\def\del  {\partial}
\def\eps{\varepsilon}
\def\R{\mathbb{R}}
 \def\dx{{\rm d}x}
\def\Div{\operatorname{div}}
\newtheorem{remark}{\textbf{Remark}}
\newtheorem{theorem}{\textbf{Theorem}}
\newtheorem{lemma}{\textbf{Lemma}}
\newtheorem{definition}{\textbf{Definition}}
\def\dx{{\rm d}x}
\def\del  {\partial}
\def\eps{\varepsilon}
\def\R{\mathbb{R}}
\def\dx{{\rm d}x}
\newenvironment{proof}{%
{\noindent \bf Proof : }%
}{%
\hfill$\Box$\\%
}
\begin{document}

\title{MODELS OF NONLINEAR ACOUSTICS VIEWED AS\\ AN APPROXIMATION OF THE NAVIER-STOKES AND EULER COMPRESSIBLE ISENTROPIC SYSTEMS}

\author{Adrien DEKKERS\footnote{ Laboratory  Math\'ematiques et Informatique pour la Complexit\'e et les Syst\`emes CentraleSup\'elec, Univ\'ersit\'e Paris-Saclay Campus de Gif-sur-Yvette, Plateau de Moulon, 
3 rue Joliot Curie 91190 Gif-sur-Yvette, France, 
adrien.dekkers@centralesupelec.fr}, Vladimir KHODYGO  \footnote{Department of Biological, Environmental and Rural Sciences 
Aberystwyth University
Penglais Campus, 
Aberystwyth, 
Ceredigion, 
SY23 3FL,  vlk@aber.ac.uk }\\ and Anna ROZANOVA-PIERRAT \footnote{Laboratory  Math\'ematiques et Informatique pour la Complexit\'e et les Syst\`emes CentraleSup\'elec, Univ\'ersit\'e Paris-Saclay Campus de Gif-sur-Yvette, Plateau de Moulon, 
3 rue Joliot Curie 91190 Gif-sur-Yvette, France, 
anna.rozanova-pierrat@centralesupelec.fr}}

\maketitle

\begin{abstract}
 The derivation of different models of non linear acoustic in thermo-ellastic media as the Kuznetsov equation, the Khokhlov-Zabolotskaya-Kuznetsov (KZK) equation and the Nonlinear Progressive wave Equation (NPE) from an isentropic Navier-Stokes/Euler system is systematized using the Hilbert type expansion in the corresponding perturbative and (for the KZK and NPE equations) paraxial \textit{ansatz}. The use of small, to compare to the constant state perturbations, correctors allows to obtain the approximation results for the solutions of these models and to estimate the time during which they keep closed in the $L^2$ norm. The KZK and NPE equations are also considered as paraxial approximations of the Kuznetsov equation, which is a model obtained only by perturbations from the Navier-Stokes/Euler system. The Westervelt equation is obtained as a nonlinear approximation of the Kuznetsov equation. In the aim to compare the solutions of the exact and approximated systems in found approximation domains the 
 well-posedness results (for the Navier-Stokes system and the Kuznetsov equation in a half-space with periodic in time initial and boundary data) were obtained. 
\end{abstract}

\section{Introduction.}\label{intro}
There is a renewed interest in the study of nonlinear wave propagation, in particular because of recent applications to ultrasound imaging ($i.e.$ HIFU) or technical and medical applications such as lithotripsy or thermotherapy. Such new techniques rely heavily on the ability to model accurately the nonlinear propagation of a finite-amplitude sound pulse in thermo-viscous elastic media. The most known nonlinear acoustic models, which we consider in this paper, are
\begin{enumerate}
 \item the Kuznetsov equation (see Eq.~(\ref{KuzEq}) and Eq.~(\ref{KuzEqA})), which is actually a quasi-linear (damped) wave equation, initially introduced by Kuznetsov\cite{Kuznetsov} for the velocity potential, see also Refs.~\cite{Hamilton,Jordan,Lesser,Barbara} for other different methods of its derivation;
 \item the Khokhlov-Zabolotskaya-Kuznetsov (KZK) equation (see Eq.~(\ref{KZKI})), which can be written for the perturbations of the density or of the pressure (see the systematic physical studies in the book\cite{Bakhvalov});
 \item the Nonlinear Progressive wave Equation (NPE) (see Eq.~(\ref{NPE}) and Eq.~(\ref{NPE2})) derived in Ref.~\cite{McDonald};
 \item the Westervelt equation (see Eq.~(\ref{West})), which is similar to the Kuznetsov equation with only one of two nonlinear terms, derived initially by Westervelt\cite{Westervelt} and later by other authors\cite{Aanonsen,TJO}.
\end{enumerate}
All these models were derived from a compressible nonlinear isentropic Navier-Stokes (for viscous media) and Euler (for the inviscid case) systems up to some small negligible terms. But all cited physical derivations of these models don't allow to say that their solutions approximate the solution of the Navier-Stokes or Euler system. The first work explaining it for the KZK equation is Ref.~\cite{Roz3}. 
Starting in Section~\ref{secderisNS} to present the initial context of the isentropic Navier-Stokes system (actually, it is also an approximation of the compressible Navier-Stokes system~(\ref{N-S1})--(\ref{N-S4})), which describes the acoustic wave motion in an homogeneous thermo-ellastic medium\cite{Bakhvalov,Hamilton,Makarov}, we systematize in this article the derivation of all these models using the ideas of Ref.~\cite{Roz3}, consisting to use correctors in the Hilbert type expansions of corresponding physical \textit{ansatzs}. 

More precisely, we show that all these models are approximations of the isentropic Navier-Stokes or Euler system
up to third order terms of a small dimensionless parameter $\eps>0$ measuring the size of the perturbations of the pressure, the density and the velocity to compare to their constant state  $(p_0, \rho_0, 0)$ (see Fig.~\ref{fig1}).
\begin{figure}[!h]
\begin{center}
                   \psfrag{P}{$P$}
                   \psfrag{P:}{$P$: small perturbations~(\ref{rhoKuz})--(\ref{uKuz})}
                   \psfrag{AK}{$A_{KZK}$}
                   \psfrag{NS}{Navier-Stokes/Euler systems}
                   \psfrag{KE}{Kuznetsov equation}
                   \psfrag{AK:}{$A_{KZK}$: KZK-paraxial approximation (Fig.~\ref{fig2})}
                   \psfrag{AN:}{$A_{NPE}$: NPE-paraxial approximations (Fig.~\ref{fig3})}
                   \psfrag{AN}{$A_{NPE}$}
                   \psfrag{NPE}{NPE equation}
                   \psfrag{KZK}{KZK equation}
                    \psfrag{B}{$B$} \psfrag{B:}{$B$: bijection~(\ref{bijKZKNPE})}
                    \psfrag{PAK}{$P\circ A_{KZK}$}
                    \psfrag{PAN}{$P\circ A_{NPE}$}
  \includegraphics[width=0.9\textwidth]{./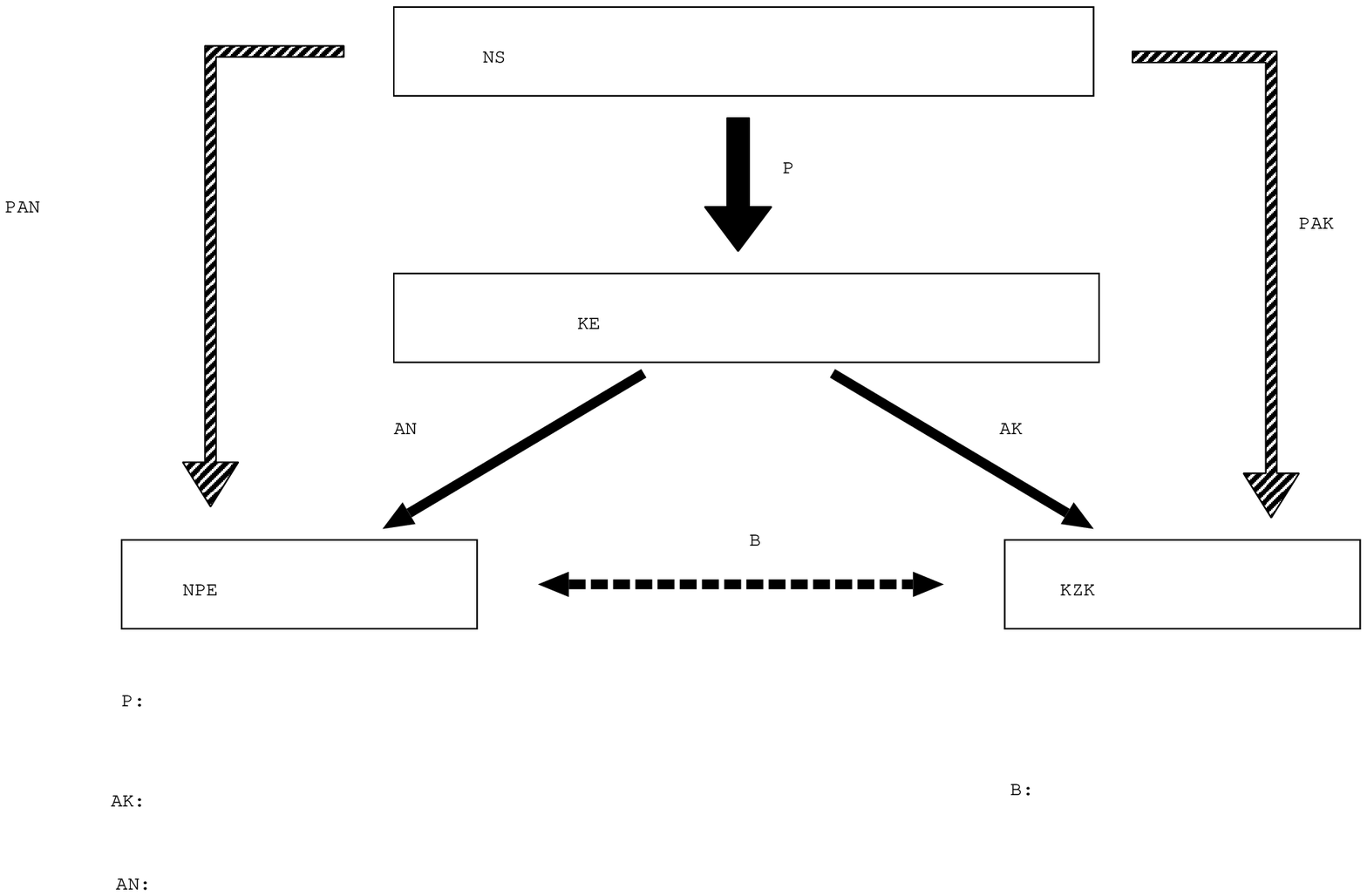}
                  \end{center}
   \caption{Schema of derivation of the models of the nonlinear acoustics. All models, the Kuznetsov, the KZK and the NPE equations are approximations up to terms of the order of $\eps^3$ of the isentropic Navier-Stokes or Euler system.}\label{fig1}
\end{figure}
As it is shown in Fig.~\ref{fig1}, the Kuznetsov equation comes from the Navier-Stokes or Euler system only by small perturbations, but to obtain the KZK and the NPE equations 
we also need to perform in addition to the small perturbations a paraxial change of variables. Moreover, the KZK and the NPE equations can be  also obtained from the Kuznetsov equation  just performing the corresponding paraxial change of variables. 
We can notice that the Kuznetsov equation~(\ref{KuzEqA}) is a non-linear wave equation containing the terms of different order on $\eps$. But the KZK- and NPE-paraxial approximations allow to have the  approximate equations with all terms of the same order, $i.e.$ the KZK and NPE equations.

The Westervelt equation is also an approximation of the Kuznetsov equation, but this time by a nonlinear perturbation. Actually the only difference between these two models is that the Westervelt equation keeps only one of two non-linear terms of the Kuznetsov equation, producing cumulative effects in a progressive wave propagation\cite{Aanonsen}. 

The NPE equation is usually used to describe short-time pulses and a long-range propagation, for instance, in an ocean wave-guide, where the refraction phenomena are important\cite{Kuperman2,Kuperman1}, while the KZK equation typically models the  ultrasonic propagation with strong diffraction phenomena,  combining with finite amplitude effects (see Ref.~\cite{Roz3} and the references therein).
Although the physical context and the physical using of the KZK and the NPE equations are different (see also Sections~\ref{secderNSKZK} and~\ref{secderNSNPE} respectively), there is a bijection (see Eq.~(\ref{bijKZKNPE})) between the variables of these two models and they can be presented by the same type differential operator with  constant positive coefficients:
$$Lu=0, \quad L=\partial^2_{t x}-c_1\partial_x(\partial_x \cdot )^2-c_2\partial^3_x \pm c_3\Delta_y, \quad \hbox{for } t\in \R^+,\; x\in\R,\; y\in \R^{n-1} .$$
Therefore, the results on the solutions of the KZK equation from Ref.~\cite{Roz2} are valid for the NPE equation. See also Ref.~\cite{Ito} for the exponential decay of the  solutions of these models in the viscous case.
 
 The well-posedness results for boundary value problems for the Kuznetsov equation are given in Refs.~\cite{Kalt2,Kalt1,Meyer} and for the Cauchy problem in Ref.~\cite{Perso}.
 
 Let us make attention that  \textit{ansatz}~(\ref{AnsaKZK1})--(\ref{AnsaKZK2}), proposed initially in Ref.~\cite{Bakhvalov} and used in Ref.~\cite{Roz3} to obtain the KZK equation from the Navier-Stokes or Euler systems, is different to   \textit{ansatz}~(\ref{EqAnsKZK1})--(\ref{EqAnsKZK2}) in Subsection~\ref{secderNSKZK}: this time it is the composition of the Kuznetsov perturbative \textit{ansatz} with the KZK paraxial change of variables\cite{Kuznetsov} (see Figs.~\ref{fig1} and~\ref{fig2}). 
 Moreover, this new approximation of the Navier-Stokes and  the Euler systems is an improvement to compare to the derivation developed in Ref.~\cite{Roz3} (see Subsection~\ref{secderNSKZK} for more details), as in Ref.~\cite{Roz3}  the Navier-Stokes/Euler system could be only approximated up to $O(\varepsilon^{\frac{5}{2}})$-terms (instead of $O(\eps^3)$ in our case).
 
In Section~\ref{secNS}, we validate the approximations of the compressible isentropic Navier-Stokes system by the different models:
by the Kuznetsov (Subsection~\ref{secNSKuz}), the KZK (Subsection~\ref{secNSKZK}) and the NPE equations (Subsection~\ref{secNSNPE}).

In Section~\ref{secEul} we do the same for the Euler system in the inviscid case. The main difference between the viscous and the inviscid case is the time existence and regularity of the solutions. Typically in the inviscid case, the solutions of the models and also of the Euler system itself (actually strong solutions), due to their non-linearity, can provide shock front formations at a finite time\cite{Alinhac2,Perso,Sideris2,Roz2,Yin1}. Thus, they are only locally well-posed, while in the viscous media all approximative models are globally well-posed for  small enough initial data\cite{Perso,Roz2}.  These existence properties of solutions for the viscous 
and the inviscid cases may also imply the difference in the definition of the domain where the approximations hold: for example\cite{Roz3}, for the approximation between the KZK equation and the Navier-Stokes system the approximation domain is  a half-space, but for the analogous inviscid case of the KZK and the Euler system it is a cone (see also the concluding Table~\ref{TABLE}).

The main hypothesis for the derivation of all these models are the following
\begin{itemize}
 \item the motion is potential;
 \item the constant state  of the medium given by $(p_0, \rho_0, 0)$ ($0$ for the velocity) is perturbed proportionally to an  dimensionless parameter $\eps>0$ (for instance, equal to $10^{-5}$ in water with an initial power of the order of $0.3 \, \mathrm{W} /\mathrm{cm}^2$);
 \item all viscosities are small (of order $\eps$).
 \end{itemize}
 To keep a physical sense of the approximation problems, we consider especially the two or three dimensional cases, $i.e.$ $\R^n$ with $n=2$ or $3$, and in the following we use the notation
 $x=(x_1,x')\in \R^n$ with one axis $x_1\in \R$ and the traversal variable $x'\in \R^{n-1}$.
 
In Sections~\ref{secNS} and~\ref{secEul} we denote by $\mathbf{U}_{\varepsilon}$ a solution of the ``exact'' Navier-Stokes/Euler system 
$Exact(\mathbf{U}_{\varepsilon})=0$ (see Eq.~(\ref{NSmatr}))
and by $\overline{\mathbf{U}}_{\varepsilon}$ an approximate  solution, constructed by the derivation \textit{ansatz} from a regular solution of one of the approximate models (typically of the Kuznetsov, the KZK or the NPE  equations), $i.e.$ a function which solves the Navier-Stokes/Euler system up to $\eps^3$ terms, denoted by $\eps^3 \textbf{R}$: $Approx(\overline{\mathbf{U}}_{\varepsilon})=Exact(\overline{\mathbf{U}}_{\varepsilon})-\eps^3 \textbf{R}=0$ (see Eq.~(\ref{NSmatrA})). 
To have the remainder term $\textbf{R}\in C([0,T],L^2(\Omega))$ we ensure that $Exact(\overline{\mathbf{U}}_{\varepsilon}) \in C([0,T],L^2(\Omega))$, $i.e.$ we need a sufficiently regular solution $\overline{\mathbf{U}}_{\varepsilon}$. The minimal regularity of the initial data to have a such $\overline{\mathbf{U}}_{\varepsilon}$ is given in Table~\ref{TABLE} (see also Table~\ref{TABLE2} for the approximations of the Kuznetsov equation).

Choosing for the exact system the same initial-boundary data found by the \textit{ansatz} for $\overline{\mathbf{U}}_{\varepsilon}$ (the regular case)  or the initial data taken  in their small $L^2$-neighborhood, $i.e.$ \begin{equation}\label{EqIndelta}
   \|\mathbf{U}_{\varepsilon}(0)-\overline{\mathbf{U}}_{\varepsilon}(0)\|_{L^2(\Omega)}\le \delta\le \eps,                                                                                                                                                                                                                                     \end{equation}
   with $\mathbf{U}_{\varepsilon}(0)$ not necessarily smooth, but ensuring the existence of an admissible weak solution of a bounded energy (see Definition~\ref{DefAdm}),
  we prove the existence of  constants $C>0$ and $K>0$ independent of $\varepsilon$, $\delta$ and   the time $t$ such that
\begin{equation}\label{validaproxintro}
 \hbox{for all }0\leq t\leq\frac{C}{\varepsilon}\;\;\;\;\;\;\Vert (\mathbf{U}_{\varepsilon}-\overline{\mathbf{U}}_{\varepsilon})(t)\Vert_{L^2(\Omega)}^2\leq K(\eps^3t+\delta^2)  e^{K\varepsilon t}\leq 9\varepsilon^2
 \end{equation}
  with $\Omega$ a domain where the both solutions $\mathbf{U}_{\varepsilon}$ and  $\overline{\mathbf{U}}_{\varepsilon}$ exist (see Theorems~\ref{ThapproxNSKuz},~\ref{ThAprKZKNS} and~\ref{ThapproxNSNPE}). 
  
   In the viscous case all aproximative models have a global unique classical solution for  small enough initial data in their corresponding approximative domains ($\Omega$  varies for different models, see Table~\ref{TABLE}): it is equal to $\R^n$, $\mathbb{T}_{x_1}\times\mathbb{R}^{n-1}$ and $\mathbb{R}_+\times \mathbb{R}^{n-1}$ for the Kuznetsov equation,   the NPE equation and  the KZK equation respectively.  If we take regular initial data $\mathbf{U}_{\varepsilon}(0)=\overline{\mathbf{U}}_{\varepsilon}(0)$, the same thing is true for the Navier-Stokes system with the same regularity for the solutions\cite{Matsumura}. But in the case of the half-space for the approximation between the Navier-Stokes system and the KZK equation, firstly considered in Ref.~\cite{Roz3}, when, due to the periodic in time boundary conditions, coming from the initial conditions for the KZK equation, we  prove the well-posedness for all finite time. To obtain it we use Ref.~\cite{Roz3} Theorem~5.5. We updated it in the framework of the new \textit{ansatz}~(\ref{EqAnsKZK1})--(\ref{EqAnsKZK2}) and corrected several misleading  in its proof (see Subsection~\ref{sNSe} Theorem~\ref{thENS}), what allows us in Theorem~\ref{ThAprKZKNS} of Subsection~\ref{secValNSKZK}  to establish the approximation result between the KZK equation and the Navier-Stokes system following Ref.~\cite{Roz3} Theorem~5.7 just updating the stability approximation estimate. 

To  obtain estimate~(\ref{validaproxintro}) we don't need  the regularity of the classical solution of the Navier-Stokes (or Euler) system, it can be a weak solution (in the sense of Hoff\cite{Hoff1} for the Navier-Stokes system or one of solutions in the sense of Luo and al.\cite{LUO-2016} for the Euler system) satisfying the admissible conditions given in Definition~\ref{DefAdm} (see also Ref.~\cite{Dafermos} p.52 and Ref.~\cite{Roz3} Definition~5.9).

 For the inviscid case, given in Section~\ref{secEul}, we verify that  the existence time of (strong) solutions of  all models is not less than $O(\frac{1}{\eps})$ and estimate~(\ref{validaproxintro}) still holds. 

As the KZK and NPE equations can be seen as approximations of the Kuznetsov equation due to their derivation (see Figure~\ref{fig1}), we also validate the approximation of the Kuznetsov equation by the KZK and NPE equations, and also by the Westervelt equation, in Section~\ref{secKuzKZK},~\ref{secKuzNPE} and~\ref{SecWest} (see Table~\ref{TABLE2}). 

To be able to consider the approximation of the Kuznetsov equation by the KZK equation (see Section~\ref{secKuzKZK}), we firstly establish  global well-posedness results for the Kuznetsov equation in the half space similar to the previous framework for the KZK and the Navier-Stokes system in Subsection~\ref{sNSe}. We study two cases: the purely time periodic boundary problem in the \textit{ansatz} variables $(z,\tau,y)$ moving with the wave and the initial boundary-value problem for the Kuznetsov equation in the initial variables $(t,x_1,x')$ with data coming from the solution of the KZK equation. We  validate these two types  approximations in Subsection~\ref{secValKuzKZK} for the viscous and inviscid cases.
Finally in Sections~\ref{secKuzNPE} and~\ref{SecWest} we validate the approximation between the Kuznetsov and NPE equation and the Kuznetsov and Westervelt equations respectively (see Table~\ref{TABLE2}). 
We can summarize the approximation results of the Kuznetsov equation in the following way:
 if $u$ is a solution of the Kuznetsov equation and $\overline{u}$ is a solution of the NPE or of the the KZK (for the initial boundary value problem) or of the Westervelt equations found for rather closed initial data $$\|\nabla_{t,\mathbf{x}} (u(0)-\overline{u}(0))\|_{L^2(\Omega)}\le \delta\le \eps,$$
then there exist $K>0$, $C_1>0$, $C_2>0$ and $C>0$ independent on $\eps$, $\delta$ and on time, such that for all $t\le \frac{C}{\eps}$ it holds
$$\|\nabla_{t,\mathbf{x}} (u-\overline{u})\|_{L^2(\Omega)}\le C_1(\eps^2t+\delta)e^{C_2\eps t}\le K\eps.$$

\section{Isentropic Navier-Stokes system for a subsonic potential motion.}\label{secderisNS} To describe the acoustic wave motion in an homogeneous thermo-ellastic medium, we start from the Navier-Stokes system in $\mathbb{R}^n$
\begin{align}
&\partial_t \rho+ \Div(\rho \mathbf{v})=0, \label{N-S1}\\ 
&\rho[\partial_t \mathbf{v}+(\mathbf{v}.\nabla) \mathbf{v}]=-\nabla p+\eta \Delta \mathbf{v}+\left(\zeta+\frac{\eta}{3} \right)\nabla.\Div (\mathbf{v}), \\ 
&\rho T [\partial_t S+(\mathbf{v}.\nabla)S]=\kappa \Delta T+\zeta (\Div \mathbf{v})^2 \nonumber\\
&+\frac{\eta}{2}\left( \partial_{x_k} v_i+\partial_{x_i}v_k-\frac{2}{3}\delta_{ik}\partial_{x_i} v_i\right)^2,\label{N-S3}\\
&p=p(\rho,S),\label{N-S4}
\end{align}
where 
the pressure $p$ is given by the state law $p=p(\rho,S)$. The density $\rho$, the velocity $\mathbf{v}$, the temperature $T$ and the entropy $S$ are unknown functions in  system~(\ref{N-S1})--(\ref{N-S4}). The coefficients $\beta,\;\kappa$ and $\eta$ are constant viscosity coefficients. The wave motion is supposed to be potential and the viscosity coefficients are supposed to be small in terms of a dimensionless small parameter $\eps>0$:
$$\eta \Delta \mathbf{v}+\left(\zeta+\frac{\eta}{3} \right)\nabla.\Div (\mathbf{v})=\left(\zeta+\frac{4}{3}\eta\right)\Delta \mathbf{v}:=\beta\Delta \mathbf{v},\quad \hbox{ with }\beta=\varepsilon\tilde{\beta}.$$ Any constant state $(\rho_0 ,\mathbf{v}_0, S_0, T_0)$ is a stationary solution of system~(\ref{N-S1})--(\ref{N-S4}). 
 Further we always take $\mathbf{v}_0=0$ using a Galilean transformation.
 Perturbation near
this constant state $(\rho_0, 0, S_0, T_0)$ introduces small increments in terms of the same dimensionless small parameter $\eps>0$:
\begin{align*}
 &T(x,t)=T_0+\varepsilon \tilde{T}(x,t)\quad\hbox{and}\quad S(x,t)=S_0+\varepsilon^2 \tilde{S}(x,t),\\
 &\rho_{\varepsilon}(x,t)=\rho_0+\varepsilon \tilde{\rho}_{\varepsilon}(x,t)\quad\hbox{and}\quad\mathbf{v}_{\varepsilon}(x,t)=\varepsilon \tilde{\mathbf{v}}_{\varepsilon}(x,t),
\end{align*}
where the perturbation of the entropy is of order $O(\eps^2)$, since it is the smallest size on $\eps$ of right hand terms in Eq~(\ref{N-S3}), due to the smallness of the viscosities (see~Eq.~(\ref{EqS})).

Actually, $\eps$ is the Mach number, which is supposed to be small\cite{Bakhvalov} ($\epsilon=10^{-5}$ for the propagation in water with an initial
power of the order of $0.3 \, \mathrm{W} /\mathrm{cm}^2$): 
\begin{equation*}
  \dfrac{\rho-\rho_0}{\rho_0}\sim\dfrac{T-T_0}{T_0}\sim\dfrac{|\mathbf{v}|}{c_0}\sim\epsilon,
 \end{equation*}
where  $c_0 = \sqrt{p' (\rho_0)}$  is the  speed  of  sound  in  the  unperturbed  media.

Using the transport heat equation~(\ref{N-S3}) up to the terms of the order of $\varepsilon^3$
\begin{equation}\label{EqS}
 \varepsilon^2 \rho_0 T_0 \partial_t \tilde{S}=\varepsilon^2 \tilde{\kappa} \Delta \tilde{T}+\mathcal{O}(\varepsilon^3),
\end{equation}
the approximate state equation
$$p=p_0+c^2\varepsilon \tilde{\rho}_{\varepsilon}+\frac{1}{2}(\partial^2_{\rho}p)_S \varepsilon^2 \tilde{\rho}^2_{\varepsilon}+(\partial_S p)_{\rho} \varepsilon^2 \tilde{S}+\mathcal{O}(\varepsilon^3)$$
(where the notation $(.)_S$ means that the expression in brackets is constant in $S$), can be replaced~\cite{Bakhvalov,Makarov,Hamilton}   by
$$p=p_0+c^2\varepsilon \tilde{\rho}_{\varepsilon}+\frac{(\gamma-1)c^2}{2\rho_0}\varepsilon^2 \tilde{\rho}^2_{\varepsilon}-\varepsilon \tilde{\kappa}\left(\frac{1}{C_V}-\frac{1}{C_p}\right)\nabla.\mathbf{v}_\varepsilon+\mathcal{O}(\varepsilon^3),$$
 using $T=\frac{p}{\rho R}$ from the theory of ideal gaze and taking 
$$p(\rho, S)=R\rho^{\gamma} e^{\dfrac{S-S_0}{C_V}}.$$
Here $\gamma=C_p/C_V$ denotes the ratio of the heat capacities at constant pressure and at constant volume respectively.

Hence, system~(\ref{N-S1})--(\ref{N-S4}) becomes an isentropic Navier-Stokes system
\begin{gather} 
\partial_t \rho_\varepsilon  +\operatorname{div}( \rho_\varepsilon  \mathbf{v}_\varepsilon)=0\,,\label{NSi1}\\
 \rho_\varepsilon[\partial_t
\mathbf{v}_\varepsilon+(\mathbf{v}_\varepsilon \cdot\nabla) \,
\mathbf{v}_\varepsilon] = -\nabla p(\rho_\varepsilon)
+\varepsilon \nu \Delta \mathbf{v}_\varepsilon\,,\label{NSi2}
\end{gather}
with the approximate state equation $p(\rho, S)=p(\rho_\eps)+O(\eps^3)$:
\begin{equation}\label{press}
p(\rho_{\varepsilon})
=p_0+c^2(\rho_{\varepsilon}-\rho_0)+\frac{(\gamma-1)c^2}{2\rho_0} (\rho_{\varepsilon}-\rho_0)^2,
\end{equation}
and with a small enough and positive viscosity coefficient:
$$\varepsilon \nu=\beta+\kappa \left(\frac{1}{C_V}-\frac{1}{C_p}\right).$$
\section{Approximation of the Navier-Stokes system.}\label{secNS}

\subsection{Navier-Stokes system and the Kuznetsov equation.}\label{secNSKuz} 

We consider system~(\ref{NSi1})--(\ref{press}) as the exact model.
The state law~(\ref{press})  is a Taylor  expansion of the pressure up to the terms of the third order on $\eps$. Therefore an approximation of system~(\ref{NSi1})--(\ref{press}) for $\mathbf{v}_\varepsilon$ and $\rho_\eps$ up to  terms $O(\eps^3)$ would be optimal.
In the framework of the nonlinear acoustic between the known approximative models derived from system~(\ref{NSi1})--(\ref{press}) are 
 the Kuznetsov, the KZK and the NPE equations. 
In this section we focus on the first of these models, $i.e.$ on the Kuznetsov equation.

Initially the Kuznetsov equation was 
derived by Kuznetsov~\cite{Kuznetsov} from the isentropic Navier-Stokes system~(\ref{NSi1})--(\ref{press}) for 
the small velocity potential $\mathbf{v}_{\varepsilon}(\mathbf{x},t)=-\nabla \tilde{u}(\mathbf{x},t),$ $\mathbf{x}\in \mathbb{R}^n,$  $t\in \mathbb{R}^+$:
\begin{equation}\label{KuzEq}
 \partial^2_t \tilde{u} -c^2 \triangle \tilde{u}=\partial_t\left( (\nabla \tilde{u})^2+\frac{\gamma-1}{2c^2}(\partial_t \tilde{u})^2+\frac{\varepsilon\nu}{\rho_0}\Delta \tilde{u}\right).
\end{equation}
The derivation was latter  discussed by a lot of authors~\cite{Hamilton,Jordan,Lesser}.

In the difference to these physical derivations we introduce a Hilbert expansion
type construction with a corrector $\varepsilon^2\rho_2(\mathbf{x},t)$ for the density perturbation, considering the following \emph{ansatz}
\begin{gather}
\rho_\varepsilon(\mathbf{x}, t)=\rho_0+\varepsilon\rho_1(\mathbf{x},t)+\varepsilon^2\rho_2(\mathbf{x},t),\label{rhoKuz}\\
\mathbf{v}_\varepsilon (\mathbf{x}, t)=-\varepsilon\nabla u(\mathbf{x},t).\label{uKuz}
\end{gather}
The use of the second order corrector in~(\ref{rhoKuz}) allows to ensure the approximation of~(\ref{NSi2}) up to terms of order $\eps^3$ (see Subsection~\ref{secderNSKuz}) and to open the question about the approximation between the exact solution of the isentropic Navier-Stokes system~(\ref{NSi1})--(\ref{press}) and its approximation given by the solution of the Kuznetsov equation, as it was done for the KZK equation\cite{Roz3}.

\subsubsection{Derivation of the Kuznetsov equation from an isentropic Navier-Stokes system.}\label{secderNSKuz}

Putting expressions for the density and velocity~(\ref{rhoKuz})--(\ref{uKuz}) into the  isentropic Navier-Stokes system~(\ref{NSi1})--(\ref{press}), we obtain for the momentum conservation~(\ref{NSi2})
\begin{multline}
\rho_\varepsilon[\partial_t
\mathbf{v}_\varepsilon+(\mathbf{v}_\varepsilon \cdot\nabla) \,
\mathbf{v}_\varepsilon] +\nabla p(\rho_\varepsilon)
-\varepsilon \nu \Delta \mathbf{v}_\varepsilon= \varepsilon \nabla(-\rho_0 \partial_t u+c^2 \rho_1)\\
 +\varepsilon^2 \left[-\rho_1 \nabla(\partial_t u)+\frac{\rho_0}{2}\nabla((\nabla u)^2)
+c^2 \nabla\rho_2+\frac{(\gamma-1)c^2}{2 \rho_0}\nabla(\rho_1^2)+\nu \nabla\Delta u\right]
+O(\varepsilon^3). \label{EqAppNS1p}
\end{multline}
In order to have an approximation up to the terms $O(\varepsilon^3)$ we put the terms of order one and two in $\varepsilon$ equal to $0$, what allows us to find the expressions for the density correctors: 
\begin{align}
 \rho_1(\mathbf{x},t) =&\frac{\rho_0}{c^2}\partial_t u(\mathbf{x},t),\label{rho1K}\\ \rho_2(\mathbf{x},t)=&-\frac{\rho_0(\gamma-2)}{2c^4} (\partial_t u)^2-\frac{\rho_0}{2c^2 }(\nabla u)^2-\frac{\nu}{c^2}\Delta u. \label{rho2K}
\end{align}
Indeed, we start by making $\varepsilon \nabla(-\rho_0 \partial_t u+c^2 \rho_1)=0$ and find the first order perturbation of the density $\rho_1$ given by Eq.~(\ref{rho1K}). Consequently, if $\rho_1$ satisfies~(\ref{rho1K}), then Eq.~(\ref{EqAppNS1p}) becomes
\begin{multline}
\rho_\varepsilon[\partial_t
\mathbf{v}_\varepsilon+(\mathbf{v}_\varepsilon \cdot\nabla) \,
\mathbf{v}_\varepsilon] +\nabla p(\rho_\varepsilon)
-\varepsilon \nu \Delta \mathbf{v}_\varepsilon=\varepsilon \nabla(-\rho_0 \partial_t u+c^2 \rho_1) \\
 \varepsilon^2 \nabla\left[-\frac{\rho_0}{2c^2} (\partial_t u)^2+\frac{\rho_0}{2}(\nabla u)^2
+c^2 \rho_2+\frac{(\gamma-1)\rho_0}{2c^2 }(\partial_t u)^2+\nu \Delta u\right]
+O(\varepsilon^3). \label{EqAppNS1p2}
\end{multline}
Thus, taking the corrector $\rho_2$ by formula~(\ref{rho2K}), we ensure that 
\begin{equation}
 \rho_\varepsilon[\partial_t
\mathbf{v}_\varepsilon+(\mathbf{v}_\varepsilon \cdot\nabla) \,
\mathbf{v}_\varepsilon] +\nabla p(\rho_\varepsilon)
-\varepsilon \nu \Delta \mathbf{v}_\varepsilon= O(\varepsilon^3). \label{EqAppNS1p3}
\end{equation}
Now we put these expressions of $\rho_1$ from~(\ref{rho1K}) and $\rho_2$ from~(\ref{rho2K}) with  \emph{ansatz}~(\ref{rhoKuz})--(\ref{uKuz}) in  Eq.~(\ref{NSi1}) of the mass conservation to obtain
\begin{align}
&\partial_t \rho_\varepsilon  +\operatorname{div}( \rho_\varepsilon  \mathbf{v}_\varepsilon)=\varepsilon\frac{\rho_0}{c^2}\left[\partial^2_t u -c^2 \Delta u-\right.\nonumber\\
     &\left.\varepsilon\partial_t\left( (\nabla u)^2+\frac{\gamma-2}{2c^2}(\partial_t u)^2+\frac{\nu}{\rho_0}\Delta u\right)-\varepsilon u_t \Delta u\right]+O(\varepsilon^3).\label{APrNS1K}
\end{align}
Then we notice that the  right hand term of the order $\eps$ in Eq.~(\ref{APrNS1K}) is actually the linear wave equation up to smaller on $\eps$ therms:
$$\partial^2_t u-c^2 \Delta u=O(\varepsilon).$$
Hence, we express 
$$\varepsilon u_t \Delta u=\varepsilon\frac{1}{c^2} u_t u_{tt}+O(\varepsilon^2)=\varepsilon\frac{1}{2c^2}\partial_t((u_t)^2)+O(\varepsilon^2), $$
and putting it in Eq.~(\ref{APrNS1K}),  we finally have
 \begin{align}
     \partial_t \rho_\varepsilon  +\operatorname{div}( \rho_\varepsilon  \mathbf{v}_\varepsilon)&=\varepsilon\frac{\rho_0}{c^2}\left[\partial^2_t u -c^2 \Delta u-\right.\nonumber\\
     &\left.\varepsilon\partial_t\left( (\nabla u)^2+\frac{\gamma-1}{2c^2}(\partial_t u)^2+\frac{\nu}{\rho_0}\Delta u\right)\right]+O(\varepsilon^3). \label{ns1K}
     \end{align}
   The right hand side of Eq.~(\ref{ns1K}) gives us  the Kuznetsov equation %
\begin{equation}\label{KuzEqA}
 \partial^2_t u -c^2 \Delta u=\varepsilon\partial_t\left( (\nabla u)^2+\frac{\gamma-1}{2c^2}(\partial_t u)^2+\frac{\nu}{\rho_0}\Delta u\right),
\end{equation}
which is  the first order approximation of the isentropic Navier-Stokes system %
up to the terms $O(\varepsilon^3)$. Moreover,
if $u$ is a solution of the Kuznetsov equation, then 
with the relations for the density perturbations~(\ref{rho1K}) and~(\ref{rho2K}) and with  \emph{ansatz}~(\ref{rhoKuz})--(\ref{uKuz})
we have
\begin{gather} 
\partial_t \rho_\varepsilon  +\operatorname{div}( \rho_\varepsilon  \mathbf{v}_\varepsilon)=O(\eps^3)\,,\label{NSi1re}\\
 \rho_\varepsilon[\partial_t
\mathbf{v}_\varepsilon+(\mathbf{v}_\varepsilon \cdot\nabla) \,
\mathbf{v}_\varepsilon] +\nabla p(\rho_\varepsilon)
-\varepsilon \nu \Delta \mathbf{v}_\varepsilon=O(\eps^3).\label{NSi2re}\end{gather}
Hence, it is clear that the standard physical perturbative approach without the corrector $\rho_2$ (it is sufficient to   take $\rho_2=0$ in our calculus) can't ensure~(\ref{NSi1re})--(\ref{NSi2re}).

Let us also notice, as it was  originally mentioned  by Kuznetsov, that the Kuznetsov equation~(\ref{KuzEqA}) contains terms of different orders, and hence, it is a wave equation with small size non-linear perturbations  $\partial_t(\nabla u)^2$, $\partial_t(\partial_t u)^2$ and the viscosity term $\partial_t\Delta u$.

\subsubsection{Approximation of the solutions of the isentropic Navier-Stokes system by the solutions of the Kuznetsov equation.}\label{secapNSKuz}

Let us calculate the remainder terms in~(\ref{NSi1re})--(\ref{NSi2re}), which are denoted respectively by 
$\varepsilon^3 R_1^{NS-Kuz}$ and $\varepsilon^3 \mathbf{R}_2^{NS-Kuz}$:
\begin{align}
&\varepsilon^3 R_1^{NS-Kuz}=\varepsilon^3\left[\frac{1}{c^2}\partial_t u\left(\frac{\rho_0(\gamma-2)}{2c^4}\partial_t[(\partial_t  u)^2]+\frac{\rho_0}{c^2}\partial_t[(\nabla u)^2]+\frac{\nu}{c^2}\partial_t\Delta u\right)\right.\nonumber\\
 &\left.\;\;\;\;\;-\frac{\rho_0}{c^2}\partial_t u\; \Delta u-\nabla \rho_2.\nabla u-\rho_2 \Delta u\right]+\varepsilon^4 \frac{1}{c^2}\partial_t u\left(\nabla \rho_2.\nabla u+\rho_2 \Delta u\right),\label{rNS1}\\
&\varepsilon^3 \mathbf{R}_2^{NS-Kuz}=\varepsilon^3\left[\frac{\rho_1}{2}\nabla[(\nabla u)^2]-\rho_2 \nabla \partial_t u\right]
+\varepsilon^4\frac{\rho_2}{2}\nabla\left[(\nabla u)^2\right].\label{rNS2}
\end{align}
If $u$ is a sufficiently regular solution of the Cauchy problem for the Kuznetsov equation in $\mathbb{R}^n$
\begin{equation}\label{CauProbKuz}
\left\lbrace
\begin{array}{l}
 \partial^2_t u -c^2 \Delta u=\varepsilon\partial_t\left( (\nabla u)^2+\frac{\gamma-1}{2c^2}(\partial_t u)^2+\frac{\nu}{\rho_0}\Delta u\right),\\
u(0)=u_0,\;\;u_t(0)=u_1,
\end{array}
\right.
\end{equation}
then, taking $\rho_1$ and $\rho_2$ according to formulas~(\ref{rho1K})-(\ref{rho2K}), we define
$\overline{\rho}_{\varepsilon}$ and $\overline{ \mathbf{v}}_{\varepsilon}$ by formulas~(\ref{rhoKuz})-(\ref{uKuz}) and obtain a solution of the following approximate system
\begin{align}
&\partial_t\overline{ \rho}_{\varepsilon}+\operatorname{div}(\overline{\rho}_{\varepsilon}\overline{ v}_{\varepsilon})=\varepsilon^3 R_1^{NS-Kuz},\label{NSAp1}\\
&\overline{\rho}_{\varepsilon} [\partial_t \overline{ \mathbf{v}}_{\varepsilon}+( \overline{ \mathbf{v}}_{\varepsilon}.\nabla) \overline{ \mathbf{v}}_{\varepsilon}]+\nabla p(\overline{\rho}_{\varepsilon})-\varepsilon \nu \Delta \overline{ \mathbf{v}}_{\varepsilon}=\varepsilon^3 \mathbf{R}_2^{NS-Kuz}\label{NSAp2}
\end{align}
with $p(\overline{\rho}_{\varepsilon})$ from the state law~(\ref{press}).  
 With notations 
$\mathbf{U}_{\varepsilon}=(\rho_{\varepsilon},\;\rho_{\varepsilon}\mathbf{v}_{\varepsilon})^t$ and $\overline{\mathbf{U}}_{\varepsilon}=(\overline{\rho}_{\varepsilon},\; \overline{\rho}_{\varepsilon} \overline{ \mathbf{v}}_{\varepsilon})^t $,  
the  exact~(\ref{NSi1})--(\ref{NSi2}) and the approximated~(\ref{NSAp1})--(\ref{NSAp2}) Navier-Stokes systems  can be respectively rewritten 
in the following forms\cite{Dafermos,Roz3}:
\begin{align}
& \partial_t \mathbf{U}_{\varepsilon}+\sum_{i=1}^n \partial_{x_i} \mathbf{G}_i(\mathbf{U}_\varepsilon)-\varepsilon \nu \begin{bmatrix}
0 \\
\Delta \mathbf{v}_{\varepsilon}
\end{bmatrix}
=0,\label{NSmatr}\\
 &\partial_t \overline{\mathbf{U}}_{\varepsilon}+\sum_{i=1}^n \partial_{x_i} \mathbf{G}_i(\overline{\mathbf{U}}_\varepsilon)-\varepsilon \nu \begin{bmatrix}
0 \\
\Delta \overline{\mathbf{v}}_{\varepsilon}
\end{bmatrix}
=\eps^3\mathbf{R}^{NS-Kuz}\label{NSmatrA}
\end{align}
with $\mathbf{R}^{NS-Kuz}=\begin{bmatrix}  R_1^{NS-Kuz}\\\mathbf{R}_2^{NS-Kuz} \end{bmatrix}$ from~(\ref{rNS1})--(\ref{rNS2}) and 
\begin{equation}\label{NSmatrfact}
\mathbf{G}_i(\mathbf{U}_{\varepsilon} )=\begin{bmatrix}
\rho_{\varepsilon} v_i\\
\rho_{\varepsilon}v_i\mathbf{v}_{\varepsilon}+p(\rho_{\varepsilon})\mathbf{e}_i
\end{bmatrix}, \quad\partial_{x_i} \mathbf{G}_i(\mathbf{U}_{\varepsilon})=D\mathbf{G}_i(\mathbf{U}_{\varepsilon})\partial_{x_i}\mathbf{U}_{\varepsilon}.
\end{equation}

The well-posedness results for the   Cauchy problems~(\ref{NSi1})-(\ref{press})\cite{Matsumura} and~(\ref{CauProbKuz})\cite{Perso} 
allow us to establish the global existence and the unicity of the classical solutions $\mathbf{U}_{\varepsilon}$ and $\overline{\mathbf{U}}_{\varepsilon}$, considered in the Kuznetsov approximation framework:
\begin{theorem}\label{ThExistUUbarNSK}
 There exists  a constant $k>0$ such that if the initial data $u_0\in H^5(\mathbb{R}^3)$ and $u_1\in H^4(\mathbb{R}^3)$ for the Cauchy problem for the Kuznetsov equation~(\ref{CauProbKuz}) 
 are sufficiently small
 $$\Vert u_0\Vert_{H^5(\mathbb{R}^3)}+\Vert u_1\Vert_{H^4(\mathbb{R}^3)}<k,$$
 then there exist global in time solutions $\overline{\mathbf{U}}_\eps=(\overline{\rho}_{\varepsilon},\; \overline{\rho}_{\varepsilon} \overline{ \mathbf{v}}_{\varepsilon})^t$ of the approximate Navier-Stokes system~(\ref{NSmatrA})  and $\mathbf{U}_\eps=(\rho_{\varepsilon},\;\rho_{\varepsilon}\mathbf{v}_{\varepsilon})^t$ of the exact  Navier-Stokes system~(\ref{NSmatr}) respectively, with the same regularity corresponding to
 \begin{equation}\label{EqregRhoNS}
 \overline{\rho}_{\varepsilon}-\rho_0,\;\rho_{\varepsilon}-\rho_0 \in C([0,+\infty[;H^3(\mathbb{R}^3))\cap C^1([0,+\infty[;H^2(\mathbb{R}^3))
\end{equation} 
and 
\begin{equation}\label{EqregvNS}
   \mathbf{\overline{v}}_{\varepsilon},\;   \mathbf{v}_{\varepsilon}\in C([0,+\infty[;H^3(\mathbb{R}^3))\cap C^1([0,+\infty[;H^1(\mathbb{R}^3)),
    \end{equation}
 %
 both considered with the state law~(\ref{press}) and with the same initial data
 \begin{eqnarray}
    &&(\bar{\rho}_\eps-\rho_\eps)|_{t=0} =0,
\quad (\bar{\mathbf{v}}_\eps-\mathbf{v}_\eps)|_{t=0} = 0,\label{bban}\end{eqnarray}
where $\bar{\rho}_\eps|_{t=0}$ and $\bar{\mathbf{v}}_\eps|_{t=0}$ are constructed as the functions of the initial data for the Kuznetsov equation $u_0$ and $u_1$ according to formulas~(\ref{rhoKuz})--(\ref{uKuz}) and~(\ref{rho1K})--(\ref{rho2K}):
\begin{align}
 &\bar{\rho}_\eps|_{t=0}= \rho_0+\eps\frac{\rho_0}{c^2}u_1-\eps^2\left[\frac{\rho_0(\gamma-2)}{2c^4} u_1^2+\frac{\rho_0}{2c^2 }(\nabla u_0)^2+\frac{\nu}{c^2}\Delta u_0\right],\label{InCondrob}\\
 &\bar{\mathbf{v}}_\eps|_{t=0}=-\eps \nabla u_0.\label{InCondub}
\end{align}
\end{theorem}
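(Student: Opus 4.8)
The plan is to reduce Theorem~\ref{ThExistUUbarNSK} to two already available well-posedness results, glued together by the explicit algebraic relations~(\ref{rho1K})--(\ref{rho2K}) and the \emph{ansatz}~(\ref{rhoKuz})--(\ref{uKuz}). Concretely, one first invokes the global existence theory for the Cauchy problem of the Kuznetsov equation~(\ref{CauProbKuz}) from Ref.~\cite{Perso}: there is a threshold $k_1>0$ such that if $\|u_0\|_{H^5(\R^3)}+\|u_1\|_{H^4(\R^3)}<k_1$, then~(\ref{CauProbKuz}) has a unique global solution $u$ with $u\in C([0,\infty[;H^5)\cap C^1([0,\infty[;H^4)\cap C^2([0,\infty[;H^3)$ (the $\eps$ in front of the nonlinearity only helps, so the smallness threshold is uniform in $\eps\in(0,1]$, and all $H^s$-norms stay bounded in time). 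From this $u$ one simply \emph{defines} $\overline{\rho}_\eps$ and $\overline{\mathbf{v}}_\eps$ by~(\ref{rhoKuz})--(\ref{uKuz}) with $\rho_1,\rho_2$ given by~(\ref{rho1K})--(\ref{rho2K}); then the computation already carried out in Subsection~\ref{secderNSKuz} shows $(\overline{\rho}_\eps,\overline{\mathbf{v}}_\eps)$ solves exactly~(\ref{NSAp1})--(\ref{NSAp2}), i.e.~(\ref{NSmatrA}), with remainder $\eps^3\mathbf{R}^{NS-Kuz}$ explicitly given by~(\ref{rNS1})--(\ref{rNS2}).

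The regularity claims~(\ref{EqregRhoNS})--(\ref{EqregvNS}) are then just bookkeeping on these formulas. For the velocity, $\overline{\mathbf{v}}_\eps=-\eps\nabla u$, so $u\in C([0,\infty[;H^5)$ gives $\overline{\mathbf{v}}_\eps\in C([0,\infty[;H^4)\subset C([0,\infty[;H^3)$, and $u\in C^1([0,\infty[;H^4)$ gives $\partial_t\overline{\mathbf{v}}_\eps=-\eps\nabla u_t\in C([0,\infty[;H^3)\subset C([0,\infty[;H^1)$, so $\overline{\mathbf{v}}_\eps\in C^1([0,\infty[;H^1)$. For the density, $\overline{\rho}_\eps-\rho_0=\eps\frac{\rho_0}{c^2}u_t-\eps^2\bigl[\frac{\rho_0(\gamma-2)}{2c^4}u_t^2+\frac{\rho_0}{2c^2}(\nabla u)^2+\frac{\nu}{c^2}\Delta u\bigr]$; the linear-in-$u_t$ part sits in $C([0,\infty[;H^4)\cap C^1([0,\infty[;H^3)$, the quadratic terms $u_t^2$ and $(\nabla u)^2$ stay in $H^3$ (and $C^1$ in time into $H^2$) because $H^3(\R^3)$ is a Banach algebra and $u_t,\nabla u\in C([0,\infty[;H^4)\cap C^1([0,\infty[;H^3)$, and the term $\Delta u\in C([0,\infty[;H^3)\cap C^1([0,\infty[;H^2)$ is the one that dictates the final regularity. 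Hence $\overline{\rho}_\eps-\rho_0\in C([0,\infty[;H^3)\cap C^1([0,\infty[;H^2)$, exactly~(\ref{EqregRhoNS}); in particular $\mathbf{R}^{NS-Kuz}\in C([0,\infty[;L^2(\R^3))$, which is what one needs later in~(\ref{validaproxintro}).

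For the exact system, one takes as initial data~(\ref{bban}), i.e.~$(\rho_\eps,\mathbf{v}_\eps)|_{t=0}=(\overline{\rho}_\eps,\overline{\mathbf{v}}_\eps)|_{t=0}$ given explicitly by~(\ref{InCondrob})--(\ref{InCondub}), and applies the global well-posedness theorem of Matsumura--Nishida (Ref.~\cite{Matsumura}) for the compressible isentropic Navier-Stokes system~(\ref{NSi1})--(\ref{press}) near the constant state $(\rho_0,0)$: if the perturbation $(\rho_\eps-\rho_0,\mathbf{v}_\eps)|_{t=0}$ is small in $H^3(\R^3)\times H^3(\R^3)$, there is a unique global solution with precisely the regularity~(\ref{EqregRhoNS})--(\ref{EqregvNS}). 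So the remaining point is to check that the data~(\ref{InCondrob})--(\ref{InCondub}) are indeed $H^3$-small and meet that theorem's smallness threshold: from the bounds on $u_0,u_1$ one has $\|\overline{\rho}_\eps-\rho_0\|_{H^3}\lesssim \eps(\|u_1\|_{H^4}+\|u_0\|_{H^5})\lesssim\eps k$ and $\|\overline{\mathbf{v}}_\eps\|_{H^3}\lesssim\eps\|u_0\|_{H^4}\lesssim\eps k$, so choosing $k=\min(k_1,k_2/M)$ with $k_2$ the Matsumura--Nishida threshold and $M$ the implied constant does the job, uniformly in $\eps\le1$. I expect the only mildly delicate point to be this consistency-of-thresholds step — verifying that the $H^3$ norm of the constructed data, which involves $\Delta u_0$, is controlled by $\|u_0\|_{H^5}$ and stays below the Navier-Stokes smallness bound — while the derivation identity (that $\overline{\mathbf{U}}_\eps$ solves~(\ref{NSmatrA}) with the stated remainder) is already done in Subsection~\ref{secderNSKuz} and the time-regularity transfer is routine algebra-of-$H^3$ bookkeeping.
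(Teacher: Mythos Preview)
Your proposal is correct and follows essentially the same approach as the paper: invoke the global well-posedness of the Kuznetsov Cauchy problem from Ref.~\cite{Perso}, build $(\overline{\rho}_\eps,\overline{\mathbf v}_\eps)$ via the \emph{ansatz}~(\ref{rhoKuz})--(\ref{uKuz}) and~(\ref{rho1K})--(\ref{rho2K}), read off the regularity~(\ref{EqregRhoNS})--(\ref{EqregvNS}) using the $H^s$-algebra property for $s>3/2$, and then feed the resulting $H^3$-small initial data~(\ref{InCondrob})--(\ref{InCondub}) into Matsumura--Nishida~\cite{Matsumura} for the exact system. One minor slip: the paper records $u\in C^2([0,\infty[;H^2(\R^3))$ rather than $C^2$ into $H^3$ (the viscous term $\tfrac{\nu}{\rho_0}\Delta u_t$ costs two derivatives of $u_t\in H^4$), but this plays no role in the stated conclusions.
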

\begin{proof}
From one hand,  Theorem~1.2 in Ref.~\cite{Perso} applied for $n=3$ with $m=4$ ensures that for  $u_0\in H^5(\mathbb{R}^3)$ and $u_1\in H^4(\mathbb{R}^3)$  there exists a constant $k_2>0$ such that if 
 \begin{equation}\label{smallcondWPkuz}
 \Vert u_0\Vert_{H^5(\mathbb{R}^3)}+ \Vert  u_1\Vert_{H^4(\mathbb{R}^3)}<k_2,
 \end{equation}
 then the Cauchy problem for the Kuznetsov equation~(\ref{CauProbKuz}) has a unique global in time solution 
 \begin{equation}\label{RegKuzR3}
 u\in C([0,+\infty[,H^5(\mathbb{R}^3))\cap C^1([0,+\infty[,H^4(\mathbb{R}^3))\cap C^2([0,+\infty[,H^2(\mathbb{R}^3)).
 \end{equation}

 From the other hand,  the Cauchy problem for the Navier-Stokes system is also globally well-posed in $\R^3$ for sufficiency small initial data (see Ref.~\cite{Matsumura} Theorem~7.1, p.~100): 
there exists a constant $k_1>0$ such that if  the initial data 
 \begin{equation}\label{NSinitdata}
 \rho_{\varepsilon}(0)-\rho_0\in H^3(\mathbb{R}^3) ,\;\;\;\mathbf{v}_{\varepsilon}(0)\in H^3(\mathbb{R}^3)
 \end{equation}
 satisfy $$\Vert \rho_{\varepsilon}(0)-\rho_0\Vert_{H^3(\mathbb{R}^3)}+  \Vert \mathbf{v}_{\varepsilon}(0)\Vert_{H^3(\mathbb{R}^3)}<k_1,$$ then the  Cauchy problem~(\ref{NSi1})-(\ref{press}) with the  initial data~(\ref{NSinitdata}) has a unique solution $(\rho_{\varepsilon},$ $\mathbf{v}_{\varepsilon} )$ globally in time satisfying~(\ref{EqregRhoNS}) and~(\ref{EqregvNS}). 
 
 Thus, for the initial solutions of the Kuznetsov equation we need to impose $u_0\in H^5(\mathbb{R}^3)$ to have $\Delta u_0\in H^3(\mathbb{R}^3)$  to be able to ensure that $\rho_\eps-\rho_0|_{t=0}\in H^3(\mathbb{R}^3)$. The regularity $u_1\in H^4(\mathbb{R}^3)$ comes from the well-posedness of the Kuznetsov problem and obviously ensures $\mathbf{v}_\eps|_{t=0}\in H^3(\mathbb{R}^3)$, what is necessary\cite{Matsumura} 
 to have a global solution of the exact Navier-Stokes system~(\ref{NSmatr}).

As $\overline{\rho}_{\varepsilon}$ and $\overline{ \mathbf{v}}_{\varepsilon}$ are defined by \textit{ansatz} ~(\ref{rhoKuz})-(\ref{uKuz}) with  $\rho_1$ and $\rho_2$ given in~(\ref{rho1K}) and~(\ref{rho2K}) respectively,
the regularity of $u$ ensures for $\overline{\rho}_{\varepsilon}-\rho_0$ and $\overline{ \mathbf{v}}_{\varepsilon}$ at least the same regularity as given in~(\ref{EqregRhoNS}) and~(\ref{EqregvNS}). 
To find it we use 
the following Sobolev embedding for the multiplication (see for example Ref.~\cite{Bers} or~\cite{Kato1}):
\begin{align}
H^s(\mathbb{R}^n)\times H^s(\mathbb{R}^n)& \hookrightarrow H^s (\mathbb{R}^n)\;\;\text{for }s>\frac{n}{2},\label{Sobolalg}\\ 
(u,v) & \mapsto uv.\nonumber
\end{align}
Moreover, considering formulas~(\ref{rNS1})--(\ref{rNS2}) with $u$ as defined in~(\ref{RegKuzR3}),
 all terms in $R_1^{NS-Kuz}$ and $\mathbf{R}_2^{NS-Kuz}$ are in $H^2(\mathbb{R}^3)$. Therefore, as $2>\frac{3}{2}$, we  use embedding~(\ref{Sobolalg}) to find that
$$R_1^{NS-Kuz}\in C([0,+\infty[,H^2(\mathbb{R}^3)) \quad \hbox{and}\quad \mathbf{R}_2^{NS-Kuz}\in C([0,+\infty[,H^2(\mathbb{R}^3)).$$ Hence, the $L^2(\mathbb{R}^3)$ and $L^{\infty}(\mathbb{R}^3)$ norms of the remainder terms $R_1^{NS-Kuz}(t)$ and $\mathbf{R}_2^{NS-Kuz}(t)$
 are bounded for $t\in [0,+\infty[$.

Finally, it is important to notice that, as $\mathbf{U}_{\varepsilon}(0)=\overline{\mathbf{U}}_{\varepsilon}(0)$,
 \begin{align*}
\Vert \rho_{\varepsilon}(0)-\rho_0\Vert_{H^3(\mathbb{R}^3)}+  \Vert \mathbf{v}_{\varepsilon}(0)\Vert_{H^3(\mathbb{R}^3)}= &\Vert \overline{\rho}_{\varepsilon}(0)-\rho_0\Vert_{H^3(\mathbb{R}^3)}+  \Vert \overline{ \mathbf{v}}_{\varepsilon}(0)\Vert_{H^3(\mathbb{R}^3)}\\
\leq & C ( \Vert u_0 \Vert_{H^5(\mathbb{R}^3)}+  \Vert u_1 \Vert_{H^4(\mathbb{R}^3)} ).
\end{align*}
Thus, there exists $k>0$ (necessarily $k\leq k_2$) such that $\Vert u_0\Vert_{H^5}+\Vert u_1\Vert_{H^4}<k$ implies the global existences of $\mathbf{U}_{\varepsilon}$ and $\overline{\mathbf{U}}_{\varepsilon}$.
\end{proof}

The stability estimate which we obtain between the exact solution of the Navier-Stokes system $\mathbf{U}_{\varepsilon}$ and the solution of the Kuznetsov equation presented by  $\overline{\mathbf{U}}_{\varepsilon}$ does not require for $\mathbf{U}_{\varepsilon}$ to have the regularity of a classical solution and allows to approximate  less regular solutions of the Navier-Stokes system with initial data in a small $L^2$ neighborhood of $\overline{\mathbf{U}}_{\varepsilon}(0)$. To define the minimal regularity property of $\mathbf{U}_{\varepsilon}$ for which  stability estimate~(\ref{validaproxintro}) holds, we introduce admissible weak solutions of a bounded energy using the entropy of the Euler system (system~(\ref{NSmatr}) with $\nu=0$)
\begin{equation}\label{entropy}
\eta(\mathbf{U}_{\varepsilon})=\rho_{\varepsilon}h(\rho_{\varepsilon})+\rho_{\varepsilon} \frac{\mathbf{v}_{\varepsilon}^2}{2}=H(\rho_{\varepsilon})+\frac{1}{\rho_{\varepsilon}}\frac{\mathbf{m}^2}{2},
\end{equation}
which is convex~\cite{Dafermos}
with $h'(\rho_{\varepsilon})=\frac{p(\rho_{\varepsilon})}{\rho_{\varepsilon}^2}$, $\mathbf{v}_{\varepsilon}=\frac{\mathbf{m}}{\rho_{\varepsilon}}$. Thus,  the first and second derivatives of $\eta$ are~\cite{Roz3}
\begin{equation}\label{firstderent}
\eta'(\mathbf{U}_{\varepsilon})=\begin{bmatrix}
H'(\rho_{\varepsilon})-\frac{1}{\rho_{\varepsilon}^2}\frac{\mathbf{m}^2}{2}\\
\frac{\mathbf{m}}{\rho_{\varepsilon}}
\end{bmatrix}^t
=\begin{bmatrix}
H'(\rho_{\varepsilon})-\frac{\mathbf{v}_{\varepsilon}^2}{2}\\
\mathbf{v}_{\varepsilon}
\end{bmatrix}^t,
\end{equation}
\begin{equation}\label{secderent}
\eta''(\mathbf{U}_{\varepsilon})=\begin{bmatrix}
H''(\rho_{\varepsilon})+\frac{\mathbf{m}^2}{\rho_{\varepsilon}^3} & -\frac{\mathbf{m}}{\rho_{\varepsilon}^2}\\
-\frac{\mathbf{m}}{\rho_{\varepsilon}^2} & \frac{1}{\rho_{\varepsilon}}
\end{bmatrix}
=\begin{bmatrix}
H''(\rho_{\varepsilon})+\frac{\mathbf{v}_{\varepsilon}^2}{\rho_{\varepsilon}} & - \frac{\mathbf{v}_{\varepsilon}}{\rho_{\varepsilon}}\\
- \frac{\mathbf{v}_{\varepsilon}}{\rho_{\varepsilon}} & \frac{1}{\rho_{\varepsilon}}
\end{bmatrix},
\end{equation}
 knowing that $\eta''(\mathbf{U}_{\varepsilon})$ is strictly positive defined.

\begin{definition}\label{DefAdm}
 The  function $\mathbf{U}_{\eps}=(\rho_{\eps}, \rho_{\eps}\mathbf{v}_{\eps})$ is called an admissible weak
solution of a bounded energy of the Cauchy problem for the Navier-Stokes system~(\ref{NSi1})--(\ref{press}) if it satisfies the following
properties:
\begin{enumerate}
    \item The pair $(\rho_{\eps}, \mathbf{v}_{\eps})$ is a weak  solution of the Cauchy problem for the Navier-Stokes system~(\ref{NSi1})--(\ref{press}) (in the distributional sense).
\item The function $\mathbf{U}_{\eps}$ 
satisfies in the sense of distributions (see Ref.~\cite[p.52]{Dafermos})
\begin{equation}\label{NSentropyweak}
  \del_t \eta (\mathbf{U}_{\eps}) +\nabla.
\textbf{q}(\mathbf{U}_{\eps})-\eps\nu \mathbf{v}_{\eps}\triangle \mathbf{v}_{\eps}\le 0, 
\hbox{ where }
  \textbf{q}(\mathbf{U}_\eps)=\mathbf{v}_{\eps} (\eta(\mathbf{U}_{\eps}) + p(\rho_{\eps})),
 \end{equation}
or equivalently, for any positive
 test function $\psi$  in $\mathcal{D}(\R^n\times[0,\infty[)$
the function $\mathbf{U}_{\eps}$ 
satisfies
\begin{eqnarray*}&&\int_0^T
\int_{\R^n} \left(\del_t \psi \eta(\mathbf{U}_\eps) +\nabla \psi.
\textbf{q}(\mathbf{U}_\eps)+\eps\nu|\nabla.\mathbf{v}_\eps|^2
\psi +\eps \nu
\mathbf{v}_\eps.[\nabla.\mathbf{v}_\eps\nabla \psi]\right) dxdt\\
&&+\int_{\R^n} \psi(x,0) \eta(\mathbf{U}_\eps(0))dx\ge0.
\end{eqnarray*}
    \item The function $\mathbf{U}_{\eps}$  
  satisfies the equality (with the notation $\mathbf{v}_\eps=(v_1,\ldots,v_n)$)
\begin{eqnarray*}
&&-\int_{\R^n} \frac{\mathbf{U}^2_\epsilon(t)}{2}dx +\int_0^t
\int_{\R^n}\left(\sum_{i=1}^n  \mathbf{G}_i(\mathbf{U}_\varepsilon)\partial_{x_i} \mathbf{U}_\epsilon -\epsilon
\nu \nabla(\rho_\eps v_i).\nabla v_i\right) dxds\\
&&+\int_{\R^n}
 \frac{\mathbf{U}^2_\epsilon(0)}{2}dx=0.
\end{eqnarray*}
\end{enumerate}
\end{definition}

Let us notice that any classical solution of~(\ref{NSmatr}), for instance the solution defined in Theorem~\ref{ThExistUUbarNSK}, satisfies the entropy condition~(\ref{NSentropyweak}) by the equality and obviously it is sufficient regular to perform the integration by parts resulting in the relation of point~3.
For  existence results of global weak solutions of the Cauchy problem for the Navier-Stokes system~(\ref{NSmatr}) with sufficiently small initial data around the constant state $(\rho_0,0)$ (actually, $\rho_0-\rho(0)$ is small in $L^\infty$, $\mathbf{v}(0)$ is small in $L^2$ and bounded in $L^{2^n}$) and with the pressure $p(\rho)=K\rho^\gamma$ with $\gamma\ge 1$, we refer to results of D. Hoff\cite{Hoff1,Hoff2}.
For fixing the idea of the regularity of a global weak solution we summarize the results of Hoff in the following theorem:
\begin{theorem}\cite{Hoff1}\label{ThHoff1}
Let for $n=3$ $\beta=0$ and for $n=2$ $\beta $ be arbitrary small, $N$ be a given arbitrary large constant.
There exists a constant $C_0>0$ such that  if  the initial data of~(\ref{NSmatr}) with $p(\rho)=K\rho^\gamma$ ($\gamma\ge 1$) satisfies the following smallness condition
\begin{align*}
 &\|\rho_0-\rho(0)\|^2_{L^\infty(\R^n)}+\int_{\R^n}\left[ (\rho_0-\rho(0))^2+|\mathbf{v}(0)|^2\right](1+|x|^2)^\beta\dx\le C_0,\\
 & \|\mathbf{v}(0)\|_{L^{2^n}(\R^n)}\le N,
\end{align*}
 then there exists a global weak solution $(\rho,\mathbf{v})$ (in the distributional sense)
 such that
 \begin{enumerate}
  \item $\rho-\rho_0\in L^\infty(\R^n\times[0,\infty[)$,
  \item $\mathbf{v}\in H^1(\R^n)$ for all $t>0$,
  \item for all $t\ge \tau >0$ $\mathbf{v}(\cdot,t)\in L^\infty(\R^n)$, 
  \item for all $\tau>0$ $\mathbf{v}\in C^{\alpha,\frac{\alpha}{2\alpha+2}}(\R^n\times[\tau,\infty[)$ for all $\alpha\in ]0,1[$ when $n=2$ and $\mathbf{v}\in C^{\frac{1}{2},\frac{1}{8}}(\R^n\times[\tau,\infty[)$ when $n=3$,
  \item $\eps\nu\operatorname{div} \mathbf{v}+p(\rho)-p(\rho_0)\in H^1(\R^n)\cap C^\alpha(\R^n)$ for almost all $t>0$ with $\alpha=\frac{1}{2}$ for $n=2$ and $\alpha=\frac{1}{10}$ when $n=3$.
 \end{enumerate}
  In addition, $(\rho, \mathbf{v})\to (\rho_0,0)$ as $t\to+\infty$ in the sense that for all $q\in ]2,+\infty[$
 $$\lim_{T\to\infty}\left(\|\rho-\rho_0\|_{L^\infty(\R^n\times[T,\infty[)}+\|\mathbf{v}(\cdot,T)\|_{L^q(\R^n)} \right)=0.$$
\end{theorem}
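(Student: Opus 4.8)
This statement only compiles the global existence theory of D.~Hoff, so the plan is to refer to \cite{Hoff1,Hoff2} for the complete proof; we merely outline its structure. First we regularize: we replace the initial data by smooth, compactly supported data converging to $(\rho_{0}-\rho(0),\mathbf{v}(0))$ in the topology of the hypotheses and solve~(\ref{NSi1})--(\ref{NSi2}), with $\eps\nu>0$ fixed, for such data by a standard parabolic fixed-point argument; this produces smooth solutions on a maximal time interval. The rest of the proof consists in deriving a priori bounds that are uniform in the regularization, so that the maximal interval is $[0,\infty[$ and one may pass to the limit.

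The first a priori estimate is the basic energy identity: multiplying the momentum equation by $\mathbf{v}$ and using the mass equation gives control of $\sup_{t}\int\rho|\mathbf{v}|^{2}$, of a potential energy quadratic in $\rho-\rho_{0}$ (with the spatial weight of the hypothesis, which is needed only when $n=2$), and of $\eps\nu\int_{0}^{\infty}\!\!\int|\nabla\mathbf{v}|^{2}$. The decisive object is the \emph{effective viscous flux}
$$F=\eps\nu\,\Div\mathbf{v}-\bigl(p(\rho)-p(\rho_{0})\bigr),$$
which --- unlike $\Div\mathbf{v}$ or $p(\rho)-p(\rho_{0})$ separately --- satisfies the elliptic identity $\Delta F=\Div(\rho\dot{\mathbf{v}})$ with $\dot{\mathbf{v}}=\partial_{t}\mathbf{v}+(\mathbf{v}\cdot\nabla)\mathbf{v}$, together with $\eps\nu\,\Delta\omega=\operatorname{curl}(\rho\dot{\mathbf{v}})$ for the vorticity $\omega$; these give $L^{2}$ and, by Sobolev embedding for $t>0$, $L^{\infty}$ and H\"older control of $F$ and $\omega$ in terms of $\|\rho\dot{\mathbf{v}}\|_{L^{2}}$. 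One then runs a hierarchy of \emph{time-weighted} estimates: testing the flow-differentiated momentum equation by $\dot{\mathbf{v}}$ and inserting powers $t^{\sigma}$ of the time to compensate the lack of smoothness of the data, one bounds $\sup_{t}t^{\sigma}\int\rho|\dot{\mathbf{v}}|^{2}$ and $\int_{0}^{\infty}\!\!\int s^{\sigma}|\nabla\dot{\mathbf{v}}|^{2}$. It is precisely here that the norm $\|\mathbf{v}(0)\|_{L^{2^{n}}}$ of the hypothesis enters, which is why it only has to be bounded (by $N$) and not small.

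Using the $L^{\infty}$ bound on $F$, one obtains the pointwise density bound: along particle trajectories the mass equation reads $\tfrac{d}{dt}\bigl(p(\rho)-p(\rho_{0})\bigr)=-\tfrac{p'(\rho)\rho}{\eps\nu}\bigl(F+(p(\rho)-p(\rho_{0}))\bigr)$, and a Gronwall/comparison argument, together with the smallness furnished by the energy estimate, keeps $\rho-\rho_{0}$ small in $L^{\infty}$ uniformly in the regularization, so that $\rho$ stays away from $0$ and from $\infty$. With all these uniform bounds, one passes to the limit in the approximation scheme; the only term not handled immediately by weak convergence is the pressure $p(\rho)$, and here the effective viscous flux is again decisive --- for small data the a priori estimates force a.e.\ convergence of $\rho$ (vanishing of the oscillation defect measure), hence $p(\rho)\to p$ of the limit density. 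Assertions~1--5 and the stated H\"older exponents then follow by feeding the elliptic bounds on $F$ and $\omega$ back into the momentum equation, read as a parabolic system for $\mathbf{v}$, while the large-time decay is obtained by rerunning the energy and time-weighted estimates on $[T,\infty[$ and interpolating between the $L^{2}$ energy bound and the $L^{\infty}$ bounds.

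We expect the main obstacle to be the uniform pointwise control of the density and the attendant compactness of the pressure nonlinearity: this is exactly where the elementary energy method alone fails and where the effective viscous flux must be used. All of this is carried out in detail in \cite{Hoff1,Hoff2}, to which we refer.
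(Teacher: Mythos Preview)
Your proposal is correct and aligns with the paper's treatment: the paper does not prove this theorem at all but merely quotes it from \cite{Hoff1,Hoff2} as background (``we summarize the results of Hoff in the following theorem''), and you do exactly that, supplementing the citation with a faithful sketch of Hoff's effective-viscous-flux argument. There is nothing to compare or correct.
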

Therefore, from Theorem~\ref{ThHoff1} it follows that a weak solution of the isentropic compressible Navier-Stokes system~(\ref{NSi1})--(\ref{press}) is also admissible weak solution of a bounded energy in the sense of Definition~\ref{DefAdm}. 
But in the following we only consider the question of the validity of
the stability estimate~(\ref{validaproxintro}) for initial data closed to $\overline{\mathbf{U}}_{\varepsilon}(0)$ in $L^2$ norm (thus for initial data not necessarily satisfying Theorem~\ref{ThHoff1}) and we don't consider  the existence question of an admissible weak
solution of a bounded energy of the Cauchy problem for the Navier-Stokes system. 
Thanks to Theorem~\ref{ThExistUUbarNSK} for classical solutions of two models and to Definition~\ref{DefAdm} containing the minimal conditions on $\mathbf{U}_{\varepsilon}$ necessary for saying that it is in a small $L^2$-neighborhood of the regular solution of the Kuznetsov equation, we validate the approximation of $\mathbf{U}_{\varepsilon}$ by $\overline{\mathbf{U}}_{\varepsilon}$ following the ideas of Ref.~\cite{Roz3}. 
\begin{theorem}\label{ThapproxNSKuz}
Let 
$\nu>0$ and $\eps>0$ be fixed and all assumptions of Theorem~\ref{ThExistUUbarNSK} hold. 
Then  there exist constant $C>0$ and $K>0$, independent on $\eps$ and the time $t$, such that  
 \begin{enumerate}
  \item 
  for all $t\leq\frac{C}{\varepsilon}$
$$\Vert (\mathbf{U}_{\varepsilon}-\overline{\mathbf{U}}_{\varepsilon})(t)\Vert_{L^2(\mathbb{R}^3)}^2\leq K\varepsilon^3 t  e^{K\varepsilon t}\leq 4 \varepsilon^2;$$
\item 
for all $b\in ]0,1[$ during  all time $t\leq\frac{C}{\varepsilon} \ln(\frac{1}{\varepsilon})$ it holds
$$\Vert (\mathbf{U}_{\varepsilon}-\overline{\mathbf{U}}_{\varepsilon})(t)\Vert_{L^2(\mathbb{R}^3)}\leq 2\varepsilon^{b}.$$
 \end{enumerate}

 Moreover, if the initial conditions for the Kuznetsov equation are such that
 $$u_0\in H^{s+2}(\R^n), \quad u_1\in H^{s+1}(\R^n) \hbox{ for } s>\frac{n}{2},\; n\ge 2$$
 and sufficiently small (in the sense of Ref.~\cite{Perso}~Theorem~1.2),
 then there exists the unique global in time solution of the Cauchy problem for the Kuznetsov equation 
 \begin{align}
\overline{\rho}_\varepsilon-\rho_0 &\in C([0,+\infty[;H^s(\mathbb{R}^n))\cap C^1([0,+\infty[;H^{s-1}(\mathbb{R}^n)),\label{EqRegMinRhoNS}\\
\overline{\textbf{v}}_{\varepsilon} &\in C([0,+\infty[;H^{s+1}(\mathbb{R}^n))\cap C^1([0,+\infty[;H^{s}(\mathbb{R}^n))\label{EqRegMinUNS}
\end{align}
 and the remainder terms $(R_1^{NS-Kuz},\mathbf{R
}_2^{NS-Kuz})$, defined in Eqs.~(\ref{rNS1})--(\ref{rNS2}), belong to $C([0,+\infty[,H^{s-1}(\R^n))$.

If in addition there exists an admissible weak
solution of a bounded energy of the Cauchy problem for the Navier-Stokes system~(\ref{NSmatr}) (for instance if $\mathbf{U}_{\varepsilon}(0)$ satisfies conditions of Theorem~\ref{ThHoff1} there is a global such weak solution) on a time interval $[0,T_{NS}[$ for the initial data
\begin{equation*}
   \|\mathbf{U}_{\varepsilon}(0)-\overline{\mathbf{U}}_{\varepsilon}(0)\|_{L^2(\R^n)}\le \delta\le \eps,                                                                                                                                                                                                                                     \end{equation*}
then it holds for all $t< \min\{\frac{C}{\varepsilon}, T_{NS}\}$ the stability estimate~(\ref{validaproxintro}):
$$\Vert (\mathbf{U}_{\varepsilon}-\overline{\mathbf{U}}_{\varepsilon})(t)\Vert_{L^2(\mathbb{R}^n)}^2\leq K(\varepsilon^3 t +\delta^2) e^{K\varepsilon t}\leq 9 \varepsilon^2.$$
\end{theorem}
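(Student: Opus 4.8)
\emph{Approach and regularity.} The plan is to run a relative-entropy (modulated-energy) argument in the spirit of Ref.~\cite{Roz3}, using the strictly convex Euler entropy $\eta$ of~(\ref{entropy})--(\ref{secderent}) to measure the distance between the exact admissible weak solution $\mathbf{U}_\eps$ (in the sense of Definition~\ref{DefAdm}) and the approximate solution $\overline{\mathbf{U}}_\eps$ built from a regular solution $u$ of the Kuznetsov equation~(\ref{CauProbKuz}). First, the regularity claims~(\ref{EqRegMinRhoNS})--(\ref{EqRegMinUNS}) and the membership $R_1^{NS-Kuz},\mathbf{R}_2^{NS-Kuz}\in C([0,+\infty[,H^{s-1}(\R^n))$ follow exactly as in the proof of Theorem~\ref{ThExistUUbarNSK}: for small $u_0\in H^{s+2}$, $u_1\in H^{s+1}$ with $s>\frac n2$, Ref.~\cite{Perso} Theorem~1.2 gives a unique global $u\in C([0,\infty[,H^{s+2})\cap C^1([0,\infty[,H^{s+1})\cap C^2([0,\infty[,H^{s-1})$, and plugging this into the \emph{ansatz}~(\ref{rhoKuz})--(\ref{uKuz}) with the correctors~(\ref{rho1K})--(\ref{rho2K}), together with the Sobolev multiplication embedding~(\ref{Sobolalg}) (licit since $s>\frac n2$), propagates this regularity to $\overline\rho_\eps-\rho_0$, $\overline{\mathbf v}_\eps$ and to the remainders~(\ref{rNS1})--(\ref{rNS2}); in particular $\|\mathbf{R}^{NS-Kuz}(t)\|_{L^2(\R^n)}$ stays bounded. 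Parts~1 and~2 are the special case $n=3$, $s=3$ of Theorem~\ref{ThExistUUbarNSK}, where $\mathbf U_\eps$ is the classical (hence admissible) solution and $\delta=0$.

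\emph{Modulated energy and differential inequality.} Set
$$\mathcal E(t)=\int_{\R^n}\Big[\eta(\mathbf U_\eps)-\eta(\overline{\mathbf U}_\eps)-\eta'(\overline{\mathbf U}_\eps)\cdot(\mathbf U_\eps-\overline{\mathbf U}_\eps)\Big]\dx .$$
Since $\eta''$ is positive definite and both states remain in a fixed neighbourhood of $(\rho_0,0)$ — for $\overline{\mathbf U}_\eps$ by the smallness of $u$, for $\mathbf U_\eps$ by the bounded-energy hypothesis — there are $c_1,c_2>0$ with $c_1\|\mathbf U_\eps-\overline{\mathbf U}_\eps\|_{L^2}^2\le\mathcal E(t)\le c_2\|\mathbf U_\eps-\overline{\mathbf U}_\eps\|_{L^2}^2$ and $\mathcal E(0)\le c_2\delta^2$. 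Differentiating $\mathcal E$ in $t$ and combining the entropy inequality~(\ref{NSentropyweak}) for $\mathbf U_\eps$, the weak form of equation~(\ref{NSmatrA}) for $\overline{\mathbf U}_\eps$ with source $\eps^3\mathbf R^{NS-Kuz}$, and the energy identity of point~3 of Definition~\ref{DefAdm}, the top-order convective fluxes cancel by the usual relative-entropy algebra (the $\mathbf G_i$ are quadratic with entropy $\eta$, so the relative flux integrates to zero and the remaining convective contribution is $\le C\|\nabla\overline{\mathbf U}_\eps\|_{L^\infty}\mathcal E$, cf.~Ref.~\cite{Dafermos}). Viscosity contributes a nonnegative dissipation $\eps\nu\|\nabla(\mathbf v_\eps-\overline{\mathbf v}_\eps)\|_{L^2}^2$ and cross terms $\eps\nu\int\nabla\overline{\mathbf v}_\eps:\nabla(\mathbf v_\eps-\overline{\mathbf v}_\eps)$, the latter absorbed by Young's inequality into the dissipation at the cost $C\eps\nu\|\nabla\overline{\mathbf v}_\eps\|_{L^2}^2=O(\eps^3)$. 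As $\overline{\mathbf v}_\eps=-\eps\nabla u$ and $\overline\rho_\eps-\rho_0=O(\eps)$ with $u$ smooth and uniformly bounded, $\|\nabla_{t,\mathbf x}\overline{\mathbf U}_\eps\|_{L^\infty}=O(\eps)$, and the source is bounded by $\eps^3\|\mathbf R^{NS-Kuz}\|_{L^2}\|\mathbf U_\eps-\overline{\mathbf U}_\eps\|_{L^2}\le C\eps^3+\eps\mathcal E(t)$. Altogether
$$\frac{d}{dt}\mathcal E(t)\le K\eps\,\mathcal E(t)+K\eps^3 .$$

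\emph{Grönwall and the three estimates.} Grönwall's lemma gives $\mathcal E(t)\le\big(\mathcal E(0)+K\eps^2 t\big)e^{K\eps t}\le K(\delta^2+\eps^3 t)e^{K\eps t}$, hence, after renaming $K$, $\|\mathbf U_\eps-\overline{\mathbf U}_\eps\|_{L^2}^2\le K(\eps^3 t+\delta^2)e^{K\eps t}$, i.e.~(\ref{validaproxintro}). Choosing $C>0$ small enough that $e^{KC}\le 3$ and $K(C+1)e^{KC}\le 9$ (recall $\delta\le\eps$) yields the bound $\le 9\eps^2$ for all $t\le C/\eps$; with $\delta=0$ and $C$ chosen so that $KCe^{KC}\le 4$ this is part~1. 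For part~2 take $\delta=0$ and $t\le\frac C\eps\ln\frac1\eps$, so $e^{K\eps t}\le\eps^{-KC}$ and $\mathcal E(t)\le KC\,\eps^{2-KC}\ln\frac1\eps$; since $2-KC\to 2$ as $C\to0$ and $\ln\frac1\eps$ is dominated by every positive power of $1/\eps$, for each $b\in]0,1[$ one fixes $C$ so small that $KC\,\eps^{2-KC}\ln\frac1\eps\le 4\eps^{2b}$ for all small $\eps$, giving $\|\mathbf U_\eps-\overline{\mathbf U}_\eps\|_{L^2}\le 2\eps^b$.

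\emph{Main obstacle.} The delicate point is the modulated-energy identity when $\mathbf U_\eps$ is only an admissible weak solution: since $\mathbf v_\eps$ need not be regular one cannot integrate by parts on it, so the identity must be assembled entirely from the distributional formulations in Definition~\ref{DefAdm}, and one must verify that the viscous cross terms are genuinely absorbable by the available dissipation rather than producing an uncontrolled $O(\eps)\|\nabla(\mathbf v_\eps-\overline{\mathbf v}_\eps)\|_{L^2}^2$ of the wrong sign — this is exactly where we follow, and patch the minor gaps of, the argument of Ref.~\cite{Roz3}.
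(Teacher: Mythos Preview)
Your proposal is correct and follows essentially the same relative-entropy argument as the paper, arriving at the same differential inequality and concluding via Gr\"onwall. The only differences are in bookkeeping: the paper shows the viscous contributions combine \emph{exactly} into $-\eps\nu\|\nabla(\mathbf v_\eps-\overline{\mathbf v}_\eps)\|_{L^2}^2$ plus a density cross term $\eps\nu\int\Delta\overline{\mathbf v}_\eps\,\tfrac{\rho_\eps-\overline\rho_\eps}{\overline\rho_\eps}(\mathbf v_\eps-\overline{\mathbf v}_\eps)\le K\eps\|\mathbf U_\eps-\overline{\mathbf U}_\eps\|_{L^2}^2$ (no leftover gradient cross term to absorb by Young), and for Point~2 it passes through the explicit ODE comparison $(v^2)'=K(\eps^3+\eps^3 v+\eps v^2)$, $v(0)=0$, obtaining the slightly sharper bound $\|\mathbf U_\eps-\overline{\mathbf U}_\eps\|_{L^2}\le c_0\eps\, e^{K\eps t}$ before reading off the $\ln(1/\eps)$ time scale.
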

\begin{proof}
In terms of entropy system~(\ref{NSmatrA}), having by the assumption the unique classical solution $\overline{\mathbf{U}}_{\varepsilon}$, can be rewritten as follows

\begin{equation}\label{approxNs+entr}
\partial_t \eta(\overline{\mathbf{U}}_{\varepsilon})+\nabla.\mathbf{q}(\overline{\mathbf{U}}_{\varepsilon})-\varepsilon \nu \overline{ \mathbf{v}}_{\varepsilon}.\Delta \overline{ \mathbf{v}}_{\varepsilon}=\varepsilon^3\left(\frac{\eta(\overline{\mathbf{U}}_{\varepsilon})+p(\overline{\rho}_{\varepsilon})}{\overline{\rho}_{\varepsilon}}R_1^{NS-Kuz}+\overline{\mathbf{v}}_{\varepsilon}.\mathbf{R}_2^{NS-Kuz} \right),
\end{equation}
with $\mathbf{R}^{NS-Kuz}=(R_1^{NS-Kuz},\mathbf{R}_2^{NS-Kuz})$ defined in Eq.~(\ref{rNS1})-(\ref{rNS2}). To abbreviate the notations, we denote the remainder term of the entropy equation in system~(\ref{approxNs+entr}) by 
$$\overline{R}^{NS-Kuz}=\left(\frac{\eta(\overline{\mathbf{U}}_{\varepsilon})+p(\overline{\rho}_{\varepsilon})}{\overline{\rho}_{\varepsilon}}R_1^{NS-Kuz}+\overline{\mathbf{v}}_{\varepsilon}.\mathbf{R}_2^{NS-Kuz} \right).$$
In the same time, it is assumed that for $\mathbf{U}_{\varepsilon}$ it holds~(\ref{NSentropyweak}) in the sense of distributions.

Let us estimate in the sense of distributions
\begin{equation}\label{Eqexpression}
 \frac{\partial}{\partial t}\Big(\eta(\mathbf{U}_{\varepsilon})-\eta(\overline{\mathbf{U}}_{\varepsilon})-\eta'(\overline{\mathbf{U}}_{\varepsilon})(\mathbf{U}_{\varepsilon}-\overline{\mathbf{U}}_{\varepsilon})\Big).
\end{equation}
First we find from systems~(\ref{NSentropyweak}) and~(\ref{approxNs+entr}) that in the sense of distributions
\begin{align*}
\frac{\partial}{\partial t}(\eta(\mathbf{U}_{\varepsilon})-\eta(\overline{\mathbf{U}}_{\varepsilon}))\le
 &-\nabla.(\mathbf{q}(\mathbf{U}_{\varepsilon})-\mathbf{q}(\overline{\mathbf{U}}_{\varepsilon}))+\varepsilon\nu(\mathbf{v}_{\varepsilon}.\Delta \mathbf{v}_{\varepsilon}- \overline{ \mathbf{v}}_{\varepsilon}.\Delta \overline{ \mathbf{v}}_{\varepsilon})-\varepsilon^3 \overline{R}^{NS-Kuz}\\
 =&-\nabla.(\mathbf{q}(\mathbf{U}_{\varepsilon})-\mathbf{q}(\overline{\mathbf{U}}_{\varepsilon}))+\varepsilon\nu \sum_{i=1}^n \partial_{x_i}(\mathbf{v}_{\varepsilon}\partial_{x_i}\mathbf{v}_{\varepsilon}- \overline{ \mathbf{v}}_{\varepsilon}\partial_{x_i} \overline{ \mathbf{v}}_{\varepsilon})\\
 &- \varepsilon\nu \sum_{i=1}^n (\partial_{x_i}\mathbf{v}_{\varepsilon}\partial_{x_i}\mathbf{v}_{\varepsilon}-\partial_{x_i} \overline{ \mathbf{v}}_{\varepsilon}\partial_{x_i} \overline{ \mathbf{v}}_{\varepsilon})-\varepsilon^3 \overline{R}^{NS-Kuz}.
\end{align*}
Then we notice that
$$-\frac{\partial}{\partial t}(\eta'(\overline{\mathbf{U}}_{\varepsilon})(\mathbf{U}_{\varepsilon}-\overline{\mathbf{U}}_{\varepsilon}))=- \partial_t \overline{\mathbf{U}}_{\varepsilon}^t \eta''(\overline{\mathbf{U}}_{\varepsilon})(\mathbf{U}_{\varepsilon}-\overline{\mathbf{U}}_{\varepsilon})-\eta'(\overline{\mathbf{U}}_{\varepsilon}) (\partial_t \mathbf{U}_{\varepsilon}-\partial_t \overline{\mathbf{U}}_{\varepsilon}),$$
where in the sense of distributions
\begin{align*}
- \partial_t \overline{\mathbf{U}}_{\varepsilon}^t \eta''(\overline{\mathbf{U}}_{\varepsilon})(\mathbf{U}_{\varepsilon}-\overline{\mathbf{U}}_{\varepsilon})=& -\left[-\sum_{i=1}^n  D\mathbf{G}_i(\overline{\mathbf{U}}_\varepsilon)\partial_{x_i} \overline{\mathbf{U}}_{\varepsilon}\right]^t\eta''(\overline{\mathbf{U}}_{\varepsilon})(\mathbf{U}_{\varepsilon}-\overline{\mathbf{U}}_{\varepsilon})\\
&-\left(\begin{bmatrix}
0 \\
\varepsilon\nu\Delta \overline{ \mathbf{v}}_{\varepsilon}
\end{bmatrix} +\varepsilon^3 \mathbf{R}^{NS-Kuz}\right)^t\eta''(\overline{\mathbf{U}}_{\varepsilon})(\mathbf{U}_{\varepsilon}-\overline{\mathbf{U}}_{\varepsilon}),
\end{align*}
and 
\begin{align*}
-\eta'(\overline{\mathbf{U}}_{\varepsilon}) (\partial_t \mathbf{U}_{\varepsilon}-\partial_t \overline{\mathbf{U}}_{\varepsilon})= &- \eta'(\overline{\mathbf{U}}_{\varepsilon})(-\sum_{i=1}^n \partial_{x_i} (\mathbf{G}_i(\mathbf{U}_\varepsilon)-\mathbf{G}_i(\overline{\mathbf{U}}_{\varepsilon})))\\
&- \eta'(\overline{\mathbf{U}}_{\varepsilon}) \varepsilon \nu \begin{bmatrix}
0\\
\Delta \mathbf{v}_{\varepsilon}-\Delta \overline{ \mathbf{v}}_{\varepsilon}
\end{bmatrix}
+\varepsilon^3 \eta'(\overline{\mathbf{U}}_{\varepsilon}) \mathbf{R}^{NS-Kuz}\\
=& \sum_{i=1}^n \partial_{x_i}(\eta'(\overline{\mathbf{U}}_{\varepsilon}) (\mathbf{G}_i(\mathbf{U}_\varepsilon)-\mathbf{G}_i(\overline{\mathbf{U}}_{\varepsilon}))\\
&-\sum_{i=1}^n \partial_{x_i}\overline{U}^t \eta''(\overline{\mathbf{U}}_{\varepsilon})(\mathbf{G}_i(\mathbf{U}_{\varepsilon})-\mathbf{G}_i(\overline{\mathbf{U}}_{\varepsilon}))\\
&- \eta'(\overline{\mathbf{U}}_{\varepsilon}) \varepsilon \nu \begin{bmatrix}
0\\
\Delta \mathbf{v}_{\varepsilon}-\Delta \overline{ \mathbf{v}}_{\varepsilon}
\end{bmatrix}
+\varepsilon^3 \eta'(\overline{\mathbf{U}}_{\varepsilon}) \mathbf{R}^{NS-Kuz}.
\end{align*}
Thanks to the convex property of the entropy we have 
$$\eta''(\mathbf{U})D\mathbf{G}_i(\mathbf{U})=(D\mathbf{G}_i(\mathbf{U}))^t\eta''(\mathbf{U}),$$ and consequently
\begin{align*}
(D\mathbf{G}_i(\overline{\mathbf{U}}_\varepsilon)\partial_{x_i} \overline{\mathbf{U}}_{\varepsilon})^t\eta''(\overline{\mathbf{U}}_{\varepsilon})(\mathbf{U}_{\varepsilon}-\overline{\mathbf{U}}_{\varepsilon})=& \partial_{x_i} \overline{\mathbf{U}}_{\varepsilon}^t(D\mathbf{G}_i(\overline{\mathbf{U}}_\varepsilon))^t\eta''(\overline{\mathbf{U}}_{\varepsilon})(\mathbf{U}_{\varepsilon}-\overline{\mathbf{U}}_{\varepsilon})\\
=&
\partial_{x_i} \overline{\mathbf{U}}_{\varepsilon}^t \eta''(\overline{\mathbf{U}}_{\varepsilon})D\mathbf{G}_i(\overline{\mathbf{U}}_\varepsilon)  (\mathbf{U}_{\varepsilon}-\overline{\mathbf{U}}_{\varepsilon}).
\end{align*}
Moreover, we compute in the sense of distributions
\begin{align*}
-&\begin{bmatrix}
0 \\
\varepsilon\nu\Delta \overline{ \mathbf{v}}_{\varepsilon}
\end{bmatrix} ^t\eta''(\overline{\mathbf{U}}_{\varepsilon})(\mathbf{U}_{\varepsilon}-\overline{\mathbf{U}}_{\varepsilon})=  -\varepsilon \nu \Delta \overline{ \mathbf{v}}_{\varepsilon}(\mathbf{v}_{\varepsilon}- \overline{ \mathbf{v}}_{\varepsilon})-\varepsilon \nu \Delta \overline{ \mathbf{v}}_{\varepsilon} \frac{\rho_{\varepsilon}-\overline{\rho}_{\varepsilon}}{\overline{\rho}_{\varepsilon}}(\mathbf{v}_{\varepsilon}- \overline{ \mathbf{v}}_{\varepsilon})\\
= & -\varepsilon \nu \sum_{i=1}^n \partial_{x_i}(\partial_{x_i} \overline{ \mathbf{v}}_{\varepsilon}(\mathbf{v}_{\varepsilon}- \overline{ \mathbf{v}}_{\varepsilon} ))
+\varepsilon \nu \sum_{i=1}^n \partial_{x_i} \overline{ \mathbf{v}}_{\varepsilon}\partial_{x_i}(\mathbf{v}_{\varepsilon}- \overline{ \mathbf{v}}_{\varepsilon} )
-\varepsilon \nu \Delta \overline{ \mathbf{v}}_{\varepsilon} \frac{\rho_{\varepsilon}-\overline{\rho}_{\varepsilon}}{\overline{\rho}_{\varepsilon}}(\mathbf{v}_{\varepsilon}- \overline{ \mathbf{v}}_{\varepsilon}),
\end{align*}
and
\begin{align*}
- \eta'(\overline{\mathbf{U}}_{\varepsilon}) \varepsilon \nu \begin{bmatrix}
0\\
\Delta \mathbf{v}_{\varepsilon}-\Delta \overline{ \mathbf{v}}_{\varepsilon}
\end{bmatrix}
= & -\varepsilon \nu \overline{ \mathbf{v}}_{\varepsilon}.(\Delta \mathbf{v}_{\varepsilon}-\Delta \overline{ \mathbf{v}}_{\varepsilon})\\
=& - \varepsilon \nu \sum_{i=1}^n \partial_{x_i}(   \overline{ \mathbf{v}}_{\varepsilon} \partial_{x_i}(\mathbf{v}_{\varepsilon}- \overline{ \mathbf{v}}_{\varepsilon} ))+ \varepsilon \nu \sum_{i=1}^n \partial_{x_i} \overline{ \mathbf{v}}_{\varepsilon}\partial_{x_i}(\mathbf{v}_{\varepsilon}- \overline{ \mathbf{v}}_{\varepsilon} ).
\end{align*}
We integrate over $\mathbb{R}^n$ expression~(\ref{Eqexpression}) and notice that
the integrals of the terms in divergence form in the development of~(\ref{Eqexpression}) are equal to zero. For the regular case in the framework of Theorem~\ref{ThExistUUbarNSK} it is due to the regularity given by~(\ref{EqregRhoNS}) and~(\ref{EqregvNS}) and the following Sobolev embedding\cite{Adams}
\begin{equation}
H^s(\mathbb{R}^n)\hookrightarrow C_0(\mathbb{R}^n):=\lbrace f\in C(\mathbb{R}^n)\vert\;\; \vert f(x)\vert\rightarrow 0\hbox{ as } \Vert x\Vert\rightarrow +\infty\rbrace\hbox{ for }s>\frac{n}{2},
\end{equation}
which allows us to use the fact that 
 $$\forall f\in C_0(\mathbb{R}^n),\hbox{ }\int_{\mathbb{R}^n} \nabla.f(x) \;dx=0.$$
 In the case of a weak admissible solution $\mathbf{U}_{\varepsilon}$ it follows from its bounded energy property (see Definition~\ref{DefAdm} point~3) which implies that $\rho_\eps-\rho_0$ and $\mathbf{v}_\eps$ tend to $0$ for $|x|\to+\infty$ and also implies the existence of the integrals over $\R^n$. 
Therefore, 
we obtain the following estimate in which each term is well-defined in the sense of distributions on $[0,+\infty[\cap [0,T_{NS}]$
\begin{align}
&\frac{d}{dt}\int_{\mathbb{R}^3} \eta(\mathbf{U}_{\varepsilon})-\eta(\overline{\mathbf{U}}_{\varepsilon})-\eta'(\overline{\mathbf{U}}_{\varepsilon})(\mathbf{U}_{\varepsilon}-\overline{\mathbf{U}}_{\varepsilon})\dx\leq\nonumber\\
&-\sum_{i=1}^3\int_{\mathbb{R}^3} \partial_{x_i}\overline{U}^t \eta''(\overline{\mathbf{U}}_{\varepsilon})(\mathbf{G}_i(\mathbf{U}_{\varepsilon})-\mathbf{G}_i(\overline{\mathbf{U}}_{\varepsilon})-D\mathbf{G}_i(\overline{\mathbf{U}}_{\varepsilon})(\mathbf{U}_{\varepsilon}-\overline{\mathbf{U}}_{\varepsilon}))\dx\nonumber\\
&-\varepsilon \nu \int_{\mathbb{R}^3} \sum_{i=1}^3 (\partial_{x_i}\mathbf{v}_{\varepsilon}\partial_{x_i}\mathbf{v}_{\varepsilon}-\partial_{x_i} \overline{ \mathbf{v}}_{\varepsilon}\partial_{x_i} \overline{ \mathbf{v}}_{\varepsilon})\dx\label{dvliment}\\
&+2\varepsilon \nu \int_{\mathbb{R}^3}\sum_{i=1}^3 \partial_{x_i} \overline{ \mathbf{v}}_{\varepsilon}\partial_{x_i}(\mathbf{v}_{\varepsilon}- \overline{ \mathbf{v}}_{\varepsilon} )\dx
+\varepsilon \nu \int_{\mathbb{R}^3} \Delta \overline{ \mathbf{v}}_{\varepsilon} \frac{\rho_{\varepsilon}-\overline{\rho}_{\varepsilon}}{\overline{\rho}_{\varepsilon}}(\mathbf{v}_{\varepsilon}- \overline{ \mathbf{v}}_{\varepsilon})\dx\nonumber\\
&-\varepsilon^3 \int_{\mathbb{R}^3} (\overline{R}^{NS-Kuz}-\eta'(\overline{\mathbf{U}}_{\varepsilon})\mathbf{R}^{NS-Kuz})\dx
-\varepsilon^3 \int_{\mathbb{R}^3}[\mathbf{R}^{NS-Kuz}]^t \eta''(\overline{\mathbf{U}}_{\varepsilon})(\mathbf{U}_{\varepsilon}-\overline{\mathbf{U}}_{\varepsilon})\dx.\nonumber
\end{align}
Now we study lower bounds of the left hand side and upper bounds of the right hand side of~(\ref{dvliment}) in order to obtain a suitable estimate.
For the right hand side of Eq.~(\ref{dvliment}) we notice that
\begin{align*}
-\varepsilon \nu \int_{\mathbb{R}^3} \sum_{i=1}^3 (\partial_{x_i}\mathbf{v}_{\varepsilon}\partial_{x_i}\mathbf{v}_{\varepsilon}-\partial_{x_i} \overline{ \mathbf{v}}_{\varepsilon}\partial_{x_i} \overline{ \mathbf{v}}_{\varepsilon})\dx
&+2\varepsilon \nu \int_{\mathbb{R}^3}\sum_{i=1}^3 \partial_{x_i} \overline{ \mathbf{v}}_{\varepsilon}\partial_{x_i}(\mathbf{v}_{\varepsilon}- \overline{ \mathbf{v}}_{\varepsilon} )\dx\\
=& -\varepsilon \nu \int_{\mathbb{R}^3}\sum_{i=1}^3(\partial_{x_i}(\mathbf{v}_{\varepsilon}- \overline{ \mathbf{v}}_{\varepsilon}))^2\dx\leq 0,
\end{align*}
hence this term can be passed in the left hand side of Eq.(\ref{dvliment}) and omitted in the estimation.
As the entropy is convex it holds
$$\exists \delta_0>0:\;\;\;\;\eta(\mathbf{U}_{\varepsilon})-\eta(\overline{\mathbf{U}}_{\varepsilon})-\eta'(\overline{\mathbf{U}}_{\varepsilon})(\mathbf{U}_{\varepsilon}-\overline{\mathbf{U}}_{\varepsilon})\geq \delta_0 \vert \mathbf{U}_{\varepsilon}-\overline{\mathbf{U}}_{\varepsilon} \vert^2.$$
Then using also its continuity, we find 
\begin{align*}
 \delta_0 \int_{\mathbb{R}^3} \vert \mathbf{U}_{\varepsilon}-\overline{\mathbf{U}}_{\varepsilon} \vert^2(t)\dx \leq \int_0^t \frac{d}{ds}\left(\int_{\mathbb{R}^3} \eta(\mathbf{U}_{\varepsilon})-\eta(\overline{\mathbf{U}}_{\varepsilon})-\eta'(\overline{\mathbf{U}}_{\varepsilon})(\mathbf{U}_{\varepsilon}-\overline{\mathbf{U}}_{\varepsilon})\dx\right)ds\\
 + C_0\int_{\mathbb{R}^3} \vert \mathbf{U}_{\varepsilon}-\overline{\mathbf{U}}_{\varepsilon} \vert^2(0)\dx.
\end{align*}
On the right hand side of (\ref{dvliment}), by the Taylor expansion we  also have
$$\mathbf{G}_i(\mathbf{U}_{\varepsilon})-\mathbf{G}_i(\overline{\mathbf{U}}_{\varepsilon})-D\mathbf{G}_i(\overline{\mathbf{U}}_{\varepsilon})(\mathbf{U}_{\varepsilon}-\overline{\mathbf{U}}_{\varepsilon})\leq C \vert \mathbf{U}_{\varepsilon}-\overline{\mathbf{U}}_{\varepsilon} \vert^2.$$
With the boundness on $[0;+\infty[$ of $R_1(t)$ and $\mathbf{R}_2(t)$ in the $L^2$ and $L^{\infty}$ norms, and thanks to the regularity of $\overline{\mathbf{U}}_{\varepsilon}$ defined in~(\ref{EqRegMinRhoNS}) and~(\ref{EqRegMinUNS}) (see also~(\ref{EqregRhoNS}) and~(\ref{EqregvNS}) for the case $\mathbf{U}_{\varepsilon}(0)=\overline{\mathbf{U}}_{\varepsilon}(0)$)  and the energy boundedness of $\mathbf{U}_{\varepsilon}$, we estimate the other terms in Eq.~(\ref{dvliment}) in the following way
\begin{align*}
\varepsilon \nu \int_{\mathbb{R}^3}& \Delta \overline{ \mathbf{v}}_{\varepsilon} \frac{\rho_{\varepsilon}-\overline{\rho}_{\varepsilon}}{\overline{\rho}_{\varepsilon}}(\mathbf{v}_{\varepsilon}- \overline{ \mathbf{v}}_{\varepsilon})\dx \leq K \varepsilon \Vert \mathbf{U}_{\varepsilon}-\overline{\mathbf{U}}_{\varepsilon} \Vert_{L^2(\mathbb{R}^3)}^2 ,\\
-\varepsilon^3 \int_{\mathbb{R}^3}& (\overline{R}^{NS-Kuz}-\eta'(\overline{\mathbf{U}}_{\varepsilon})\mathbf{R}^{NS-Kuz})\dx\leq K \varepsilon^3, \\
-\varepsilon^3 \int_{\mathbb{R}^3}&[\mathbf{R}^{NS-Kuz}]^t \eta''(\overline{\mathbf{U}}_{\varepsilon})(\mathbf{U}_{\varepsilon}-\overline{\mathbf{U}}_{\varepsilon})\dx\\
&\leq  \varepsilon^3 \Vert \eta''(\overline{\mathbf{U}}_{\varepsilon})\Vert_{L^{\infty}(\mathbb{R}^3)} \Vert \mathbf{R}^{NS-Kuz}\Vert_{L^2(\mathbb{R}^3)} \Vert \mathbf{U}_{\varepsilon}-\overline{\mathbf{U}}_{\varepsilon} \Vert_{L^2(\mathbb{R}^3)}\\
&\leq  K \varepsilon^3 \Vert \mathbf{U}_{\varepsilon}-\overline{\mathbf{U}}_{\varepsilon} \Vert_{L^2(\mathbb{R}^3)}.
\end{align*}

Now, by integrating on $[0,t]$, we obtain from~(\ref{dvliment}) the following inequality
\begin{align*}
\int_{\mathbb{R}^3} \vert \mathbf{U}_{\varepsilon}-\overline{\mathbf{U}}_{\varepsilon} \vert^2(t)&\dx\leq  \int_0^t\Big[  (C\Vert \nabla \overline{\mathbf{U}}_{\varepsilon}\Vert_{L^{\infty}}+K \varepsilon)  \Vert \mathbf{U}_{\varepsilon}-\overline{\mathbf{U}}_{\varepsilon} \Vert_{L^2(\mathbb{R}^3)}^2\\
& +K\varepsilon^3+K\varepsilon^3\Vert \mathbf{U}_{\varepsilon}-\overline{\mathbf{U}}_{\varepsilon} \Vert_{L^2(\mathbb{R}^3)} \Big]ds+ C_1\int_{\mathbb{R}^3} \vert \mathbf{U}_{\varepsilon}-\overline{\mathbf{U}}_{\varepsilon} \vert^2(0)\dx.
\end{align*}
Here $K$, $C$ and $C_1$ are generic constants of order $O(\eps^0)$ which do not depend on time.
Using once more the regularity properties~(\ref{EqregRhoNS}) and ~(\ref{EqregvNS}), we have the boundness of~$\Vert \nabla \overline{\mathbf{U}}_{\varepsilon}\Vert_{L^{\infty}} $. 
But knowing that $\overline{\rho}_{\varepsilon}$ and $\overline{\textbf{v}}_{\varepsilon}$ are defined by \textit{ansatz} ~(\ref{rhoKuz})--(\ref{uKuz}), we deduce that  
 $\Vert \nabla \overline{\mathbf{U}}_{\varepsilon}\Vert_{L^{\infty}}\leq C \varepsilon.$
Therefore,
\begin{align*}
\Vert \mathbf{U}_{\varepsilon}-\overline{\mathbf{U}}_{\varepsilon} \Vert_{L^2}^2 & 
\leq  \int_0^t K \Big(  \varepsilon \Vert \mathbf{U}_{\varepsilon}-\overline{\mathbf{U}}_{\varepsilon} \Vert_{L^2(\mathbb{R}^3)}^2 +\varepsilon^3+\varepsilon^3\Vert \mathbf{U}_{\varepsilon}-\overline{\mathbf{U}}_{\varepsilon} \Vert_{L^2(\mathbb{R}^3)} \Big)ds\\
&+ C_1\int_{\mathbb{R}^3} \vert \mathbf{U}_{\varepsilon}-\overline{\mathbf{U}}_{\varepsilon} \vert^2(0)\dx.
\end{align*}
Then applying the Gronwall Lemma we  have directly $$\Vert (\mathbf{U}_{\varepsilon}-\overline{\mathbf{U}}_{\varepsilon})(t)\Vert_{L^2(\mathbb{R}^3)}^2\leq K(\varepsilon^3 t +\delta^2) e^{K\varepsilon t},$$
since $K\eps t$ is a non-decreasing in time function and $\varepsilon^3\sqrt{v} < K\eps v$ for all $v\in \R^+$. In addition, to find the estimate of  Point~2 for the regular case $\mathbf{U}_{\varepsilon}(0)=\overline{\mathbf{U}}_{\varepsilon}(0)$, we notice that
$$\Vert \mathbf{U}_{\varepsilon}-\overline{\mathbf{U}}_{\varepsilon} \Vert_{L^2(\mathbb{R}^3)}\leq v,$$
where $v$ is the solution of the following Cauchy problem 
$$\left\lbrace
\begin{array}{l}
(v^2)'= K(\varepsilon^3+ \varepsilon^3 v+ \varepsilon v^2),\\
v(0)=0.
\end{array}\right. 
$$
The study of this problem gives us
\begin{align*}
&\frac{1}{K\varepsilon}\ln\left(1+v(t)+\frac{1}{\varepsilon^2}v(t)^2\right)\\
&-\frac{1}{K}\frac{2}{\sqrt{4-\varepsilon^2}}\left[\arctan\left(\frac{2}{\sqrt{4\varepsilon^2-\varepsilon^4}}\left[v(t)+\frac{\varepsilon^2}{2}\right]\right)-\arctan\left(\frac{\varepsilon}{\sqrt{4-\varepsilon^2}} \right)\right]=t.
\end{align*}
The boundness of the function $\arctan x$ implies
\begin{align*}
1+v(t)+\frac{1}{\varepsilon^2}v(t)^2\leq & e^{\frac{2\varepsilon}{\sqrt{4-\varepsilon^2}}} e^{\arctan\left[\frac{2}{\sqrt{4\varepsilon^2-\varepsilon^4}}\left(v(t)+\frac{\varepsilon^2}{2}\right)\right]-\arctan\left(\frac{\varepsilon}{\sqrt{4-\varepsilon^2}} \right) } e^{K\varepsilon t}\\
\leq & e^{\frac{2\varepsilon}{\sqrt{4-\varepsilon^2}}} e^{\frac{\pi}{2}} e^{K\varepsilon t}
\leq c_0^2 \;e^{K\varepsilon t}
\end{align*}
with $c_0^2=e^{\frac{2}{\sqrt{3}}} e^{\frac{\pi}{2}}$ which for instance is less than $3.5\;$.
Therefore, the estimate
$$\Vert \mathbf{U}_{\varepsilon}-\overline{\mathbf{U}}_{\varepsilon} \Vert_{L^2(\mathbb{R}^3)}\leq c_0 \varepsilon e^{K \varepsilon  t}$$
gives the result as soon as $c_0 \varepsilon e^{\varepsilon K t}\leq 2 \varepsilon^{b},$ with $b\leq 1$, \textit{i.e.} for
$t\leq \frac{C}{\varepsilon}\;\; \text{when}\;\; b=1,$
and  for
$t\leq \frac{C}{\varepsilon}\ln(\frac{1}{\varepsilon})$ in the case $b<1$. 

We finish the proof with the remark on the minimal regularity of the initial data for the Kuznetsov equation such that the approximation is possible, $i.e.$ the remainder terms $R_1^{NS-Kuz}$ and $\mathbf{R}_2^{NS-Kuz}$ keep bounded for a finite time interval. Indeed, if $u_0\in H^{s+2}(\mathbb{R}^n)$ and $u_1\in H^{s+1}(\mathbb{R}^n)$ with $s>\frac{n}{2}$ then $u\in C([0,+\infty[;H^{s+2}(\mathbb{R}^n))$ and
\begin{align*}
u_t\in  C([0,+\infty[;H^{s+1}(\mathbb{R}^n)),\quad
u_{tt}\in C([0,+\infty[;H^{s-1}(\mathbb{R}^n)).
\end{align*}
Since $\overline{\rho}_{\varepsilon}$ is defined by~(\ref{rhoKuz}) with~(\ref{rho1K}) and~(\ref{rho2K}) and $\overline{\textbf{v}}_{\varepsilon}$ by~(\ref{uKuz}) respectively,  we exactly find the regularity~(\ref{EqRegMinRhoNS}) and~(\ref{EqRegMinUNS}).  
Thus by the regularity of the left-hand side part for the approximated Navier-Stokes system~(\ref{NSAp1})--(\ref{NSAp2}) we obtain the desired regularity for the right-hand side.
\end{proof}

\subsection{Navier-Stokes system and the KZK equation.}\label{secNSKZK}

\subsubsection{Derivation of the KZK equation from an isentropic Navier-Stokes system.}\label{secderNSKZK}
In the present section we focus on the derivation from the isentropic Navier-Stokes system of 
 the Khoklov-Zabolotskaya-Kuznetsov equation (KZK)
     in non-linear media using the following acoustical properties of beam's propagation
\begin{enumerate}
  \item The beams are concentrated near the $x_1$-axis ;
  \item The beams propagate along the $x_1$-direction;
  \item The beams are generated either by an initial condition or by a forcing term on the boundary $x_1=0$.
  \end{enumerate}
The different type of derivations of the KZK equation are discussed in Ref.~\cite{Roz3}. 

This time we perform it in two steps:
\begin{enumerate}
\item We introduce small perturbations around a constant state of the compressible isentropic Navier-Stokes system according to the Kuznetsov \textit{ansatz} ~(\ref{rhoKuz})--(\ref{uKuz}):
\begin{align}
\partial_t \rho_{\varepsilon}+\nabla.(\rho_{\varepsilon} \mathbf{v}_{\varepsilon})=&\varepsilon[\partial_t\rho_1-\rho_0\Delta u]\nonumber\\
     &+\varepsilon^2[\partial_t \rho_2-\nabla \rho_1 \nabla u-\rho_1\Delta u]+O(\varepsilon^3),\label{approxNSKuzR1}
\end{align}
and we have again~(\ref{EqAppNS1p}) for the conservation of momentum.
\item We perform the paraxial change of variable\cite{Roz3} (see Fig.~\ref{fig2}):
\begin{equation}\label{chvarkzk}
\tau= t-\frac{x_1}{c},\;\;\;z=\varepsilon x_1,\;\;\; y=\sqrt{\varepsilon}x'.
\end{equation}
\end{enumerate} 
\begin{figure}[h!]
\begin{center}
\psfrag{a}{$x_1$}\psfrag{b}{$\mathbf{x'}$}\psfrag{c}{$t$}\psfrag{NS}{\small{Navier-Stokes/}}\psfrag{E}{\small{
Euler $(x_1,\mathbf{x'},t)$}}\psfrag{a1}{$z=\epsilon
x_1$}\psfrag{b1}{$y=\sqrt{\epsilon}
\mathbf{x'}$}\psfrag{c1}{$\tau=t-\frac{x_1}{c}$}\psfrag{KZK}{\small{KZK$(\tau,z,\mathbf{y})$}}
  \includegraphics[width=.7\textwidth]{./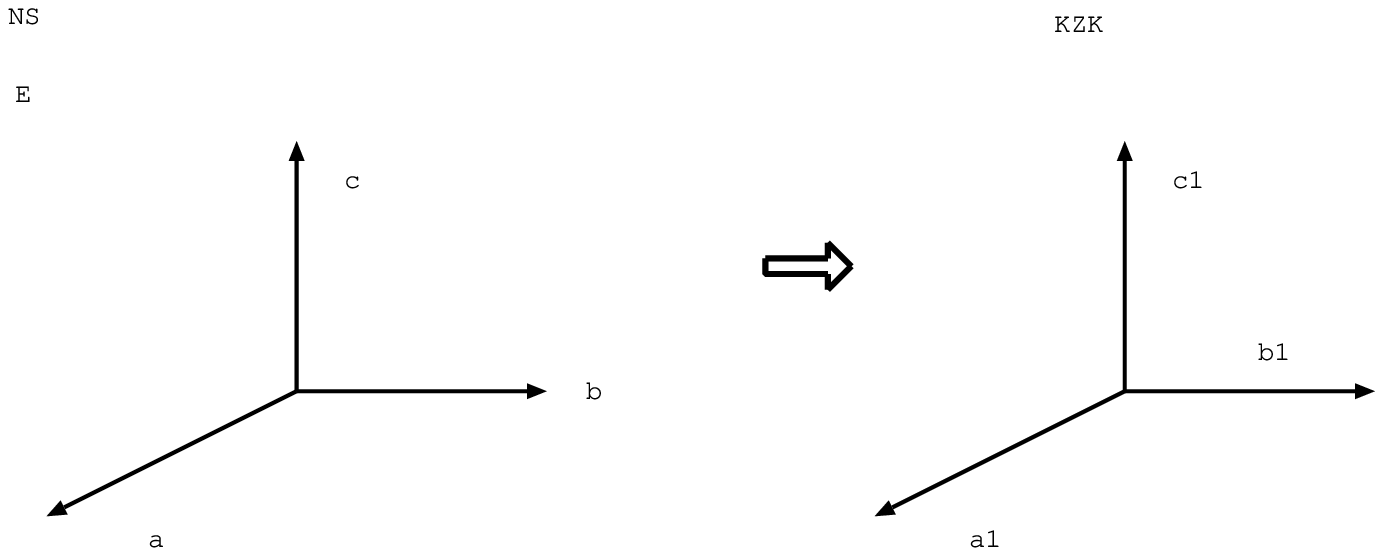}
                  \end{center}
   \caption{Paraxial change of variables for the profiles
$U(t-x_1/c,\epsilon x_1,\sqrt{\epsilon}\mathbf{x'})$.}\label{fig2}
\end{figure}
Since the gradient $\nabla$ in the coordinates $(\tau,z,y)$ becomes depending on $\eps$
$$\tilde{\nabla}=\left(\varepsilon \partial_z -\frac{1}{c}\partial_{\tau},\sqrt{\varepsilon} \nabla_y \right)^t,$$
if we denote 
\begin{equation}\label{paraxpot}
u(x,t)=\Phi(t-x_1/c,\epsilon x_1,\sqrt{\epsilon}x')=\Phi(\tau,z,y),
\end{equation}
we need to take attention to have the paraxial correctors of the order $O(1)$:
$$\rho_1(x,t)=I(\tau,z,y),\hbox{  }\rho_2(x,t)=H(\tau,z,y)=J(\tau,z,y)+O(\eps) ,$$
where actually $H(\tau,z,y)$ is the profile function obtained from $\rho_2$ (see~\ref{Apen1} Eq.~(\ref{rho2kzkH})) containing not only the terms of the order $O(1)$ but also terms up to $\eps^2$. Hence,  we denote by $J$ all terms of $H$ of order $0$ on $\eps$ which are significant in order to have an approximation up to the terms $O(\varepsilon^3)$. 

In new variables $(\tau,z,y)$ Eq.~(\ref{EqAppNS1p}) becomes 
\begin{align}
&\rho_{\varepsilon} [\partial_t \mathbf{v}_{\varepsilon}+(\mathbf{v}_{\varepsilon}.\nabla)\mathbf{v}_{\varepsilon}]+\nabla p(\rho_{\varepsilon})-\varepsilon \nu \Delta \mathbf{v}_{\varepsilon}= \varepsilon \tilde{\nabla}[-\rho_0 \partial_{\tau}\Phi+c^2 I]\label{EqAppNSkzk2}\\
&+ \varepsilon^2 \left[-I \tilde{\nabla}(\partial_{\tau}\Phi)+\frac{\rho_0}{2}\tilde{\nabla}\left(\frac{1}{c^2}(\partial_{\tau}\Phi)^2\right)\right.\nonumber\\
&\;\;\;\;\;\;\;\;\;\left.+c^2\tilde{\nabla} J+\frac{\gamma-1}{2\rho_0}c^2\tilde{\nabla}(I^2)
+\nu \tilde{\nabla}\left( \frac{1}{c^2}\partial^2_{\tau}\Phi\right) \right]+O(\varepsilon^3).\nonumber
\end{align}
Consequently, we find the correctors of the density as functions of $\Phi$:
\begin{align}
I(\tau,z,y) =&\frac{\rho_0}{c^2}\partial_\tau \Phi(\tau,z,\mathbf{y})\label{Ikzk},\\
J (\tau,z,y)=&-\frac{\rho_0 (\gamma-1)}{2c^4}(\partial_\tau \Phi)^2-\frac{\nu}{c^4} \partial^2_\tau \Phi. \label{Jkzk}
\end{align}
Indeed, we start by making $\varepsilon \tilde{\nabla}[-\rho_0 \partial_{\tau}\Phi+c^2 I]=0$ and find the first order perturbation of the density $I$ given by Eq.~(\ref{Ikzk}). Moreover, if $\rho_1$ satisfies~(\ref{Ikzk}), then Eq.~(\ref{EqAppNSkzk2}) becomes
\begin{multline}
\rho_\varepsilon[\partial_t
\mathbf{v}_\varepsilon+(\mathbf{v}_\varepsilon \cdot\nabla) \,
\mathbf{v}_\varepsilon] +\nabla p(\rho_\varepsilon)
-\varepsilon \nu \Delta \mathbf{v}_\varepsilon= \varepsilon \tilde{\nabla}[-\rho_0 \partial_{\tau}\Phi+c^2 I]\\
 \varepsilon^2 \tilde{\nabla}\left[-\frac{\rho_0}{2c^2} (\partial_{\tau}\Phi)^2+\frac{\rho_0}{2c^2} (\partial_{\tau}\Phi)^2
+c^2 J+\frac{(\gamma-1)\rho_0}{2c^2 }(\partial_{\tau} \Phi)^2+ \frac{\nu}{c^2}\partial^2_{\tau}\Phi\right]
+O(\varepsilon^3). 
\end{multline}
Thus, taking the corrector $J$ in the expansion of $\rho_\eps$
\begin{equation}
 \rho_{\varepsilon}(\mathbf{x},t)=\rho_0+\varepsilon I(\tau,z,\mathbf{y})+\varepsilon^2 J(\tau,z,\mathbf{y})\label{rhokzk},
\end{equation}
by formula~(\ref{Jkzk}), we ensure that 
\begin{equation}
 \rho_\varepsilon[\partial_t
\mathbf{v}_\varepsilon+(\mathbf{v}_\varepsilon \cdot\nabla) \,
\mathbf{v}_\varepsilon] +\nabla p(\rho_\varepsilon)
-\varepsilon \nu \Delta \mathbf{v}_\varepsilon= O(\varepsilon^3). 
\end{equation}
Now we put these expressions of $I$ from~(\ref{Ikzk}) and $J$ from~(\ref{Jkzk}) with  the paraxial approximation in  Eq.~(\ref{approxNSKuzR1}) of the mass conservation to obtain
\begin{align}
\partial_t \rho_{\varepsilon}+\nabla.(\rho_{\varepsilon} \mathbf{v}_{\varepsilon})=&\varepsilon^2\left[\frac{\rho_0}{c^2}(2c\partial^2_{z\tau}\Phi-c^2 \Delta_y\Phi)-\frac{\rho_0}{2c^4}(\gamma+1)\partial_{\tau}[(\partial_\tau \Phi)^2]-\frac{\nu}{c^4}\partial^3_\tau \Phi\right]\nonumber\\
&+O(\varepsilon^3).\label{massKZKsansreste}
\end{align}
All terms of the second order on $\eps$ in relation~(\ref{massKZKsansreste}) give us the equation on $\Phi$, which is the KZK equation.
If we use relation~(\ref{Ikzk}), we obtain the usual form of the KZK equation often written\cite{KhZabKuzn,Roz3} for the first perturbation $I$ of the density $\rho_\epsilon$:
\begin{equation}\label{KZKI}
c\partial^2_{\tau z} I -\frac{(\gamma+1)}{4\rho_0}\partial_\tau^2
I^2-\frac{\nu}{2 c^2\rho_0}\partial^3_\tau I-\frac{c^2}2 \Delta_y
I=0.
\end{equation}
We notice that, as the Kuznetsov equation, this model still contains terms describing the wave propagation $\partial^2_{\tau z} I$,  the non-linearity  $ \partial_\tau^2
I^2$ and the viscosity effects $\partial^3_\tau I$ of the medium but also adds a diffraction effects by the traversal Laplacian $ \Delta_yI$. This corresponds to the description of the quasi-one-dimensional propagation of a signal in a homogeneous nonlinear isentropic medium.
By our derivation (see also~(\ref{massKZK})--(\ref{momentKZK}))  we obtain that the KZK equation is the second order approximation of the isentropic Navier-Stokes system up to term of $O(\varepsilon^3)$. 
In this sense, since the entropy and the pressure in Section~\ref{secderisNS} are approximated up to terms of the order of $\varepsilon^3$, the \emph{ansatz}~(\ref{rhokzk})-(\ref{vkzk}) (for the KZK equations) is optimal, as the equations of the Navier-Stokes system are approximated up to $O(\varepsilon^3)$-terms. 

Let us compare our \emph{ansatz}
\begin{align}
 &u(x_1,\mathbf{x'},t)=\Phi(t-x_1/c,\epsilon x_1,\sqrt{\epsilon}x'),\label{EqAnsKZK1}\\
 &\rho_{\varepsilon}(x_1,\mathbf{x'},t)=\rho_0+\varepsilon I(t-x_1/c,\epsilon x_1,\sqrt{\epsilon}x')+\varepsilon^2 J(t-x_1/c,\epsilon x_1,\sqrt{\epsilon}x')\label{EqAnsKZK2}
\end{align}
to the \emph{ansatz} introduced in Ref.~\cite{Roz3} by definning a corrector $\epsilon^2 v_2$ for the velocity perturbation along the propagation axis in the initial \emph{ansatz}, proposed by Khokhlov and Zabolotskaya~\cite{KhZabKuzn}:
 \begin{align}
   &  \rho_\epsilon(x_1,\mathbf{x'},t)= \rho_0+\epsilon I(t-\frac{x_1}{c},\epsilon x_1, \sqrt{\epsilon} \mathbf{x'})\,,\label{AnsaKZK1}\\
   &\mathbf{v_\epsilon}(x_1,\mathbf{x'},t)=\epsilon(v_1+\epsilon v_2 ;\sqrt{\epsilon}
   \mathbf{w})(t-\frac{x_1}{c},\epsilon x_1, \sqrt{\epsilon} \mathbf{x'}).\label{AnsaKZK2}
  \end{align}

This time,
the assumption to  work directly with the velocity potential~(\ref{EqAnsKZK1}) imediately implies  the following velocity expansion
\begin{align}
\mathbf{v}_{\varepsilon}(\mathbf{x},t)=&-\varepsilon\left(-\frac{1}{c}\partial_\tau\Phi+\varepsilon \partial_z \Phi; \sqrt{\varepsilon}\nabla_y \Phi\right)(\tau,z,\mathbf{y}),\label{vkzk}
\end{align}
where we recognize the velocity \emph{ansatz} of Ref.~\cite{Roz3} %
   with $$v_1=\frac{1}{c}\partial_\tau\Phi= \frac{c}{\rho_0} I, \quad   \mathbf{w}=\nabla_y \Phi=\frac{c^2}{\rho_0}\partial^{-1}_{\tau}\nabla_y
I,$$
but for the corrector $v_2$ this time
$$v_2=-\partial_z \Phi=-\frac{c^2}{\rho_0}\partial^{-1}_{\tau}\partial_zI$$ 
instead of (see Ref.~\cite{Roz3} and formula~(\ref{invdtau}) for  definition of the operator $\del_{\tau}^{-1}$)
$$v_2^{Rozanova}=-\frac{c^2}{\rho_0}\partial^{-1}_{\tau}\partial_zI
+\frac{(\gamma-1)}{2\rho_0^2}cI^2+\frac{\nu}{c\rho_0^2}\del_\tau I.$$
If we add the second order correctors $v_2$ for the velocity to $J$ for the density, we obtain exactly all terms of the corrector $v_2^{Rozanova}$. But the \emph{ansatz}~(\ref{AnsaKZK1})--(\ref{AnsaKZK2}) is not optimal since the equation of momentum in transverse direction keeps the non-zero terms~\cite{Roz3} of the order of $\epsilon^\frac{5}{2}$.

\subsubsection{Well posedness of the KZK equation.}\label{secWPKZKNPE}
We use Ref.~\cite{Roz2} to give results on well posedness of the Cauchy problem:
\begin{equation}\label{NPEcau}
\left\lbrace
\begin{array}{c}
c\partial^2_{\tau z} I -\frac{(\gamma+1)}{4\rho_0}\partial_\tau^2
I^2-\frac{\nu}{2 c^2\rho_0}\partial^3_\tau I-\frac{c^2}2 \Delta_y
I=0\hbox{ on }\mathbb{T}_{\tau}\times\mathbb{R}_+\times\mathbb{R}^{n-1},\\
I(\tau,0,y)=I_0(\tau,y)\hbox{ on }\mathbb{T}_{\tau}\times\mathbb{R}^{n-1}
\end{array}\right.
\end{equation}
in the class of $L-$periodic functions with respect to the variable $\tau$ and with mean value zero
\begin{equation}\label{zeromeanval}
\int_0^L I(\ell,z,y)d\ell=0 .
\end{equation}
The introduction of the operator $\partial_\tau^{-1}$, defined by formula\begin{equation}\label{invdtau}
\partial_{\tau}^{-1} I(\tau,z,y):=\int_0^{\tau} I(\ell,z,y) d\ell+\int_0^L \frac{\ell}{L}I(\ell,z,y)d\ell,
\end{equation}
allows us to consider instead of Eq.~(\ref{KZKI}) the following equivalent equation
$$c\partial_{ z} I -\frac{(\gamma+1)}{4\rho_0}\partial_\tau
I^2-\frac{\nu}{2 c^2\rho_0}\partial^2_\tau I-\frac{c^2}2 \partial_{\tau}^{-1}\Delta_y
I=0\hbox{ on }\mathbb{T}_{\tau}\times\mathbb{R}_+\times\mathbb{R}^{n-1} ,$$
for which it holds the following theorem\cite{Ito,Roz2}:
\begin{theorem}\label{wpglopkzknpe}
\cite{Roz2}
Consider the Cauchy problem for the KZK equation:
\begin{equation}\label{NPEcau2}
\left\lbrace
\begin{array}{c}
c\partial_{ z} I -\frac{(\gamma+1)}{4\rho_0}\partial_\tau
I^2-\frac{\nu}{2 c^2\rho_0}\partial^2_\tau I-\frac{c^2}2 \partial_{\tau}^{-1}\Delta_y
I=0\hbox{ on }\mathbb{T}_{\tau}\times\mathbb{R}_+\times\mathbb{R}^{n-1},\\
I(\tau,0,y)=I_0(\tau,y)\hbox{ on }\mathbb{T}_{\tau}\times\mathbb{R}^{n-1},
\end{array}\right.
\end{equation}
with the operator $\partial_\tau^{-1}$ defined by formula~(\ref{invdtau}), $\nu\geq 0$, and $\int_0^L I_0(\ell,y)d\ell=0 $, the following results hold true
\begin{enumerate}
\item (Local existence)  For $s>\left[\frac{n}{2}\right]+1$ there exists a constant $C(s,L)$ such that for any initial data $I_0\in H^s(\mathbb{T}_\tau\times \mathbb{R}^{n-1})$ on an interval $[0,T[$ with
$$T\geq \frac{1}{C(s,L)\Vert I_0\Vert_{H^s(\mathbb{T}_\tau\times \mathbb{R}^{n-1})}} $$
problem~(\ref{NPEcau2}) has a unique solution $I$ such that  
$$I\in C([0,T[,H^s(\mathbb{T}_\tau\times \mathbb{R}^{n-1}))\cap C^1([0,T[,H^{s-2}(\mathbb{T}_\tau\times \mathbb{R}^{n-1})),$$ which satisfies the zero mean value condition~(\ref{zeromeanval}).
\item  (Shock formation) Let $T^*$ be the largest time on which such a solution is defined, then we have
$$\int_0^{T^*} \sup_{\tau,y}(\vert \partial_\tau I(\tau,t,y)\vert+\vert \nabla_y I(\tau,t,y)\vert)\;dt=+\infty.$$
\item(Global existence) If $\nu>0$ we have the global existence for small enough data: there exists a constant $C_1>0$ such that
$$\Vert I_0\Vert_{H^s(\mathbb{T}_\tau\times \mathbb{R}^{n-1})}\leq C_1 \Rightarrow T^*=+\infty.$$
\item (exponential decay)\cite{Ito}  If $\nu>0$, $s\in \mathbb{N}$ and $s\geq \left[\frac{n+1}{2}\right]$, then  there exists a constant $C_2>0$ such that $\Vert I_0\Vert_{H^s(\mathbb{T}_\tau\times \mathbb{R}^{n-1})}\leq C_2$ implies for all $z\geq0$
$$\Vert I(z)\Vert_{H^s(\mathbb{T}_\tau\times \mathbb{R}^{n-1})}\leq C \Vert I_0\Vert_{H^s(\mathbb{T}_\tau\times \mathbb{R}^{n-1})} e^{-\ell z}, $$where $C>0$ and $\ell\in]0,1[$ are constants.
\end{enumerate}
\end{theorem}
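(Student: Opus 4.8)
This is the content of Ref.~\cite{Roz2} (part~4 being taken from Ref.~\cite{Ito}); here I only indicate the structure of the argument. The first step is to read problem~(\ref{NPEcau2}) as an evolution problem in the ``time-like'' variable $z$,
\begin{equation*}
c\,\partial_z I=\frac{\gamma+1}{4\rho_0}\,\partial_\tau(I^2)+\frac{\nu}{2c^2\rho_0}\,\partial^2_\tau I+\frac{c^2}{2}\,\partial_\tau^{-1}\Delta_y I,
\end{equation*}
on functions on $\mathbb{T}_\tau\times\R^{n-1}$ with zero mean in $\tau$. Three structural facts drive the energy method. On the space of mean-zero functions $\partial_\tau^{-1}$ is the genuine inverse of $\partial_\tau$, is skew-adjoint, and commutes with $\Delta_y$; hence $\partial_\tau^{-1}\Delta_y$ is skew-adjoint in every $H^s(\mathbb{T}_\tau\times\R^{n-1})$ and drops out of all $L^2$- and $H^s$-energy identities. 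The viscous term $\nu\,\partial^2_\tau I$ contributes a nonpositive term proportional to $-\nu\|\partial_\tau I\|_{H^s}^2$ (and is simply absent when $\nu=0$), and the Burgers nonlinearity satisfies $\int\partial_\tau(I^2)\,I=0$ by periodicity. Finally, applying $\partial^\alpha$ for $|\alpha|\le s$, multiplying by $\partial^\alpha I$ and integrating, the Kato--Ponce commutator estimates yield, for $s>\left[\frac{n}{2}\right]+1$,
\begin{equation*}
\frac{d}{dz}\|I\|_{H^s(\mathbb{T}_\tau\times\R^{n-1})}^2\le -\,c_\nu\,\|\partial_\tau I\|_{H^s}^2+C\big(\|\partial_\tau I\|_{L^\infty}+\|\nabla_y I\|_{L^\infty}\big)\|I\|_{H^s}^2,
\end{equation*}
with $c_\nu>0$ for $\nu>0$ and $c_\nu=0$ for $\nu=0$; by the Sobolev embedding $H^{s-1}\hookrightarrow L^\infty$ the right-hand side is bounded by $C\|I\|_{H^s}^3$.

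For the \emph{local existence} (part~1) I would regularize the equation --- by a Friedrichs mollifier in $\tau$, or by adding $-\mu\,\partial_\tau^4 I$ --- solve the regularized problems by the Cauchy--Lipschitz theorem in $H^s$, propagate the bound above uniformly in the regularization parameter, and pass to the limit. The differential inequality $\frac{d}{dz}\|I\|_{H^s}^2\le C\|I\|_{H^s}^3$ both gives the lifespan bound $T\ge 1/(C(s,L)\|I_0\|_{H^s})$ and propagates the $H^s$ regularity, while uniqueness follows from the analogous $L^2$ estimate for the difference of two solutions. The time regularity $I\in C^1([0,T[,H^{s-2})$ and the persistence of the zero-mean condition~(\ref{zeromeanval}) are then read off the equation itself: the worst right-hand term $\partial_\tau^2 I$ costs two $\tau$-derivatives, and integrating the equation over a period in $\tau$ annihilates every term on the right, so $\partial_z\!\int_0^L I\,d\ell=0$.

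For the \emph{blow-up criterion} (part~2) the key point is that in the displayed estimate the Gronwall coefficient is exactly $\sup_{\tau,y}(|\partial_\tau I|+|\nabla_y I|)$: were this integrable on $[0,T^*[$, the $H^s$ norm would remain finite up to $T^*$ and restarting the local theory near $T^*$ would continue the solution past $T^*$, contradicting maximality. For the \emph{global existence} for small data when $\nu>0$ (part~3), and simultaneously the \emph{exponential decay} (part~4) following Ref.~\cite{Ito}, one exploits that, $I$ having zero mean in $\tau$ on the torus, the Poincar\'e inequality in the $\tau$ variable (pointwise in $y$, then integrated) gives $\|I\|_{H^s}\le C_P\|\partial_\tau I\|_{H^s}$, so the $\tau$-dissipation controls the full $H^s$ norm. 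The estimate becomes $\frac{d}{dz}\|I\|_{H^s}^2\le\big(C\,C_P\|I\|_{H^s}-c_\nu\big)\|\partial_\tau I\|_{H^s}^2$; a continuity/bootstrap argument shows that if $\|I_0\|_{H^s}$ is small enough that $C\,C_P\|I_0\|_{H^s}<c_\nu$, the norm never reaches that threshold, whence $T^*=+\infty$, and then $\frac{d}{dz}\|I\|_{H^s}^2\le-\tfrac{c_\nu}{2C_P^2}\|I\|_{H^s}^2$, which integrates to the exponential decay.

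The delicate point throughout is that the dissipation acts only in the $\tau$ variable while $y$ ranges over $\R^{n-1}$, which has no decay mechanism of its own; it is precisely the combination of the skew-symmetry of the nonlocal diffraction operator $\partial_\tau^{-1}\Delta_y$, the absence of transverse derivatives in the Burgers nonlinearity, and the Poincar\'e inequality on $\mathbb{T}_\tau$ that transfers the $\tau$-dissipation to the transverse directions and makes parts~3 and~4 go through. The verification that the $y$-derivatives are nevertheless controlled, and the precise constants in the decay rate, are carried out in Refs.~\cite{Roz2,Ito}.
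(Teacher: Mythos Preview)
The paper does not supply its own proof of this theorem: it is stated with the citation \cite{Roz2} (and \cite{Ito} for part~4) and is followed immediately by remarks, with no proof environment. So there is nothing in the paper to compare your argument against line by line.

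That said, your sketch is a faithful outline of the energy method actually carried out in \cite{Roz2} and \cite{Ito}: the skew-adjointness of $\partial_\tau^{-1}\Delta_y$ on mean-zero functions, the sign of the viscous contribution, commutator/tame estimates for the Burgers term under the Sobolev threshold $s>[\tfrac{n}{2}]+1$, the Beale--Kato--Majda-type continuation criterion, and the Poincar\'e inequality on $\mathbb{T}_\tau$ to convert $\tau$-dissipation into full $H^s$ decay are exactly the ingredients used there. One small point worth tightening: your Poincar\'e step $\|I\|_{H^s}\le C_P\|\partial_\tau I\|_{H^s}$ is correct only on the scale of norms after summing over multi-indices with the right bookkeeping (pure-$y$ derivatives of $I$ still have zero $\tau$-mean, so Poincar\'e in $\tau$ applies to each $\partial^\alpha I$ separately), and in \cite{Ito} the decay constant $\ell$ is obtained precisely from this argument; you have the mechanism right but should state explicitly that Poincar\'e is applied componentwise to $\partial^\alpha I$ for $|\alpha|\le s$.
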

\begin{remark}
\cite{Roz2} We note that when $\nu=0$, all the corresponding statements of Theorem~\ref{wpglopkzknpe} remain valid for $0>t>-C$ with a suitable $C$.
\end{remark}
\begin{remark}\label{RemHalfSpace}
In the study of the well-posedness of the KZK equation we inverse the usual role of the time with the main space variable along the propagation axis $z$:
 for $\nu>0$ the solution $I(\tau,z,y)=I(t-\frac{x_1}{c},\eps x_1, \sqrt{\eps}x')$ is defined
for $x_1>0$, as it is global on $z\in \R^+$. Hence if we want to compare the KZK equation to other models such as the Kuznetsov equation or the Navier-Stokes system we need the well posedness results for these models on the half space
\begin{equation}\label{hs}
 \{x_1>0, \quad t>0, \quad x'\in
\R^{n-1}\},
\end{equation}
taking into account the fact that  the boundary conditions for the exact system come from the initial condition $I_0$ of the Cauchy problem~(\ref{NPEcau2}) associated to the KZK equation.
\end{remark} 

\subsubsection{Entropy estimate for the isentropic Navier-Stokes equation on the half space and the associated existence result.}\label{sNSe}

We follow now Section 5.2 in~Ref.~\cite{Roz3} updating it for the new \textit{ansatz} and correct the proof of Theorem~5.5. See~Ref.~\cite{Roz3} for more details. 

We consider 
the Cauchy problem for the KZK equation~(\ref{NPEcau2})
for an initial data $$I(t,0,y)=I_0(t,y) \quad (\tau=t \; \hbox{ for }x_1=0)$$ $L$-periodic in $t$ with
mean value zero.
Theorem~\ref{wpglopkzknpe} ensures that for any
initial data $I_0$, defined in
$\mathbb{T}_t\times \mathbb{R}^{n-1}$ with small enough $H^s$ ($s>
[\frac n2]+1$) norm (with respect to $\nu$), there exists a unique
solution of the KZK equation~(\ref{KZKI}) $I$, which
as a function of $(\tau , z, y)$ is global on $z\in \R^+$, periodic in $\tau$ of period
$L$ and mean value zero, and decays for
$z\rightarrow \infty$~\cite{Roz2}.

Therefore, see Remark~\ref{RemHalfSpace}, we consider our approximation problem  between the isentropic Navier-Stokes system~(\ref{NSi1})--(\ref{NSi2}) and the KZK equation in the half space~(\ref{hs}).

By $I_0$ we find $I$ and thus also $\Phi$ and $J$, using Eq.~(\ref{Ikzk})--(\ref{Jkzk}). This allows us to construct the density and velocities $\overline{\rho}_{\varepsilon}$ and $\overline{\textbf{v}}_{\varepsilon}$ in accordance with the \textit{ansatz} ~(\ref{rhokzk}) and~(\ref{vkzk}). Thus, by $I$ we  construct the function $\overline{\mathbf{U}}_{\varepsilon}=(\overline{\rho}_{\varepsilon},\overline{\rho}_{\varepsilon}\overline{\textbf{v}}_{\varepsilon})^t$.

In particular, for $t=0$ we have functions defined for $x_1>0$
because $I$ is well-defined for any $z>0$
\begin{eqnarray*}
&&\overline{\rho}_\eps(0, x_1,x')=\rho_0+\eps I(-\frac{x_1}c,\eps
x_1, \sqrt{\eps} x')+\varepsilon^2 J(-\frac{x_1}c,\eps
x_1, \sqrt{\eps} x'),
\\
&&\overline{\textbf{v}}_\eps(0,
x_1,x')=(\overline{v}_{1},\overline{\textbf{v}}'_\eps)(-\frac{x_1}c,\eps
x_1, \sqrt{\eps} x'),
\end{eqnarray*}
where
\begin{align*}
\overline{v}_{1}=\eps\frac{
c}{\rho_0}I+ \eps^2\frac{c^2}{\rho_0} \partial_z\partial_\tau^{-1}I,\quad\overline{\textbf{v}}'_\eps=\sqrt{\varepsilon}\frac{c^2}{\rho_0}\nabla_y\partial_\tau^{-1}I
\end{align*}
and for $x_1=0$ we have $L$-periodic functions with mean value zero
\begin{eqnarray}
&&\overline{\rho}_\eps(t,0,x')=\rho_0+\eps I(t,0, \sqrt{\eps}
x')+\varepsilon^2 J(t,0, \sqrt{\eps}
x'),\label{bns2}
\\
&&\overline{\textbf{v}}_\eps(t,0,x')=(\overline{v}_{1},\overline{\textbf{v}}'_\eps)(t,0,
\sqrt{\eps} x').\label{bns1}
\end{eqnarray}

It is important to notice that the solution $\overline{\textbf{v}}_\eps $ in
system~(\ref{NSi1})--(\ref{NSi2}) is small on the boundary:
$\overline{\textbf{v}}_\eps|_{x_1=0}=\eps \tilde{\textbf{v}}_\eps|_{x_1=0}$. Therefore, we have
$\left|\overline{\textbf{v}}_\eps|_{x_1=0}\right|<c$, which corresponds to
 the ``subsonic'' boundary case. More precisely, when the first
velocity component is positive $\overline{v}_{1}|_{x_1=0}>0$,
  we have a subsonic inflow boundary condition, and when it is
negative  $\overline{v}_{1}|_{x_1=0}<0$, we have a subsonic outflow boundary condition, see Fig.~\ref{figNShs}.
\begin{figure}[h!]
\begin{center}
\psfrag{0}{$0$}\psfrag{x}{$x_1>0$}\psfrag{y}{$x'$}\psfrag{B}{$v_1|_{x_1=0}<0$}
\psfrag{A}{$v_1|_{x_1=0}>0$}\psfrag{up}{$(\mathbf{v}-\overline{\mathbf{v}})|_{x_1=0}=0$}\psfrag{un}{$(\mathbf{v}-\overline{\mathbf{v}})|_{x_1=0}=0$}\psfrag{rp}{$(\rho-\overline{\rho})|_{x_1=0}=0$}\psfrag{t}{$t$}\includegraphics[width=6cm]{./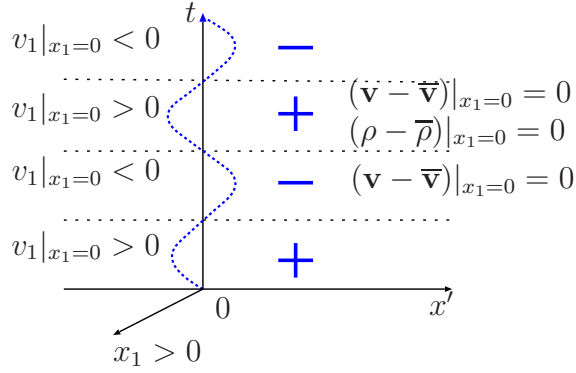}
\caption{Periodic subsonic inflow-outflow boundary conditions for the Navier-Stokes system.}\label{figNShs}
\end{center}
\end{figure}
We also notice that, due to Eq.~(\ref{vkzk}), the first component of the
velocity $\overline{\textbf{v}}_{1}$ on the boundary has the following form
\begin{eqnarray*}&&\overline{v}_{1}|_{x_1=0}=\left(\eps\frac{
c}{\rho_0}I+ \eps^2 G(I)\right)
(t,0,\sqrt{\eps}x')=\left.\left(\eps\frac{ c}{\rho_0}I+\eps^2
G(I)\right)\right|_{z=0}\\
&&=\eps\frac{ c}{\rho_0}I_0(t,y)+\eps^2 G(I_0)(t,y),
\end{eqnarray*}
where
\begin{equation}G(I)=\frac{c^2}{\rho_0} \partial_z\partial_\tau^{-1}I=\frac{c^2}{\rho_0}\partial_\tau^{-1}\left(\frac{(\gamma+1)}{4c\rho_0}\partial_\tau
I^2+\frac{\nu}{2 c^3\rho_0}\partial^2_\tau I +\frac{c}{2} \partial_{\tau}^{-1}\Delta_y I\right)
.\label{ch3F}
\end{equation}
 Therefore, the boundary
conditions for $\overline{\textbf{v}}_{1}$ are defined by the initial conditions for
KZK equation and
 are $L$-periodic in $t$ and have  mean value zero. In addition, the
sign of
 $\overline{\textbf{v}}_{1}|_{x_1=0}$ is the same as the sign of $I_0$
 (because the term $G(I_0)$ is of a higher order of smallness on $\eps$).\begin{remark}\label{remNST}
As  the viscosity term   $\eps \nu \overline{\textbf{v}}_{\varepsilon}$, where $\eps$ is
fixed small enough parameter, $\nu$ is a constant, and in our case
$\overline{\textbf{v}}_{\varepsilon}$ is of the order of $\eps$, the boundary layer phenomenon can be
excluded.
\end{remark}

\begin{theorem}\label{thENS}
Let $ n\leq 3$. Suppose that  the initial data of the KZK Cauchy problem
$I_0(t,y)=I_0(t,\sqrt{\epsilon} x')$ is such that \begin{enumerate}
    \item $I_0$ is $L$-periodic in $t$ and with  mean value zero,
\label{itT1}
    \item for fixed $t$, $I_0$ has the same sign for all $y\in
    \R^{n-1}$, and for $t\in ]0,L[$ the sign changes, i.e. $I_0=0$,
only for a finite number of times, \label{itT2}
    \item $I_0(t,y)\in H^{s}(\mathbb{T}_t\times \R^{n-1})$ for
    $s\ge 10$,\label{itT3}
    \item $I_0$ is sufficiently small in the sense of
Theorem~\ref{wpglopkzknpe} such that\cite[p.20]{Roz2} 
\begin{equation*}
 \|I_0\|_{H^{s}}< \frac{\nu}{2c^2\rho_0}\frac{C_1(L)}{C_2(s)}. 
\end{equation*}
       \label{itT4}
\end{enumerate}
Consequently, there exists  a unique global  solution in time $I(\tau,z,y)$
of~(\ref{NPEcau2}) for $z=\epsilon x_1 >0$, moreover, the functions
$\bar{\rho}_\epsilon$,
$\overline{\textbf{v}}_{\varepsilon}=(\overline{v}_1,\overline{\textbf{v}}'_\eps)$, defined
by the \textit{ansatz} ~(\ref{rhokzk})-(\ref{vkzk}) and Eq.~(\ref{Ikzk})--(\ref{Jkzk}) in the half space~(\ref{hs})
 are
smooth with $\Omega=\mathbb{T}_t\times
\R^{n-1}_y$:
\begin{equation}
\bar{\rho}_\epsilon\in C\left([0,\infty[,
H^{s-4}\left(\Omega\right)\right) \cap
C^1\left([0,\infty[;H^{s-6}\left(\Omega\right)\right),\label{barro}
\end{equation}
\begin{equation}\bar{\textbf{v}}_\epsilon\in C\left([0,\infty[;
H^{s-4}\left(\Omega\right)\right) \cap
C^1\left([0,\infty[;H^{s-6}\left(\Omega\right)\right).\label{baru}
\end{equation}
The Navier-Stokes system~(\ref{NSi1})--(\ref{NSi2}) in  the half space
 with
 initial data~(\ref{bban}) and following boundary conditions
$$    (\bar{\textbf{v}}_{\epsilon}-\mathbf{v}_{\epsilon})|_{x_1=0}=0,$$%
with positive  first component of the velocity
$v_1|_{x_1=0}
>0$ (i.e. at points where the fluid enters the domain) has the
additional boundary
condition
$$(\bar{\rho}_\epsilon-\rho_\epsilon)|_{x_1=0}=0.
$$
When $v_1|_{x_1=0} \le0$ there is no any boundary
condition for $\rho_\epsilon$.

Then, for all finite times $T>0$
    there exists a unique
    solution $U_\epsilon=(\rho_\epsilon,\rho_\varepsilon u_\epsilon)$ of the
Navier-Stokes system~(\ref{NSi1})--(\ref{NSi2}) with
    the following smoothness on $\left[0,T\right]$
     $$\rho_{\eps}\in
C\left(\left[0,T\right],H^{3}\left(\{x_1>0\}\times\R^{n-1}\right)\right)\cap
C^1\left(\left[0,T\right],H^{2}\left(\{x_1>0\}\times\R^{n-1}\right)\right)$$ and
$$u_{\eps}\in
C\left(\left[0,T\right],H^{3}\left(\{x_1>0\}\times\R^{n-1}\right)\right)\cap
C^1\left(\left[0,T\right],H^{1}\left(\{x_1>0\}\times\R^{n-1}\right)\right). $$%
\end{theorem}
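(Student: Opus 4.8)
The plan is to follow the line of Ref.~\cite{Roz3} Theorem~5.5, adapted to the new \emph{ansatz}~(\ref{EqAnsKZK1})--(\ref{EqAnsKZK2}), in three stages: first produce a global smooth KZK profile $I$; then transfer its regularity through the paraxial \emph{ansatz} to $\overline{\mathbf{U}}_{\varepsilon}=(\overline{\rho}_{\varepsilon},\overline{\rho}_{\varepsilon}\overline{\mathbf{v}}_{\varepsilon})$; and finally solve the Navier--Stokes system~(\ref{NSi1})--(\ref{NSi2}) in the half space~(\ref{hs}) with the prescribed subsonic inflow/outflow data, via a local existence result together with an a priori estimate that pushes the maximal existence time past every prescribed finite $T$. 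For the first stage, hypotheses~(\ref{itT3}) and~(\ref{itT4}) put $I_0$ in $H^{s}(\mathbb{T}_t\times\R^{n-1})$ with $s\ge 10>[\frac{n}{2}]+1$ (recall $n\le 3$) and make it small with respect to $\nu$, so Theorem~\ref{wpglopkzknpe} gives a unique solution $I$ of~(\ref{NPEcau2}), global in $z\in\R^+$, $L$-periodic and of zero mean in $\tau$, with $I\in C([0,\infty[,H^{s})\cap C^1([0,\infty[,H^{s-2})$ and decaying as $z\to\infty$.

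For the second stage I would note that the KZK equation~(\ref{NPEcau2}) itself expresses $\partial_z I$ through $\partial_\tau I^2$, $\partial_\tau^2 I$ and $\partial_\tau^{-1}\Delta_y I$, so that $\partial_z^k I\in C([0,\infty[,H^{s-2k})$. Combining this with the formulas~(\ref{Ikzk})--(\ref{Jkzk}) for the correctors (in which $I=\frac{\rho_0}{c^2}\partial_\tau\Phi$ and $J=-\frac{\gamma-1}{2\rho_0}I^2-\frac{\nu}{c^2\rho_0}\partial_\tau I$), with the velocity \emph{ansatz}~(\ref{vkzk}) (whose corrector $v_2=-\partial_z\Phi$ adds one $z$-derivative), and with the chain rule for the change of variables~(\ref{chvarkzk}) --- each $\partial_{x_1}$ producing $-\frac{1}{c}\partial_\tau+\varepsilon\partial_z$ and each $\partial_{x'}$ producing $\sqrt{\varepsilon}\nabla_y$ --- a routine derivative count yields the regularity~(\ref{barro})--(\ref{baru}) for $\overline{\rho}_{\varepsilon}$ and $\overline{\mathbf{v}}_{\varepsilon}$ on $\Omega=\mathbb{T}_t\times\R^{n-1}_y$; the $C^1$-in-time part comes from reading the time derivatives off the approximate system~(\ref{NSi1re})--(\ref{NSi2re}). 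The bound $s\ge 10$ is what guarantees that $\overline{\mathbf{U}}_{\varepsilon}\in C([0,\infty[,H^3(\{x_1>0\}\times\R^{n-1}))\cap C^1([0,\infty[,H^2)$, the class used below, and that the $\varepsilon^3$-remainder of the approximate system~(\ref{NSmatrA}) lies in $C([0,\infty[,L^2)$.

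For the third stage I would first invoke a local-in-time existence and uniqueness result for the mixed (initial--boundary value) problem for the quasilinear hyperbolic--parabolic system~(\ref{NSi1})--(\ref{NSi2}) on $\{x_1>0\}$ with the boundary conditions under consideration --- all of $\mathbf{v}$ prescribed on $\{x_1=0\}$, and $\rho$ prescribed in addition only on the inflow part $\{x_1=0,\ v_1>0\}$ --- which is of the maximally dissipative (subsonic) type, as in Ref.~\cite{Roz3} Theorem~5.5. The data is taken equal to $\overline{\mathbf{U}}_{\varepsilon}$ at $t=0$ and on $x_1=0$, so that the compatibility conditions up to the required order hold automatically (because $\overline{\mathbf{U}}_{\varepsilon}$ solves the exact system up to the $\varepsilon^3$-remainder); and since $\overline{\mathbf{v}}_{\varepsilon}$ is of size $\varepsilon$ on the boundary, the term $\varepsilon\nu\Delta\mathbf{v}_{\varepsilon}$ produces no boundary layer (Remark~\ref{remNST}). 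This yields a unique solution on a maximal interval $[0,T^{*}[$ with $\rho_{\varepsilon}-\rho_0,\ \mathbf{v}_{\varepsilon}\in C([0,T^{*}[,H^3)$, $\rho_{\varepsilon}-\rho_0\in C^1([0,T^{*}[,H^2)$ and $\mathbf{v}_{\varepsilon}\in C^1([0,T^{*}[,H^1)$.

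The core of the argument, and the step where I expect the real difficulty, is the a priori estimate forcing $T^{*}>T$ for every prescribed finite $T$. I would set $\mathbf{W}=\mathbf{U}_{\varepsilon}-\overline{\mathbf{U}}_{\varepsilon}$, which vanishes at $t=0$ and has homogeneous boundary data, subtract~(\ref{NSmatr}) from~(\ref{NSmatrA}), and carry out energy estimates up to order three in space, keeping the parabolic dissipation $\varepsilon\nu\|\nabla\mathbf{W}\|_{H^3}^2$ on the favourable side. At order zero this is the relative-entropy computation already done in the proof of Theorem~\ref{ThapproxNSKuz}, now on the half space: the divergence terms integrate into boundary integrals on $\{x_1=0\}$, which either vanish (where $\mathbf{v}$, or $\rho$ at the inflow, are prescribed) or carry a favourable sign on the outflow part $\{x_1=0,\ v_1\le 0\}$ --- this subsonic dissipativity of the boundary conditions is precisely the point whose justification in Ref.~\cite{Roz3} has to be restated carefully for the \emph{ansatz}~(\ref{EqAnsKZK1})--(\ref{EqAnsKZK2}). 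At higher orders one differentiates the equations and uses $\|\nabla_{t,\mathbf{x}}\overline{\mathbf{U}}_{\varepsilon}\|_{L^\infty}=O(\varepsilon)$ (from the \emph{ansatz} and the paraxial scaling $z=\varepsilon x_1$, $y=\sqrt{\varepsilon}x'$), the algebra property $H^{s}\times H^{s}\hookrightarrow H^{s}$ for $s>\frac{n}{2}$, the embedding $H^3(\R^n)\hookrightarrow W^{1,\infty}(\R^n)$ ($n\le 3$), and the global $H^3$-bound of $\overline{\mathbf{U}}_{\varepsilon}$, to close a Gronwall inequality of the form $\frac{d}{dt}\|\mathbf{W}\|_{H^3}^2\le K\varepsilon\|\mathbf{W}\|_{H^3}^2+K\varepsilon^3\|\mathbf{W}\|_{H^3}+K\varepsilon^3$. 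Integrating gives $\|\mathbf{W}(t)\|_{H^3}^2\le K\varepsilon^2\,(e^{K\varepsilon t}-1)$, which --- although only of size $\varepsilon^2$ for $t\le C/\varepsilon$ (the range relevant for Theorem~\ref{ThAprKZKNS}) --- is finite at every finite $t$; hence $\|\rho_{\varepsilon}-\rho_0\|_{H^3}+\|\mathbf{v}_{\varepsilon}\|_{H^3}$ stays bounded on every $[0,\min\{T,T^{*}\}]$, and the standard continuation criterion yields $T^{*}>T$. Restricting to $[0,T]$ gives the asserted solution with the stated regularity, and uniqueness follows by applying the same energy estimate to the difference of two solutions with identical data.
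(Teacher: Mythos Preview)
Your three-stage architecture matches the paper's, and stages one and two are essentially the same. Stage three, however, is organized differently from the paper and contains a gap.

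The paper does \emph{not} work with the difference $\mathbf{W}=\mathbf{U}_{\varepsilon}-\overline{\mathbf{U}}_{\varepsilon}$ to establish existence; it reserves that for the approximation Theorem~\ref{ThAprKZKNS}. Instead it multiplies the Navier--Stokes system by the entropy Hessian symmetrizer $2\mathbf{U}_{\varepsilon}^{T}\eta''(\mathbf{U}_{\varepsilon})$ and derives an $L^{2}$ energy identity for $\mathbf{U}_{\varepsilon}$ directly on $\{x_{1}>0\}$. The crucial step is an \emph{explicit} computation of the boundary flux:
\[
\mathbf{U}_{\varepsilon}^{T}\eta''(\mathbf{U}_{\varepsilon})DG_{1}(\mathbf{U}_{\varepsilon})\mathbf{U}_{\varepsilon}\big|_{x_{1}=0}=\rho_{\varepsilon}\,p'(\rho_{\varepsilon})\,v_{1}\big|_{x_{1}=0}.
\]
On the outflow part $v_{1}\le 0$ this is nonpositive and is simply dropped. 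On the inflow part $v_{1}>0$ the density is prescribed, so the whole expression is computed from $I_{0}$ and is bounded by $C_{0}\varepsilon I_{0}$; integrating in $t$ over successive periods and using the $L$-periodicity and zero mean of $I_{0}$ gives the bound $K([t/L]+1)\varepsilon\|I_{0}\|_{H^{s}}$. Gronwall then yields $\|\mathbf{U}_{\varepsilon}(t)\|_{L^{2}}^{2}$ finite for every finite $t$ (with an $e^{Ct}$ growth, not $e^{C\varepsilon t}$). The paper then \emph{invokes} the theory of incompletely parabolic problems (Ref.~\cite{Gustafsson}) together with the $H^{3}$ half-space theory of Matsumura--Nishida (Ref.~\cite{Matsumura2}) to pass from this $L^{2}$ bound and the dissipativity of the boundary conditions to the claimed $H^{3}$ regularity; the requirement $s\ge 10$ enters here so that $\partial_{z}^{k}I\in L^{2}(\{z>0\};H^{s-2k})$ for $k\le 4$ and hence $\mathbf{U}_{0}\in H^{3}(\{x_{1}>0\}\times\R^{n-1})$.

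Your boundary analysis is the gap. You assert that the boundary integrals for $\mathbf{W}$ ``either vanish \dots\ or carry a favourable sign on the outflow part'', but you do not carry out the computation, and in particular you never use hypothesis~(\ref{itT2}) --- that $I_{0}(t,\cdot)$ has a fixed sign in $y$ for each $t$ --- which is precisely what allows the paper to split $\{x_{1}=0\}$ into pure inflow and pure outflow time-slabs rather than having a mixed, $y$-dependent boundary type. Without that hypothesis one cannot cleanly separate the two regimes, and your sign claim for the outflow term remains unjustified (for the \emph{difference} system linearized around $\overline{\mathbf{U}}_{\varepsilon}$, the boundary quadratic form on the $\rho$-component is not automatically sign-definite). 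Moreover, you propose to close an $H^{3}$ energy for $\mathbf{W}$ by differentiating the equations, but you do not address what boundary conditions the derivatives of $\mathbf{W}$ satisfy on $\{x_{1}=0\}$; the paper sidesteps this by appealing to Refs.~\cite{Gustafsson,Matsumura2} rather than doing the higher-order estimates by hand. Finally, note that the Gronwall rate the paper obtains for $\mathbf{U}_{\varepsilon}$ itself is $e^{Ct}$, not $e^{C\varepsilon t}$ --- the $\varepsilon$-small rate you write is only what one gets for the approximation estimate on $\mathbf{W}$ in Theorem~\ref{ThAprKZKNS}, and conflating the two is part of what makes your third stage look easier than it is.
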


\begin{remark}\label{NSREM1}\cite{Roz3}
The restriction to have the same sign for $I_0$ for all fixed time avoids a change
in the type of the boundary condition applied to the tangential
variables for the Navier-Stokes system. Moreover, Zabolotskaya~\cite{KhZabKuzn}  takes as the initial
conditions for the
KZK equation (which correspond to the boundary condition for $v_1$)
the expression $$I(\tau,0,y)=-F(y) \sin \tau$$  with an amplitude distribution $F(y)\geq 0$. Especially, for a Gaussian
beam~\cite{KhZabKuzn}
$$F(y)=e^{-y^2},$$
    while for a beam with a polynomial amplitude~\cite{KhZabKuzn}  $$F(y)=\left\{%
\begin{array}{ll}
    (1-y^2)^2, \quad y\leq 1, \\
    0, \quad y>1.
\end{array}%
\right.$$
\end{remark}

\begin{proof}
As previously, we use  the fact that the   entropy for the isentropic
Euler system  $\eta(\mathbf{U}_{\eps})$, defined by
Eq.~(\ref{entropy}) is a convex function\cite{Dafermos}.

%

Let us multiply the Navier-Stokes system~(\ref{NSmatr}), from the left, by
$2\mathbf{U}_\eps^T \eta''(\mathbf{U}_\eps)$
$$2\mathbf{U}_\eps^T
\eta''(\mathbf{U}_\eps)\del_t \mathbf{U}_{\eps}
 +\sum_{i=1}^n 2\mathbf{U}_\eps^T
\eta''(\mathbf{U}_\eps) DG_i(\mathbf{U}_\varepsilon)\partial_{x_i} \mathbf{U}_\varepsilon-
 \eps\nu 2\mathbf{U}_\eps^T
\eta''(\mathbf{U}_\eps)\left[\begin{array}{c}0\\ \triangle
\mathbf{v}_{\eps}\end{array}\right]=0.$$

We notice that
$$\mathbf{U}_\eps^T
\eta''(\mathbf{U}_\eps)\left[\begin{array}{c}0\\ \triangle
\mathbf{v}_{\eps}\end{array}\right]=0,$$
and, therefore, we have
$$2 \mathbf{U}_\eps^T \eta''(\mathbf{U}_\eps) \del_t
\mathbf{U}_\eps
 =\del_t[\mathbf{U}_\eps^T
\eta''(\mathbf{U}_\eps)\mathbf{U}_\eps]-\mathbf{U}_\eps^T \del_t
\eta''(\mathbf{U}_\eps) \mathbf{U}_\eps.$$
Moreover, by virtue of $\eta''(U)DG_i(U)=(DG_i(U))^T \eta''(U)$ we find$$2\mathbf{U}_\eps^T
\eta''(\mathbf{U}_\eps) DG_i(\mathbf{U}_{\eps})\partial_{x_i}
\mathbf{U}_\eps=\partial_{x_i} [\mathbf{U}_{\eps}^T
\eta''(\mathbf{U}_{\eps})DG_i(\mathbf{U}_{\eps})\mathbf{U}_{\eps}]
-  \mathbf{U}_{\eps}^T
\partial_{x_i}[\eta''(\mathbf{U}_{\eps})DG_i(\mathbf{U}_{\eps})]\mathbf{U}_{\eps}.$$
Integrating over $[0,t]\times\{x_1>0\}$ ($x'\in \R^{n-1}$), we obtain
$$\int_0^t\int_{x_1>0}\del_t[\mathbf{U}_\eps^T
\eta''(\mathbf{U}_\eps)\mathbf{U}_\eps]dxds+\int_0^t\int_{x_1>0}\sum_{i=1}^n\partial_{x_i}
[\mathbf{U}_{\eps}^T
\eta''(\mathbf{U}_{\eps})DG_i(\mathbf{U}_{\eps})\mathbf{U}_{\eps}]dxds$$
$$- \int_0^t\int_{x_1>0}\mathbf{U}_\eps^T \del_t
\eta''(\mathbf{U}_\eps) \mathbf{U}_\eps
dxds-\int_0^t\int_{x_1>0}\sum_{i=1}^n\mathbf{U}_{\eps}^T
\partial_{x_i}[\eta''(\mathbf{U}_{\eps})DG_i(\mathbf{U}_{\eps})]\mathbf{U}_{\eps}dxds=0.$$

 Integrating by parts we result in
\begin{eqnarray*}
&&\int_{x_1>0}\mathbf{U}_{\eps}^T\eta''(\mathbf{U}_{\eps})\mathbf{U}_{\eps}dx-
\int_{x_1>0}\mathbf{U}_{\eps}^T\eta''(\mathbf{U}_{\eps})\mathbf{U}_{\eps}|_{t=0}dx\\
&&- \int_0^t\int_{x_1>0}\mathbf{U}_\eps^T\left[ \del_t
\eta''(\mathbf{U}_\eps) +
\sum_{i=1}^n \partial_{x_i}[\eta''(\mathbf{U}_{\eps})DG_i(\mathbf{U}_{\eps})]\right]\mathbf{U}_{\eps}dxds\\
&& -\int_0^t\int_{\R^{n-1}}\mathbf{U}_{\eps}^T
\eta''(\mathbf{U}_{\eps})DG_1(\mathbf{U}_{\eps})\mathbf{U}_{\eps}|_{x_1=0}dx'ds=0.
\end{eqnarray*}

 We recall that
$\eta''(\mathbf{U}_{\eps})$ is positive definite, consequently for some $C>0$ and $\delta_0>0$
$$C |\mathbf{U}_{\eps}|^2 \geq\mathbf{U}_{\eps}^T
\eta''(\mathbf{U}_{\eps})\mathbf{U}_{\eps}\ge \delta_0
|\mathbf{U}_{\eps}|^2.$$

Therefore, we obtain for the initial data
\begin{equation}\label{defU0NSKZK}
\textbf{U}_{0}=\left[\begin{array}{c}\rho_0+\eps I+\varepsilon^2 J\\
\eps\left(\rho_0+\eps I+\varepsilon^2 J\right)\left(\frac{c}{\rho_0}I+\eps
G(I),\sqrt{\eps}\vec{w}\right)\end{array}\right]\left(-\frac{x_1}{c},\eps x_1, \sqrt{\eps}x'\right)
\end{equation} 
and the relation
 \begin{align*}
 \delta_0\int_{x_1>0}\mathbf{U}_{\eps}^2 dx -C \int_{x_1>0}\mathbf{U}_{0}^2dx- & \int_0^t\int_{\R^{n-1}}\mathbf{U}_{\eps}^T
\eta''(\mathbf{U}_{\eps})DG_1(\mathbf{U}_{\eps})\mathbf{U}_{\eps}|_{x_1=0}dx'ds \\
\leq & C_1 \int_0^t\int_{x_1>0} \mathbf{U}_{\eps}^2 dx \;ds.
 \end{align*}
As in Ref.~\cite{Gustafsson}, $C_1$ is an upper bound for the eigenvalues of the symmetric matrix
 $$ \del_t
\eta''(\mathbf{U}_\eps) +
\sum_{i=1}^n \partial_{x_i}[\eta''(\mathbf{U}_{\eps})DG_i(\mathbf{U}_{\eps})].$$
Let us now consider the integral on the boundary. With notation
$\mathbf{v}_{\varepsilon}=(v_1,\mathbf{v}_{\varepsilon}')^t$ for the velocity and
$H''(\rho)=\frac{p'(\rho)}{\rho}$, we see with $DG_1(\mathbf{U}_{\varepsilon})$ coming from~(\ref{NSmatrfact}) that
\begin{eqnarray*}
&&\mathbf{U}_{\eps}^T
\eta''(\mathbf{U}_{\eps})DG_1(\mathbf{U}_{\eps})\mathbf{U}_{\eps}\\
&&=(\rho_\eps, \rho_\eps
\mathbf{v}_{\varepsilon})^T\left(\begin{array}{cc}H''(\rho_\eps)+\frac{\mathbf{v}_\eps^2}{\rho_\eps}&-\frac{\mathbf{v}_{\varepsilon}}{\rho_\eps}\\& \\
-\frac{\mathbf{v}_{\varepsilon}}{\rho_\eps}&\frac{1}{\rho_\eps}Id_{n} \end{array}\right)
\left(\begin{array}{ccc}
0&1&0\\
-v_1^2+p'(\rho_\eps)&2v_1&0\\
-v_1 \mathbf{v}_{\varepsilon}' & \mathbf{v}_{\varepsilon}'& v_1 Id_{n-1}
\end{array}\right)
\left(\begin{array}{c}\rho_\eps\\
\rho_\eps \mathbf{v}_{\varepsilon}\end{array}\right)\\
&&=(\rho_\eps,\rho_\eps v_1 ,\rho_\eps
\textbf{v}'_{\varepsilon})^T \left(\begin{array}{ccc}
v_1\left(\frac{\mathbf{v}_{\varepsilon}^2}{\rho_{\varepsilon}}-\frac{p'(\rho_{\varepsilon})}{\rho_{\varepsilon}}\right) & \frac{-v_1^2}{\rho_{\varepsilon}}+\frac{p'(\rho_{\varepsilon})}{\rho_{\varepsilon}} & -v_1\frac{\mathbf{v}_{\varepsilon}'}{\rho_{\varepsilon}} \\
\frac{-v_1^2}{\rho_{\varepsilon}}+\frac{p'(\rho_{\varepsilon})}{\rho_{\varepsilon}} & \frac{v_1}{\rho_{\varepsilon}} & 0 \\
-v_1\frac{\mathbf{v}_{\varepsilon}'}{\rho_{\varepsilon}} & 0 & \frac{v_1}{\rho_{\varepsilon}}Id_{n-1}
\end{array}\right) \left(\begin{array}{c}\rho_\eps\\
\rho_\eps v_1\\
\rho_\eps \mathbf{v}_{\varepsilon}'
\end{array}\right)\\
&&=\rho_{\varepsilon}p'(\rho_{\varepsilon})v_1.
\end{eqnarray*}

Let us consider the initial condition $I_0(t,y)$ for the KZK
equation of the type described in Remark~\ref{NSREM1}. We suppose (without
loss of generality) that $I_0=0$ for $t\in ]0,L[$ only once. 
More precisely, we suppose that the sign of $v_1$ is changing in the following way:

 \begin{itemize}
    \item $v_1\le0$ for $t\in [0+(k-1)L, \frac{L}{2}+(k-1)L]$
($k=1,2,3,...$),
    \item $v_1>0$  for $t\in ] \frac{L}{2}+(k-1)L,kL[$ ($k=1,2,3,...$). \end{itemize}

If $t\in [0, \frac{L}{2}]$ (for $k=1$), the first component of the
velocity 
$v_1|_{x_1=0}<0$ is negative, and thus we have
$$\rho_{\varepsilon}p'(\rho_{\varepsilon})v_1<0.$$
 If $t\in ] \frac{L}{2},L[$, the first component of velocity is positive
$v_1|_{x_1=0}>0,$
 then we also impose $\rho_\eps|_{x_1=0}=\rho_0+\eps I_0(t,y)+\varepsilon^2 J$, where
$I_0(t,y)$ is the initial
condition for the KZK equation and $J$ coming from Eq.~(\ref{Jkzk}).
For the term $$\rho_{\varepsilon}p'(\rho_{\varepsilon})v_1>0$$ we see that on the boundary   it has
the form
\begin{align*}
\rho_{\varepsilon}p'(\rho_{\varepsilon})v_1=& \varepsilon \left( \frac{c}{\rho_0} I_0+\frac{c^2}{\rho_0}\partial_z\partial_{\tau}^{-1}I_0\right) (\rho_0+\eps I_0(t,y)+\varepsilon^2 J)p'(\rho_0+\eps I_0(t,y)+\varepsilon^2 J)\\
\leq & C_0 \varepsilon I_0
\end{align*}

 for some constant $C_0>0$ independent on $\varepsilon$. Consequently, for $k\geq 1$
\begin{align*}
\int_0^{kL} \int_{\mathbb{R}^{n-1}}\rho_{\varepsilon}p'(\rho_{\varepsilon})v_1 \vert_{x_1=0} dx'\; ds \leq & \sum_{j=1}^k \int_{\left]\frac{L}{2}+(j-1)L,jL\right[ }\int_{\mathbb{R}^{n-1}}\rho_{\varepsilon}p'(\rho_{\varepsilon})v_1 \vert_{x_1=0} dx'\; ds\\
\leq & \sum_{j=1}^k \int_{\left]\frac{L}{2}+(j-1)L,jL\right[ }\int_{\mathbb{R}^{n-1}}  C_0 \varepsilon I_0
\leq  K k \varepsilon \Vert I_0\Vert_{H^{s}},
\end{align*}
where $K=O(1)$ is a positive constant independent of $k$.
\\However for $t>0$ we have $k\geq 1$ such that $t\in [(k-1)L,kL[$ and it implies on one hand if $t\in \left[(k-1)L,(k-1)L+\frac{L}{2}\right[ $
$$ \int_0^{t} \int_{\mathbb{R}^{n-1}}\rho_{\varepsilon}p'(\rho_{\varepsilon})v_1 \vert_{x_1=0} dx'\; ds\leq \int_0^{(k-1)L} \int_{\mathbb{R}^{n-1}}\rho_{\varepsilon}p'(\rho_{\varepsilon})v_1 \vert_{x_1=0} dx'\; ds$$and on the other hand if $t\in\left[(k-1)L+\frac{L}{2},kL\right[$
$$ \int_0^{t} \int_{\mathbb{R}^{n-1}}\rho_{\varepsilon}p'(\rho_{\varepsilon})v_1 \vert_{x_1=0} dx'\; ds\leq \int_0^{kL} \int_{\mathbb{R}^{n-1}}\rho_{\varepsilon}p'(\rho_{\varepsilon})v_1 \vert_{x_1=0} dx'\; ds.$$
As a consequence, we obtain for all $t>0$
$$\int_0^{t} \int_{\mathbb{R}^{n-1}}\rho_{\varepsilon}p'(\rho_{\varepsilon})v_1 \vert_{x_1=0} dx'\; ds\leq K \left(\left[\frac{t}{L}\right]+1\right)\varepsilon \Vert I_0\Vert_{H^{s}}.$$
Therefore we deduce the estimate, as $\delta_0>0$
$$\int_{x_1>0}\mathbf{U}_{\eps}^2 dx \le \frac{C}{\delta_0} \int_{x_1>0}\mathbf{U}_{0}^2
dx+\eps \frac{K}{\delta_0}\left(\left[\frac{t}{L}\right]+1\right) \Vert I_0\Vert_{H^{s}}+\frac{C_1}{\delta_0} \int_0^t\int_{x_1>0} \mathbf{U}_{\eps}^2 dx \;ds.$$
By Gronwall's lemma we find
$$\|\mathbf{U}_{\eps}\|^2_{L^{2}}(t) \le \frac{C}{\delta_0}\left(\|\mathbf{U}_{0}\|^2_{L^{2}}
+\eps \frac{K}{C}\left(\left[\frac{t}{L}\right]+1\right) \Vert I_0\Vert_{H^{s}} \right)e^{\frac{C_1}{\delta_0}t}$$
remainning bounded for all finite times.

Thus, for all $T<+\infty$ 
we obtain
 that $$\mathbf{U}_{\eps}\in
L^{\infty}\left([0,T],L^2\left(\{x_1>0\}\times\R^{n-1}\right)\right).$$
If $I_0=0$ for $t\in ]0,L[$ a finite number of times $m$, we obtain the same result for $\mathbf{U}_{\eps}$.

Hence, by  Ref.~\cite{Gustafsson} we have proved that the chosen boundary conditions ensure the local well-posedness for the Navier-Stokes system in the half space, which can be viewed as a symetrisable incompletely parabolic system. 
We apply now the theory of incompletely parabolic problems~\cite[p. 352]{Gustafsson}
with the result of global well-posedness of the Navier-Stokes system in the half space with the Dirichlet boundary conditions\cite{Matsumura2} for the velocity and with the initial data $\rho_{\varepsilon}(0)-\rho_0\in H^3(\{x_1>0\}\times\R^{n-1}))$ and $\mathbf{v}_{\varepsilon}(0)\in H^3(\{x_1>0\}\times\R^{n-1})$ small enough. Hence,  for sufficient regular initial data
$\mathbf{U}_{0}\in H^{3}(\{x_1>0\}\times\R^{n-1})$ ($n\leq 3$) for all finite time $T<\infty$, we obtain by the energy method that $\mathbf{U}_{\eps}\in
L^{\infty}([0,T],H^{3}(\{x_1>0\}\times\R^{n-1}))$.

To ensure that $\textbf{U}_0$ defined in Eq.~(\ref{defU0NSKZK}) belongs $H^3(\{x_1>0\}\times\R^{n-1})$ we need to take $I_0\in H^s(\mathbb{T}_t\times\mathbb{R}^{n-1})$ such that 
$$\overline{\rho}_{\varepsilon}\in C([0,+\infty[;H^3(\{x_1>0\}\times\R^{n-1}),\;
\overline{\textbf{v}}_{\varepsilon}\in C([0,+\infty[;H^3(\{x_1>0\}\times\R^{n-1}).$$
By Theorem~\ref{wpglopkzknpe}, 
$I_0\in H^s(\mathbb{T}_t\times\mathbb{R}^{n-1})$ implies while $s-2k\geq 0$ that
$$I(\tau,z,y)\in C^k(\{x_1>0\};H^{s-2k}(\mathbb{T}_{\tau}\times\mathbb{R}^{n-1})),$$
but we can also say\cite{Ito}, thanks to Point~4 of Theorem~\ref{wpglopkzknpe}, that
$$\partial_z^k I(\tau,z,y)\in L^2(\{x_1>0\};H^{s-2k}(\mathbb{T}_{\tau}\times\mathbb{R}^{n-1})).$$
Considering the expressions of $\overline{\rho}_{\varepsilon}$ and $\overline{\textbf{v}}_{\varepsilon}$ 
$$\overline{\rho}_{\varepsilon}=\rho_0+\eps I-\frac{\eps^2}{\rho_0}\left(\frac{\gamma-1}{2}I^2-\frac{\nu}{c^2}\del_\tau I\right),\; \overline{\textbf{v}}_{\varepsilon}=\frac{c^2}{\rho_0}\left(\frac{\eps}{c}I-\eps^2 \del_{\tau}^{-1}\partial_z I; \eps^\frac{3}{2} \del_{\tau}^{-1} \nabla_y I\right),$$
the least regular term  is $\partial_{\tau}^{-1}\partial_z I$.
 Thus we need to ensure
$$\partial_z I\in  C([0,+\infty[;H^3(\{x_1>0\}\times\R^{n-1}),$$ which leads us to take $s\geq 10$ in order to have
$$\partial_z^k I(\tau,z,y)\in L^2(\{x_1>0\};H^{s-2k}(\mathbb{T}_{\tau}\times\mathbb{R}^{n-1}))$$ for $k\leq 4$ with $s-2k\geq 2$ as we want to have the continuity on time.
This choice of the regularity for $I_0$ allows us to control the boundary terms appearing from the integration by parts in the energy method. 
 Indeed, 
we can perform analogous computations as in Ref.~\cite{Dafermos} p.103 to control the spatial derivative of $\mathbf{U}_{\eps}$ of the order less or equal to $3$ and directly verify that all boundary terms are controled by $t \|I_0\|_{H^s}$, what is actually is a consequence of the well-posedness\cite{Matsumura2} in $H^3$. 

Thus, we obtain the existence of the unique local solution of the Navier-Stokes
system with 
$$\rho_{\eps}\in
C\left([0,T],H^{3}\left(\{x_1>0\}\times\R^{n-1}\right)\right)\cap
C^1\left([0,T],H^{2}\left(\{x_1>0\}\times\R^{n-1}\right)\right)$$ and
$$u_{\eps}\in
C\left([0,T],H^{3}\left(\{x_1>0\}\times\R^{n-1}\right)\right)\cap
C^1\left([0,T],H^{1}\left(\{x_1>0\}\times\R^{n-1}\right)\right). $$
%
%
\end{proof}
\subsubsection{Approximation of the solutions of the isentropic Navier-Stokes system with the solutions of the KZK equation.}\label{secValNSKZK}
Knowing from Subsection~\ref{secderNSKZK} that the KZK equation can be derived from the
compressible isentropic Navier-Stokes system~(\ref{NSi1})--(\ref{NSi2})
using the \textit{ansatz}~(\ref{EqAnsKZK1})--(\ref{EqAnsKZK2}) with $I$ and $J$ given by~(\ref{Ikzk}) and~(\ref{Jkzk}) respectively,
 we obtain the following expansion of the Navier-Stokes equations
 \begin{align}
\partial_t \rho_{\varepsilon}+\nabla.(\rho_{\varepsilon} \mathbf{v}_{\varepsilon})=&\varepsilon^2[\frac{\rho_0}{c^2}(2c\partial^2_{z\tau}\Phi-c^2 \Delta_y\Phi)-\frac{\rho_0}{2c^4}(\gamma+1)\partial_{\tau}[(\partial_\tau \Phi)^2]-\frac{\nu}{c^4}\partial^3_\tau \Phi]\nonumber\\
&+\varepsilon^3 R_1^{NS-KZK}\label{massKZK}
\end{align}
and
\begin{align}
\rho_{\varepsilon} [\partial_t \mathbf{v}_{\varepsilon}&+(\mathbf{v}_{\varepsilon}.\nabla)\mathbf{v}_{\varepsilon}]+\nabla p(\rho_{\varepsilon})-\varepsilon \nu \Delta \mathbf{v}_{\varepsilon}
= \varepsilon \tilde{\nabla}[-\rho_0 \partial_{\tau}\Phi+c^2 I]\nonumber\\
&+ \varepsilon^2 \tilde{\nabla}\left[c^2 J+\frac{(\gamma-1)\rho_0}{2c^2 }(\partial_{\tau} \Phi)^2+ \frac{\nu}{c^2}\partial^2_{\tau}\Phi\right]
+\varepsilon^3 \mathbf{R}_2^{NS-KZK},\label{momentKZK}
\end{align}
where $R_1^{NS-KZK}$ and $\mathbf{R}_2^{NS-KZK}$ are the remainder terms given in~\ref{Apen1}.
So, as it was previously explained for the approximation of the Navier-Stokes by the Kuznetsov equation in Subsection~\ref{secapNSKuz}, if we consider a solution of the KZK equation $I$ and define by it the functions $\Phi$ and $J$, then we define according to  \textit{ansatz}~(\ref{EqAnsKZK1})--(\ref{EqAnsKZK2}) $\overline{\rho}_{\varepsilon}$ and  $\overline{ \mathbf{v}}_{\varepsilon}$ (see Eq.~(\ref{vkzk})), which solve the approximate  system~(\ref{NSAp1})--(\ref{NSAp2}) with the remainder terms $R_1^{NS-KZK}$ and $\mathbf{R}_2^{NS-KZK}$ and, as previously, with $p(\overline{\rho}_{\varepsilon})$ from the state law~(\ref{press})
:
\begin{align}
&\partial_t\overline{ \rho}_{\varepsilon}+\operatorname{div}(\overline{\rho}_{\varepsilon}\overline{ \mathbf{v}}_{\varepsilon})=\varepsilon^3 R_1^{NS-KZK},\label{massaproxKZK}\\
&\overline{\rho}_{\varepsilon} [\partial_t \overline{ \mathbf{v}}_{\varepsilon}+( \overline{ \mathbf{v}}_{\varepsilon}.\nabla) \overline{ \mathbf{v}}_{\varepsilon}]+\nabla p(\overline{\rho}_{\varepsilon})-\varepsilon \nu \Delta \overline{ \mathbf{v}}_{\varepsilon}=\varepsilon^3 \mathbf{R}_2^{NS-KZK}.\label{momaproxKZK}
\end{align}

As usual, we denote  by $\mathbf{U}_{\varepsilon}=(\rho_{\varepsilon}\;,\;\rho_{\varepsilon}\mathbf{v}_{\varepsilon})^t$  the solution of the exact Navier-Stokes system and by $\overline{\mathbf{U}}_{\varepsilon}=(\overline{\rho}_{\varepsilon} \;,\; \overline{\rho}_{\varepsilon} \overline{ \mathbf{v}}_{\varepsilon})^t $  the solution of~(\ref{massaproxKZK})--(\ref{momaproxKZK}). 

We  work on $\mathbb{R}_+\times \mathbb{R}^{n-1}$ ($n=2$ or 3) due to the domain  of the well-posedness for the KZK equation. In this case the Navier-Stokes system is locally well-posed with non homogeneous  boundary conditions of $\overline{\mathbf{U}}_{\varepsilon}$,  as they are directly determined by the initial condition $I_0$ of the KZK equation~(\ref{NPEcau2}) according to Theorem~\ref{thENS}. 
Knowing the existence results for  two problems, we validate the approximation of $\mathbf{U}_{\varepsilon}$ by $\overline{\mathbf{U}}_{\varepsilon}$ following Ref.~\cite{Roz3} and Subsection~\ref{secapNSKuz}:
\begin{theorem}\cite{Roz3}\label{ThAprKZKNS}
Let $n=2$ or $3$, $s\ge 10$ and   Theorem~\ref{thENS} hold. 
Then there exist constants $C>0$ and $K>0$ such that we have the following stability estimate 
$$0\leq t\leq\frac{C}{\varepsilon},\;\;\Vert \mathbf{U}_{\varepsilon}-\overline{\mathbf{U}}_{\varepsilon}\Vert^2_{L^2(\mathbb{R}_+\times\mathbb{R}^{n-1})}(t)\leq K\eps^3t  e^{K\varepsilon t}\leq 9\varepsilon^2. $$
\end{theorem}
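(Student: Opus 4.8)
The plan is to run the relative-entropy argument of Theorem~\ref{ThapproxNSKuz} on the half space $\{x_1>0\}\times\R^{n-1}$, the only new feature being the boundary terms at $x_1=0$, which are controlled by the subsonic inflow--outflow structure already exploited in the proof of Theorem~\ref{thENS}. First I would record that, since $\overline{\mathbf{U}}_\varepsilon$ is built from a global KZK solution $I$ through \textit{ansatz}~(\ref{EqAnsKZK1})--(\ref{EqAnsKZK2}) together with Eqs.~(\ref{Ikzk})--(\ref{Jkzk}), it solves the approximate system~(\ref{massaproxKZK})--(\ref{momaproxKZK}); rewriting this in entropy form exactly as in~(\ref{approxNs+entr}) gives
$$\partial_t\eta(\overline{\mathbf{U}}_\varepsilon)+\nabla.\mathbf{q}(\overline{\mathbf{U}}_\varepsilon)-\varepsilon\nu\,\overline{\mathbf{v}}_\varepsilon.\Delta\overline{\mathbf{v}}_\varepsilon=\varepsilon^3\overline{R}^{NS-KZK},$$
with $\overline{R}^{NS-KZK}$ the analogue of the remainder $\overline{R}^{NS-Kuz}$. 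By Theorem~\ref{thENS} both $\mathbf{U}_\varepsilon$ and $\overline{\mathbf{U}}_\varepsilon$ exist on $[0,T]$ for every finite $T$, with $\overline{\mathbf{U}}_\varepsilon$ classical of regularity~(\ref{barro})--(\ref{baru}), with matching data $(\mathbf{U}_\varepsilon-\overline{\mathbf{U}}_\varepsilon)|_{t=0}=0$ and matching Dirichlet trace $(\mathbf{v}_\varepsilon-\overline{\mathbf{v}}_\varepsilon)|_{x_1=0}=0$, and with $(\rho_\varepsilon-\overline{\rho}_\varepsilon)|_{x_1=0}=0$ at inflow points where $v_1|_{x_1=0}>0$.

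Next I would differentiate in the distributional sense the relative entropy $\eta(\mathbf{U}_\varepsilon)-\eta(\overline{\mathbf{U}}_\varepsilon)-\eta'(\overline{\mathbf{U}}_\varepsilon)(\mathbf{U}_\varepsilon-\overline{\mathbf{U}}_\varepsilon)$, using~(\ref{NSentropyweak}) for $\mathbf{U}_\varepsilon$ and the identity above for $\overline{\mathbf{U}}_\varepsilon$, and integrate over $\{x_1>0\}$. This reproduces inequality~(\ref{dvliment}) plus one extra contribution at $x_1=0$ coming from the flux terms that were in divergence form; by the same algebra as in the proof of Theorem~\ref{thENS} (where $\mathbf{U}_\varepsilon^T\eta''(\mathbf{U}_\varepsilon)DG_1(\mathbf{U}_\varepsilon)\mathbf{U}_\varepsilon=\rho_\varepsilon p'(\rho_\varepsilon)v_1$) this boundary term, now evaluated on the difference, vanishes where $v_1|_{x_1=0}>0$ because there $(\mathbf{U}_\varepsilon-\overline{\mathbf{U}}_\varepsilon)|_{x_1=0}=0$, and has the favourable sign where $v_1|_{x_1=0}\le0$; hence it can be dropped. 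The viscous cross terms combine into $-\varepsilon\nu\sum_i\int(\partial_{x_i}(\mathbf{v}_\varepsilon-\overline{\mathbf{v}}_\varepsilon))^2\,dx\le0$ and are discarded, the remaining viscous term is bounded by $K\varepsilon\|\mathbf{U}_\varepsilon-\overline{\mathbf{U}}_\varepsilon\|_{L^2}^2$ thanks to the regularity of $\overline{\mathbf{v}}_\varepsilon$, and the Taylor estimate gives $|\mathbf{G}_i(\mathbf{U}_\varepsilon)-\mathbf{G}_i(\overline{\mathbf{U}}_\varepsilon)-D\mathbf{G}_i(\overline{\mathbf{U}}_\varepsilon)(\mathbf{U}_\varepsilon-\overline{\mathbf{U}}_\varepsilon)|\le C|\mathbf{U}_\varepsilon-\overline{\mathbf{U}}_\varepsilon|^2$.

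Finally, using the convexity bound $\eta(\mathbf{U}_\varepsilon)-\eta(\overline{\mathbf{U}}_\varepsilon)-\eta'(\overline{\mathbf{U}}_\varepsilon)(\mathbf{U}_\varepsilon-\overline{\mathbf{U}}_\varepsilon)\ge\delta_0|\mathbf{U}_\varepsilon-\overline{\mathbf{U}}_\varepsilon|^2$, the structural fact $\|\nabla_{t,x}\overline{\mathbf{U}}_\varepsilon\|_{L^\infty}\le C\varepsilon$ forced by the paraxial scaling $(\tau,\varepsilon x_1,\sqrt\varepsilon x')$ in the \textit{ansatz}, and the $L^2\cap L^\infty$ boundedness of $R_1^{NS-KZK},\mathbf{R}_2^{NS-KZK}$ on $[0,T]$ (guaranteed by $s\ge10$ and Theorem~\ref{thENS} after passing from profile to physical variables), one arrives at
$$\|\mathbf{U}_\varepsilon-\overline{\mathbf{U}}_\varepsilon\|_{L^2}^2(t)\le\int_0^t K\big(\varepsilon\|\mathbf{U}_\varepsilon-\overline{\mathbf{U}}_\varepsilon\|_{L^2}^2+\varepsilon^3+\varepsilon^3\|\mathbf{U}_\varepsilon-\overline{\mathbf{U}}_\varepsilon\|_{L^2}\big)\,ds,$$
and Gronwall's lemma (bounding $\varepsilon^3\sqrt v\le K\varepsilon v$) yields $\|\mathbf{U}_\varepsilon-\overline{\mathbf{U}}_\varepsilon\|_{L^2}^2(t)\le K\varepsilon^3 t\,e^{K\varepsilon t}$; choosing $C$ with $KCe^{KC}\le9$ gives the bound $\le9\varepsilon^2$ on $[0,C/\varepsilon]$. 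The main obstacle is the boundary analysis: one must verify that the relative-entropy flux through $\{x_1=0\}$ is indeed sign-definite (resp.\ zero) under the mixed Dirichlet/inflow conditions of Theorem~\ref{thENS}, and that the profile-to-physical change of variables preserves the smallness of the $\varepsilon^3$ remainders; the interior estimates are otherwise identical to those in Theorem~\ref{ThapproxNSKuz} and to Ref.~\cite{Roz3}.
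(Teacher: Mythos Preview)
Your proposal is correct and matches the approach the paper intends: the paper does not give an explicit proof of Theorem~\ref{ThAprKZKNS} but refers to Ref.~\cite{Roz3} and to the relative-entropy argument of Subsection~\ref{secapNSKuz} (Theorem~\ref{ThapproxNSKuz}), which is precisely what you reconstruct---the only new ingredient being the boundary flux at $\{x_1=0\}$, handled via the subsonic inflow/outflow dichotomy of Theorem~\ref{thENS}. Your identification of the boundary analysis as the main point to check is accurate; in Ref.~\cite{Roz3} this is carried out by verifying that with $\mathbf{v}_\varepsilon=\overline{\mathbf{v}}_\varepsilon$ on the whole boundary and $\rho_\varepsilon=\overline{\rho}_\varepsilon$ at inflow, the relative-entropy boundary flux either vanishes or has the good sign, exactly as you outline.
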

\begin{remark}
The regularity of $I_0\in H^s(\mathbb{T}_t\times\mathbb{R}^{n-1})$ with $s>8$ (see Table~\ref{TABLE}) is minimal to ensure  that $R_1^{NS-KZK}$ and $\textbf{R}_2^{NS-KZK}$, see~\ref{Apen1}, belongs to $C([0,+\infty[;L^2(\mathbb{R}_+\times\mathbb{R}^{n-1}))$. 

 Indeed, if $I_0\in H^s(\mathbb{T}_t\times \mathbb{R}^{n-1})$ with $s>\max\{8,\frac{n}{2}\}$, then for $0\leq k \leq 4$
$$I(\tau,z,y)\in C^k(\lbrace z >0\rbrace;H^{s-2k}(\mathbb{T}_{\tau}\times \mathbb{R}^{n-1})).$$
Let us denote $\Omega=\mathbb{T}_{\tau}\times \mathbb{R}^{n-1}$.
Given the equations for $\overline{\rho}_{\varepsilon}$ by~(\ref{rhokzk}) with~(\ref{Ikzk}) and~(\ref{Jkzk}) and for $\overline{\textbf{v}}_{\varepsilon}$  by~(\ref{vkzk}) respectively,  we have for $0\leq k\leq 2$
\begin{align*}
&\partial^k_z\overline{\rho}_{\varepsilon}(\tau,z,y)\in  C(\lbrace z >0\rbrace;H^{s-1-2k}(\Omega)),\;
\partial^k_z\overline{\textbf{v}}_{\varepsilon}(\tau,z,y) \in  C(\lbrace z >0\rbrace;H^{s-2-2k}(\Omega)),
\end{align*}
but we can also say\cite{Ito} thanks to Point~4 of Theorem~\ref{wpglopkzknpe} that 
\begin{align*}
&\partial^k_z\overline{\rho}_{\varepsilon}(\tau,z,y)\in  L^2(\lbrace z >0\rbrace;H^{s-1-2k}(\Omega)),\;
\partial^k_z\overline{\textbf{v}}_{\varepsilon}(\tau,z,y) \in  L^2(\lbrace z >0\rbrace;H^{s-2-2k}(\Omega)).
\end{align*}
This implies for $0\leq k\leq 2$ (as $s>8$)  that $s-2-2k>2$ and
\begin{align*}
\partial^k_z\overline{\rho}_{\varepsilon}(\tau,z,y)\in &C(\mathbb{T}_{\tau}; L^2(\lbrace z >0\rbrace;H^{s-1-2k}( \mathbb{R}^{n-1}))),\\
\partial^k_z\overline{\textbf{v}}_{\varepsilon}(\tau,z,y) \in &C(\mathbb{T}_\tau; L^2(\lbrace z >0\rbrace;H^{s-2-2k}( \mathbb{R}^{n-1}))).
\end{align*}
Hence we find 
\begin{align*}
\overline{\rho}_{\varepsilon}(t,x_1,x'),\; \overline{\textbf{v}}_{\varepsilon}(t,x_1,x')\in & C([0,+\infty[;H^2(\lbrace x_1 >0\rbrace\times \mathbb{R}^{n-1}).
\end{align*}
As in addition for $0\leq k\leq 1$, considering $\overline{\rho}_{\varepsilon}$ and $\overline{\mathbf{v}}_{\varepsilon}$ as functions of $(\tau,z,y)$,
\begin{align*}
&\partial^k_z\partial_{\tau}\overline{\rho}_{\varepsilon}\in  C(\lbrace z >0\rbrace;H^{s-2-2k}(\Omega)),\;
\partial^k_z\partial_{\tau}\overline{\mathbf{v}}_{\varepsilon} \in  C(\lbrace z >0\rbrace;H^{s-3-2k}(\Omega)),
\end{align*}
 we deduce in the same way that
\begin{align*}
\partial_t \overline{\rho}_{\varepsilon}(t,x_1,x'),\; \partial_t \overline{\textbf{v}}_{\varepsilon}(t,x_1,x')\in & C([0,+\infty[;H^1(\lbrace x_1 >0\rbrace\times \mathbb{R}^{n-1})).
\end{align*}
These regularities of  $\overline{\rho}_{\varepsilon}$ and $\overline{\mathbf{v}}_{\varepsilon}$ viewed as functions of $(t,x_1,x')$ allow to have all left-hand terms in the approximated Navier-Stokes system (\ref{massaproxKZK})--(\ref{momaproxKZK}) of the regularity $C([0,T];L^2(\lbrace x_1 >0\rbrace\times \mathbb{R}^{n-1}))$ and the remainder terms in the right-hand side inherit it.
\end{remark}
\subsection{Navier-Stokes system and the NPE equation.}\label{secNSNPE}
\subsubsection{Derivation of the NPE equation}\label{secderNSNPE}

The NPE equation (Nonlinear Progressive wave Equation), initially derived by McDonald and Kuperman~\cite{McDonald}, is an example of a paraxial approximation in the aim to describe short-time pulses and a long-range propagation, for instance, in an ocean wave-guide, where the refraction phenomena are important.
To compare to the KZK equation we use the following paraxial change of variables
\begin{equation}\label{paraxpotenNPE}
u(t,x_1,x')=\Psi(\varepsilon t,x_1-ct,\sqrt{\varepsilon}x')=\Psi(\tau,z,y),
\end{equation}
with
\begin{equation}\label{chvarnpe}
\tau=\varepsilon t,\;\;z=x_1-ct,\;\;y=\sqrt{\varepsilon}x'.
\end{equation}

\begin{figure}[h!]
\begin{center}
\psfrag{a}{$x_1$}\psfrag{b}{$\mathbf{x'}$}\psfrag{c}{$t$}\psfrag{NS}{\small{Navier-Stokes/}}\psfrag{E}{\small{
Euler
$(x_1,\mathbf{x'},t)$}}\psfrag{a1}{$z=x_1-ct$}\psfrag{b1}{$\mathbf{y}=\sqrt{\epsilon}
\mathbf{x'}$}\psfrag{c1}{$\tau=\epsilon t$}\psfrag{NPE}{\small{NPE
$(\tau,z,\mathbf{y})$}}
  \includegraphics[width=.6\textwidth]{./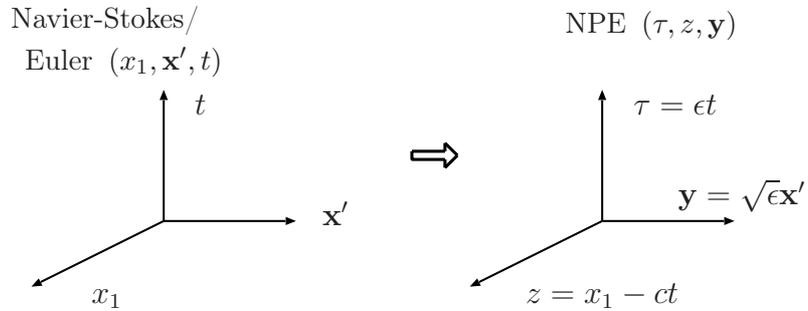}
                  \end{center}
   \caption{Paraxial change of variables for the profiles $U(\epsilon
t,x_1-ct,\sqrt{\epsilon}\mathbf{x'})$.}\label{fig3}
\end{figure}
For the velocity we have
\begin{equation}\label{vNPE}
\mathbf{v}_{\varepsilon}(t,x_1,x')= -\varepsilon \nabla u(t,x_1,x')=-\varepsilon(\partial_z \Psi,\sqrt{\varepsilon} \nabla_y \Psi)(\tau,z,y).
\end{equation}
If we compare the NPE equation to the isentropic Navier-Stokes system this method of approximation does not allow to keep the Kuznetsov \textit{ansatz} of perturbations~(\ref{rhoKuz})--(\ref{uKuz}) imposing~(\ref{rho1K})--(\ref{rho2K})
just by introducing the new paraxial profiles $\Psi$ for $u$, $\xi$ for $\rho_1$ and $\chi$ for $\rho_2$ and taking the term of order $0$ in $\varepsilon$ as it was done in the case of the KZK-approximation. This time 
the paraxial change of variables~(\ref{chvarnpe})
 for $\rho_1$ and $\rho_2$ defined in (\ref{rho1K})--(\ref{rho2K}) gives
\begin{align*}
\rho_1=&-\frac{\rho_0}{c}\partial_z\Psi+\varepsilon \frac{\rho_0}{c^2}\partial_{\tau}\Psi,\\
\rho_2=&-\frac{\rho_0(\gamma-2)}{2c^2}(\partial_z \Psi)^2-\frac{\rho_0}{2c^2}(\partial_z \Psi)^2-\frac{\nu}{\rho_0}\partial_z^2 \Psi\\
&+\varepsilon\left[\frac{\rho_0(\gamma-2)}{2c^3}\partial_{ z}\Psi \partial_{\tau}\Psi-\frac{\rho_0}{2c^2}(\nabla_y \Psi)^2-\frac{\nu}{c^2}\Delta_y \Psi\right]\\
&+\varepsilon^2 \left(-\frac{\rho_0(\gamma-2)}{2c^4}\right)(\partial_{\tau }\Psi)^2.
\end{align*}
Thus one of the terms in the $\rho_1$-extension takes part on the second order corrector of $\rho_{\varepsilon}$:
\begin{equation}\label{rhoNPE}
\rho_{\varepsilon}(t,x_1,x')= \rho_0+\varepsilon \xi(\tau,z,y)+\varepsilon^2 \chi(\tau,z,y),
\end{equation}
with
\begin{align}
\xi(\tau,z,y)=&-\frac{\rho_0}{c}\partial_z\Psi,\label{P1NPE}\\
\chi(\tau,z,y)=&\frac{\rho_0}{c^2}\partial_{\tau}\Psi-\frac{\rho_0(\gamma-1)}{2c^2}(\partial_z \Psi)^2-\frac{\nu}{c^2}\partial^2_z\Psi.\label{P2NPE}
\end{align}

The obtained \textit{ansatz}~(\ref{vNPE})--(\ref{rhoNPE})  applied to the Navier-Stokes system gives 
\begin{align*}
\partial_t\rho_{\varepsilon}+\operatorname{div}(\rho_{\varepsilon} \mathbf{v}_{\varepsilon})=&\varepsilon^2(-\frac{2\rho_0}{c})\left(\partial^2_{\tau z}\Psi-\frac{(\gamma+1)}{4}\partial_z(\partial_z\Psi)^2-\frac{\nu}{2\rho_0}\partial^3_z\Psi+\frac{c}{2}\Delta_y\Psi\right)\\
&+\varepsilon^3 R_1^{NS-NPE},
\end{align*}
\begin{align*}
\rho_{\varepsilon} [\partial_t \mathbf{v}_{\varepsilon}&+(\mathbf{v}_{\varepsilon}.\nabla)\mathbf{v}_{\varepsilon}]+\nabla p(\rho_{\varepsilon})-\varepsilon \nu \Delta \mathbf{v}_{\varepsilon}
=\varepsilon \nabla\left(\xi+\frac{\rho_0}{c}\partial_z\Psi\right)\\
&+c^2\varepsilon^2\nabla\big[\chi-\frac{\rho_0}{c^2}\partial_{\tau}\Psi+\frac{\rho_0(\gamma-1)}{2c^2}(\partial_z \Psi)^2
+\frac{\nu}{c^2}\partial^2_z\Psi\Big]
+\varepsilon^3 \mathbf{R}_2^{NS-NPE}.
\end{align*}
The remainder term in the conservation of mass is given by
\begin{align}
\varepsilon^3 R_1^{NS-NPE}=& \varepsilon^3\big( \partial_{\tau} \chi-\nabla_y \xi\;\nabla_y\Psi-\xi\;\Delta_y\Psi-\partial_z\chi\;\partial_z\Psi-\chi\;\partial^2_z\Psi)\nonumber\\
&+\varepsilon^4(-\nabla_y \chi\;\nabla_y\Psi-\chi \;\Delta_y\Psi),\label{R1NSNPE}
\end{align}
while in the conservation of momentum along the $x_1$ axis it is given by\begin{align}
&\varepsilon^3 \mathbf{R}_2^{NS-NPE}. \overrightarrow{e}_1=\varepsilon^3\Big[-\frac{\rho_0}{c}\partial_z\Psi \;\partial^2_{\tau z}\Psi+\frac{\rho_0}{2}\partial_z(\nabla_y\Psi)^2+\nu \partial_z \Delta_y\Psi+\frac{\xi}{2}\partial_z(\partial_z\Psi)^2\nonumber\\
&+c\chi\partial^2_z\Psi\Big]
+\varepsilon^4\left(\frac{\xi}{2}\partial_z(\nabla_y\Psi)^2-\chi\partial^2_{\tau z}\Psi+\frac{\chi}{2}\partial_z(\partial_z\Psi)^2\right)
+\varepsilon^5\frac{\chi}{2}\partial_z(\nabla_y\Psi)^2,\label{R2NSNPEe1}
\end{align}
and along all transversal direction $x_j$  to the propagation  $x_1$-axis\begin{align}
&\varepsilon^3 \mathbf{R}_2^{NS-NPE}. \overrightarrow{e}_j=\varepsilon^{\frac{7}{2}}\Big[-\frac{\rho_0}{c}\partial_z\Psi \;\partial^2_{\tau y_j}\Psi+\frac{\rho_0}{2}\partial_{y_j}(\nabla_y\Psi)^2+\nu \partial_{y_j} \Delta_y\Psi+\frac{\xi}{2}\partial_{y_j}(\partial_z\Psi)^2\nonumber\\
&+c\chi\partial^2_{z y_j}\Psi\Big]
+\varepsilon^{\frac{9}{2}}\left(\frac{\xi}{2}\partial_{y_j}(\nabla_y\Psi)^2-\chi\partial^2_{\tau y_j}\Psi+\frac{\chi}{2}\partial_{y_j}(\partial_z\Psi)^2\right)
+\varepsilon^{\frac{11}{2}}\frac{\chi}{2}\partial_{y_j}(\nabla_y\Psi)^2.\label{R2NSNPEej}
\end{align}
As all previous models, for this \textit{ansatz}, the NPE equation
\begin{equation}\label{NPE}
 \partial^2_{\tau z}\Psi-\frac{(\gamma+1)}{4}\partial_z(\partial_z\Psi)^2-\frac{\nu}{2\rho_0}\partial^3_z\Psi+\frac{c}{2}\Delta_y\Psi=0
\end{equation}
 appears  as the second order approximation of the isentropic Navier-Stokes system up to the terms of the order of $\mathcal{O}(\varepsilon^3)$.
In the sequel we work with the NPE equation satisfied by $\xi$ (see Eq.~(\ref{P1NPE}) for the definition) 
\begin{equation}\label{NPE2}
\partial^2_{\tau z} \xi+\frac{(\gamma+1)c}{4\rho_0}\partial_z^2[(\xi)^2]-\frac{\nu}{2\rho_0}\partial^3_z  \xi+\frac{c}{2}\Delta_y  \xi=0.
\end{equation}
Looking at Figs~\ref{fig2} and~\ref{fig3} together with~(\ref{KZKI}) and~(\ref{NPE}) we see that we have a bijection between the variables of the KZK and NPE equations
defined by the relations
\begin{equation}\label{bijKZKNPE}
z_{NPE}=-c\tau_{KZK} \hbox{ and }\tau_{NPE}=\varepsilon \tau_{KZK}+\frac{z_{KZK}}{c},
\end{equation}
which implies  for the derivatives
$$\partial_{\tau_{NPE}}=c\partial_{z_{KZK}}\hbox{ and } \partial_{z_{NPE}}=-\frac{1}{c}\partial_{\tau_{KZK}}.$$
Thus, as it was mentioned in Introduction, the known mathematical results for the KZK equation can be directely applied for the NPE equation. 
\subsubsection{Well posedness of the NPE equation.}
We consider the Cauchy problem:
\begin{equation}\label{npecau}
\left\lbrace
\begin{array}{c}
\partial^2_{\tau z} \xi+\frac{(\gamma+1)c}{4\rho_0}\partial_z^2[(\xi)^2]-\frac{\nu}{2\rho_0}\partial^3_z  \xi+\frac{c}{2}\Delta_y  \xi=0\hbox{ on }\mathbb{R}_+\times\mathbb{T}_{z}\times\mathbb{R}^{n-1},\\
\xi(0,z,y)=\xi_0(z,y)\hbox{ on }\mathbb{T}_{z}\times\mathbb{R}^{n-1},
\end{array}\right.
\end{equation}
in the class of $L-$periodic functions with respect to the variable $z$ and with mean value zero along $z$.
The introduction of the operator $\partial_z^{-1}$ defined similarly to $\partial_{\tau}^{-1}$ in Eq.~(\ref{invdtau})
allows us to consider instead of Eq.~(\ref{NPE2}) the following equivalent equation
$$\partial_{\tau } \xi+\frac{(\gamma+1)c}{4\rho_0}\partial_z[(\xi)^2]-\frac{\nu}{2\rho_0}\partial^2_z  \xi+\frac{c}{2}\Delta_y \partial_{z}^{-1} \xi=0\hbox{ on }\mathbb{R}_+\times\mathbb{T}_{z}\times\mathbb{R}^{n-1} .$$
As a consequence we can use the results of Subsection~\ref{secWPKZKNPE} if we replace $\tau$ by $z$.
In the same time 
for the viscous case it holds the following theorem: 
\begin{theorem}\label{ThWPNPE}
Let $n\geq 2$, $\nu>0$, $s>\max\left(4,\left[\frac{n}{2}\right]+1\right)$ and $\xi_0\in H^{s}(\mathbb{T}_{z}\times\mathbb{R}^{n-1})$ with zero mean value along $z$. Then there exists a constant $k_2>0$ such that if 
\begin{equation}\label{smallcondWPnpe}
\Vert \xi_0\Vert_{H^{s}(\mathbb{T}_{z}\times\mathbb{R}^{2})}<k_2,
\end{equation}
then the Cauchy problem for the NPE equation~(\ref{npecau}) has a unique global in time solution 
\begin{equation}\label{RegNPE3}
\xi\in \bigcap_{i=0}^2 C^i([0,+\infty[,H^{s-2i}(\mathbb{T}_{z}\times\mathbb{R}^{2}))
\end{equation}
satisfying the zero mean value condition along $z$.
Moreover for $\Psi$ according with Eq.~(\ref{P1NPE}) we have
$$\Psi:=-\frac{c}{\rho_0}\partial_{z}^{-1} \xi\in \bigcap_{i=0}^2 C^i([0,+\infty[,H^{s-2i}(\mathbb{T}_{z}\times\mathbb{R}^{2}))$$
and it also satisfies the zero mean value condition along $z$, $i.e.$
$\int_0^L \Psi(\tau,l,y)dl=0.$
\end{theorem}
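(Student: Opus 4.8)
The plan is to reduce the well-posedness of the NPE Cauchy problem~(\ref{npecau}) for $\xi$ directly to the KZK theory of Theorem~\ref{wpglopkzknpe}, exploiting the bijection~(\ref{bijKZKNPE}) between the KZK and NPE variables, and then to transfer the regularity to $\Psi$ by means of the operator $\partial_z^{-1}$. First I would introduce the operator $\partial_z^{-1}$ exactly as $\partial_\tau^{-1}$ was defined in~(\ref{invdtau}), with $z$ playing the role of $\tau$ and the mean-value-zero condition taken along $z$; this is legitimate since we work in the class of $L$-periodic functions of $z$ with mean value zero, and $\partial_z^{-1}$ then maps this class into itself while inverting $\partial_z$. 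Applying $\partial_z^{-1}$ to~(\ref{NPE2}) gives the first-order-in-$\tau$ equivalent equation
$$\partial_{\tau } \xi+\frac{(\gamma+1)c}{4\rho_0}\partial_z[(\xi)^2]-\frac{\nu}{2\rho_0}\partial^2_z  \xi+\frac{c}{2}\Delta_y \partial_{z}^{-1} \xi=0,$$
which has precisely the structure of the KZK equation~(\ref{NPEcau2}) after interchanging the roles of $\tau$ and $z$ (and noting the sign of the $\Delta_y$-term is immaterial by the Remark following Theorem~\ref{wpglopkzknpe}, or is absorbed in the change of variables). Since $\xi$ satisfies the mean-value-zero condition along $z$, it remains in this class for all time, and the operator $\partial_z^{-1}$ is well-defined along the flow.

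Next I would invoke Theorem~\ref{wpglopkzknpe} verbatim in the form adapted by the substitution $\tau \leftrightarrow z$. Under the smallness hypothesis~(\ref{smallcondWPnpe}) with $k_2$ chosen as the constant $C_1$ of point~3 of Theorem~\ref{wpglopkzknpe} (applied with the transversal dimension $n-1=2$, hence $n=3$, so that the stated requirement $s>\max(4,[\tfrac n2]+1)$ gives $s\ge 4>[\tfrac n2]+1$), global existence holds: there is a unique global solution $\xi$ with
$$\xi\in C([0,+\infty[,H^{s}(\mathbb{T}_{z}\times\mathbb{R}^{2}))\cap C^1([0,+\infty[,H^{s-2}(\mathbb{T}_{z}\times\mathbb{R}^{2})),$$
satisfying the zero-mean-value condition along $z$. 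To upgrade to the full regularity $\bigcap_{i=0}^2 C^i([0,+\infty[,H^{s-2i})$ I would bootstrap using the equation itself: $\partial_\tau \xi$ is expressed through $\partial_z[(\xi)^2]$, $\partial_z^2\xi$ and $\partial_z^{-1}\Delta_y\xi$, each of which lies in $H^{s-2}$ when $\xi\in H^s$ (using the Sobolev multiplication embedding~(\ref{Sobolalg}), valid since $s-2\ge 2 > 1 = \tfrac n2$ is only needed after noting $s>4$ actually gives room), hence $\xi\in C^1([0,+\infty[,H^{s-2})$; differentiating once more in $\tau$ and substituting again gives $\xi\in C^2([0,+\infty[,H^{s-4})$. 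This yields~(\ref{RegNPE3}).

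Finally, for $\Psi$ defined by $\Psi:=-\frac{c}{\rho_0}\partial_z^{-1}\xi$ in accordance with~(\ref{P1NPE}), I observe that $\partial_z^{-1}$ maps $H^{k}(\mathbb{T}_z\times\mathbb{R}^2)$ continuously into itself for each $k$ (it is an antiderivative in the periodic variable plus a constant-in-$z$ correction, both bounded operations on the mean-value-zero class), and it commutes with $\partial_\tau$ and with $\Delta_y$. Therefore $\Psi$ inherits exactly the regularity class of $\xi$, namely $\bigcap_{i=0}^2 C^i([0,+\infty[,H^{s-2i}(\mathbb{T}_z\times\mathbb{R}^2))$; moreover $\partial_z^{-1}\xi$ has zero mean value along $z$ by construction of the operator, giving $\int_0^L\Psi(\tau,l,y)\,dl=0$. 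The only genuinely delicate point is checking that the transversal term $\frac{c}{2}\Delta_y\partial_z^{-1}\xi$ really is of the same nonlocal type as the KZK term $-\frac{c^2}{2}\partial_\tau^{-1}\Delta_y I$ so that Theorem~\ref{wpglopkzknpe} applies without modification — this amounts to verifying that the paraxial change of variables~(\ref{bijKZKNPE}) is an isomorphism of the relevant periodic Sobolev spaces and carries one equation to the other, which I would state precisely and then refer to Ref.~\cite{Roz2} for the underlying energy estimates.
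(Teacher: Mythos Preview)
Your proposal is correct and follows essentially the same approach as the paper: invoke Theorem~\ref{wpglopkzknpe} with the roles of $\tau$ and $z$ interchanged to obtain the $C^0\cap C^1$ regularity of $\xi$, bootstrap via the equation to reach $C^2$, and then transfer the regularity to $\Psi=-\frac{c}{\rho_0}\partial_z^{-1}\xi$ by the boundedness of $\partial_z^{-1}$ on the mean-value-zero class. The only cosmetic difference is that the paper justifies this last boundedness by the Poincar\'e inequality, $\Vert \partial_z^{-1}\xi\Vert_{H^s}\le C\Vert \partial_z\partial_z^{-1}\xi\Vert_{H^s}=C\Vert\xi\Vert_{H^s}$, whereas you argue it directly from the structure of the operator; both amount to the same thing.
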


\begin{proof}
For $\xi_0\in H^{s}(\mathbb{T}_{z}\times\mathbb{R}^{n-1})$ small enough, the existence of a global in time solution 
$$ \xi\in \bigcap_{i=0}^1 C^i([0,+\infty[,H^{s-2i}(\mathbb{T}_{z}\times\mathbb{R}^{n-1}))$$
of the Cauchy problem for the NPE equation~(\ref{npecau}) comes from Theorem~\ref{wpglopkzknpe}. We also have the desired regularity by a simple bootstrap argument. Moreover the formula for $\partial_z^{-1}$ (see the equivalent definition of $\partial_{\tau}^{-1}$ in Eq.~(\ref{invdtau})) implies for $s\geq 1$ by the Poincar\'e inequality
$$\Vert \partial_z^{-1} \xi\Vert_{H^s(\mathbb{T}_{z}\times\mathbb{R}^{n-1})}\leq C \Vert \partial_z \partial_z^{-1} \xi\Vert_{H^s(\mathbb{T}_{z}\times\mathbb{R}^{n-1})} \leq C \Vert \xi\Vert_{H^s(\mathbb{T}_{z}\times\mathbb{R}^{n-1})},$$
which gives us the same regularity for $\Psi$.
\end{proof}

\subsubsection{Approximation of the solutions of the isentropic Navier-Stokes system by the solutions of the NPE equation.}\label{secapNSNPE}
By Subsection~\ref{secWPKZKNPE}, this time the approximation domain is $\mathbb{T}_{x_1}\times \mathbb{R}^{n-1}$. 
Let $\xi$ be a sufficiently regular solution of the Cauchy problem~(\ref{npecau}) for the NPE equation in $\mathbb{T}_z\times\mathbb{R}^{n-1}$.
Then, taking $\xi$ and $\chi$ according to formulas~(\ref{P1NPE})-(\ref{P2NPE}), with $\Psi$ defined using the operator $\partial_z^{-1}$ equivalent to $\partial_{\tau}^{-1}$ (see Eq.~~(\ref{invdtau})), we define
$\overline{\rho}_{\varepsilon}$ and $\overline{ \mathbf{v}}_{\varepsilon}$ by formulas~(\ref{vNPE})--(\ref{rhoNPE}). For $\overline{\rho}_{\varepsilon}$ and $\overline{ \mathbf{v}}_{\varepsilon}$ we obtain a solution of the approximate system~(\ref{NSAp1})--(\ref{NSAp2}) defined on $\mathbb{R}_+\times\mathbb{T}_{x_1}\times\mathbb{R}^{n-1}$
with $p(\overline{\rho}_{\varepsilon})$ from the state law~(\ref{press}), but with the remainder terms  $R_1^{NS-NPE}$ and $\textbf{R}_2^{NS-NPE}$ defined respectively in Eqs.~(\ref{R1NSNPE})--(\ref{R2NSNPEej}) instead of $R_1^{NS-Kuz}$ and  $\textbf{R}_2^{NS-Kuz}$. 


In what following we consider the three dimensional case, knowing, thanks to the energy method used in Ref.~\cite{Matsumura} on $\mathbb{R}^3$, that  
the Cauchy problem for the Navier-Stokes system is globally well-posed in $\mathbb{T}_{x_1}\times\mathbb{R}^{2}$ for sufficiency small initial data (see Ref.~\cite{Matsumura} Theorem~7.1, p.~100 or Ref.~\cite{Cao}): 
\begin{theorem}\label{ThWPNSper}
There exists a constant $k_1>0$ such that if  the initial data 
\begin{equation}\label{NSinitdataper}
\rho_{\varepsilon}(0)-\rho_0\in H^3(\mathbb{T}_{x_1}\times\mathbb{R}^{2}) ,\;\;\;\mathbf{v}_{\varepsilon}(0)\in H^3(\mathbb{T}_{x_1}\times\mathbb{R}^{2})
\end{equation}
satisfy $$\Vert \rho_{\varepsilon}(0)-\rho_0\Vert_{H^3(\mathbb{T}_{x_1}\times\mathbb{R}^{2})}+  \Vert \mathbf{v}_{\varepsilon}(0)\Vert_{H^3(\mathbb{T}_{x_1}\times\mathbb{R}^{2})}<k_1,$$ 
and $\rho_{\varepsilon}(0)-\rho_0$ and $\mathbf{v}_\varepsilon(0)$ have a zero mean value among $x_1$ then the  Cauchy problem~(\ref{NSi1})-(\ref{press}) on $\mathbb{T}_{x_1}\times\mathbb{R}^{2}$ with the initial data~(\ref{NSinitdataper}) has a unique global in time solution $(\rho_{\varepsilon},$ $\mathbf{v}_{\varepsilon} )$  such that \begin{equation}\label{EqregRhoNSper}
 \rho_{\varepsilon}-\rho_0\in C([0,+\infty[;H^3(\mathbb{T}_{x_1}\times\mathbb{R}^{2}))\cap C^1([0,+\infty[;H^2(\mathbb{T}_{x_1}\times\mathbb{R}^{2})),
\end{equation} 
which implies
\begin{equation}\label{EqregRhoNSper2}
 \rho_{\varepsilon}-\rho_0\in C([0,+\infty[;H^3(\mathbb{T}_{x_1}\times\mathbb{R}^{2}))\cap C^1([0,+\infty[;H^1(\mathbb{T}_{x_1}\times\mathbb{R}^{2}))
\end{equation}
and 
\begin{equation}\label{EqregvNSper}
      \mathbf{v}_{\varepsilon}\in C([0,+\infty[;H^3(\mathbb{T}_{x_1}\times\mathbb{R}^{2}))\cap C^1([0,+\infty[;H^1(\mathbb{T}_{x_1}\times\mathbb{R}^{2})).
    \end{equation}
Moreover for all time for $\rho_{\varepsilon}-\rho_0$ and $\mathbf{v}_\varepsilon$ have a zero mean value along $x_1$.
\end{theorem}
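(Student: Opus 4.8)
The plan is to derive this statement by adapting to the mixed domain $\Omega=\mathbb{T}_{x_1}\times\R^{2}$ the classical small-data global theory of Matsumura and Nishida (Ref.~\cite{Matsumura}, Theorem~7.1; see also Ref.~\cite{Cao}), for which all the Sobolev embeddings, Moser-type product estimates and interpolation inequalities used on the whole space remain valid, the only change being that the Fourier variable dual to $x_1$ now ranges over $\Z$ instead of $\R$. First I would establish the local theory: writing $\sigma=\rho_\eps-\rho_0$ and linearising (\ref{NSi1})--(\ref{press}) around the constant state $(\rho_0,0)$ one obtains a symmetric hyperbolic--parabolic system, and a standard iteration (or contraction) argument in $C([0,T];H^3(\Omega))\cap C^1([0,T];H^2(\Omega))$, with $\mathbf v_\eps$ picking up the parabolic smoothing $L^2([0,T];H^4(\Omega))$, produces a unique local solution on some interval $[0,T_0]$ exactly as in the whole-space case; in particular $\rho_\eps-\rho_0\in C^1([0,T];H^2(\Omega))\hookrightarrow C^1([0,T];H^1(\Omega))$, which yields (\ref{EqregRhoNSper2}) once (\ref{EqregRhoNSper}) is known globally.

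The core of the proof is the uniform-in-time a priori estimate for small data. I would derive an energy--dissipation inequality of the form
$$\frac{d}{dt}\,\mathcal E(t)+\mathcal D(t)\ \le\ C\,\sqrt{\mathcal E(t)}\;\mathcal D(t),$$
where $\mathcal E(t)\simeq\|\sigma(t)\|_{H^3(\Omega)}^2+\|\mathbf v_\eps(t)\|_{H^3(\Omega)}^2$ and $\mathcal D(t)\simeq\|\nabla\sigma(t)\|_{H^2(\Omega)}^2+\|\nabla\mathbf v_\eps(t)\|_{H^3(\Omega)}^2$, the gain of $\|\nabla\sigma\|$ inside $\mathcal D$ coming, as usual, from the hyperbolic coupling (testing the momentum equation against $\nabla\sigma$). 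To upgrade this into the global bound $\mathcal E(t)\le C\,\mathcal E(0)$ one combines it with the decay of the linearised semigroup on $\Omega$: on the nonzero $x_1$-Fourier modes there is a spectral gap (no low-frequency degeneracy), whereas the zero $x_1$-mode reproduces the linearised compressible system on $\R^{2}$, so one recovers exactly the two-dimensional decay rates. A continuity/bootstrap argument then gives, for $\|\sigma(0)\|_{H^3(\Omega)}+\|\mathbf v_\eps(0)\|_{H^3(\Omega)}<k_1$ small enough, a global solution with the regularity (\ref{EqregRhoNSper}) and (\ref{EqregvNSper}).

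For the propagation of the vanishing mean along $x_1$ I would integrate the continuity equation (\ref{NSi1}) and each component of the momentum equation (\ref{NSi2}) over one period $x_1\in\mathbb{T}$: every flux of the form $\partial_{x_1}(\cdot)$ drops out by periodicity, and the $x_1$-averages of $\rho_\eps-\rho_0$ and of $\rho_\eps\mathbf v_\eps$, regarded as functions of $(t,x')$, solve a lower-dimensional evolution system. Checking that this system, started from the vanishing averages prescribed at $t=0$, keeps them identically zero (by uniqueness for the averaged system, using the regularity already obtained) gives the asserted zero mean of $\rho_\eps-\rho_0$ and $\mathbf v_\eps$ along $x_1$ for all time.

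The step I expect to be the main obstacle is the global closing of the energy estimate: in the two transverse directions there is no Poincar\'e inequality, so -- unlike on a full torus -- a purely energy-based continuation does not suffice and one must genuinely invoke the slower, two-dimensional decay of the linearised flow; this is precisely where the result specific to $\R^{2}$ (and hence the reference~\cite{Cao}) is needed.
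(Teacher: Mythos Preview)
Your overall approach matches what the paper does: the paper does not give a self-contained proof of this theorem but simply invokes the energy method of Matsumura--Nishida (Ref.~\cite{Matsumura}, Theorem~7.1) and the partial-space-periodic adaptation of Ref.~\cite{Cao}. Your sketch of the local theory, the energy--dissipation inequality, and the bootstrap closure is precisely that programme spelled out in more detail than the paper itself provides, so on the existence and regularity part you are aligned with the paper and correct.

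There is, however, a genuine gap in your argument for the propagation of the zero $x_1$-mean. When you integrate (\ref{NSi1})--(\ref{NSi2}) over $x_1\in\mathbb{T}$, the resulting equations for $\langle\rho_\eps-\rho_0\rangle$ and $\langle\rho_\eps\mathbf v_\eps\rangle$ are \emph{not} a closed system: they involve $\langle\rho_\eps\mathbf v_\eps\otimes\mathbf v'_\eps\rangle$ and $\langle p(\rho_\eps)\rangle$, which depend on the full solution, not merely on the averages. Hence ``uniqueness for the averaged system'' does not apply as stated, and the argument as written does not show that the averages remain zero. The paper gives no argument for this point either (it simply asserts it), so you are not missing something the paper supplies; but if you want to actually justify the claim you would need a different mechanism --- for instance, checking whether the specific structure of the NPE-derived data in the paper enforces a symmetry, or verifying that Ref.~\cite{Cao} proves this conservation directly.
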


The existence results for the Cauchy problems of the Navier-Stokes system~(\ref{NSi1})-(\ref{press}) and the NPE equation~(\ref{npecau}) 
allow us to establish the global existence of $\mathbf{U}_{\varepsilon}$ and $\overline{\mathbf{U}}_{\varepsilon}$, considered in the NPE approximation framework: 
 \begin{theorem}\label{ThExistUUbarNSNPE}
 Let $n=3$.
 There exists  a constant $k>0$ such that if the initial datum 
 $\xi_0\in H^5(\mathbb{T}_{z}\times\mathbb{R}^{2})$ for the Cauchy problem for the NPE equation~(\ref{npecau}) (necessarily $k\leq k_2$, see Theorem~\ref{ThWPNPE})
 is sufficiently small
 $$\Vert \xi_0\Vert_{H^5(\mathbb{T}_{z}\times\mathbb{R}^{n-1})}<k,$$
and has a zero mean value then there exist global in time solutions $\overline{\mathbf{U}}_\eps=(\overline{\rho}_{\varepsilon},\; \overline{\rho}_{\varepsilon} \overline{ \mathbf{v}}_{\varepsilon})^t$ of the approximate Navier-Stokes system~(\ref{NSmatrA})  and $\mathbf{U}_\eps=(\rho_{\varepsilon},\;\rho_{\varepsilon}\mathbf{v}_{\varepsilon})^t$ of the exact  Navier-Stokes system~(\ref{NSmatr}) respectively, with the same regularity corresponding to~(\ref{EqregRhoNSper2}) and~(\ref{EqregvNSper}) and a zero mean value in the $x_1$-direction, both considered with the state law~(\ref{press}) and with the same initial data 
 \begin{eqnarray}
    &&(\bar{\rho}_\eps-\rho_\eps)|_{t=0} =0,
\quad (\bar{\mathbf{v}}_\eps-\mathbf{v}_\eps)|_{t=0} = 0,\label{bbannpe}\end{eqnarray}
where $\bar{\rho}_\eps|_{t=0}$ and $\bar{\mathbf{v}}_\eps|_{t=0}$ are constructed as the functions of the initial datum for NPE equation $\xi_0$ according to formulas~(\ref{vNPE})--(\ref{P2NPE}).
\end{theorem}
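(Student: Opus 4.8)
The plan is to run, \emph{mutatis mutandis}, the argument of Theorem~\ref{ThExistUUbarNSK}, replacing the Kuznetsov well-posedness input by the NPE well-posedness of Theorem~\ref{ThWPNPE} and the whole-space Navier--Stokes result by its periodic-in-$x_1$ version, Theorem~\ref{ThWPNSper}. From the small datum $\xi_0$ one produces a global NPE profile $\xi$ (hence $\Psi$ and $\chi$), builds $\overline{\mathbf U}_\eps=(\overline\rho_\eps,\overline\rho_\eps\overline{\mathbf v}_\eps)^t$ through the paraxial \emph{ansatz}~(\ref{paraxpotenNPE})--(\ref{P2NPE}), verifies that it has the regularity~(\ref{EqregRhoNSper2})--(\ref{EqregvNSper}) together with zero $x_1$-mean, and then uses its initial trace as data for the exact system, whose global solution $\mathbf U_\eps$ is delivered by Theorem~\ref{ThWPNSper} provided $k$ is chosen small enough.

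\textbf{Step 1: construction and regularity of $\overline{\mathbf U}_\eps$.} Apply Theorem~\ref{ThWPNPE} with $n=3$ and $s=5$ (admissible since $5>\max(4,[n/2]+1)$): for $\|\xi_0\|_{H^5(\mathbb{T}_z\times\mathbb{R}^2)}<k_2$ there is a unique global $\xi\in\bigcap_{i=0}^2 C^i([0,+\infty[,H^{5-2i}(\mathbb{T}_z\times\mathbb{R}^2))$ with zero $z$-mean, and $\Psi=-\frac{c}{\rho_0}\partial_z^{-1}\xi$ enjoys the same regularity and zero $z$-mean. Define $\overline\rho_\eps$ and $\overline{\mathbf v}_\eps$ by~(\ref{vNPE})--(\ref{P2NPE}). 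Using the change of variables~(\ref{chvarnpe}) (so that $\partial_t=\eps\partial_\tau-c\partial_z$, $\partial_{x_1}=\partial_z$ and $\nabla_{x'}=\sqrt\eps\nabla_y$), the $H^s$-boundedness of $\partial_z^{-1}$ on the zero-$z$-mean class (Poincaré, as in the proof of Theorem~\ref{ThWPNPE}), and the multiplication rule~(\ref{Sobolalg}) with $n=3$, one checks that the heaviest contributions — the term $\partial_\tau\Psi$ inside $\chi$ (which, via the NPE equation, costs $\xi\in C^1(\cdot,H^3)$) and the term $\partial_z^{-1}\nabla_y\xi$ inside $\overline{\mathbf v}_\eps$ — remain in $H^3$ in space, so that $\overline\rho_\eps-\rho_0$ and $\overline{\mathbf v}_\eps$ lie in $C([0,+\infty[,H^3(\mathbb{T}_{x_1}\times\mathbb{R}^2))\cap C^1([0,+\infty[,H^1(\mathbb{T}_{x_1}\times\mathbb{R}^2))$, which is exactly~(\ref{EqregRhoNSper2})--(\ref{EqregvNSper}). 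This bookkeeping pins the threshold to $s=5$: with $s=4$ the $\partial_\tau\Psi$ term only reaches $H^2$.

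\textbf{Step 2: zero mean and the exact solution.} Since $z=x_1-ct$ is a pure shift, the mean over $x_1\in\mathbb{T}_{x_1}$ equals the mean over $z\in\mathbb{T}_z$; both $\partial_z$ and $\partial_z^{-1}$ preserve the zero-$z$-mean class (by the very definition~(\ref{invdtau})), and $\xi,\Psi$ have zero $z$-mean, so $\overline{\mathbf v}_\eps$ and the $O(\eps)$ part of $\overline\rho_\eps-\rho_0$ have zero $x_1$-mean; the one point needing care is the quadratic corrector $(\partial_z\Psi)^2$ in $\chi$, whose $z$-mean does not vanish and must be handled (by normalizing the corrector while keeping the residual $O(\eps^3)$) so as to fit the periodic framework of Theorem~\ref{ThWPNSper}. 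Evaluating~(\ref{vNPE})--(\ref{P2NPE}) at $t=0$ yields $\overline\rho_\eps|_{t=0}-\rho_0$ and $\overline{\mathbf v}_\eps|_{t=0}$ as explicit polynomial expressions in $\xi_0$, whence $\|\overline\rho_\eps(0)-\rho_0\|_{H^3}+\|\overline{\mathbf v}_\eps(0)\|_{H^3}\le C\|\xi_0\|_{H^5}$. Setting $\mathbf U_\eps(0)=\overline{\mathbf U}_\eps(0)$ and $k\le\min\{k_2,k_1/C\}$ (with $k_1$ from Theorem~\ref{ThWPNSper}) meets the smallness hypothesis there, producing the unique global $\mathbf U_\eps=(\rho_\eps,\rho_\eps\mathbf v_\eps)^t$ with the regularity~(\ref{EqregRhoNSper2})--(\ref{EqregvNSper}) and zero $x_1$-mean; since the data coincide,~(\ref{bbannpe}) holds and the proof is complete.

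\textbf{Main obstacle.} The substantive difficulty is the sharp regularity count of Step~1: because $\Psi$ is recovered from $\xi$ through the nonlocal $\partial_z^{-1}$ and $\chi$ contains $\partial_z^2\Psi$, a product $(\partial_z\Psi)^2$ and $\partial_\tau\Psi$ — the last of which, after one further $\partial_t$, turns into $\partial_\tau^2\Psi$ via the paraxial chain rule — one must track exactly how two spatial and two temporal derivatives of $\xi$ are consumed, and confirm that $\xi_0\in H^5$ is both necessary and sufficient to land the left-hand side of the approximate system~(\ref{NSAp1})--(\ref{NSAp2}) in $C([0,+\infty[,L^2)$. The secondary difficulty, already flagged, is reconciling the zero-$x_1$-mean requirement of the periodic Navier--Stokes theory with the non-mean-zero quadratic density corrector.
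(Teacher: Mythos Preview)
Your proposal is correct and follows essentially the same route as the paper: invoke Theorem~\ref{ThWPNPE} with $s=5$ to produce the global NPE profile, build $\overline{\mathbf U}_\eps$ via~(\ref{vNPE})--(\ref{P2NPE}), read off its $H^3$ regularity, and feed the initial trace into Theorem~\ref{ThWPNSper} with $k$ small enough. The paper's own proof is a three-sentence sketch saying exactly this (``essentially the same as for Theorem~\ref{ThExistUUbarNSK}''); your more careful regularity bookkeeping and your observation that the quadratic corrector $(\partial_z\Psi)^2$ in $\chi$ does not have zero $z$-mean go beyond what the paper writes --- the paper does not address that point at all.
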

\begin{proof}
The proof is essentially the same as for Theorem~\ref{ThExistUUbarNSK}. 
According to Theorem~\ref{ThWPNPE} with $s=5$, the datum $\xi_0$ is regular enough so that 
$$\rho_{\varepsilon}-\rho_0\vert_{t=0}\in H^3(\mathbb{T}_{x_1}\times\mathbb{R}^{2})\hbox{ and }\mathbf{v}_{\varepsilon}\vert_{t=0}\in [H^3(\mathbb{T}_{x_1}\times\mathbb{R}^{2})]^3$$
constructed with the help of formulas~(\ref{vNPE})--(\ref{P2NPE}) in order to apply Theorem~\ref{ThWPNSper}. These formulas together with Theorem~\ref{ThWPNPE} imply that $\overline{\rho}_{\varepsilon}$ and $\overline{\textbf{v}}_\varepsilon$ have the desired regularity.
\end{proof}

Thanks to Theorem~\ref{ThExistUUbarNSNPE} we validate the approximation of $\mathbf{U}_{\varepsilon}$ by $\overline{\mathbf{U}}_{\varepsilon}$ following Ref.~\cite{Roz3}: 
\begin{theorem}\label{ThapproxNSNPE}
 Let  
 $\nu>0$ and $\eps>0$ be fixed and all assumptions of Theorem~\ref{ThExistUUbarNSNPE} hold. Then estimates of Theorem~\ref{ThapproxNSKuz} hold  in $L^2(\mathbb{T}_{x_1}\times\mathbb{R}^{2})$.
 \end{theorem}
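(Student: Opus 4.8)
The plan is to reproduce, essentially word for word, the relative-entropy stability argument from the proof of Theorem~\ref{ThapproxNSKuz}, the only changes being the spatial domain, now $\mathbb{T}_{x_1}\times\mathbb{R}^{2}$ instead of $\mathbb{R}^3$, and the explicit form of the remainder terms, now $R_1^{NS-NPE}$ and $\mathbf{R}_2^{NS-NPE}$ from Eqs.~(\ref{R1NSNPE})--(\ref{R2NSNPEej}). Under the assumptions of Theorem~\ref{ThExistUUbarNSNPE} both the exact solution $\mathbf{U}_\eps$ and the approximate one $\overline{\mathbf{U}}_\eps$ exist globally in time with the regularity~(\ref{EqregRhoNSper2})--(\ref{EqregvNSper}) and with zero mean value in $x_1$, and $\overline{\mathbf{U}}_\eps$ solves the approximate system~(\ref{NSAp1})--(\ref{NSAp2}) with these NPE remainders; being a classical solution, $\overline{\mathbf{U}}_\eps$ can be rewritten in the entropy form~(\ref{approxNs+entr}) with $\mathbf{R}^{NS-Kuz}$ replaced by $\mathbf{R}^{NS-NPE}$. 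Although $\eps^3\mathbf{R}^{NS-NPE}$ collects terms of orders ranging from $\eps^3$ to $\eps^{11/2}$, all of them are $O(\eps^3)$, and the regularity of the profiles $\Psi$, $\xi$, $\chi$ given by Theorem~\ref{ThWPNPE}, together with the Sobolev multiplication embedding used in the proof of Theorem~\ref{ThapproxNSKuz}, keeps $R_1^{NS-NPE}(t)$ and $\mathbf{R}_2^{NS-NPE}(t)$ bounded in $L^2(\mathbb{T}_{x_1}\times\mathbb{R}^{2})$ and $L^\infty(\mathbb{T}_{x_1}\times\mathbb{R}^{2})$ uniformly for $t\ge 0$.

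Next I would estimate $\frac{d}{dt}\int(\eta(\mathbf{U}_\eps)-\eta(\overline{\mathbf{U}}_\eps)-\eta'(\overline{\mathbf{U}}_\eps)(\mathbf{U}_\eps-\overline{\mathbf{U}}_\eps))\,\dx$ following line by line the computation leading to~(\ref{dvliment}): one uses the entropy inequality~(\ref{NSentropyweak}) for $\mathbf{U}_\eps$ and the entropy equality~(\ref{approxNs+entr}) for $\overline{\mathbf{U}}_\eps$, the convexity identity $\eta''(\mathbf{U})D\mathbf{G}_i(\mathbf{U})=(D\mathbf{G}_i(\mathbf{U}))^t\eta''(\mathbf{U})$, and collects the viscous contributions into the manifestly nonpositive term $-\eps\nu\sum_i\int(\partial_{x_i}(\mathbf{v}_\eps-\overline{\mathbf{v}}_\eps))^2\,\dx$, which is then discarded. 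All divergence-form terms integrate to zero: in the $x_1$-direction because every function is $L$-periodic in $x_1$, and in the transverse directions $x'\in\mathbb{R}^2$ because the $H^s$-regularity of $\overline{\mathbf{U}}_\eps$ and $\mathbf{U}_\eps$ (or, if one only assumes $\mathbf{U}_\eps$ is an admissible weak solution, its bounded-energy property from Definition~\ref{DefAdm}) forces decay at infinity, playing the role of $H^s\hookrightarrow C_0$ in the $\mathbb{R}^3$ case.

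From there the argument closes exactly as in Theorem~\ref{ThapproxNSKuz}: convexity gives $\eta(\mathbf{U}_\eps)-\eta(\overline{\mathbf{U}}_\eps)-\eta'(\overline{\mathbf{U}}_\eps)(\mathbf{U}_\eps-\overline{\mathbf{U}}_\eps)\ge\delta_0|\mathbf{U}_\eps-\overline{\mathbf{U}}_\eps|^2$ and continuity gives the matching bound at $t=0$; the Taylor remainder of $\mathbf{G}_i$ is controlled by $|\mathbf{U}_\eps-\overline{\mathbf{U}}_\eps|^2$; the remainder contributions are bounded by $K\eps^3$ and $K\eps^3\|\mathbf{U}_\eps-\overline{\mathbf{U}}_\eps\|_{L^2}$; and, crucially, $\|\nabla\overline{\mathbf{U}}_\eps\|_{L^\infty}\le C\eps$ because $\overline{\rho}_\eps-\rho_0$ and $\overline{\mathbf{v}}_\eps$ are built from the $\eps$-scaled \emph{ansatz}~(\ref{vNPE})--(\ref{rhoNPE}) with $O(1)$ profiles. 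Gronwall's lemma then yields $\|(\mathbf{U}_\eps-\overline{\mathbf{U}}_\eps)(t)\|^2_{L^2(\mathbb{T}_{x_1}\times\mathbb{R}^{2})}\le K(\eps^3 t+\delta^2)e^{K\eps t}$, and in the regular case $\mathbf{U}_\eps(0)=\overline{\mathbf{U}}_\eps(0)$ the sharper bounds valid for $t\le\frac{C}{\eps}$ and for $t\le\frac{C}{\eps}\ln(\frac{1}{\eps})$ follow from the same comparison $\|\mathbf{U}_\eps-\overline{\mathbf{U}}_\eps\|_{L^2}\le v$ with $(v^2)'=K(\eps^3+\eps^3 v+\eps v^2)$, $v(0)=0$.

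The only points demanding care — and the main (mild) obstacle — are, first, checking that the divergence/boundary terms really vanish on the mixed domain $\mathbb{T}_{x_1}\times\mathbb{R}^{2}$ (periodicity removes the $x_1$-boundary, while decay in $x'$ of both solutions, guaranteed by Theorem~\ref{ThWPNSper} in the classical case and by the bounded-energy condition otherwise, handles $|x'|\to\infty$) and that the zero-mean-in-$x_1$ property is shared by $\mathbf{U}_\eps$ and $\overline{\mathbf{U}}_\eps$, so that Theorems~\ref{ThWPNPE}, \ref{ThWPNSper} and~\ref{ThExistUUbarNSNPE} apply consistently; and, second, verifying that the regularity $\xi_0\in H^5$ assumed in Theorem~\ref{ThExistUUbarNSNPE} indeed suffices to bound $R_1^{NS-NPE}$ and $\mathbf{R}_2^{NS-NPE}$ in $L^2$ and $L^\infty$, which requires some bookkeeping because the NPE loses two derivatives per $\partial_\tau$ and these remainders involve $\partial_\tau\chi$, hence $\partial_\tau^2\Psi$-type terms; if needed one simply raises the threshold on $\xi_0$, exactly as the ``moreover'' part of Theorem~\ref{ThapproxNSKuz} does. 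Everything else is a transcription of the proof of Theorem~\ref{ThapproxNSKuz}.
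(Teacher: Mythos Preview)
Your proposal is correct and follows exactly the approach the paper takes: the paper in fact omits the proof entirely, stating only that it is the same as that of Theorem~\ref{ThapproxNSKuz} because one has the same systems~(\ref{NSmatr}) and~(\ref{NSmatrA}) with different remainder terms of the same order in $\varepsilon$. Your write-up simply spells out what that transcription looks like, and your remarks about the vanishing of divergence terms on $\mathbb{T}_{x_1}\times\mathbb{R}^{2}$ and the regularity needed for the NPE remainders are precisely the minor adaptations the paper leaves implicit (and addresses separately in the paragraph following the theorem).
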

The proof being the same as in Theorem~\ref{ThapproxNSKuz} is omitted. In fact it is due to the same Eqs.~~(\ref{NSmatr}) and~(\ref{NSmatrA}) with just different remainders terms of the same order on $\varepsilon$.

It is also easy to see using the previous arguments that the minimum regularity of the initial data (see Table~\ref{TABLE}) to have the remainder terms 
$$
R_1^{NS-NPE}\hbox{ and }\textbf{R}_2^{NS-NPE}\in  C([0,+\infty[;L^2(\mathbb{T}_{x_1}\times \mathbb{R}^{2}))
$$
corresponds to
$\xi_0\in H^s(\mathbb{T}_{x_1}\times \mathbb{R}^{2})$ with $s \geq 4$ since then for $0\leq k \leq 2$
$$\xi(\tau,z,y)\in C^k([0,+\infty[\rbrace;H^{s-2k}(\mathbb{T}_{z}\times \mathbb{R}^{2})),$$
which finally implies with formulas~(\ref{vNPE})--(\ref{P2NPE}) that
\begin{align*}
\overline{\rho}_{\varepsilon}(t,x_1,x')\in C([0,+\infty[;H^2(\mathbb{T}_{x_1}\times \mathbb{R}^{2}))\cap C^1([0,+\infty[;L^2(\mathbb{T}_{x_1}\times \mathbb{R}^{2})),\\
\overline{\textbf{v}}_{\varepsilon}(t,x_1,x')\in C([0,+\infty[;H^3(\mathbb{T}_{x_1}\times \mathbb{R}^{2}))\cap C^1([0,+\infty[;H^1(\mathbb{T}_{x_1}\times \mathbb{R}^{2})).
\end{align*}

%
 
%
%

\section{Approximation of the Euler system.}\label{secEul}
Let us consider the following isentropic Euler system:
\begin{align}
&\partial_t\rho_{\varepsilon}+\operatorname{div}(\rho_{\varepsilon} \mathbf{v}_{\varepsilon})=0,\label{Euli1}\\
& \rho_{\varepsilon} [\partial_t \mathbf{v}_{\varepsilon}+(\mathbf{v}_{\varepsilon}.\nabla)\mathbf{v}_{\varepsilon}]+\nabla p(\rho_{\varepsilon})=0\label{Euli2}
\end{align}
with $p(\rho_{\varepsilon})$ given in Eq.~(\ref{press}). 
We use all notations of Section~\ref{secNS} just taking $\nu=0$. 

Let us consider two and three dimensional cases.
The entropy $\eta$ of the isentropic Euler system, defined in Eq.~(\ref{entropy}), is of class $C^3$ and in addition $\eta''(U_{\varepsilon})$ is positive definite for $\rho_{\varepsilon}>0$. Moreover, from~(\ref{NSmatr}) we see that $G_i\in C^\infty$ with respect to $U_{\varepsilon}$ for $\rho_{\varepsilon}>0$. Then we can apply Theorem~5.1.1 p. 98 in Ref.~\cite{Dafermos} which gives us the local well-posedness of the Euler system:
\begin{theorem}\label{ThWPEulDafermos}\cite{Dafermos}
In $\mathbb{R}^n$ for $n=2$ or $3$, suppose the initial data $\mathbf{U}_{\varepsilon}(0)$ be continuously differentiable on $\mathbb{R}^n$, take value in some compact set with $\rho_{\varepsilon}(0)>0$, and 
$$\hbox{for }i=1,...,n,\hbox{ } \partial_{x_i} \mathbf{U}_{\varepsilon}(0)\in [H^s(\mathbb{R}^{n})]^{n+1} \hbox{ with }  s>n/2.$$ 
Then there exists $0<T_{\infty}\leq +\infty,$   and a unique continuously differentiable function $\mathbf{U}_{\varepsilon}$ on $\mathbb{R}^3\times [0, T_{\infty}[$ taking value with $\rho_{\varepsilon}>0$, which is a classical solution of the Cauchy problem associated to~(\ref{NSmatr}) with $\nu=0$. Furthermore for $i=1,...,n$
$$\partial_{x_i} \mathbf{U}_{\varepsilon}(t)\in \bigcap_{k=0}^s C^k([0, T_{\infty}[;[H^{s-k}(\mathbb{R}^{n})]^{n+1}).$$
The interval $[0, T_{\infty}[$ is maximal in that if $T_{\infty}< +\infty$ then 
$$\int_0^{T_{\infty}} \sup_{i=1,...,n}\Vert \partial_{x_i}\mathbf{U}_{\varepsilon}\Vert_{[L^{\infty}(\mathbb{R}^n)]^{n+1}}dt=+\infty,$$
and/or the range of $\mathbf{U}_{\varepsilon}(t)$ escapes from every compact subsets of $\mathbb{R}^*_+\times \mathbb{R}^n$ as $t\rightarrow T_{\infty}$. 
\end{theorem}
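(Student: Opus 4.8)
The plan is to recognize that the statement is a direct transcription to the present setting of Theorem~5.1.1 (p.~98) of Ref.~\cite{Dafermos}, so the proof reduces to checking that the isentropic Euler system~(\ref{Euli1})--(\ref{Euli2}) fits the abstract framework of that theorem and then quoting it. Writing the system in the conservative form~(\ref{NSmatr}) with $\nu=0$, i.e.\ $\partial_t \mathbf{U}_\varepsilon+\sum_{i=1}^n\partial_{x_i}\mathbf{G}_i(\mathbf{U}_\varepsilon)=0$ with $\mathbf{G}_i$ given by~(\ref{NSmatrfact}), two structural facts must be verified on the open set $\{\rho_\varepsilon>0\}$: that each $\mathbf{G}_i$ is a smooth function of $\mathbf{U}_\varepsilon$ there, and that the system carries a strictly convex entropy of class $C^3$ whose Hessian is positive definite.

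For the first point, the components of $\mathbf{G}_i$ are $\rho_\varepsilon v_i=m_i$, $\rho_\varepsilon v_i\mathbf{v}_\varepsilon=m_i\mathbf{m}/\rho_\varepsilon$ (with $\mathbf{m}=\rho_\varepsilon\mathbf{v}_\varepsilon$) and $p(\rho_\varepsilon)$, which by~(\ref{press}) is a polynomial in $\rho_\varepsilon$; hence $\mathbf{G}_i\in C^\infty(\{\rho_\varepsilon>0\})$. For the second point, the entropy $\eta(\mathbf{U}_\varepsilon)$ of~(\ref{entropy}) is $C^3$ on $\{\rho_\varepsilon>0\}$ — the internal part $H(\rho_\varepsilon)$ is smooth since $H'(\rho_\varepsilon)=p(\rho_\varepsilon)/\rho_\varepsilon^2$ with $p$ polynomial, and the kinetic part $\mathbf{m}^2/(2\rho_\varepsilon)$ is smooth for $\rho_\varepsilon>0$ — and its Hessian~(\ref{secderent}) is positive definite for $\rho_\varepsilon>0$, as already recorded there, since $H''(\rho_\varepsilon)=p'(\rho_\varepsilon)/\rho_\varepsilon>0$ on the physically admissible range where $p'(\rho_\varepsilon)=c^2+\frac{(\gamma-1)c^2}{\rho_0}(\rho_\varepsilon-\rho_0)>0$. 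Thus $\eta$ is a strictly convex entropy and $\eta''(\mathbf{U}_\varepsilon)$ is a symmetrizer of the flux Jacobians, via the compatibility identity $\eta''(\mathbf{U})D\mathbf{G}_i(\mathbf{U})=(D\mathbf{G}_i(\mathbf{U}))^t\eta''(\mathbf{U})$ already used in Subsection~\ref{secapNSKuz}, so the system is symmetrizable hyperbolic on $\{\rho_\varepsilon>0\}$.

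With these verifications in place, Ref.~\cite{Dafermos} Theorem~5.1.1 applies verbatim to continuously differentiable initial data with range in a compact subset of $\{\rho_\varepsilon>0\}$ and $\partial_{x_i}\mathbf{U}_\varepsilon(0)\in[H^s(\R^n)]^{n+1}$, $s>n/2$, yielding the maximal time $T_\infty\in(0,+\infty]$, the unique $C^1$ solution $\mathbf{U}_\varepsilon$ on $\R^n\times[0,T_\infty[$ with $\rho_\varepsilon>0$, the propagation of regularity $\partial_{x_i}\mathbf{U}_\varepsilon(t)\in\bigcap_{k=0}^s C^k([0,T_\infty[;[H^{s-k}(\R^n)]^{n+1})$, and the stated continuation alternative. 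The only genuinely delicate point — and the reason Dafermos' hypotheses and blow-up criterion are phrased in terms of a compact range with $\rho_\varepsilon>0$ — is that both the smoothness of the fluxes and the positive definiteness of the symmetrizer degenerate as $\rho_\varepsilon\to 0$; since this is already built into the cited statement, no further argument is required, and the proof consists precisely of the structural check above followed by the citation.
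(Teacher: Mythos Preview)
Your proposal is correct and matches the paper's approach exactly: the paper does not give a formal proof of this cited theorem, but immediately before stating it records precisely the two structural checks you perform --- that $\eta$ is $C^3$ with $\eta''(\mathbf{U}_\varepsilon)$ positive definite for $\rho_\varepsilon>0$, and that the $\mathbf{G}_i$ are $C^\infty$ in $\mathbf{U}_\varepsilon$ for $\rho_\varepsilon>0$ --- and then invokes Theorem~5.1.1 p.~98 of Ref.~\cite{Dafermos}. Your write-up is simply a more detailed version of that same verification-and-citation.
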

\begin{remark}
A sufficient condition for the initial data to apply Theorem~\ref{ThWPEulDafermos} is to have $\rho_{\varepsilon}(0)-\rho_0\in H^3(\mathbb{R}^n)$ and $\mathbf{v}_{\varepsilon}(0)\in (H^3(\mathbb{R}^n))^n$ with $\rho_{\varepsilon}(0)>0$.
\end{remark}
To approximate the solutions of the Euler system and the Kuznetsov, the NPE or the KZK equations, we need to know for which time (how long) they exist. In the difference to the viscous case, the inviscid models 
can provide blow-up phenomena as indicated in Theorem~\ref{ThWPEulDafermos} for the Euler system, in Theorem~\ref{ThTimeExistNu0} for the Kuznetsov equation and for the KZK and the NPE equations see Theorem~1.3 in Ref.~\cite{Roz2}. Let us start by summarize what is known on the blow-up time for the Euler system\cite{Alinhac2,Sideris2,Sideris3,Sideris1,Sideris2d,Yin1}.

Due to our framework of the non-linear acoustic, it is important for us to have a potential motion (the irrotational case) and to consider the compressible isentropic Euler system~(\ref{Euli1})--(\ref{Euli2}) with initial data defining a perturbation of order $\varepsilon$ around the constant state $(\rho_0,0)$: 
%
\begin{theorem}\label{ThminTepsSideris}\textbf{(Existence time for the Euler system)}
\begin{enumerate}
\item In $\mathbb{R}^n$ for $n=2$ or $3$, suppose the initial data $$\mathbf{U}_{\varepsilon}(0)=(\rho_{\varepsilon,0},\rho_{\varepsilon,0}\mathbf{v}_{\varepsilon,0})^t $$ be a perturbation of order $\varepsilon$ around the constant state $(\rho_0,0)$ (see Eq.~(\ref{EulerinitAlinh})) and take value such that for $i=1,...,n$, $\partial_{x_i} \mathbf{U}_{\varepsilon}(0)\in [H^s(\mathbb{R}^{n})]^{n+1}$ with $s>n/2$.
Then according to Theorem \ref{ThWPEulDafermos}  there exists a unique classical solution of the Cauchy problem associated to~(\ref{NSmatr}) with $\nu=0$ with a regularity given in Theorem \ref{ThWPEulDafermos}.
Moreover considering a generic constant $C>0$  independent on $\varepsilon$, the existence time $T_\varepsilon$ is estimated by
$T_\varepsilon \geq \frac{C}{\varepsilon}.$
\item \cite{Sideris2,Sideris3,Sideris1,Sideris2d} If $\nabla\times \mathbf{v}_{\varepsilon,0}=0$ and if 
$$\left(\frac{\rho_{\varepsilon,0}}{\rho_0}\right)^{\frac{\gamma-1}{2}}-1\hbox{ and }\textbf{v}_{\varepsilon,0}\hbox{ belong to the energy space } X^m$$ a dense subspaces of $H^m(\mathbb{R}^n)$ with $m\geq 4$ (for instance they can belong to $\mathcal{D}(\mathbb{R}^n)$, see p.7-8 in Ref.~\cite{Sideris1} for the exact definition of $X^m$) then
$$T_\varepsilon\geq \frac{C}{\varepsilon^2}\hbox{ for }n=2, \hbox{ and }
T_\varepsilon\geq \exp\left(\frac{C}{\varepsilon}\right)-1 \hbox{ for }n=3.$$
The regularity is given by energy estimates on $X^m$ which implies at least the same regularity as in Theorem \ref{ThWPEulDafermos} if for $i=1,...,n$, $\partial_{x_i} \mathbf{U}_{\varepsilon}(0)\in [H^{m-1}(\mathbb{R}^{n})]^{n+1}$.
\end{enumerate} 
\end{theorem}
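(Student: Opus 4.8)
We do not prove the second assertion: it is precisely the content of the almost-global existence results of Sideris and of their two-dimensional refinements (the references given in the statement), which we only recall below; so the plan concerns the first assertion, which we obtain by combining the local well-posedness of Theorem~\ref{ThWPEulDafermos} with a quantitative energy estimate and a continuity (bootstrap) argument.

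First I would set $\mathbf{V}=\mathbf{U}_{\varepsilon}-(\rho_0,0)^t$; by the hypothesis that $\mathbf{U}_{\varepsilon}(0)$ is a perturbation of order $\varepsilon$ of $(\rho_0,0)$ (Eq.~(\ref{EulerinitAlinh})) one has $\|\mathbf{V}(0)\|_{L^\infty(\mathbb{R}^n)}=O(\varepsilon)$ and $\|\partial_{x_i}\mathbf{U}_{\varepsilon}(0)\|_{H^s(\mathbb{R}^n)}=O(\varepsilon)$ for $i=1,\dots,n$, and since $s>n/2$ we have $H^s(\mathbb{R}^n)\hookrightarrow L^\infty(\mathbb{R}^n)$. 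The Euler system is system~(\ref{NSmatr}) with $\nu=0$, and it is symmetrized by the Hessian $\eta''(\mathbf{U}_{\varepsilon})$ of the entropy~(\ref{entropy}), which is positive definite for $\rho_{\varepsilon}>0$ and satisfies $\eta''(\mathbf{U})D\mathbf{G}_i(\mathbf{U})=(D\mathbf{G}_i(\mathbf{U}))^t\eta''(\mathbf{U})$, exactly as used in the proof of Theorem~\ref{thENS}. Putting $\mathcal{E}(t)=\sum_{i=1}^n\|\partial_{x_i}\mathbf{U}_{\varepsilon}(t)\|^2_{H^s(\mathbb{R}^n)}$, I would apply $\partial^\alpha$ ($|\alpha|\le s$) to the equations satisfied by the $\partial_{x_i}\mathbf{V}$, pair with $\eta''(\mathbf{U}_{\varepsilon})\partial^\alpha\partial_{x_i}\mathbf{V}$ and integrate over $\mathbb{R}^n$: the principal part is skew-symmetric (after integration by parts) and disappears, while the commutator terms (Moser/Kato--Ponce estimates) together with the contribution of $\partial_t\eta''(\mathbf{U}_{\varepsilon})+\sum_i\partial_{x_i}[\eta''(\mathbf{U}_{\varepsilon})D\mathbf{G}_i(\mathbf{U}_{\varepsilon})]$ (whose symmetric part has bounded spectrum, as in Ref.~\cite{Gustafsson}) are all controlled by $C(\|\mathbf{V}\|_{L^\infty})\,\|\nabla\mathbf{U}_{\varepsilon}\|_{L^\infty}\,\mathcal{E}(t)$, with $C(\cdot)$ an increasing function that stays finite as long as $\rho_{\varepsilon}$ is bounded away from $0$. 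This yields
\begin{equation}\label{plan-energy-ineq}
\frac{d}{dt}\mathcal{E}(t)\le C(\|\mathbf{V}(t)\|_{L^\infty})\,\|\nabla\mathbf{U}_{\varepsilon}(t)\|_{L^\infty}\,\mathcal{E}(t).
\end{equation}

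The decisive point is that the lost factor is $\|\nabla\mathbf{U}_{\varepsilon}\|_{L^\infty}=\|\nabla\mathbf{V}\|_{L^\infty}\lesssim\|\nabla\mathbf{V}\|_{H^s}\lesssim\sqrt{\mathcal{E}(t)}$, hence of size $\varepsilon$ rather than of size $1$: this is what forces the lifespan to be of order $1/\varepsilon$. I would then run a continuity argument on the largest subinterval of $[0,T_\infty[$ on which simultaneously $\|\mathbf{V}(t)\|_{L^\infty}\le\rho_0/2$ (so that $C(\cdot)\le C_0$, a fixed constant, and $\rho_{\varepsilon}\in[\rho_0/2,3\rho_0/2]$) and $\mathcal{E}(t)\le 4\mathcal{E}(0)$: on it~(\ref{plan-energy-ineq}) becomes $\frac{d}{dt}\mathcal{E}\le C_1\varepsilon\,\mathcal{E}$, so Gronwall gives $\mathcal{E}(t)\le\mathcal{E}(0)e^{C_1\varepsilon t}\le 2\mathcal{E}(0)$ for $t\le\frac{\ln 2}{C_1\varepsilon}$, while integrating $\partial_t\mathbf{V}=-\sum_i D\mathbf{G}_i(\mathbf{U}_{\varepsilon})\partial_{x_i}\mathbf{U}_{\varepsilon}$ in time gives $\|\mathbf{V}(t)\|_{L^\infty}\le\|\mathbf{V}(0)\|_{L^\infty}+C_2\varepsilon t\le\rho_0/4$ for $\varepsilon$ small and $t\le c/\varepsilon$. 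Both bounds being strictly improved, this subinterval contains $[0,C/\varepsilon]$ for some $C>0$ independent of $\varepsilon$; on $[0,C/\varepsilon]$ we then have $\int_0^{C/\varepsilon}\|\nabla\mathbf{U}_{\varepsilon}\|_{L^\infty}\,dt<\infty$ and the range of $\mathbf{U}_{\varepsilon}$ remains in a compact subset of $\mathbb{R}^*_+\times\mathbb{R}^n$, so by the maximality criterion of Theorem~\ref{ThWPEulDafermos} necessarily $T_\infty>C/\varepsilon$, i.e.\ $T_{\varepsilon}\ge C/\varepsilon$.

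For the second assertion, when $\nabla\times\mathbf{v}_{\varepsilon,0}=0$ the motion stays irrotational and~(\ref{Euli1})--(\ref{Euli2}) is equivalent to a second-order quasilinear wave equation for the velocity potential whose quadratic nonlinearity has the algebraic structure exploited by Sideris and collaborators; the generalized (Klainerman vector field) energy estimates on the spaces $X^m$ with $m\ge 4$ produce the bounds $T_{\varepsilon}\ge C/\varepsilon^2$ for $n=2$ and $T_{\varepsilon}\ge\exp(C/\varepsilon)-1$ for $n=3$, and in particular control $\partial_{x_i}\mathbf{U}_{\varepsilon}(t)$ in the Sobolev scale at least as in Theorem~\ref{ThWPEulDafermos} provided $\partial_{x_i}\mathbf{U}_{\varepsilon}(0)\in[H^{m-1}(\mathbb{R}^n)]^{n+1}$; we simply invoke those references. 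The only genuinely delicate step in the proof of the first assertion is ensuring that the energy estimate~(\ref{plan-energy-ineq}) loses exactly one power $\|\nabla\mathbf{U}_{\varepsilon}\|_{L^\infty}=O(\varepsilon)$ and that the $L^\infty$-bootstrap for $\mathbf{V}$ remains consistent with it; everything sharper than the lifespan $1/\varepsilon$ is deferred to the cited works.
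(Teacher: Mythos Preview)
Your proposal is correct and follows the same approach as the paper. The paper's own proof simply says that the first point is ``a direct consequence of the proof of Theorem~5.1.1 p.~98 in Ref.~\cite{Dafermos}'' and defers the second point to the Sideris references; what you have written is precisely an unpacked version of that Dafermos argument (entropy symmetrization, $H^s$ energy estimate losing a factor $\|\nabla\mathbf{U}_\varepsilon\|_{L^\infty}=O(\varepsilon)$, bootstrap/continuation, blow-up criterion), so you are more explicit but not methodologically different.
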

\begin{proof}
The first point is a direct consequence of the proof of Theorem~5.1.1 p. 98 in Ref.~\cite{Dafermos}.
For the second point we refer to Refs.~ \cite{Sideris2,Sideris3,Sideris1,Sideris2d} in order to have estimations of $T_{\varepsilon}$ with the help of energy estimates in the considered energy spaces which are dense subspaces of the usual Sobolev spaces.
\end{proof}

%
Let us pay attention on the optimality of the lifespan in the previous results for two~\cite{Alinhac2} and three dimensional cases~\cite{Yin1}. The following theorem tells us that the lowerbound for the lifespan of the compressible Euler system in the irrotational case found in Theorem~\ref{ThminTepsSideris} is optimal:
\begin{theorem}\label{ThWpEulaxisphesym}\textbf{(Blow-up for the Euler system)}
\begin{enumerate}
\item  \cite{Alinhac2}
In $\mathbb{R}^2$, we consider the initial data given by 
\begin{equation}\label{EulerinitAlinh}
\rho_\varepsilon(0)=\rho_0+\varepsilon \rho_{\varepsilon,0}\hbox{ and }\mathbf{v}_{\varepsilon}(0)=\varepsilon \mathbf{v}_{\varepsilon,0},
\end{equation}
with $\rho_{\varepsilon,0}$ and $\mathbf{v}_{\varepsilon,0}$ of regularity $C^{\infty}$ with a compact support. Moreover 
$$\mathbf{v}_{\varepsilon,0}(x)=v_r \vert x\vert_2 \overrightarrow{e}_r+v_{\theta} \vert x\vert_2 \overrightarrow{e}_{\theta},$$
with $\rho_{\varepsilon,0},\; v_r,\; v_{\theta}\in \mathcal{D}(\mathbb{R}^2)$ depending only on $r=\vert x\vert_2=\sqrt{x_1^2+x_2^2}$ for $x=(x_1,x_2)^t$.

Then the Euler system~(\ref{Euli1})--(\ref{Euli2}) with initial data~(\ref{EulerinitAlinh}) admits a $C^{\infty}$ solution for~$t\in[0,T_{\varepsilon}[$ with
$$\lim_{\varepsilon\rightarrow 0} \varepsilon^2 T_{\varepsilon}=C>0.$$

\item \cite{Yin1}
In $\mathbb{R}^3$, we consider the initial data given by~(\ref{EulerinitAlinh}) with $\rho_{\varepsilon,0}$ and $\mathbf{v}_{\varepsilon,0}$ of regularity $C^{\infty}$ with a compact support. Moreover 
$$\mathbf{v}_{\varepsilon,0}(x)=v_r \vert x\vert_3 \overrightarrow{e}_r,$$
with $\rho_{\varepsilon,0}\hbox{ and } v_r\in \mathcal{D}(\mathbb{R}^3)$ depending only on $r=\vert x\vert_3=\sqrt{x_1^2+x_2^2+x_3^2}$ 
\\for $x=(x_1,x_2,x_3)^t$.
Then the Euler system~(\ref{Euli1})--(\ref{Euli2}) with initial data~(\ref{EulerinitAlinh}) admits a $C^{\infty}$ solution for $t\in[0,T_{\varepsilon}[$ with
$$\lim_{\varepsilon\rightarrow 0} \varepsilon \ln( T_{\varepsilon})=C>0.$$
\end{enumerate}
\end{theorem}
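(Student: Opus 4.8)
The plan is to reduce both statements to the sharp blow-up (optimal-lifespan) results of Alinhac~\cite{Alinhac2} in $\R^2$ and of Yin~\cite{Yin1} in $\R^3$; the work consists only in checking that the hypotheses of those results are met here and in disposing of the fact that~\eqref{press} is a truncated, rather than a polytropic, state law. For the latter, choose $A>0$ with $A\rho_0^\gamma=p_0$ and $A\gamma\rho_0^{\gamma-1}=c^2$; then the polytropic pressure $A\rho^\gamma$ and the pressure~\eqref{press} share their value, first and second derivatives at $\rho_0$ — in particular the same sound speed $c$ and the same genuine-nonlinearity coefficient $p''(\rho_0)=(\gamma-1)c^2/\rho_0$ — and, since the data~\eqref{EulerinitAlinh} is an $O(\eps)$ perturbation of the constant state $(\rho_0,0)$ and the classical solution stays $O(\eps)$-close to $(\rho_0,0)$ up to its blow-up time, the two laws differ by $O(\eps^3)$ on the whole range of $\rho_\eps$. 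As discussed in the last paragraph, this $O(\eps^3)$ discrepancy does not affect the order at which the blow-up time is determined. (Positivity $\rho_\eps>0$, needed for Theorem~\ref{ThWPEulDafermos} and for the convexity of the entropy~\eqref{entropy}, holds for $\eps$ small since $\rho_{\eps,0}$ is bounded.)

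For Point~1, the data~\eqref{EulerinitAlinh} with $\rho_{\eps,0},v_r,v_\theta\in\D(\R^2)$ depending only on $r$ is exactly the axisymmetric class treated in~\cite{Alinhac2}, and its hypotheses (smoothness, compact support, $O(\eps)$ size) are met. Theorem~\ref{ThWPEulDafermos} first provides a unique solution, $C^\infty$ in $(t,x)$ because the data lies in every $H^s$, on a maximal interval $[0,T_\eps[$; Alinhac's analysis then identifies the asymptotics of $T_\eps$ through a weakly-nonlinear geometric-optics reduction, in which the leading profile along the outgoing acoustic characteristics solves a Burgers-type equation whose first gradient-catastrophe time, read in the slow variable $\eps^2 t$, has a positive limit, so that $\eps^2 T_\eps\to C>0$. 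For Point~2, the data with $\mathbf{v}_{\eps,0}=v_r(r)\,\overrightarrow{e}_r$, $\rho_{\eps,0},v_r\in\D(\R^3)$, is spherically symmetric and irrotational ($\nabla\times\mathbf{v}_{\eps,0}=0$), hence covered by Yin's spherically symmetric blow-up theorem~\cite{Yin1}, whose conclusion is $\eps\ln(T_\eps)\to C>0$, the solution again being $C^\infty$ on $[0,T_\eps[$. Finally, comparing these blow-up rates with the lower bounds of Theorem~\ref{ThminTepsSideris} — the $\eps^{-2}$ bound in $\R^2$ and the $\exp(C/\eps)-1$ bound in $\R^3$, valid in the irrotational case — shows that those lifespan estimates are optimal.

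The one genuinely delicate point, beyond reconciling the notation of~\cite{Alinhac2,Yin1} with the present setting, is the justification that replacing~\eqref{press} by $A\rho^\gamma$ leaves the limiting constants $C$ in Points~1 and~2 unchanged — equivalently, that those constants depend on the pressure only through $p'(\rho_0)$ and $p''(\rho_0)$. The cleanest route is to re-run the energy and geometric-optics estimates of Alinhac and of Yin with the pressure~\eqref{press} kept as is, verifying at each step that the coefficients entering the limiting Burgers equation (resp. the exponential-in-$1/\eps$ lower bound) are functions of $p'(\rho_0)$ and $p''(\rho_0)$ alone, the $O(\eps^3)$ remainder feeding only into lower-order terms; an alternative is a continuation argument based on the a priori bound $\|\rho_\eps-\rho_0\|_{L^\infty}=O(\eps)$, which localises the state law near $\rho_0$. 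I expect this verification, rather than the invocation of~\cite{Alinhac2,Yin1} itself, to be where the real effort lies.
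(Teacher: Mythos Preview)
The paper provides no proof of this theorem whatsoever: it is stated as a citation of known results, with Point~1 attributed to Alinhac~\cite{Alinhac2} and Point~2 to Yin~\cite{Yin1}, and the text moves on immediately afterwards. Your reduction to those references is therefore exactly what the paper does, only you are more explicit about it.

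Where you go further than the paper is in flagging the discrepancy between the truncated state law~\eqref{press} used here and the polytropic law $p=A\rho^\gamma$ assumed in~\cite{Alinhac2,Yin1}. This is a legitimate point that the paper simply ignores: it states the theorem for system~\eqref{Euli1}--\eqref{Euli2} with pressure~\eqref{press} but cites results proved for a different pressure. Your proposed resolution --- matching $p'(\rho_0)$ and $p''(\rho_0)$ and arguing that the $O(\eps^3)$ difference does not affect the leading-order Burgers dynamics governing blow-up --- is the right idea, and your honest assessment that this verification is where the real work would lie is accurate. The paper does not carry out this verification; it appears to treat the theorem as a direct quotation of the cited references without addressing the mismatch.
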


Now let us consider the derivation of the Kuznetsov equation of Subsection~\ref{secderNSKuz} in the assumption $\nu=0$. 
Taking \textit{ansatz}~(\ref{rhoKuz})--(\ref{uKuz}) for $\rho_{\varepsilon}$ and $\mathbf{v}_{\varepsilon}$ 
and imposing~(\ref{rho1K})--(\ref{rho2K}) for $\rho_1$ and $\rho_2$ with $\nu=0$, 
we derive as in Subsection~\ref{secderNSKuz} the  inviscid Kuznetsov equation with the notation $\alpha=\frac{\gamma-1}{c^2}$ 
\begin{equation}\label{CauProbKuznonvisc}
\left\lbrace
\begin{array}{l}
 \partial^2_t u -c^2 \Delta u=\varepsilon\partial_t\left( (\nabla u)^2+\frac{\alpha}{2}(\partial_t u)^2\right),\\
u(0)=u_0,\;\;u_t(0)=u_1.
\end{array}
\right.
\end{equation}
 Thanks to Theorem~1.1 in Ref.~\cite{Perso}, we have the following local well posedness result:
\begin{theorem}\textbf{(Local well posedness for the inviscid Kuznetsov equation)}\cite{Perso}\label{ThMainWPnu0}
  Let $\nu=0$, $n\in \mathbb{N}^*$ and $s>\frac{n}{2}+1$. For all $u_0\in H^{s+1}(\mathbb{R}^n)$ and $u_1\in H^s(\mathbb{R}^n)$ such that 
  $$\Vert u_1\Vert_{L^{\infty}(\mathbb{R}^n)}< \frac{1}{2 \alpha\varepsilon},\hbox{ } \Vert u_0\Vert_{L^{\infty}(\mathbb{R}^n)}<M_1\hbox{ and }\Vert \nabla u_0\Vert_{L^{\infty}(\mathbb{R}^n)}<M_2,$$ with $M_1$ and $M_2$ in $\mathbb{R}^*_+$, the following results hold:
 \begin{enumerate}
  \item There exists $T^*>0$, finite or not, such that there exists a unique solution $u$ of the inviscid Kuznetsov system~(\ref{CauProbKuznonvisc}) with the following regularity 
\begin{align}
&u\in C^r([0,T^*[;H^{s+1-r}(\mathbb{R}^n))\;\;\text{for}\;\;0\leq r\leq s,\label{regkuznu0}\\
&\forall t\in [0,T^*[,\;\;\Vert u_t(t)\Vert_{L^{\infty}(\mathbb{R}^n)}< \frac{1}{2 \alpha\varepsilon},\;\;\Vert u\Vert_{L^{\infty}(\mathbb{R}^n)}<M_1,\;\;\Vert \nabla u\Vert_{L^{\infty}(\mathbb{R}^n)}<M_2.\label{conshyper}
\end{align}
\item The map $(u_0,u_1)\mapsto (u(t,.),\partial_t u(t,.))$ is continuous in the topology of $H^{s+1}\times H^s$ uniformly in $t\in[0,T^*[$.
                        \end{enumerate}
\end{theorem}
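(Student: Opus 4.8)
The statement is essentially Theorem~1.1 of Ref.~\cite{Perso}; the plan is to recast Eq.~(\ref{CauProbKuznonvisc}) as a quasilinear strictly hyperbolic wave equation and to run the standard energy method combined with a contraction argument. Expanding the right-hand side, one has $\partial_t\big((\nabla u)^2\big)=2\nabla u\cdot\nabla u_t$ and $\partial_t\big((\partial_t u)^2\big)=2u_t u_{tt}$, so Eq.~(\ref{CauProbKuznonvisc}) is equivalent to
$$(1-\varepsilon\alpha u_t)\,\partial_t^2 u-c^2\Delta u=2\varepsilon\,\nabla u\cdot\nabla u_t .$$
The key point is that the hypothesis $\Vert u_1\Vert_{L^\infty}<\frac{1}{2\alpha\varepsilon}$ forces $1-\varepsilon\alpha u_t\ge\frac12$ at $t=0$; since $s>\frac n2+1$ gives the embedding $H^s\hookrightarrow W^{1,\infty}$, this lower bound, together with the bounds $M_1$, $M_2$ on $\Vert u_0\Vert_{L^\infty}$ and $\Vert\nabla u_0\Vert_{L^\infty}$, persists on a short time interval, so the equation stays strictly hyperbolic and the pointwise constraints~(\ref{conshyper}) remain valid as long as the solution exists.

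First I would freeze the coefficient: given $w$ in a suitable ball of $\bigcap_{r=0}^{s}C^r([0,T];H^{s+1-r}(\R^n))$ with $\Vert w_t\Vert_{L^\infty}<\frac{1}{2\alpha\varepsilon}$, solve the linear wave equation $(1-\varepsilon\alpha w_t)\partial_t^2 u-c^2\Delta u=2\varepsilon\,\nabla w\cdot\nabla w_t$ with the prescribed data $u(0)=u_0$, $u_t(0)=u_1$. The standard theory for linear hyperbolic equations with coefficients bounded away from degeneracy gives a unique solution with the regularity~(\ref{regkuznu0}). Then I would show that for $T$ small enough the map $w\mapsto u$ sends a closed ball into itself and is a contraction in a lower-order norm, yielding the fixed point. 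The a priori bounds come from applying $\partial^\beta$ for $|\beta|\le s$, pairing with $\partial_t\partial^\beta u$ in $L^2(\R^n)$, and integrating by parts: the $-c^2\Delta$ term produces $\frac{d}{dt}\frac{c^2}{2}\Vert\nabla\partial^\beta u\Vert_{L^2}^2$, the quasilinear term gives $\frac{d}{dt}\frac12\int(1-\varepsilon\alpha w_t)|\partial_t\partial^\beta u|^2\,\dx$ plus a harmless term in $w_{tt}$, and the commutator $[\partial^\beta,1-\varepsilon\alpha w_t]\partial_t^2 u$ together with the right-hand side is estimated by Moser-type tame product and Kato--Ponce commutator inequalities in terms of $\Vert u\Vert_{H^{s+1}}$, $\Vert u_t\Vert_{H^s}$, $\Vert w\Vert_{H^{s+1}}$, $\Vert w_t\Vert_{H^s}$ and the corresponding $W^{1,\infty}$ norms; Gronwall then closes the estimate on a time depending only on the size of the data.

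Uniqueness and the continuity of $(u_0,u_1)\mapsto(u,u_t)$ in the $H^{s+1}\times H^s$ topology, uniformly on $[0,T^*[$, would follow from the same energy identity applied to the difference of two solutions, which loses no derivatives because of the symmetric structure of the principal part, combined with a Bona--Smith-type approximation argument to recover continuity at the top regularity. The main obstacle is the genuinely quasilinear top-order term: the coefficient of $u_{tt}$ depends on $u_t$ itself, so the energy estimate must carry the commutator $[\partial^\beta,\varepsilon\alpha u_t]\partial_t^2 u$, and the apparent derivative loss there has to be reabsorbed using the hyperbolicity margin, namely $1-\varepsilon\alpha u_t\ge\frac12$, while at the same time one must verify that the pointwise bounds in~(\ref{conshyper}) guaranteeing that very margin are not violated, i.e. that the blow-up is controlled by the $W^{1,\infty}$ norms only. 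Once these intertwined estimates are closed, the existence of a maximal $T^*$ and the persistence of the regularity~(\ref{regkuznu0}) on $[0,T^*[$ are routine.
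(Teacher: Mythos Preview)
The paper does not prove this theorem at all: it is quoted verbatim as Theorem~1.1 of Ref.~\cite{Perso} and no argument is given here. Your sketch is the standard route (rewrite as $(1-\varepsilon\alpha u_t)u_{tt}-c^2\Delta u=2\varepsilon\nabla u\cdot\nabla u_t$, exploit the strict hyperbolicity margin $1-\varepsilon\alpha u_t\ge\frac12$, freeze coefficients and close a fixed point by $H^s$ energy estimates with Kato--Ponce commutators, then Bona--Smith for continuity at top regularity), and this is precisely the approach of Ref.~\cite{Perso}; there is nothing to compare against in the present paper.
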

Ref.~\cite{Perso} allows us to give a result on the lower bound of the lifespan~$T_\varepsilon$ of the Kuznetsov equation. The method is similar to the case of the Euler system~(\ref{Euli1})--(\ref{Euli2}). It is based on the using of a group of linear transformations preserving the equation $u_{tt}-\Delta u=0$, initially proposed by John\cite{John}. We formulate the lifespan and blow-up time results for the inviscid Kuznetsov equation in the following theorem:
\begin{theorem}\label{ThTimeExistNu0}
\begin{enumerate}
\item \cite{Perso} Let $m\in\mathbb{N}$, $m\geq\left[\frac{n}{2}+2\right]$ . For $u_0\in H^{m+1}(\mathbb{R}^n)$ and $u_1\in H^m(\mathbb{R}^n)$ such that the results of Theorem~\ref{ThMainWPnu0} hold for $s=m$, let $u_0$ and $u_1$ be also small enough in the sense of an energy defined in Point~3 of Theorem~1.1 in Ref.~\cite{Perso}.
Then there exists a generic constant $C>0$ independent on $\varepsilon$ such that
$T_\varepsilon\geq \frac{C}{\varepsilon}.$

\item \cite{Perso} Let $m\in\mathbb{N}$, $m\geq n+2$ if $n$ is even and $m\geq n+1$ if $n$ is odd. For $u_0\in H^{m+1}(\mathbb{R}^n)$ and $u_1\in H^m(\mathbb{R}^n)$ such that the results of Theorem~\ref{ThMainWPnu0} hold for $s=m$, let $u_0$ and $u_1$ be also small enough in the sense of a generalized energy defined in Theorem 3.3 in Ref.~\cite{Perso}. 
Then there exists a generic constant $C>0$ independent on $\varepsilon$ such that
$$T_\varepsilon\geq \frac{C}{\varepsilon^2} \hbox{ for }n=2,\;T_\varepsilon\geq \exp\left(\frac{C}{\varepsilon}\right)-1\hbox{ for }n=3\hbox{ and }T_\varepsilon= +\infty\hbox{ for }n\geq4.$$
\item \cite{Alinhac} In dimension $n=2$ and $3$, there exist functions $u_0\in \mathcal{D}(\mathbb{R}^n)$ and $u_1\in \mathcal{D}(\mathbb{R}^n)$ such that the solution $u$ of the Cauchy problem for the inviscid Kuznetsov equation~(\ref{CauProbKuznonvisc}) has a geometric blow-up for the time of order $T_\varepsilon=O\left(\frac{1}{\varepsilon^2}\right)$ and $T_\varepsilon=O\left(\exp\left(\frac{1}{\varepsilon}\right)\right)$  respectively.
\end{enumerate}
\end{theorem}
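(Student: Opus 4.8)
The plan is to deduce all three statements from standard lifespan and blow-up theory for quasilinear wave equations with small Cauchy data, after removing the factor $\varepsilon$ in front of the nonlinearity by a rescaling. First I would set $v:=\varepsilon u$, which turns the Cauchy problem~(\ref{CauProbKuznonvisc}) into the inviscid Kuznetsov equation with $\varepsilon$ replaced by $1$,
$$\partial^2_t v-c^2\Delta v=\partial_t\left((\nabla v)^2+\frac{\alpha}{2}(\partial_t v)^2\right),$$
but with initial data $(v,v_t)(0)=\varepsilon(u_0,u_1)$ of size $O(\varepsilon)$; the lifespan of $v$ is then exactly $T_\varepsilon$. Expanding $\partial_t((\partial_t v)^2)=2\partial_t v\,\partial^2_t v$ and $\partial_t((\nabla v)^2)=2\nabla v\cdot\nabla\partial_t v$ exhibits this as a genuinely quasilinear perturbation of the wave operator $\partial^2_t-c^2\Delta$, and re-expressing its quadratic part through the null form $Q_0(v,v)=(\partial_t v)^2-c^2|\nabla v|^2$ while using $\alpha=\frac{\gamma-1}{c^2}$ gives
$$(\nabla v)^2+\frac{\alpha}{2}(\partial_t v)^2=\frac{\gamma+1}{2c^2}(\partial_t v)^2-\frac{1}{c^2}Q_0(v,v),$$
so the surviving non-null coefficient $\frac{\gamma+1}{2c^2}$ is nonzero and the classical null condition fails for every admissible $\gamma>1$.

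For Point~1 I would invoke the elementary energy method. By the local well-posedness and continuation criterion of Theorem~\ref{ThMainWPnu0}, an $H^m$ energy estimate combined with the embedding $H^{m-1}(\R^n)\hookrightarrow L^\infty(\R^n)$, which is licit since $m-1>\frac n2$, gives $\Vert\partial v(t)\Vert_{H^{m-1}(\R^n)}\lesssim\varepsilon\,e^{C\varepsilon t}$, so the solution persists as long as this quantity stays small, i.e. for $t\le C'/\varepsilon$; this is the almost-global lifespan bound of Ref.~\cite{Perso} (the analogue of Point~1 of Theorem~\ref{ThminTepsSideris}).

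For Point~2 I would run the Klainerman vector-field method, exactly as for the Euler system in Theorem~\ref{ThminTepsSideris} via John's transformation group~\cite{John}: commute the equation with the generators $\partial_\alpha$, the rotations $x_i\partial_j-x_j\partial_i$, the Lorentz boosts and the scaling $S=t\partial_t+x\cdot\nabla$ adapted to $\partial^2_t-c^2\Delta$, propagate the generalized weighted energy of Theorem~3.3 in Ref.~\cite{Perso}, and use the Klainerman--Sobolev inequality to obtain the decay $|\partial v(t,x)|\lesssim\varepsilon(1+t)^{-(n-1)/2}$ needed to close a continuity argument on that energy. The quadratic nonlinearity is then integrable in $t$ for $n\ge4$, giving $T_\varepsilon=+\infty$; it is log-divergent for $n=3$, giving $T_\varepsilon\ge\exp(C/\varepsilon)-1$; and the slower decay for $n=2$ yields only $T_\varepsilon\ge C/\varepsilon^2$. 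These are the sharp lifespan bounds of Theorem~3.3 in Ref.~\cite{Perso}.

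For Point~3 the rescaling reduces matters to exhibiting compactly supported data for which $v$ blows up at time $O(\varepsilon^{-2})$ when $n=2$, resp. $O(\exp(\varepsilon^{-1}))$ when $n=3$. Since the non-null coefficient $\frac{\gamma+1}{2c^2}$ computed in the first step is nonzero, Alinhac's geometric blow-up construction for quasilinear wave equations failing the null condition applies~\cite{Alinhac}: it produces radial $u_0,u_1\in\mathcal{D}(\R^n)$ whose solution stays bounded while its gradient focuses to a singularity precisely at those time scales, matching the lower bounds of Point~2 and establishing their optimality. The main obstacle is really just that first structural step: one must check carefully that, once the Kuznetsov nonlinearity is written through $Q_0$, the genuinely non-null coefficient does not vanish, so that the equation lies outside the null-condition class in dimensions $n=2,3$ — which both forces a finite, though long, lifespan and permits Alinhac's blow-up — whereas in dimensions $n\ge4$ no structural condition is needed. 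After that, Points~1--3 are covered respectively by the local energy estimate, by the vector-field argument of Ref.~\cite{Perso}, and by Ref.~\cite{Alinhac}, with no further computation.
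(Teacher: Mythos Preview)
Your sketch is correct and, in fact, contains considerably more detail than the paper itself provides: in the paper this theorem is stated as a compilation of cited results, with the only indication of method being the sentence preceding it, that ``the method is similar to the case of the Euler system'' and ``is based on the using of a group of linear transformations preserving the equation $u_{tt}-\Delta u=0$, initially proposed by John.'' Points~1 and~2 are attributed directly to Ref.~\cite{Perso} and Point~3 to Ref.~\cite{Alinhac}, with no further argument given.

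Your approach matches what those references do. The rescaling $v=\varepsilon u$ is the natural way to put the problem in the standard small-data framework, and your null-form computation correctly isolates the non-null coefficient $\frac{\gamma+1}{2c^2}$, which is the structural input both for the failure of the null condition (driving Alinhac's blow-up in Point~3) and for the positive lifespan results (Points~1--2 via standard and generalized energies respectively). The only remark is that the paper does not spell out the rescaling or the null-form decomposition explicitly; it simply defers to the cited works. So your write-up is not so much a different route as an honest unpacking of what the citations contain.
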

\begin{remark}
 In $\mathbb{R}^2$ and $\mathbb{R}^3$ we see that the lifespan of the inviscid Kuznetsov equation corresponds to the blow-up time estimation for the compressible isentropic Euler system in Theorems~\ref{ThminTepsSideris} and~\ref{ThWpEulaxisphesym}, a result in accordance with the fact that the inviscid Kuznetsov equation is an approximation of  the Euler system.We also notice that in the two cases (for the Euler system and the Kuznetsov equation) having a longer existence time requires more regularity on the initial data.
 \end{remark}

\begin{theorem}\label{ThExistUUbarUEulK}
Let $n=2$ or $3$. If the initial data $u_0\in H^4(\mathbb{R}^n)$ and $u_1\in H^3(\mathbb{R}^n)$ for the Cauchy problem for the inviscid Kuznetsov equation~(\ref{CauProbKuznonvisc}) satisfy 
$$\Vert u_0\Vert_{H^4(\mathbb{R}^n)}+\Vert u_1\Vert_{H^3(\mathbb{R}^n)}\leq l$$ 
with $l$ small enough, there exists $T^*_\varepsilon>0$ and $C>0$, independent on $\varepsilon$, satisfying
 $$ T^*_{\varepsilon} \geq\frac{C}{\varepsilon} $$
  such that there exist local in time solutions
 $$\overline{\mathbf{U}}_\eps=(\overline{\rho}_{\varepsilon},\; \overline{\rho}_{\varepsilon} \overline{ \mathbf{v}}_{\varepsilon})^t  \hbox{ and } \mathbf{U}_\eps=(\rho_{\varepsilon},\;\rho_{\varepsilon}\mathbf{v}_{\varepsilon})^t \hbox{ on }[0,T^*_{\varepsilon}[$$
 of   the approximate Euler system given by~(\ref{NSmatrA}) and  of the exact Euler system given by~(\ref{NSmatr}) with $\nu=0$, both considered with the state law~(\ref{press}) and with the same initial data~(\ref{bban}). In addition, the solutions have the same regularity corresponding to
\begin{equation}\label{regUepsEul}
\mathbf{U}_{\varepsilon}-(\rho_0,0)^t\in \bigcap_{k=0}^3 C^k([0,T_{\varepsilon}^*[; [H^{3-k}(\mathbb{R}^n)]^{n+1}) .
\end{equation}  
Here $\bar{\rho}_\eps|_{t=0}$ and $\bar{\mathbf{v}}_\eps|_{t=0}$ are constructed as the functions of the initial data for the Kuznetsov equation $u_0$ and $u_1$ by formulas~(\ref{InCondrob})--(\ref{InCondub}) according to~(\ref{rhoKuz})--(\ref{uKuz}) and~(\ref{rho1K})--(\ref{rho2K}) taken with $\nu=0$.

\end{theorem}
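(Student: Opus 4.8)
The plan is to run the argument of Theorem~\ref{ThExistUUbarNSK} again, replacing every viscous ingredient by its inviscid counterpart. First I would invoke Theorem~\ref{ThMainWPnu0} (local well-posedness for the inviscid Kuznetsov equation) with $s=3$; this is legitimate since $3>\frac n2+1$ for $n=2,3$, and the pointwise smallness conditions $\|u_1\|_{L^\infty}<\frac{1}{2\alpha\varepsilon}$, $\|u_0\|_{L^\infty}<M_1$, $\|\nabla u_0\|_{L^\infty}<M_2$ follow from $\|u_0\|_{H^4}+\|u_1\|_{H^3}\le l$ being small enough, via the Sobolev embeddings $H^2\hookrightarrow L^\infty$ and $H^3\hookrightarrow W^{1,\infty}$ valid for $n\le3$. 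One thus obtains a unique solution $u\in C^r([0,T^*[;H^{4-r}(\mathbb{R}^n))$ for $0\le r\le3$ satisfying~(\ref{conshyper}) on $[0,T^*[$. To bound the lifespan from below I would then apply Theorem~\ref{ThTimeExistNu0}, Point~1, with $m=3$: this satisfies $m\ge\left[\frac n2+2\right]$ for $n=2,3$, the hypotheses of Theorem~\ref{ThMainWPnu0} hold for $s=m=3$, and the energy smallness is guaranteed by taking $l$ small; hence $T^*\ge\frac{C}{\varepsilon}$ with $C$ independent of $\varepsilon$.

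Next I would build $\overline{\mathbf U}_\varepsilon=(\overline\rho_\varepsilon,\overline\rho_\varepsilon\overline{\mathbf v}_\varepsilon)^t$ from $u$ through the Kuznetsov \emph{ansatz}~(\ref{rhoKuz})--(\ref{uKuz}) with $\rho_1,\rho_2$ given by~(\ref{rho1K})--(\ref{rho2K}) taken with $\nu=0$. Because $\rho_2$ now carries no $\Delta u$ term, the data $u_0\in H^4$, $u_1\in H^3$ already put $\overline\rho_\varepsilon(0)-\rho_0$ and $\overline{\mathbf v}_\varepsilon(0)$ in $H^3(\mathbb{R}^n)$ — this is exactly why $H^4\times H^3$ replaces the $H^5\times H^4$ requirement of the viscous Theorem~\ref{ThExistUUbarNSK}. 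For the time-dependent regularity~(\ref{regUepsEul}) I would combine the Sobolev multiplication embedding~(\ref{Sobolalg}) (applicable since $2>\frac n2$ for $n\le3$) with the inviscid Kuznetsov equation~(\ref{CauProbKuznonvisc}) itself, which trades each spatial Laplacian against two extra time derivatives: $\partial_t^2u=c^2\Delta u+\varepsilon\partial_t\big((\nabla u)^2+\frac\alpha2(\partial_t u)^2\big)$, and so on for $\partial_t^3u,\partial_t^4u$. The one delicate point is $\partial_t^3\overline\rho_\varepsilon$, which requires $\partial_t^4u\in L^2(\mathbb{R}^n)$: differentiating the equation three times in $t$ produces on the right a term $\varepsilon\alpha\,\partial_t u\,\partial_t^4u$, so one recovers $\partial_t^4u\in C([0,T^*[;L^2(\mathbb{R}^n))$ only after moving it to the left and dividing by $1-\varepsilon\alpha\,\partial_t u$, which is admissible precisely because $\varepsilon\alpha\|\partial_t u\|_{L^\infty}<\frac12$ by~(\ref{conshyper}). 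Together with $\overline{\mathbf v}_\varepsilon=-\varepsilon\nabla u$ this yields $\overline{\mathbf U}_\varepsilon-(\rho_0,0)^t\in\bigcap_{k=0}^3 C^k([0,T^*[;[H^{3-k}(\mathbb{R}^n)]^{n+1})$.

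Finally, for the exact Euler system I would use that, by construction, $\mathbf U_\varepsilon(0)=\overline{\mathbf U}_\varepsilon(0)$ is a perturbation of order $\varepsilon$ of the constant state $(\rho_0,0)$ in the sense of~(\ref{EulerinitAlinh}), with amplitude functions bounded in $H^3$, with $\rho_\varepsilon(0)>0$ for $\varepsilon$ small, and with $\partial_{x_i}\mathbf U_\varepsilon(0)\in[H^2(\mathbb{R}^n)]^{n+1}$ where $2>\frac n2$. Theorem~\ref{ThminTepsSideris}, Point~1 (which itself rests on Theorem~\ref{ThWPEulDafermos}), then gives a unique classical solution $\mathbf U_\varepsilon$ of~(\ref{NSmatr}) with $\nu=0$ and existence time at least $\frac{C'}{\varepsilon}$; propagating the $H^3$-smallness of the perturbation by the standard energy estimates for symmetrizable quasilinear hyperbolic systems upgrades the derivative-level regularity of Theorem~\ref{ThWPEulDafermos} to exactly~(\ref{regUepsEul}), just as in the viscous case. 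Taking $T^*_\varepsilon$ to be the minimum of the Kuznetsov and Euler lifespans, both of order $\frac1\varepsilon$, finishes the proof. I expect the main obstacle to be the third-time-derivative bookkeeping — the $\partial_t^4u$ inversion above and the matching energy-estimate step for $\mathbf U_\varepsilon$ — rather than anything conceptually new beyond what is already contained in Theorem~\ref{ThExistUUbarNSK}.
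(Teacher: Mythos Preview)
Your proposal is correct and follows essentially the same route as the paper's proof: invoke the inviscid Kuznetsov local well-posedness and lifespan bound (Theorems~\ref{ThMainWPnu0} and~\ref{ThTimeExistNu0} with $s=m=3$), build $\overline{\mathbf U}_\varepsilon$ from the ansatz, apply Theorem~\ref{ThminTepsSideris} to the Euler system, and take the minimum of the two lifespans. Your explicit treatment of $\partial_t^4 u$ via the equation and the bound $\varepsilon\alpha\|\partial_t u\|_{L^\infty}<\frac12$ actually fills in a step the paper asserts without detail (it simply states $u\in\bigcap_{k=0}^4 C^k([0,T_{\varepsilon,1}[;H^{4-k})$, which Theorem~\ref{ThMainWPnu0} only gives up to $k=3$); otherwise the arguments coincide.
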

\begin{proof}
Taking $u_0\in H^4(\mathbb{R}^n)$ and $u_1\in H^3(\mathbb{R}^n)$ with $\Vert u_0\Vert_{H^4(\mathbb{R}^n)} +\Vert u_1\Vert_{H^3(\mathbb{R}^n)}\leq l $ and $l$ small enough, the Cauchy problem for the inviscid Kuznetsov equation~(\ref{CauProbKuznonvisc}) is locally well-posed according to Theorem~\ref{ThTimeExistNu0}. 
Moreover the solution $u$ belongs to $\bigcap_{k=0}^4 C^k([0,T_{\varepsilon,1}[; H^{4-k}(\mathbb{R}^n)) $ with $T_{\varepsilon,1}\geq \frac{C_1}{\varepsilon}$ and $C_1>0$ independent of $\varepsilon$.
\\As $u_0\in H^4(\mathbb{R}^n)$ and $u_1\in H^3(\mathbb{R}^n)$, it  ensures that 
$$\rho_\eps-\rho_0|_{t=0}\in H^3(\mathbb{R}^n)\hbox{ and }\mathbf{v}_\eps|_{t=0}\in [H^3(\mathbb{R}^n)]^3.$$
Therefore $\rho_\eps\vert_{t=0}>0$ if $u_0$ and $u_1$ small enough.

By Theorem~\ref{ThminTepsSideris} it is sufficient to have a local solution $\mathbf{U}_{\varepsilon}$ on $[0,T_{\varepsilon,2}[$ of the exact Euler system (see~(\ref{NSmatr}) with $\nu=0$) verifying~(\ref{regUepsEul}) with $T_\varepsilon^*$ corresponding to $T_{\varepsilon,2}$,
 $T_{\varepsilon,2}\geq \frac{C_2}{\varepsilon}$ with $C_2>0$ independent on $\varepsilon$. 

Now we consider $T^*_{\varepsilon}=\min(T_{\varepsilon,1},T_{\varepsilon,2})$, and we have $T^*_{\varepsilon}\geq\frac{C}{\varepsilon}$ with $C>0$ independent on $\varepsilon$.
As $\overline{\rho}_{\varepsilon}$ and $\overline{ \mathbf{v}}_{\varepsilon}$ are defined by \textit{ansatz} ~(\ref{rhoKuz})-(\ref{uKuz}) with  $\rho_1$ and $\rho_2$ given in Eqs.~(\ref{rho1K})--(\ref{rho2K}),
the regularity of $u$ implies for $\overline{\mathbf{U}}_{\varepsilon} $ at least the same regularity as given in~(\ref{regUepsEul}). 
To find it we use the Sobolev embedding~(\ref{Sobolalg}) for the multiplication.
\end{proof}

Knowing the existence results for the two problems, we validate the approximation of $\mathbf{U}_{\varepsilon}$ by the solution of the Kuznetsov equation, $i.e.$ by $\overline{\mathbf{U}}_{\varepsilon}$, following Ref.~\cite{Roz3}.
\begin{theorem}\label{ThapproxEulKuz}\textbf{(Approximation of the Euler system by the Kuznetsov equation)}
Let $n=2$ or $3$ and $u_0\in H^4(\mathbb{R}^n)$, $u_1\in H^3(\mathbb{R}^n)$ be the initial data for the Kuznetsov equation and $\mathbf{U}_{\varepsilon}(0)=\overline{\mathbf{U}}_{\varepsilon}(0)$ for the Euler energy respectively. 
For 
$$\Vert u_0\Vert_{H^4(\mathbb{R}^n)} +\Vert u_1\Vert_{H^3(\mathbb{R}^n)}\leq l $$
 with $l$ small enough, there is the local existence of $\mathbf{U}_{\varepsilon}$ and $\overline{\mathbf{U}}_{\varepsilon}$ for $t\in[0,T^*_{\varepsilon}[$ with~$T^*_{\varepsilon}$ given by Theorem~\ref{ThExistUUbarUEulK} and the same regularity~(\ref{regUepsEul}). 
Moreover there exist $C>0$ and $K>0$ independent on $\varepsilon$ and on the time $t$, such that
\begin{equation}\label{inegapproxeulkuz}
\forall t\leq\frac{C}{\varepsilon} \;\;\;\;\;\;\;\;\Vert (\mathbf{U}_{\varepsilon}-\overline{\mathbf{U}}_{\varepsilon})(t)\Vert^2_{L^2(\mathbb{R}^3)}\leq  Kt \varepsilon^3 e^{K\varepsilon t}\leq 4 \varepsilon^2.
\end{equation}
\end{theorem}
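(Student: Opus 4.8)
The plan is to run the relative entropy (modulated energy) argument of the proof of Theorem~\ref{ThapproxNSKuz}, now in the inviscid regime $\nu=0$, where every viscous contribution simply vanishes. First I would invoke Theorem~\ref{ThExistUUbarUEulK}: for $\|u_0\|_{H^4(\mathbb{R}^n)}+\|u_1\|_{H^3(\mathbb{R}^n)}$ small it provides a lifespan $T^*_\varepsilon\ge \frac{C}{\varepsilon}$ (with $C>0$ independent of $\varepsilon$) and, on $[0,T^*_\varepsilon[$, a classical solution $\mathbf{U}_\varepsilon$ of the exact Euler system~(\ref{NSmatr}) with $\nu=0$ together with the approximate solution $\overline{\mathbf{U}}_\varepsilon=(\overline{\rho}_\varepsilon,\overline{\rho}_\varepsilon\overline{\mathbf{v}}_\varepsilon)^t$ constructed from the solution $u$ of the inviscid Kuznetsov problem~(\ref{CauProbKuznonvisc}) by the \emph{ansatz}~(\ref{rhoKuz})--(\ref{uKuz}) and~(\ref{rho1K})--(\ref{rho2K}) taken with $\nu=0$, both with the regularity~(\ref{regUepsEul}) and with the common datum $\mathbf{U}_\varepsilon(0)=\overline{\mathbf{U}}_\varepsilon(0)$, i.e. $\delta=0$. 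Because both solutions are continuously differentiable, the entropy identity~(\ref{NSentropyweak}) with $\nu=0$ holds for $\mathbf{U}_\varepsilon$ as an equality and the integration by parts of point~3 of Definition~\ref{DefAdm} is legitimate; $\overline{\mathbf{U}}_\varepsilon$ satisfies the analogue of~(\ref{approxNs+entr}) with $\nu=0$, namely $\partial_t\eta(\overline{\mathbf{U}}_\varepsilon)+\nabla\cdot\mathbf{q}(\overline{\mathbf{U}}_\varepsilon)=\varepsilon^3\overline{R}^{NS-Kuz}$ with $\overline{R}^{NS-Kuz}=\tfrac{\eta(\overline{\mathbf{U}}_\varepsilon)+p(\overline{\rho}_\varepsilon)}{\overline{\rho}_\varepsilon}R_1^{NS-Kuz}+\overline{\mathbf{v}}_\varepsilon\cdot\mathbf{R}_2^{NS-Kuz}$, where $R_1^{NS-Kuz},\mathbf{R}_2^{NS-Kuz}$ are the remainders~(\ref{rNS1})--(\ref{rNS2}) specialized to $\nu=0$.

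Next I would differentiate in time, in the sense of distributions, the relative entropy density $\mathcal{E}:=\eta(\mathbf{U}_\varepsilon)-\eta(\overline{\mathbf{U}}_\varepsilon)-\eta'(\overline{\mathbf{U}}_\varepsilon)(\mathbf{U}_\varepsilon-\overline{\mathbf{U}}_\varepsilon)$, reproducing the computation in the proof of Theorem~\ref{ThapproxNSKuz} with every term proportional to $\nu$ erased. Using the convexity relation $\eta''(\mathbf{U})D\mathbf{G}_i(\mathbf{U})=(D\mathbf{G}_i(\mathbf{U}))^t\eta''(\mathbf{U})$ and integrating over $\mathbb{R}^n$, the divergence-form terms integrate to zero because by~(\ref{regUepsEul}) and the embedding $H^s(\mathbb{R}^n)\hookrightarrow C_0(\mathbb{R}^n)$, $s>n/2$, the spatial gradients of $\mathbf{U}_\varepsilon$ and $\overline{\mathbf{U}}_\varepsilon$ vanish at infinity; one is then left with a differential inequality of the form
\[
\frac{d}{dt}\int_{\mathbb{R}^n}\mathcal{E}\,\dx\le -\sum_{i=1}^n\int_{\mathbb{R}^n}\partial_{x_i}\overline{\mathbf{U}}_\varepsilon^t\,\eta''(\overline{\mathbf{U}}_\varepsilon)\,\mathcal{T}_i\,\dx-\varepsilon^3\!\int_{\mathbb{R}^n}\!\Big(\overline{R}^{NS-Kuz}-\eta'(\overline{\mathbf{U}}_\varepsilon)\mathbf{R}^{NS-Kuz}+[\mathbf{R}^{NS-Kuz}]^t\eta''(\overline{\mathbf{U}}_\varepsilon)(\mathbf{U}_\varepsilon-\overline{\mathbf{U}}_\varepsilon)\Big)\,\dx,
\]
where $\mathbf{R}^{NS-Kuz}=(R_1^{NS-Kuz},\mathbf{R}_2^{NS-Kuz})$ and $\mathcal{T}_i=\mathbf{G}_i(\mathbf{U}_\varepsilon)-\mathbf{G}_i(\overline{\mathbf{U}}_\varepsilon)-D\mathbf{G}_i(\overline{\mathbf{U}}_\varepsilon)(\mathbf{U}_\varepsilon-\overline{\mathbf{U}}_\varepsilon)$ is the quadratic Taylor remainder of $\mathbf{G}_i$.

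Then I would close the estimate as in Theorem~\ref{ThapproxNSKuz}: by convexity and smoothness of $\eta$, $\delta_0|\mathbf{U}_\varepsilon-\overline{\mathbf{U}}_\varepsilon|^2\le\mathcal{E}\le C_0|\mathbf{U}_\varepsilon-\overline{\mathbf{U}}_\varepsilon|^2$; $|\mathcal{T}_i|\le C|\mathbf{U}_\varepsilon-\overline{\mathbf{U}}_\varepsilon|^2$; from the \emph{ansatz}~(\ref{rhoKuz})--(\ref{uKuz}), $\|\nabla\overline{\mathbf{U}}_\varepsilon\|_{L^\infty}\le C\varepsilon$; and $R_1^{NS-Kuz},\mathbf{R}_2^{NS-Kuz}$ are bounded in $L^2(\mathbb{R}^n)\cap L^\infty(\mathbb{R}^n)$ uniformly in $\varepsilon$ and in $t\in[0,\tfrac{C}{\varepsilon}]$, using the regularity $u\in\bigcap_{k=0}^4 C^k([0,T^*_\varepsilon[;H^{4-k}(\mathbb{R}^n))$, the Sobolev multiplication~(\ref{Sobolalg}) (since $2>n/2$ for $n\le3$) and the embedding $H^2(\mathbb{R}^n)\hookrightarrow L^\infty(\mathbb{R}^n)$. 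Integrating on $[0,t]$ and using $\mathbf{U}_\varepsilon(0)=\overline{\mathbf{U}}_\varepsilon(0)$ one gets
\[
\int_{\mathbb{R}^n}|\mathbf{U}_\varepsilon-\overline{\mathbf{U}}_\varepsilon|^2(t)\,\dx\le\int_0^t K\Big(\varepsilon\,\|\mathbf{U}_\varepsilon-\overline{\mathbf{U}}_\varepsilon\|_{L^2}^2+\varepsilon^3+\varepsilon^3\|\mathbf{U}_\varepsilon-\overline{\mathbf{U}}_\varepsilon\|_{L^2}\Big)\,ds,
\]
and Gronwall's lemma (bounding $\varepsilon^3\sqrt{v}\le K\varepsilon v$ for $v\ge0$, as in the cited proof) yields $\|(\mathbf{U}_\varepsilon-\overline{\mathbf{U}}_\varepsilon)(t)\|_{L^2(\mathbb{R}^n)}^2\le K\varepsilon^3 t\,e^{K\varepsilon t}$. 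Taking $C$ small enough that $KCe^{KC}\le4$, the right-hand side is $\le4\varepsilon^2$ for all $t\le\tfrac{C}{\varepsilon}$, which is exactly~(\ref{inegapproxeulkuz}).

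The step I expect to be the main obstacle — and the genuine difference from the viscous Theorem~\ref{ThapproxNSKuz} — is the uniformity in $\varepsilon$ on the \emph{finite} interval: since $u$, hence $\overline{\mathbf{U}}_\varepsilon$ and the remainders $R_1^{NS-Kuz},\mathbf{R}_2^{NS-Kuz}$, live only on $[0,T^*_\varepsilon[$, one must check that on $[0,\tfrac{C}{\varepsilon}]\subset[0,T^*_\varepsilon[$ the Sobolev norms of $u$, and therefore the $L^2$ and $L^\infty$ norms of the remainders, stay bounded by a constant independent of $\varepsilon$, so that the constant $K$ appearing in the Gronwall inequality is truly $\varepsilon$-free and $e^{K\varepsilon t}$ remains $O(1)$ for $t\le\tfrac{C}{\varepsilon}$. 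This uniform control is precisely what the John-type scaling/energy argument behind Theorem~\ref{ThTimeExistNu0} (Ref.~\cite{Perso}) supplies. No boundary terms arise, the problem being posed on all of $\mathbb{R}^n$.
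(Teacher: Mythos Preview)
Your proposal is correct and follows essentially the same approach as the paper: invoke Theorem~\ref{ThExistUUbarUEulK} for the local existence with lifespan $T^*_\varepsilon\ge C/\varepsilon$, then rerun the relative-entropy computation from the proof of Theorem~\ref{ThapproxNSKuz} with every $\nu$-term dropped, and close by Gronwall. The paper's own proof is in fact terser than yours --- it literally says ``the rest follows exactly as in the proof of Theorem~\ref{ThapproxNSKuz} except that $\nu=0$'' --- so your write-up, including the explicit identification of the uniformity-in-$\varepsilon$ issue on $[0,C/\varepsilon]$ as the one genuine point of care, is if anything more complete.
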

\begin{proof}
The local existence of $\mathbf{U}_{\varepsilon}$ and $\overline{\mathbf{U}}_{\varepsilon}$ comes from Theorem~\ref{ThExistUUbarUEulK}.

We make use of the convex entropy as in Ref.~\cite{Dafermos} for the isentropic Euler equation and the rest follows exactly as in the proof of Theorem~\ref{ThapproxNSKuz} except that $\nu=0$.

We finish the proof with the remark on the minimal regularity of the initial data for the Kuznetsov equation such that the approximation is possible, $i.e.$ the remainder terms $R_1^{NS-Kuz}$ and $\mathbf{R}_2^{NS-Kuz}$ keep bounded for a finite time interval. Indeed, if $u_0\in H^{s+2}(\mathbb{R}^n)$ and $u_1\in H^{s+1}(\mathbb{R}^n)$ with $s>\frac{n}{2}$ then $u\in C([0,T_{\varepsilon}^*[;H^{s+2}(\mathbb{R}^n))$ and
\begin{align*}
u_t\in  C([0,T_{\varepsilon}^*[;H^{s+1}(\mathbb{R}^n)),\quad
u_{tt}\in C([0,T_{\varepsilon}^*[;H^{s}(\mathbb{R}^n)).
\end{align*}
Since $\overline{\rho}_{\varepsilon}$ is defined by~(\ref{rhoKuz}) with~(\ref{rho1K})--(\ref{rho2K}) and $\overline{\textbf{v}}_{\varepsilon}$ by~(\ref{uKuz}), with $\nu=0$, respectively,  we exactly find the regularity
\begin{align*}
\overline{\rho}_{\varepsilon}\in C([0,T_{\varepsilon}^*[;H^{s+1}(\mathbb{R}^n))\cap C^1([0,T_{\varepsilon}^*[;H^{s}(\mathbb{R}^n)),\\
\overline{\textbf{v}}_{\varepsilon}\in C([0,T_{\varepsilon}^*[;H^{s+1}(\mathbb{R}^n))\cap C^1([0,T_{\varepsilon}^*[;H^{s}(\mathbb{R}^n)).
\end{align*} 
Thus by the regularity of the left-hand side part for the approximated Navier-Stokes system~(\ref{NSAp1})--(\ref{NSAp2}) we obtain the desired regularity for the right-hand side.
\end{proof}
\begin{theorem}\textbf{(Approximation of the Euler system by the NPE equation)}
Let $n=2$ or $3$.
 There exists  a constant $k>0$ such that if the initial datum 
 $\xi_0\in H^5(\mathbb{T}_{z}\times\mathbb{R}^{2})$ for the Cauchy problem for the NPE equation~(\ref{npecau}) with $\nu=0$ 
 is sufficiently small
 $$\Vert \xi_0\Vert_{H^5(\mathbb{T}_{z}\times\mathbb{R}^{n-1})}<k\varepsilon,$$
and has a zero mean value then there exist local in time solutions $\overline{\mathbf{U}}_\eps$ of the approximate Euler system~(\ref{NSmatrA})  and $\mathbf{U}_\eps$ of the exact  Euler system~(\ref{NSmatr}) with $\nu=0$ respectively, with the same regularity corresponding to~(\ref{EqregRhoNSper2}) and~(\ref{EqregvNSper}) on $[0,T_{\varepsilon}^*[$ instead of $[0,+\infty[$ and a zero mean value in the $x_1$-direction, both considered with the state law~(\ref{press}) and with the same initial data (\ref{bbannpe})
where $\bar{\rho}_\eps|_{t=0}$ and $\bar{\mathbf{v}}_\eps|_{t=0}$ are constructed as the functions of the initial datum for NPE equation $\xi_0$ according to formulas~(\ref{vNPE})--(\ref{P2NPE}) with $\nu=0$.
Moreover there exists $C>0$ independent of $\varepsilon$ such that $T_{\varepsilon}^*> \frac{C}{\varepsilon}$ and for $t\leq \frac{C}{\varepsilon}$ we have inequality (\ref{inegapproxeulkuz}) on $\mathbb{T}_{x_1}\times\mathbb{R}^{n-1}$.

\end{theorem}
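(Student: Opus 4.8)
The statement is the inviscid ($\nu=0$) counterpart of Theorem~\ref{ThapproxNSNPE}, and the plan is to obtain it from the $\nu=0$ version of the NPE construction in exactly the way Theorem~\ref{ThapproxEulKuz} is obtained from Theorem~\ref{ThapproxNSKuz}. The only structural differences with respect to the viscous case are that global-in-time well-posedness is replaced by local well-posedness with a lifespan bounded below by $C/\varepsilon$, that all viscous integrals disappear, and that the exact Euler solution, being classical, satisfies the entropy identity for~(\ref{NSmatr}) with $\nu=0$ with equality, so the admissible-weak-solution machinery is not needed.

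\emph{Construction of the approximate solution.} With $\nu=0$ the NPE Cauchy problem~(\ref{npecau}), after inverting $\partial_z$ as in~(\ref{invdtau}), is of the same type as the KZK problem~(\ref{NPEcau2}) with the roles of $\tau$ and $z$ interchanged; hence Theorem~\ref{wpglopkzknpe} (together with the Remark following it for $\nu=0$) and a bootstrap in the equation, as in the proof of Theorem~\ref{ThWPNPE}, give for $\xi_0\in H^5(\mathbb{T}_z\times\mathbb{R}^{n-1})$ of zero mean along $z$ with $\|\xi_0\|_{H^5}<k\varepsilon$ a unique $\xi\in\bigcap_{i=0}^{2}C^i([0,\tau^*[,H^{5-2i}(\mathbb{T}_z\times\mathbb{R}^{n-1}))$ of zero mean along $z$, whose lifespan in $\tau$ obeys $\tau^*\ge c_0/\|\xi_0\|_{H^5}\ge c_0/(k\varepsilon)$; since $\tau=\varepsilon t$, this gives a physical-time solution on $[0,T_{\varepsilon,1}[$ with $T_{\varepsilon,1}\ge c_0/(k\varepsilon^2)$, in particular $T_{\varepsilon,1}>C/\varepsilon$ for $\varepsilon$ small. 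Setting $\Psi=-\frac{c}{\rho_0}\partial_z^{-1}\xi$ and $\chi$ by~(\ref{P2NPE}) with $\nu=0$, I define $\overline{\rho}_\varepsilon,\overline{\mathbf{v}}_\varepsilon$ from the \textit{ansatz}~(\ref{vNPE})--(\ref{P2NPE}); using the NPE equation itself to rewrite the $\tau$-derivatives entering $\chi$ and the remainders (these produce only the harmless operators $\Delta_y\partial_z^{-1},\Delta_y\partial_z^{-2},\ldots$) together with the multiplication rule~(\ref{Sobolalg}), one checks that $\overline{\rho}_\varepsilon-\rho_0$ and $\overline{\mathbf{v}}_\varepsilon$ have the regularity~(\ref{EqregRhoNSper2})--(\ref{EqregvNSper}) on $[0,T_{\varepsilon,1}[$, are of zero mean along $x_1$, and that $R_1^{NS-NPE},\mathbf{R}_2^{NS-NPE}$ of~(\ref{R1NSNPE})--(\ref{R2NSNPEej}) lie in $C([0,T_{\varepsilon,1}[,L^2(\mathbb{T}_{x_1}\times\mathbb{R}^{n-1}))$ and are uniformly $O(1)$ there (the hypothesis $\xi_0\in H^5$ is more than the $s\ge 4$ that already suffices). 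Thus $\overline{\mathbf{U}}_\varepsilon=(\overline{\rho}_\varepsilon,\overline{\rho}_\varepsilon\overline{\mathbf{v}}_\varepsilon)^t$ solves~(\ref{NSAp1})--(\ref{NSAp2}) with $\nu=0$.

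\emph{Construction of the exact solution.} By construction $\mathbf{U}_\varepsilon(0)=\overline{\mathbf{U}}_\varepsilon(0)$ is a perturbation of order $\varepsilon$ of the constant state $(\rho_0,0)$ with $\rho_\varepsilon(0)-\rho_0,\mathbf{v}_\varepsilon(0)\in H^3(\mathbb{T}_{x_1}\times\mathbb{R}^{n-1})$ small and $\rho_\varepsilon(0)>0$, so Theorem~\ref{ThWPEulDafermos} together with Theorem~\ref{ThminTepsSideris}(1) — whose energy proofs carry over verbatim from $\mathbb{R}^n$ to the cylinder $\mathbb{T}_{x_1}\times\mathbb{R}^{n-1}$ — provide a unique classical solution $\mathbf{U}_\varepsilon$ on $[0,T_{\varepsilon,2}[$ with $T_{\varepsilon,2}\ge C_2/\varepsilon$ and the regularity~(\ref{EqregRhoNSper2})--(\ref{EqregvNSper}), whose $x_1$-mean stays zero for all times exactly as in Theorem~\ref{ThWPNSper}. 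I then set $T_\varepsilon^*=\min(T_{\varepsilon,1},T_{\varepsilon,2})\ge C/\varepsilon$ with $C$ independent of $\varepsilon$.

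\emph{Stability estimate, and the main obstacle.} On $[0,T_\varepsilon^*[$ both solutions exist and I repeat the relative-entropy/Gronwall computation of the proof of Theorem~\ref{ThapproxNSKuz} with $\nu=0$ (every viscous integral drops out): convexity of $\eta$ gives $\int[\eta(\mathbf{U}_\varepsilon)-\eta(\overline{\mathbf{U}}_\varepsilon)-\eta'(\overline{\mathbf{U}}_\varepsilon)(\mathbf{U}_\varepsilon-\overline{\mathbf{U}}_\varepsilon)]\,dx\ge\delta_0\|\mathbf{U}_\varepsilon-\overline{\mathbf{U}}_\varepsilon\|_{L^2}^2$; differentiating in $t$, the divergence-form terms integrate to zero (decay in $x'$ from $\xi\in H^5(\cdot\times\mathbb{R}^{n-1})$, periodicity in $x_1$), the Taylor remainder of $\mathbf{G}_i$ is $\le C|\mathbf{U}_\varepsilon-\overline{\mathbf{U}}_\varepsilon|^2$, the term $\overline{R}^{NS-NPE}-\eta'(\overline{\mathbf{U}}_\varepsilon)\mathbf{R}^{NS-NPE}$ collapses, by $H'(\rho)=h(\rho)+p(\rho)/\rho$, to $\overline{\mathbf{v}}_\varepsilon^2 R_1^{NS-NPE}$, which is $L^1$ and $O(\varepsilon^2)$, and $\|\nabla_{t,\mathbf{x}}\overline{\mathbf{U}}_\varepsilon\|_{L^\infty}\le C\varepsilon$ is read off the \textit{ansatz}; with $\delta=0$ (same initial data) the Gronwall lemma yields $\|\mathbf{U}_\varepsilon-\overline{\mathbf{U}}_\varepsilon\|_{L^2(\mathbb{T}_{x_1}\times\mathbb{R}^{n-1})}^2(t)\le K\varepsilon^3 t\,e^{K\varepsilon t}$, which for $t\le C/\varepsilon$ with $C$ chosen so that $KCe^{KC}\le 4$ gives~(\ref{inegapproxeulkuz}). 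I expect the only genuine work to be the lifespan bookkeeping: the inviscid NPE equation has no global theory, so one must track how the smallness $\|\xi_0\|_{H^5}<k\varepsilon$ propagates into $T_{\varepsilon,1}$, and then check that $T_{\varepsilon,1}$ and the Euler lifespan $T_{\varepsilon,2}$ from Theorem~\ref{ThminTepsSideris} both exceed a common $C/\varepsilon$ with $C$ independent of $\varepsilon$; together with the verification, on the cylinder, that $\xi_0\in H^5$ alone yields the stated regularity of the \textit{ansatz} and of $R_1^{NS-NPE},\mathbf{R}_2^{NS-NPE}$. The relative-entropy estimate itself is then routine.
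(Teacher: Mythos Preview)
Your proposal is correct and follows essentially the same route as the paper: carry the Dafermos/Sideris local well-posedness and lifespan results for Euler over to the cylinder $\mathbb{T}_{x_1}\times\mathbb{R}^{n-1}$, invoke Theorem~\ref{wpglopkzknpe} for the inviscid NPE lifespan under the smallness $\|\xi_0\|_{H^5}<k\varepsilon$, and then repeat the relative-entropy/Gronwall argument of Theorems~\ref{ThExistUUbarUEulK}--\ref{ThapproxEulKuz} with $\nu=0$. Your bookkeeping of the $\tau=\varepsilon t$ rescaling (yielding $T_{\varepsilon,1}\ge C/\varepsilon^2$ in physical time) is in fact a bit more precise than the paper's statement, but the overall strategy is the same.
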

\begin{proof}
The work of Dafermos in Ref.~\cite{Dafermos} can always be applied on $\mathbb{T}_{x_1}\times\mathbb{R}^{n-1}$ for $n=2$ or $3$ instead of $\mathbb{R}^n$ so we have an equivalent of Theorem~\ref{ThWPEulDafermos} and we also have the same equivalent of Theorem~\ref{ThminTepsSideris}. 
This is due to the fact that the energy estimate in the articles of Sideris\cite{Sideris2,Sideris3,Sideris1,Sideris2d} are always true on $\mathbb{T}_{x_1}\times \mathbb{R}$ and $\mathbb{T}_{x_1}\times \mathbb{R}^2$. In all this cases we must also suppose that we have a mean value equal to zero in the direction $x_1$. 
As by Theorem~\ref{wpglopkzknpe} the NPE equation is locally well posed on $[0,T_{\varepsilon}[$ with $ T_{\varepsilon}\geq \frac{C}{\varepsilon}$ if $\Vert \xi_0\Vert_{H^5(\mathbb{T}_{z}\times\mathbb{R}^{n-1})}<k\varepsilon,$ we have an equivalent of Theorems~\ref{ThExistUUbarUEulK} and~\ref{ThapproxEulKuz} for the exact compressible isentropic Euler system and  its approximation by the NPE equation on $\mathbb{T}_{x_1}\times\mathbb{R}^{n-1}$ for $n=2$ or $3$ as $ \xi_0\in H^5(\mathbb{T}_{z}\times\mathbb{R}^{n-1}) $ also implies $\bar{\rho}_\eps|_{t=0}$ and $\bar{\mathbf{v}}_\eps|_{t=0}$ in $H^3(\mathbb{T}_{x_1}\times\mathbb{R}^{n-1}) $.

It is also easy to see using the previous arguments that the minimum regularity of the initial data (see Table~\ref{TABLE}) to have the remainder terms 
$$
R_1^{NS-NPE}\hbox{ and }\textbf{R}_2^{NS-NPE}\in  C([0,T_{\varepsilon}^*[;L^2(\mathbb{T}_{x_1}\times \mathbb{R}^{n-1}))
$$
corresponds to
$\xi_0\in H^s(\mathbb{T}_{x_1}\times \mathbb{R}^{n-1})$ with $s \geq 4$ since then for $0\leq k \leq 2$
$$\xi(\tau,z,y)\in C^k([0,T_{\varepsilon}^*[\rbrace;H^{s-2k}(\mathbb{T}_{z}\times \mathbb{R}^{n-1})),$$
which finally implies with formulas~(\ref{rhoNPE}),~(\ref{vNPE}),~(\ref{P1NPE}) and~(\ref{P2NPE}) with $\nu=0$ that
\begin{align*}
\overline{\rho}_{\varepsilon}(t,x_1,x')\in C([0,T_{\varepsilon}^*[;H^2(\mathbb{T}_{x_1}\times \mathbb{R}^{n-1}))\cap C^1([0,T_{\varepsilon}^*[;L^2(\mathbb{T}_{x_1}\times \mathbb{R}^{n-1})),\\
\overline{\textbf{v}}_{\varepsilon}(t,x_1,x')\in C([0,T_{\varepsilon}^*[;H^3(\mathbb{T}_{x_1}\times \mathbb{R}^{n-1}))\cap C^1([0,T_{\varepsilon}^*[;H^1(\mathbb{T}_{x_1}\times \mathbb{R}^{n-1})).
\end{align*}

\end{proof} 
\begin{remark}
 If we allow the Euler system to have not the classical, but an admissible weak solution with the bounded energy (see Definition~\ref{DefAdm} and take $\nu=0$) taking the initial data in a small on $\eps$ $L^2$-neighborhood of $\overline{\mathbf{U}}_{\varepsilon}(0)$, then we also formally have estimate~(\ref{validaproxintro}).
But, thanks to Ref.~\cite{LUO-2016} it is known that the Euler system can provide infinitely many admissible weak solutions, and thus there are no  sense to approximate them.
\end{remark}
 For the approximation by the KZK equation the inviscid case has already been studied in Ref.~\cite{Roz3}. The key point is that we must restrict our spacial domain to a cone in order to take into account the fact that the KZK equation is only locally well posed.
 \begin{theorem}\label{ThAproEulKZK}
 \cite{Roz3}
 Suppose that there exists the solution $I$ of the KZK Cauchy problem~(\ref{NPEcau2}) with $I_0\in H^s(\mathbb{T}_{\tau}\times \mathbb{R}^{n-1})$ for $s>\max\lbrace 10, \left[\frac{n}{2}\right]+1\rbrace,$ and $\nu=0$ such that $I(\tau,z,y)$ is $L-$periodic with respect to $\tau$ and defined for $\vert z\vert \leq R$ and $y\in \mathbb{R}_y^{n-1}$. Also  we assume
 $$z\mapsto I(\tau,z,y)\in C(]-R,R[;H^s(\mathbb{T}_{\tau}\times \mathbb{R}_y^{n-1}))\cap C^1(]-R,R[;H^{s-2}(\mathbb{T}_{\tau}\times \mathbb{R}_y^{n-1}))$$
 (the uniqueness and the existence of such a solution is proved by Theorem~\ref{wpglopkzknpe}).
 
 Let $\overline{\mathbf{U}}_{\varepsilon}=(\overline{\rho}_{\varepsilon},\overline{\rho}_{\varepsilon}\overline{\textbf{v}}_{\varepsilon})^t$ be the approximate solution of the isentropic Euler system~(\ref{massaproxKZK})--(\ref{momaproxKZK}) with $\nu=0$ deduced from a solution of the KZK equation. Then the function $\overline{\mathbf{U}}_{\varepsilon}(t,x_1,x')$ is defined in 
 $$\mathbb{T}_t\times(\Omega_{\varepsilon}=\lbrace x_1\vert x_1<\frac{R}{\varepsilon}-ct\rbrace\times \mathbb{R}_{x'}^{n-1})$$
 and is smooth enough according to the regularity of $I$.
 
 Let us now consider the solution $\mathbf{U}_{\varepsilon}$ of the Euler System~(\ref{NSmatr}) with $\nu=0$ in a cone
 $$C(t)=\lbrace 0<s<t\rbrace \times Q_{\varepsilon}(s)=\lbrace x=(x_1,x'):\vert x_1\vert\leq \frac{R}{\varepsilon}-Ms, M\geq c, x'\in \mathbb{R}^{n-1}\rbrace$$
 with the initial data 
 $$(\rho_{\varepsilon}-\overline{\rho}_{\varepsilon})\vert_{t=0}=0,\;\;\;(\mathbf{v}_{\varepsilon}- \overline{ \mathbf{v}}_{\varepsilon})\vert_{t=0}=0.$$
 Consequently, (see Ref.~\cite{Dafermos} p. 62) there exists $T_0$ such that for the time interval $0\leq t\leq \frac{T_0}{\varepsilon}$ there exists the classical solution $\mathbf{U}_{\varepsilon}=(\rho_{\varepsilon},\rho_{\varepsilon}\mathbf{v}_{\varepsilon})$ of the Euler system~(\ref{NSmatr}) with $\nu=0$ in a cone
 $$C(T)=\lbrace 0<t<T \vert T<\frac{T_0}{\varepsilon}\rbrace\times Q_{\varepsilon}(t)$$
 with
 $$\Vert \nabla \mathbf{U}_{\varepsilon} \Vert_{L^{\infty}([0,\frac{T_0}{\varepsilon}[;H^{s-1}(Q_{\varepsilon}))}<\varepsilon C \hbox{ for }s>\left[\frac{n}{2}\right]+1.$$
 
 Moreover, there exists $K>0$ such that for any $\varepsilon$ small enough, the solutions $\mathbf{U}_{\varepsilon}$ and $\overline{\mathbf{U}}_{\varepsilon}$ which where determined as above in cone $C(T)$ with the same initial data, satisfy the estimate for $0<t<\frac{T_0}{\varepsilon} $
 $$\Vert (\mathbf{U}_{\varepsilon}-\overline{\mathbf{U}}_{\varepsilon})(t)\Vert_{L^2(Q_{\varepsilon}(t))}^2\le c_0^2 \varepsilon^3 t e^{2K\varepsilon t}\leq 4 \varepsilon^2$$
 with $c_0^2>0$.
  
 \end{theorem}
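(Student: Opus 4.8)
The plan is to follow the relative–entropy (modulated energy) argument used for Theorem~\ref{ThapproxNSKuz}, specialized to $\nu=0$ and localized to the truncated cone $C(T)$, the localization being forced by the fact that the KZK equation is only locally well posed in the propagation variable $z$.

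First I would assemble the two existence ingredients. From Theorem~\ref{wpglopkzknpe} (taken with $\nu=0$ on a bounded $z$-interval) one obtains the solution $I$ with the stated regularity; through the \textit{ansatz}~(\ref{EqAnsKZK1})--(\ref{EqAnsKZK2}), Eqs.~(\ref{Ikzk})--(\ref{Jkzk}) and~(\ref{vkzk}) this produces $\overline{\mathbf{U}}_\varepsilon=(\overline{\rho}_\varepsilon,\overline{\rho}_\varepsilon\overline{\mathbf{v}}_\varepsilon)^t$, which is smooth on $\mathbb{T}_t\times\Omega_\varepsilon$ and solves the approximate Euler system~(\ref{massaproxKZK})--(\ref{momaproxKZK}) with $\nu=0$. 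The paraxial scaling~(\ref{chvarkzk}) forces $\|\nabla_{t,x}\overline{\mathbf{U}}_\varepsilon\|_{L^\infty}\le C\varepsilon$, and by the regularity assumed on $I_0$ ($s\ge 10$) the remainder profiles $R_1^{NS-KZK}$, $\mathbf{R}_2^{NS-KZK}$ lie in $C([0,\infty[;L^2)$ in the scaled variables. On the exact side, since $\mathbf{U}_\varepsilon(0)=\overline{\mathbf{U}}_\varepsilon(0)$ is an $O(\varepsilon)$ perturbation of $(\rho_0,0)$ with $\rho_\varepsilon(0)>0$, Theorem~\ref{ThWPEulDafermos} together with the lifespan estimate of Theorem~\ref{ThminTepsSideris} and the finite-speed-of-propagation property of~(\ref{NSmatr}) with $\nu=0$ yields, for $M\ge c$, a classical solution $\mathbf{U}_\varepsilon$ in the cone $C(T)$ for every $T<T_0/\varepsilon$, with $\|\nabla\mathbf{U}_\varepsilon\|_{L^\infty([0,T_0/\varepsilon[;H^{s-1}(Q_\varepsilon))}\le C\varepsilon$, the factor $\varepsilon$ coming from the smallness of the data perturbation in the energy estimates.

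Then I would run the entropy comparison. Using the convex entropy $\eta$ of~(\ref{entropy}) with its derivatives~(\ref{firstderent})--(\ref{secderent}), I differentiate $\int_{Q_\varepsilon(t)}\big(\eta(\mathbf{U}_\varepsilon)-\eta(\overline{\mathbf{U}}_\varepsilon)-\eta'(\overline{\mathbf{U}}_\varepsilon)(\mathbf{U}_\varepsilon-\overline{\mathbf{U}}_\varepsilon)\big)\dx$ exactly as in the proof of Theorem~\ref{ThapproxNSKuz} with $\nu=0$, collecting the quadratic flux-defect terms $\mathbf{G}_i(\mathbf{U}_\varepsilon)-\mathbf{G}_i(\overline{\mathbf{U}}_\varepsilon)-D\mathbf{G}_i(\overline{\mathbf{U}}_\varepsilon)(\mathbf{U}_\varepsilon-\overline{\mathbf{U}}_\varepsilon)=O(|\mathbf{U}_\varepsilon-\overline{\mathbf{U}}_\varepsilon|^2)$ weighted by $\partial_{x_i}\overline{\mathbf{U}}_\varepsilon=O(\varepsilon)$, and the $\varepsilon^3$-terms carrying $\mathbf{R}^{NS-KZK}$. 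Convexity gives the lower bound $\eta(\mathbf{U}_\varepsilon)-\eta(\overline{\mathbf{U}}_\varepsilon)-\eta'(\overline{\mathbf{U}}_\varepsilon)(\mathbf{U}_\varepsilon-\overline{\mathbf{U}}_\varepsilon)\ge\delta_0|\mathbf{U}_\varepsilon-\overline{\mathbf{U}}_\varepsilon|^2$; the divergence-form terms integrate to contributions on $\partial Q_\varepsilon(t)$, and on the lateral face — because the cone opens at speed $M\ge c>|\overline{\mathbf{v}}_\varepsilon|$, so the face is outgoing-spacelike for the hyperbolic operator and the relevant flux has the subsonic sign structure $\rho_\varepsilon p'(\rho_\varepsilon)v_1$ computed in the proof of Theorem~\ref{thENS} — this term has the favourable sign and is dropped. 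What remains is
\begin{equation*}
\frac{d}{dt}\int_{Q_\varepsilon(t)}|\mathbf{U}_\varepsilon-\overline{\mathbf{U}}_\varepsilon|^2\dx\le K\varepsilon\int_{Q_\varepsilon(t)}|\mathbf{U}_\varepsilon-\overline{\mathbf{U}}_\varepsilon|^2\dx+K\varepsilon^3+K\varepsilon^3\|\mathbf{U}_\varepsilon-\overline{\mathbf{U}}_\varepsilon\|_{L^2(Q_\varepsilon(t))}.
\end{equation*}
Gronwall's lemma with $\|(\mathbf{U}_\varepsilon-\overline{\mathbf{U}}_\varepsilon)(0)\|_{L^2}=0$ (and the elementary bound $\varepsilon^3\sqrt{v}\le K\varepsilon v$ to absorb the last term) gives $\|(\mathbf{U}_\varepsilon-\overline{\mathbf{U}}_\varepsilon)(t)\|^2_{L^2(Q_\varepsilon(t))}\le c_0^2\varepsilon^3 t\,e^{2K\varepsilon t}$; restricting to $0\le t\le T_0/\varepsilon$ and choosing $T_0$ so small that $c_0^2 T_0 e^{2KT_0}\le 4$ yields the final bound $\le 4\varepsilon^2$.

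The step I expect to be the main obstacle is the joint bookkeeping of the lateral boundary of the cone and the $\varepsilon$-scaling of the KZK profiles: one must verify that the entropy flux $\mathbf{q}\cdot\mathbf{n}$ through $\partial Q_\varepsilon(t)$ genuinely has the right sign for the \emph{relative} quantity (using $M\ge c$, $|\overline{\mathbf{v}}_\varepsilon|<c$ and the sign computation of Theorem~\ref{thENS}), and that after the paraxial change of variables~(\ref{chvarkzk}) the $L^2(Q_\varepsilon(t))$-norms of $\nabla\overline{\mathbf{U}}_\varepsilon$ and of the remainders carry precisely the powers of $\varepsilon$ needed to keep the coefficients $K\varepsilon$ and $K\varepsilon^3$ above $\varepsilon$-uniform even though $Q_\varepsilon(t)$ grows like $\varepsilon^{-1}$ in $x_1$. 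The remaining verifications — continuity of $\eta$, the quadratic Taylor bounds on $\mathbf{G}_i$, and the regularity propagation from $I$ to $\overline{\mathbf{U}}_\varepsilon$ and to $\mathbf{R}^{NS-KZK}$ via the multiplication embedding~(\ref{Sobolalg}) — are routine and identical to those in Theorem~\ref{ThapproxNSKuz}.
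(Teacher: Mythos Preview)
Your proposal is correct and follows the same relative-entropy-on-a-cone approach that the paper takes over from Ref.~\cite{Roz3} (the paper does not reprove this theorem but merely cites it, noting only that the cone restriction is forced by the local-in-$z$ well-posedness of the inviscid KZK equation). One small correction: the favourable sign of the lateral boundary contribution is not the $\rho_\varepsilon p'(\rho_\varepsilon)v_1$ computation of Theorem~\ref{thENS} (that is an $L^2$-energy identity for the exact solution on a fixed half-space), but rather the Dafermos bound that the relative entropy--entropy flux pair satisfies $|Y(\mathbf{U}_\varepsilon,\overline{\mathbf{U}}_\varepsilon)|\le M\,h(\mathbf{U}_\varepsilon,\overline{\mathbf{U}}_\varepsilon)$ once $M$ exceeds the maximal characteristic speed $|\mathbf{v}_\varepsilon|+\sqrt{p'(\rho_\varepsilon)}$, which is exactly your ``spacelike lateral face'' observation stated correctly.
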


\begin{remark}
The regularity of $I_0\in H^s(\mathbb{T}_t\times\mathbb{R}^{n-1})$ with $s>8$ (see Table~\ref{TABLE}) is minimal to ensure  that $R_1^{NS-KZK}$ and $\textbf{R}_2^{NS-KZK}$, see~\ref{Apen1}, are in $C([0,\frac{T_0}{\varepsilon}[;L^2(Q_\varepsilon))$. 
$$I(\tau,z,y)\in C^k(]-R,R[;H^{s-2k}(\mathbb{T}_{\tau}\times \mathbb{R}^{n-1})).$$
Let us denote $\Omega=\mathbb{T}_{\tau}\times \mathbb{R}^{n-1}$.
Given the equations for $\overline{\rho}_{\varepsilon}$ by~(\ref{rhokzk}) with~(\ref{Ikzk}) and~(\ref{Jkzk}) and for $\overline{\textbf{v}}_{\varepsilon}$  by~(\ref{vkzk}) with $\nu=0$ respectively,  we have for $0\leq k\leq 2$
\begin{align*}
&\partial^k_z\overline{\rho}_{\varepsilon}(\tau,z,y)\in  C(]-R,R[;H^{s-2k}(\Omega)),\;
\partial^k_z\overline{\textbf{v}}_{\varepsilon}(\tau,z,y) \in  C(]-R,R[;H^{s-2-2k}(\Omega)),
\end{align*}
but we can also say that 
\begin{align*}
&\partial^k_z\overline{\rho}_{\varepsilon}(\tau,z,y)\in  L^2(]-R,R[;H^{s-2k}(\Omega)),\;
\partial^k_z\overline{\textbf{v}}_{\varepsilon}(\tau,z,y) \in  L^2(]-R,R[;H^{s-2-2k}(\Omega)).
\end{align*}
This implies for $0\leq k\leq 2$ (as $s>8$)  that $s-2-2k>2$ and
\begin{align*}
\partial^k_z\overline{\rho}_{\varepsilon}(\tau,z,y)\in &C(\mathbb{T}_{\tau}; L^2(\lbrace x_1\vert x_1<\frac{R}{\varepsilon}-ct\rbrace;H^{s-2k}( \mathbb{R}^{n-1}))),\\
\partial^k_z\overline{\textbf{v}}_{\varepsilon}(\tau,z,y) \in &C(\mathbb{T}_\tau; L^2(\lbrace x_1\vert x_1<\frac{R}{\varepsilon}-ct\rbrace;H^{s-2-2k}( \mathbb{R}^{n-1}))).
\end{align*}
Hence we find 
\begin{align*}
\overline{\rho}_{\varepsilon}(t,x_1,x'),\; \overline{\textbf{v}}_{\varepsilon}(t,x_1,x')\in & C([0,\frac{T_0}{\varepsilon}[;H^2(Q_{\varepsilon})).\end{align*}
As in addition for $0\leq k\leq 1$, considering $\overline{\rho}_{\varepsilon}$ and $\overline{\mathbf{v}}_{\varepsilon}$ as functions of $(\tau,z,y)$,
\begin{align*}
&\partial^k_z\partial_{\tau}\overline{\rho}_{\varepsilon}\in  C(]-R,R[;H^{s-1-2k}(\Omega)),\;
\partial^k_z\partial_{\tau}\overline{\mathbf{v}}_{\varepsilon} \in  C(]-R,R[;H^{s-3-2k}(\Omega)),
\end{align*}
 we deduce in the same way that
\begin{align*}
\partial_t \overline{\rho}_{\varepsilon}(t,x_1,x'),\; \partial_t \overline{\textbf{v}}_{\varepsilon}(t,x_1,x')\in & C([0,\frac{T_0}{\varepsilon}[;H^1(Q_{\varepsilon})).
\end{align*}
These regularities of  $\overline{\rho}_{\varepsilon}$ and $\overline{\mathbf{v}}_{\varepsilon}$ viewed as functions of $(t,x_1,x')$ allow to have all left-hand terms in the approximated Euler system (\ref{massaproxKZK})--(\ref{momaproxKZK}) with $\nu=0$ of the regularity $C([0,\frac{T_0}{\varepsilon}[;L^2(Q_\varepsilon))$ and the remainder terms in the right-hand side inherit it.
\end{remark}

\section{The Kuznetsov equation and the KZK equation.}
\label{secKuzKZK}
\subsection{Derivation of the KZK equation from the Kuznetsov equation.}\label{secderKuzKZK}
If  the velocity potential is given\cite{Kuznetsov} by Eq.~(\ref{paraxpot}), we directly obtain from the Kuznetsov equation~(\ref{KuzEqA}) with the paraxial change of variable~(\ref{chvarkzk}) that
\begin{align}
& \partial^2_t u -c^2 \Delta u-\varepsilon\partial_t\left( (\nabla u)^2+\frac{\gamma-1}{2c^2}(\partial_t u)^2+\frac{\nu}{\rho_0}\Delta u\right)\nonumber\\
&=\varepsilon\left[2c\partial_{\tau z}^2\Phi-\frac{\gamma+1}{2c^2}\partial_{\tau}(\partial_{\tau} \Phi)^2-\frac{\nu}{\rho_0c^2}\partial^3_{\tau}\Phi- c^2 \Delta_y \Phi\right]+\varepsilon^2 R^{Kuz-KZK}\label{KZKpoten}\end{align}
with 
\begin{align}
\varepsilon^2 R^{Kuz-KZK}= &\varepsilon^2 \left(-c^2 \partial^2_z \Phi+\frac{2}{c} \partial_{\tau}(\partial_{\tau}\Phi \partial_z \Phi)-\partial_{\tau}(\nabla_y\Phi)^2+\frac{2\nu}{c\rho_0}\partial^2_{\tau}\partial_z \Phi-\frac{\nu}{\rho_0}\partial_{\tau} \Delta_y \Phi\right)\nonumber\\
&+\varepsilon^3\left(-\partial_{\tau}(\partial_z\Phi)^2-\frac{\nu}{\rho_0}\partial_{\tau}\partial^2_z\Phi\right).\label{remkuzkzk}
\end{align}

Therefore,
we find that the right-hand side $\epsilon$-order terms in Eq.~(\ref{KZKpoten}) is exactly the KZK equation~(\ref{KZKI}). Due to its well posedness domain, to validate the approximation between the solutions of the KZK and the Kuznetsov equations, we need to study the well posedness of the Kuznetsov equation on the half space with boundary conditions coming from the initial condition for the KZK equation.

\subsection{Well posedness of the models.}\label{secWPresults}

\subsubsection{Well posedness of the Kuznetsov equation in the half space with periodic boundary conditions.}\label{WPKuzhalf1}
Let us consider the following periodic in time problem for the Kuznetsov equation in the half space $\mathbb{R}_+\times\mathbb{R}^{n-1}$ with periodic in time Dirichlet boundary conditions:
\begin{equation}\label{kuzper}
\left\lbrace
\begin{array}{l}
u_{tt}-c^2 \Delta u-\nu \varepsilon \Delta u_t=\alpha \varepsilon u_t\; u_{tt}+\beta \varepsilon \nabla u\; \nabla u_t\;\;\;\;\hbox{ on }\; \mathbb{T}_t\times \mathbb{R}_+\times\mathbb{R}^{n-1},\\
u\vert_{x_1=0}=g\;\;\;\;\hbox{ on }\; \mathbb{T}_t\times \mathbb{R}^{n-1},
\end{array}\right.
\end{equation}
where $g$ is a $L$-periodic in time and of mean value zero function. 
For this we use Ref.~\cite{Celik} and thus we  directly obtain the following result of maximal regularity:
\begin{theorem}~\cite{Celik}\label{ThCelik}
Let $n=3$, $ \Omega=\mathbb{R}_+\times \mathbb{R}^{n-1}$ and $p\in ]1,+\infty[$. Then there exits a unique solution $u\in W^{2}_p(\mathbb{T}_t;L^p(\Omega))\cap W^{1}_p(\mathbb{T}_t;W^2_p(\Omega))$ with the mean value zero
\begin{equation}\label{EqMV0U}
 \forall x\in \Omega \quad \int_{\mathbb{T}_t}u(s,x)\;ds=0
\end{equation}
 of the following system
\begin{equation}\label{kuzlininhomper}
\left\lbrace\begin{array}{l}
u_{tt}-c^2 \Delta u - \nu \varepsilon \Delta u_t=f \;\;\;\hbox{ on }\;\mathbb{T}_t\times \Omega,\\
u =g\;\;\;\hbox{ on }\;\mathbb{T}_t\times \partial\Omega\
\end{array}\right.
\end{equation}
if and only if the functions
\begin{equation}\label{EqfTh51}
 f\in L^p(\mathbb{T}_t;L^p(\Omega)) \hbox{ and } g\in W^{2-\frac{1}{2p}}_p(\mathbb{T}_t;L^p(\partial\Omega))\cap W^1_p(\mathbb{T}_t;W^{2-\frac{1}{p}}_p(\partial\Omega))
\end{equation}
and are  of mean value zero:
\begin{equation}\label{EqMV0}
 \forall x\in \Omega\quad \int_{\mathbb{T}_t}f(l,x)\;dl=0 \hbox{ and }\forall x'\in \partial\Omega \quad \int_{\mathbb{T}_t}g(l,x')\;dl=0.
\end{equation}
Moreover, we have the following stability estimate
\begin{align*}
\Vert u\Vert_{W^{2}_p(\mathbb{T}_t;L^p(\Omega))\cap W^{1}_p(\mathbb{T}_t;W^2_p(\Omega))}\leq C & \left(\Vert f\Vert_{L^p(\mathbb{T}_t;L^p(\Omega))}\right.\\
&+\left.\Vert g\Vert_{ W^{2-\frac{1}{2p}}_p(\mathbb{T}_t;L^p(\partial\Omega))\cap W^1_p(\mathbb{T}_t;W^{2-\frac{1}{p}}_p(\partial\Omega)) } \right).
\end{align*}
\end{theorem}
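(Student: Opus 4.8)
The plan is to establish Theorem~\ref{ThCelik} as a maximal $L^p$-regularity statement for the time-periodic damped wave operator $\mathcal{A}:=\partial_t^2-c^2\Delta-\nu\varepsilon\Delta\partial_t$ on the half-space $\Omega=\mathbb{R}_+\times\mathbb{R}^{n-1}$, following~\cite{Celik}. The necessity direction is a bookkeeping argument: if $u$ lies in $W^2_p(\mathbb{T}_t;L^p(\Omega))\cap W^1_p(\mathbb{T}_t;W^2_p(\Omega))$ and solves~(\ref{kuzlininhomper}), then $f=\mathcal{A}u\in L^p(\mathbb{T}_t;L^p(\Omega))$ is automatic, while $g=u|_{\partial\Omega}$ belongs to the asserted trace space by the anisotropic trace theorem for that maximal regularity class (this is exactly what forces the mixed smoothness $W^{2-\frac1{2p}}_p$ in time together with $W^{2-\frac1p}_p$ in space). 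Averaging the equation over one period and using $\int_{\mathbb{T}_t}\partial_t^2 u\,dt=\int_{\mathbb{T}_t}\partial_t u\,dt=0$, $\int_{\mathbb{T}_t}\Delta u\,dt=\Delta\int_{\mathbb{T}_t}u\,dt=0$ and $\int_{\mathbb{T}_t}u\,dt=0$ gives $\int_{\mathbb{T}_t}f\,dt=0$ and, on the boundary, $\int_{\mathbb{T}_t}g\,dt=0$; in particular the mean-value-zero hypotheses~(\ref{EqMV0}) are forced.

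For sufficiency I would first reduce to $g=0$. Using the anisotropic trace theorem in the reverse direction, choose a lifting $Eg\in W^2_p(\mathbb{T}_t;L^p(\Omega))\cap W^1_p(\mathbb{T}_t;W^2_p(\Omega))$ of $g$ of mean value zero with norm controlled by that of $g$; then $v=u-Eg$ must solve~(\ref{kuzlininhomper}) with $g$ replaced by $0$ and $f$ replaced by $\tilde f:=f-\mathcal{A}Eg\in L^p(\mathbb{T}_t;L^p(\Omega))$, still of mean value zero (since $Eg$, hence $\mathcal{A}Eg$, is mean-value-zero). For the homogeneous-boundary problem I would expand in a Fourier series in time, $v(t,x)=\sum_{k\in\mathbb{Z}\setminus\{0\}}e^{ik\omega t}v_k(x)$ — the $k=0$ mode is absent by the mean-value-zero constraint, and consistency of the sum with the full $\tilde f$ is exactly what uses $\int_{\mathbb{T}_t}\tilde f\,dt=0$ — so that each coefficient solves the Dirichlet elliptic problem
$$\bigl(-k^2\omega^2-(c^2+ik\omega\,\nu\varepsilon)\Delta\bigr)v_k=\tilde f_k \quad \text{in } \Omega,\qquad v_k=0 \text{ on } \partial\Omega,$$
equivalently $v_k=(c^2+ik\omega\nu\varepsilon)^{-1}(\mu_k+\Delta)^{-1}\tilde f_k$ with spectral parameter $\mu_k=k^2\omega^2/(c^2+ik\omega\,\nu\varepsilon)$. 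Since $\arg(c^2+ik\omega\nu\varepsilon)=\arctan(k\omega\nu\varepsilon/c^2)$ is bounded away from $0$ over integers $k\neq0$, the parameters $\mu_k$ stay in a fixed proper subsector of the open lower (resp. upper) half-plane, hence inside the sector of analyticity of the Dirichlet Laplacian $-\Delta$ on $L^p(\Omega)$; sectoriality then gives, uniformly in $k$, $\|k^2 v_k\|_{L^p}\lesssim\|\tilde f_k\|_{L^p}$ and $\|k\,\Delta v_k\|_{L^p}+\|\Delta v_k\|_{L^p}\lesssim\|\tilde f_k\|_{L^p}$, which are the symbol bounds matching the target norm.

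The main work — and the step I expect to be the genuine obstacle — is upgrading these scalar symbol bounds to the actual norm estimate in $W^2_p(\mathbb{T}_t;L^p(\Omega))\cap W^1_p(\mathbb{T}_t;W^2_p(\Omega))$. This requires the operator-valued Fourier multiplier theorem of Weis on the UMD space $L^p(\Omega)$ (valid for $1<p<\infty$), hence not mere uniform boundedness but $\mathcal{R}$-boundedness of the operator families $k\mapsto k^2(c^2+ik\omega\nu\varepsilon)^{-1}(\mu_k+\Delta)^{-1}$ and $k\mapsto k\,\Delta(c^2+ik\omega\nu\varepsilon)^{-1}(\mu_k+\Delta)^{-1}$ in $\mathcal{L}(L^p(\Omega))$. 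This in turn follows from the $\mathcal{R}$-sectoriality (equivalently, bounded $H^\infty$-calculus) of the Dirichlet Laplacian on the half-space, obtainable by even/odd reflection from the whole-space case, combined with Kahane's contraction principle to absorb the scalar factors $k^2(c^2+ik\omega\nu\varepsilon)^{-1}|\mu_k|^{-1}$ and the like, all of which are bounded sequences. Uniqueness is then immediate from the same Fourier representation: a mean-value-zero solution of the homogeneous problem has every $v_k$ solving a uniquely solvable elliptic problem with zero data, so $v\equiv0$; undoing $u=v+Eg$ and collecting the estimates yields the stated stability bound, and the whole argument is reversible, giving the ``if and only if''.
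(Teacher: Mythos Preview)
Your proposal is correct and essentially reconstructs the argument of Celik--Kyed~\cite{Celik}, which is exactly what the paper's proof amounts to: the paper simply cites~\cite{Celik} for the existence direction and invokes the anisotropic trace theorems of~\cite{Denk} for the necessity direction. Your outline of the Fourier-series reduction, the uniform sectorial estimates for the resolvent problems $(\mu_k+\Delta)^{-1}$, and the passage to $L^p$ via Weis' operator-valued multiplier theorem and $\mathcal{R}$-sectoriality of the Dirichlet Laplacian on the half-space is precisely the machinery underlying that reference, so there is nothing to add.
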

\begin{proof}
On one hand,
if $f$ and $g$ satisfy~(\ref{EqfTh51})--(\ref{EqMV0}), the necessity of the conditions is shown in Ref.~\cite{Celik}. 
On the other hand, the conditions~(\ref{EqfTh51})--(\ref{EqMV0}) are sufficient by a direct application of the trace theorems recalled in Ref.~\cite{Celik} and proved in Ref.~\cite{Denk} for example.
\end{proof}
The results of Ref.~\cite{Celik} allow to see that Theorem~\ref{ThCelik} does not depend on~$n$, moreover if we look at the case $p=2$ the linearity of the operator $\partial^2_t-c^2\Delta-\nu \Delta\partial_t$ from~(\ref{kuzlininhomper}) implies that we can work with $H^s(\Omega)$ instead of $L^2(\Omega)$ : 
\begin{lemma}\label{thmLinPer}
Let $n\in \mathbb{N}^*$, $ \Omega=\mathbb{R}_+\times \mathbb{R}^{n-1}$, $s\geq 0$ then 
there exits a unique solution 
\begin{equation}\label{Timepersolspac}
u\in X=\left\lbrace u\in H^2(\mathbb{T}_t;H^s(\Omega))\cap H^1(\mathbb{T}_t;H^{s+2}(\Omega))\vert \forall x\in \Omega \;\int_{\mathbb{T}_t}u(s,x)\;ds=0\right\rbrace
\end{equation}
with the mean value zero (see Eq.~(\ref{EqMV0U})) of  system~(\ref{kuzlininhomper})
if and only if
\begin{equation}\label{timeperboundreg}
f\in L^2(\mathbb{T}_t;H^s(\Omega))\hbox{ and }g\in \mathbb{F}_{\mathbb{T}}= H^{\frac{7}{4}}(\mathbb{T}_t;H^s(\partial\Omega))\cap H^1(\mathbb{T}_t;H^{s+\frac{3}{2}}(\partial\Omega))
\end{equation}
both satisfying~(\ref{EqMV0}).

Moreover we have the following stability estimate
$$\Vert u\Vert_{X}\leq C (\Vert f\Vert_{L^2(\mathbb{T}_t;H^s(\Omega))}+\Vert g\Vert_{ \mathbb{F}_{\mathbb{T}}} ).$$
Here $H^2(\mathbb{T}_t;H^s(\Omega))\cap H^1(\mathbb{T}_t;H^{s+2}(\Omega))$ is endowed with its usual norm denoted here and in the sequel by $\Vert.\Vert_X$.
\end{lemma}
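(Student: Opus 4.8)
The plan is to bootstrap Lemma~\ref{thmLinPer} from Theorem~\ref{ThCelik}. Since the operator $\partial_t^2-c^2\Delta-\nu\eps\Delta\partial_t$ is linear, the natural strategy is: (i) establish the case $s=0$ (with $p=2$) directly from Theorem~\ref{ThCelik}, identifying the trace space correctly, and (ii) lift to general $s\ge 0$ by differentiating the equation in the tangential variables and using the elliptic structure in the normal variable to recover the full $H^{s+2}$ spatial regularity.

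\medskip

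\noindent\textbf{Step 1: the base case $s=0$.} For $p=2$, Theorem~\ref{ThCelik} gives a unique mean-value-zero solution $u\in W^2_2(\mathbb{T}_t;L^2(\Omega))\cap W^1_2(\mathbb{T}_t;W^2_2(\Omega))=H^2(\mathbb{T}_t;L^2(\Omega))\cap H^1(\mathbb{T}_t;H^2(\Omega))$ precisely when $f\in L^2(\mathbb{T}_t;L^2(\Omega))$ and $g\in W^{2-\frac1{2p}}_p(\mathbb{T}_t;L^p(\partial\Omega))\cap W^1_p(\mathbb{T}_t;W^{2-\frac1p}_p(\partial\Omega))$, both of mean value zero. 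Specializing $p=2$ we get $g\in H^{7/4}(\mathbb{T}_t;L^2(\partial\Omega))\cap H^1(\mathbb{T}_t;H^{3/2}(\partial\Omega))$, which is exactly $\mathbb{F}_{\mathbb{T}}$ with $s=0$; so the base case is just a rewriting of Theorem~\ref{ThCelik}, together with the stability estimate stated there.

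\medskip

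\noindent\textbf{Step 2: tangential regularity.} Write $\Omega=\mathbb{R}_+\times\mathbb{R}^{n-1}$ with normal coordinate $x_1>0$ and tangential coordinates $x'\in\mathbb{R}^{n-1}$ (and the periodic time $t$ as an extra tangential direction). For a multi-index $\alpha$ acting on $(t,x')$, apply $\partial^\alpha_{(t,x')}$ to system~(\ref{kuzlininhomper}); since the coefficients are constant, $\partial^\alpha u$ solves the same system with data $\partial^\alpha f$ and $\partial^\alpha g$, and it still has mean value zero in $t$. By the base case applied to these derivatives, for $|\alpha|\le s$ (here one takes $s\in\mathbb{N}$ first, then interpolates) one obtains $\partial^\alpha u\in H^2(\mathbb{T}_t;L^2(\Omega))\cap H^1(\mathbb{T}_t;H^2(\Omega))$ with the corresponding norm bound by $\|\partial^\alpha f\|_{L^2(\mathbb{T}_t;L^2(\Omega))}+\|\partial^\alpha g\|_{\mathbb{F}_{\mathbb{T}}}$. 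Summing over $|\alpha|\le s$ controls all tangential-derivative norms of $u$; the integer case of the claimed regularity for $u$ then follows once the missing purely-normal derivatives are recovered in Step 3, and non-integer $s$ is handled by complex interpolation between consecutive integers, which is compatible with all the spaces appearing (Bessel-potential/Sobolev spaces on $\Omega$, $\partial\Omega$, and $\mathbb{T}_t$).

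\medskip

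\noindent\textbf{Step 3: normal regularity via the equation, and the main obstacle.} From $c^2\Delta u=\partial_t^2u-\nu\eps\Delta\partial_t u-f$, once we control $\partial_t^2 u$, $\partial_t\Delta u$, $f$ and all tangential derivatives of $u$ up to order $s+2$, we can solve for $\partial_{x_1}^2 u$ in terms of quantities already estimated in $H^s(\Omega)$ (in $t,x'$) plus lower-order $x_1$-derivatives, and iterate to get $\partial_{x_1}^k u\in H^s$ in the remaining variables for $k\le s+2$, yielding $u\in H^1(\mathbb{T}_t;H^{s+2}(\Omega))$ and $u\in H^2(\mathbb{T}_t;H^s(\Omega))$. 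The delicate point — and the part I expect to be the main obstacle — is the precise identification of the trace space $\mathbb{F}_{\mathbb{T}}$ and the verification that it is both necessary and sufficient: necessity comes from the trace/anisotropic-embedding theorems for the parabolic-type scale $H^2(\mathbb{T}_t;H^s(\Omega))\cap H^1(\mathbb{T}_t;H^{s+2}(\Omega))$ (the $\tfrac74$ and $\tfrac32$ exponents are the half-order trace losses in the time- and space-differentiability indices respectively, as recorded in Ref.~\cite{Celik} via Ref.~\cite{Denk}), and sufficiency requires an extension operator from $\mathbb{F}_{\mathbb{T}}$ into the solution space compatible with the mean-value-zero constraint, after which one reduces to the $g=0$ case and invokes Theorem~\ref{ThCelik}. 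The mean-value-zero conditions~(\ref{EqMV0}) must be tracked throughout: they are preserved by tangential differentiation and by the extension, and they are exactly what makes the time-periodic problem for this (otherwise non-coercive in $t$) operator uniquely solvable. Finally, assembling the tangential and normal estimates and using the closed-graph/open-mapping principle (or simply summing the component estimates) delivers the stated stability bound $\|u\|_X\le C(\|f\|_{L^2(\mathbb{T}_t;H^s(\Omega))}+\|g\|_{\mathbb{F}_{\mathbb{T}}})$, with $C$ depending on $n,s,\eps,\nu,c$ but not on $f,g$.
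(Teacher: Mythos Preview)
Your proposal is correct and, in fact, supplies considerably more detail than the paper does. The paper does not give a formal proof of Lemma~\ref{thmLinPer}; it simply records, in the sentence immediately preceding the lemma, that the results of Ref.~\cite{Celik} do not depend on $n$ and that ``the linearity of the operator $\partial_t^2-c^2\Delta-\nu\Delta\partial_t$ \ldots\ implies that we can work with $H^s(\Omega)$ instead of $L^2(\Omega)$.'' In other words, the paper defers to the arguments of \cite{Celik} and asserts that they lift verbatim from $L^2$ to $H^s$.

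Your route is genuinely different in execution: rather than appealing to the internal structure of the proof in \cite{Celik}, you take Theorem~\ref{ThCelik} as a black box at $s=0$, then bootstrap via tangential differentiation in $(t,x')$ (which commutes with the constant-coefficient operator and preserves the boundary geometry and the mean-zero constraint), and finally recover the missing normal derivatives from the equation $(c^2+\nu\eps\partial_t)\partial_{x_1}^2u=\partial_t^2u-f-(c^2+\nu\eps\partial_t)\Delta_{x'}u$, iterating and interpolating for non-integer $s$. This is a standard and robust strategy; it has the advantage of being self-contained and of making the trace-space identification (the $\tfrac74$ and $\tfrac32$ exponents) transparent through the anisotropic trace theorems you cite. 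The paper's approach is terser but relies on the reader accepting that the machinery of \cite{Celik} transfers wholesale to the $H^s$-valued setting.
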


To prove the global well-posedness of problem~(\ref{kuzper}) for the Kuznetsov equation we use the following theorem~\cite{Sukhinin}:
\begin{theorem}\label{thSuh}\cite{Sukhinin}
Let $X$ be a Banach space, let $Y$ be a separable
topological vector space, let $L : X \rightarrow Y$ be a linear
continuous operator, let $U$ be the open unit ball in $X$, let ${\rm
P}_{LU}:LX \to [0,\infty [$ be the Minkowski functional of the set
$LU$, and let $\Phi :X \to LX$ be a mapping satisfying the condition
\begin{equation*}
 {\rm P}_{LU} \bigl(\Phi (x) -\Phi (\bar{x})\bigr) \leq
\Theta (r) \left\|x -\bar{x} \right\|\quad \text{for} \quad \left\|x
-x_0 \right\| \leqslant r,\quad \left\|\bar{x} -x_0 \right\| \leq r
\end{equation*} for some $x_0 \in X,$ where $\Theta :[0,\infty [ \to [0,\infty [$ is a monotone
non-decreasing function. Set $b(r) =\max \bigl(1 -\Theta (r),0
\bigr)$ for $r \geq 0$.

 Suppose that \begin{align*}
               &w =\int\limits_0^\infty b(r)\,dr \in ]0,\infty ], \quad r_* =\sup \{ r
\geq 0|\;b(r) >0 \},\\
&w(r) =\int\limits_0^r b(t)dt \quad (r \geq 0) \quad\hbox{and} \quad f(x) =Lx
+\Phi(x) \quad \hbox{for} \quad x \in X.
              \end{align*}
Then for any $r \in
[0,r_*[$ and $ y \in f(x_0) +w(r)LU$, there exists an
 $ x \in x_0 +rU$ such that $f(x) =y$.
\end{theorem}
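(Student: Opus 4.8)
The plan is to solve $f(x)=Lx+\Phi(x)=y$ by a Graves‑type successive‑approximation scheme adapted to a linear part $L$ that is neither assumed invertible nor surjective, the action of $L$ being quantified through the Minkowski functional $P_{LU}$. The preliminary observations are: (i) since $U$ is convex and balanced and $L$ is linear, $P_{LU}$ is a seminorm on $LX$ and, because $P_{LU}(v)=\inf\{\|\xi\|:\,L\xi=v\}$ for $v\in LX$, for every $v\in LX$ and every $\eta>0$ there is $\xi\in X$ with $L\xi=v$ and $\|\xi\|\le P_{LU}(v)+\eta$; (ii) $LU$ is bounded in $Y$ (continuous linear image of a bounded set), so $P_{LU}(v_k)\to0$ forces $v_k\to0$ in $Y$; (iii) $\Phi(x)\in LX$ for all $x$, hence $f(x_0)\in LX$ and, by the identity below, every iterate difference $y-f(x_n)$ will lie in $LX$.

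Fix $r\in[0,r_*[$ and $y\in f(x_0)+w(r)LU$, so that $P_{LU}(y-f(x_0))<w(r)$. I would construct $x_0,x_1,x_2,\dots$ together with step fractions $\lambda_n\in(0,1]$ and slacks $\eta_n>0$ as follows: given $x_n$, put $v_n:=P_{LU}(y-f(x_n))$, pick $h_n$ with $Lh_n=\lambda_n(y-f(x_n))$ and $\|h_n\|\le\lambda_n v_n+\eta_n$, and set $x_{n+1}:=x_n+h_n$. A direct computation gives $y-f(x_{n+1})=(1-\lambda_n)(y-f(x_n))+\bigl(\Phi(x_n)-\Phi(x_{n+1})\bigr)$; writing $s_n:=\sum_{k<n}\|h_k\|\ge\|x_n-x_0\|$, so $s_{n+1}=s_n+\|h_n\|$, and applying the hypothesis on $\Phi$ (legitimate since $\|x_n-x_0\|,\|x_{n+1}-x_0\|\le s_{n+1}$) together with subadditivity and homogeneity of $P_{LU}$,
$$v_{n+1}\le(1-\lambda_n)v_n+\Theta(s_{n+1})\|h_n\|\le(1-\lambda_n)v_n+\Theta(s_n+\lambda_n v_n+\eta_n)(\lambda_n v_n+\eta_n).$$
This is a polygonal (Euler) discretization, in the ``displacement'' variable $s$, of the scalar Cauchy problem $v'(s)=-b(s)$, $v(0)=v_0$, whose solution is $v(s)=v_0-w(s)$: indeed for $\lambda_n$ small $s_{n+1}-s_n\approx\lambda_n v_n$ and $v_{n+1}-v_n\approx-\lambda_n v_n\,b(s_n)$, so $\Delta v/\Delta s\approx-b(s_n)$. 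Since $\Theta$ is non‑decreasing, $b=\max(1-\Theta,0)$ is non‑increasing, $w$ is strictly increasing on $[0,r_*[$, and $w(s^{*})=v_0<w(r)$ holds at some $s^{*}<r$, so the comparison trajectory reaches $0$ strictly before displacement $r$.

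The heart of the argument is then the purely scalar bookkeeping: choosing the $\lambda_n\to0$ fast enough and the $\eta_n$ summable and small enough relative to the room $w(r)-v_0>0$, a routine comparison/induction shows the discrete trajectory stays below the continuous one, hence $\sum_n\|h_n\|<r$ and $v_n\to0$, so in particular $\|h_n\|\le\lambda_n v_n+\eta_n\to0$. Consequently $(x_n)$ is Cauchy in $X$ with limit $x\in x_0+rU$ (strictly, as $\sum\|h_n\|<r$). Finally $Lx_n\to Lx$ by continuity of $L$, and $\Phi(x_n)\to\Phi(x)$ in $Y$ because $P_{LU}(\Phi(x_n)-\Phi(x))\le\Theta(r)\|x_n-x\|\to0$ with $LU$ bounded; hence $f(x_n)\to f(x)$. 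On the other hand $P_{LU}(y-f(x_n))=v_n\to0$ gives $y-f(x_n)\to0$ in $Y$, and therefore $f(x)=y$.

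I expect the main obstacle to be exactly the scalar comparison lemma of the third paragraph, i.e. verifying that one can choose $(\lambda_n,\eta_n)$ so that the two‑variable iteration $(s_n,v_n)$ does not overshoot $s=r$ before $v_n$ decays to $0$; the delicate interplay is between the additive slacks $\eta_n$ — forced on us by the possible non‑attainment of the infimum defining $P_{LU}$ — and the integral budget $w(r)-P_{LU}(y-f(x_0))>0$. Everything else (the algebraic identity for $y-f(x_{n+1})$, the Lipschitz estimate with the sharper radius $s_{n+1}$, and the passage to the limit via boundedness of $LU$) is routine.
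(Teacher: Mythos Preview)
The paper does not prove this theorem; it is quoted from Sukhinin's article and used as a black box in the proof of Theorem~\ref{globwellposKuzper}. There is therefore no proof in the paper to compare your argument against.

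That said, your Graves-type successive-approximation scheme is the natural route and is almost certainly what Sukhinin does. Your preliminary observations are all correct: the infimum formula $P_{LU}(v)=\inf\{\|\xi\|:L\xi=v\}$, the fact that $P_{LU}(v_k)\to0$ forces $v_k\to0$ in $Y$ via boundedness of $LU$, the algebraic identity for $y-f(x_{n+1})$, and the use of the radius $s_{n+1}$ in the Lipschitz estimate. You have also correctly isolated the scalar comparison as the only genuine work.

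Two warnings on that comparison. First, your recursion gives $v_{n+1}-v_n\le -\lambda_n v_n\,b(s_{n+1})$ with $b$ evaluated at the \emph{right} endpoint; since $b$ is non-increasing, $b(s_{n+1})(s_{n+1}-s_n)\le\int_{s_n}^{s_{n+1}}b(t)\,dt$, so this one-step bound does not directly dominate the increment $w(s_{n+1})-w(s_n)$. A clean fix is to pre-select a partition $0=t_0<\cdots<t_N=r'$ with $r'<r$ fine enough that the right Riemann sum $\sum_k b(t_{k+1})(t_{k+1}-t_k)$ exceeds $v_0$ (possible because right Riemann sums of a non-increasing integrand increase to $w(r')>v_0$), steer the $s_n$ along the $t_k$ until $v_n$ drops below the mesh, and then finish with full steps $\lambda_n=1$ and the crude bound $v_{n+1}\le\Theta(r)v_n$ for a geometric tail. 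Second, the phrase ``$\lambda_n\to0$ fast enough'' is misleading: if $\sum\lambda_n<\infty$ the product $\prod(1-\lambda_n b(s_{n+1}))$ need not tend to $0$ and $v_n$ can stall away from $0$. What you actually want is that each $\lambda_n$ be small enough to hit the next partition point, not that the sequence tend to zero.

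Finally, the separability of $Y$ plays no role in your scheme; it may be a standing hypothesis in Sukhinin's paper rather than an essential one.
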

Now we can use the maximal regularity result for system~(\ref{kuzlininhomper}) with Theorem~\ref{thSuh} and the same method as for the Cauchy problem associated with the Kuznetsov equation used in our previous work~\cite{Perso}. We will just have to use the boundary conditions of problem~(\ref{kuzper}) as the initial condition of the corresponding Cauchy problem in $\mathbb{R}^n$.
\begin{theorem}\label{globwellposKuzper}
Let $\nu>0$, $n\in \mathbb{N}^*$, $\Omega=\mathbb{R}_+\times \mathbb{R}^{n-1} $, $s>\frac{n}{2}$. 
Let 
$
X\hbox{ be defined by }(\ref{Timepersolspac})
$
and the boundary condition 
$
g\in \mathbb{F}_{\mathbb{T}}$ be defined by~(\ref{timeperboundreg}) and in addition, let $g$ be of the mean value zero (see Eq.~(\ref{EqMV0})).

Then there exist $r^*=O(1)$ and $C_1=O(1)$ such that for all $r\in [0,r^*[$, if
$\Vert g\Vert_{\mathbb{F}_{\mathbb{T}}}\leq \frac{\sqrt{\nu \varepsilon}}{C_1}r,$
there exists a unique solution $u\in X$ of the periodic problem~(\ref{kuzper}) satisfying~(\ref{EqMV0U}) and such that $\Vert u\Vert_X\leq 2r$.
\end{theorem}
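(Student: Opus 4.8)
The plan is to reformulate the periodic boundary-value problem~(\ref{kuzper}) as a fixed-point equation $f(u)=0$ in the Banach space $X$ defined by~(\ref{Timepersolspac}), where $f$ splits as $f(u)=Lu+\Phi(u)$ with $L$ the linear part and $\Phi$ the nonlinear part, and then to invoke Theorem~\ref{thSuh}. First I would set $L:X\to Y:=L^2(\mathbb{T}_t;H^s(\Omega))\times\mathbb{F}_{\mathbb{T}}$ by $Lu=(u_{tt}-c^2\Delta u-\nu\varepsilon\Delta u_t,\ u|_{x_1=0})$; Lemma~\ref{thmLinPer} says precisely that $L$ is a continuous linear bijection from $X$ onto the closed subspace of $Y$ consisting of pairs of mean value zero, with bounded inverse, so the Minkowski functional $\mathrm{P}_{LU}$ of $LU$ is equivalent to the $Y$-norm on that subspace (up to the constant from the stability estimate). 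The nonlinear map is $\Phi(u)=(-\,(\alpha\varepsilon u_t u_{tt}+\beta\varepsilon\nabla u\cdot\nabla u_t),\,0)$, which indeed lands in $LX$: the mean value zero of the forcing term is automatic because it is a $\partial_t$-exact expression (note $u_t u_{tt}=\tfrac12\partial_t(u_t)^2$ and $\nabla u\cdot\nabla u_t=\tfrac12\partial_t|\nabla u|^2$), so integrating over the torus $\mathbb{T}_t$ gives zero; and the boundary component is $0$, trivially mean value zero. Here I take $x_0=0$, so $f(x_0)=(0,g)$ up to the identification, or rather I absorb $g$ into the target: I seek $u$ with $f(u)=(0,g)$.

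Next I would establish the Lipschitz-type estimate required by Theorem~\ref{thSuh}, namely
$$\mathrm{P}_{LU}\bigl(\Phi(u)-\Phi(\bar u)\bigr)\le\Theta(r)\,\|u-\bar u\|_X\quad\text{for }\|u\|_X\le r,\ \|\bar u\|_X\le r.$$
By Lemma~\ref{thmLinPer} this reduces to bounding $\|\alpha\varepsilon(u_t u_{tt}-\bar u_t\bar u_{tt})+\beta\varepsilon(\nabla u\cdot\nabla u_t-\nabla\bar u\cdot\nabla\bar u_t)\|_{L^2(\mathbb{T}_t;H^s(\Omega))}$ by a constant times $\varepsilon r\|u-\bar u\|_X$. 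This is where the hypothesis $s>\tfrac n2$ enters: since $X\hookrightarrow H^2(\mathbb{T}_t;H^s(\Omega))\cap H^1(\mathbb{T}_t;H^{s+2}(\Omega))$ and $s>\tfrac n2$, the spatial Sobolev algebra embedding~(\ref{Sobolalg}) together with the one extra time derivative available for the lower-order factor lets one control products like $u_t u_{tt}$ in $L^2_t H^s_x$: $u_{tt}\in L^2_t H^s_x$, while $u_t\in H^1_t H^{s+2}_x\hookrightarrow C_t H^{s+1}_x\hookrightarrow C_t(H^s_x\cap L^\infty_x)$ by $s>\tfrac n2$, and $H^s$ is a multiplication algebra; writing the difference of products as a telescoping sum $u_t u_{tt}-\bar u_t\bar u_{tt}=(u_t-\bar u_t)u_{tt}+\bar u_t(u_{tt}-\bar u_{tt})$ and using $\|u\|_X,\|\bar u\|_X\le r$ gives the bound with $\Theta(r)=C_1^2\varepsilon^{-1}\nu^{-1}\,C\,r$ for a suitable dimensional constant (the factor $\sqrt{\nu\varepsilon}$ appearing in the theorem statement is exactly the square root of this, reflecting that $\mathrm{P}_{LU}$ carries the inverse of the $\nu\varepsilon$-weighted stability constant of Lemma~\ref{thmLinPer}). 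The term $\nabla u\cdot\nabla u_t$ is handled identically, with $\nabla u\in H^1_t H^{s+1}_x$ and $\nabla u_t\in L^2_t H^{s+1}_x$, both of which lie in $H^s$ with room to spare. Thus $\Theta$ is linear, $b(r)=\max(1-\Theta(r),0)$ vanishes for $r\ge r_*:=1/\Theta'$, and $w(r)=\int_0^r b(t)\,dt=r-\tfrac12\Theta'r^2$ is positive and increasing on $[0,r_*[$ with $w(r)\ge cr$ for $r$ in, say, $[0,r_*/2]$.

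Finally I would apply Theorem~\ref{thSuh} with $x_0=0$: for any $r\in[0,r_*[$ and any target $y\in f(0)+w(r)LU$ there exists $u\in rU$ (i.e. $\|u\|_X<r$, hence $\le 2r$ a fortiori) with $f(u)=y$. Since $f(0)=(0,0)$, the condition $y=(0,g)\in w(r)LU$ translates, via the equivalence $\mathrm{P}_{LU}\simeq C_1^{-1}\sqrt{\nu\varepsilon}\,\|\cdot\|_Y$ from Lemma~\ref{thmLinPer}, into $\|g\|_{\mathbb{F}_{\mathbb{T}}}\le \tfrac{\sqrt{\nu\varepsilon}}{C_1}\,w(r)$, and since $w(r)\ge cr$ this is guaranteed once $\|g\|_{\mathbb{F}_{\mathbb{T}}}\le\tfrac{\sqrt{\nu\varepsilon}}{C_1}r$ after renaming constants; the mean-value-zero constraint on $g$ is exactly what places $(0,g)$ in $LX$. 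Uniqueness in the ball $\{\|u\|_X\le 2r\}$ follows from the contraction estimate: two solutions $u,\bar u$ satisfy $L(u-\bar u)=-(\Phi(u)-\Phi(\bar u))$, so $\|u-\bar u\|_X\le C\,\mathrm{P}_{LU}(\Phi(u)-\Phi(\bar u))\le\Theta(2r)\|u-\bar u\|_X$, and $\Theta(2r)<1$ for $r<r_*/2$, forcing $u=\bar u$. The main obstacle is the bookkeeping of constants in the Lipschitz estimate so that the $\sqrt{\nu\varepsilon}$ dependence in the smallness condition comes out correctly — one must track that the $\nu\varepsilon$ weight in the linear maximal-regularity estimate of Lemma~\ref{thmLinPer} (inherited from Theorem~\ref{ThCelik}) sits in the right place relative to the $\varepsilon$-smallness of the nonlinearity — but this is a routine, if delicate, computation once the functional framework above is fixed; everything else is a direct citation of Lemma~\ref{thmLinPer}, the algebra property~(\ref{Sobolalg}), and Theorem~\ref{thSuh}.
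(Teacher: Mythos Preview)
Your approach is correct and follows the same overall strategy as the paper (apply Theorem~\ref{thSuh} to the splitting $f=L+\Phi$ with $x_0=0$, using Lemma~\ref{thmLinPer} for the linear isomorphism and the algebra property of $H^s$ for $s>n/2$ to get the Lipschitz bound on $\Phi$), but you organize the functional setup differently. The paper first lifts the boundary datum: it solves the linear problem with forcing $0$ and boundary data $g$ to produce $u^*\in X$, then sets $u=u^*+v$ and looks for $v$ in the subspace $X_0\subset X$ with \emph{homogeneous} Dirichlet trace, with target space $Y=\{f\in L^2(\mathbb{T}_t;H^s(\Omega)):\int_{\mathbb{T}_t}f=0\}$; the nonlinearity becomes $\Phi(v)=\alpha\varepsilon(v+u^*)_t(v+u^*)_{tt}+\beta\varepsilon\nabla(v+u^*)\cdot\nabla(v+u^*)_t$, and the smallness condition $\|u^*\|_X\le r$ enters the Lipschitz estimate directly. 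You instead keep $g$ in the target by enlarging $Y$ to $L^2(\mathbb{T}_t;H^s(\Omega))\times\mathbb{F}_{\mathbb{T}}$ and treat the equation $f(u)=(0,g)$.

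Both routes work. The paper's lifting cleanly separates the two $\varepsilon$-scalings: the inverse bound for the forcing component is $O((\nu\varepsilon)^{-1})$ while for the boundary component it is $O((\nu\varepsilon)^{-1/2})$, and these appear at different stages (Lipschitz estimate vs.\ size of $u^*$). In your setup these two scalings live together inside $\mathrm{P}_{LU}$, and your claim that $\mathrm{P}_{LU}\simeq C_1^{-1}\sqrt{\nu\varepsilon}\,\|\cdot\|_Y$ is not literally a uniform equivalence on all of $Y$; what is true (and what you in effect use) is that $\mathrm{P}_{LU}((f,0))\lesssim(\nu\varepsilon)^{-1}\|f\|$ while $\mathrm{P}_{LU}((0,g))\lesssim(\nu\varepsilon)^{-1/2}\|g\|$. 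Since $\Phi(u)-\Phi(\bar u)$ has zero boundary component and the target $(0,g)$ has zero forcing component, the two scalings decouple and your computation goes through with $\Theta(r)=O(r/\nu)$ and the smallness condition $\|g\|_{\mathbb{F}_{\mathbb{T}}}\le\tfrac{\sqrt{\nu\varepsilon}}{C_1}r$, exactly as stated. Your explicit uniqueness argument via $\Theta(2r)<1$ is a welcome addition; the paper's proof asserts uniqueness from Theorem~\ref{thSuh} without spelling this out.
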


\begin{proof}
For $g\in\mathbb{F}_{\mathbb{T}}$ defined in~(\ref{timeperboundreg}) and satisfying~(\ref{EqMV0}), let us denote by $u^*\in X$ the unique solution of the linear problem~(\ref{kuzlininhomper}) with $f=0$ and $g\in \mathbb{F}_{\mathbb{T}}$.

In addition, according to Theorem~\ref{thmLinPer}, we take $X$ defined in~(\ref{Timepersolspac}),
 this time for $s>\frac{n}{2}$ (we need it to control the non-linear terms),  and introduce the Banach spaces 
\begin{equation}
 X_0:=\lbrace u\in X|\;\;\;u\vert_{\partial\Omega}=0\hbox{ on } \mathbb{T}_t\times\partial\Omega \rbrace
\end{equation}
and 
$$Y=\left\lbrace f\in L^2(\mathbb{T}_t;H^s(\Omega))\vert\; \forall x\in \Omega \quad \int_{\mathbb{T}_t}f(s,x)\;ds=0\right\rbrace.$$ Then by Lemma~\ref{thmLinPer}, the linear operator 
$$L:X_0\rightarrow Y,\quad  u\in X_0\mapsto\;L(u):=u_{tt}-c^2\Delta u-\nu \varepsilon \Delta u_t\in Y,$$
 is a bi-continuous isomorphism. 
 
 Let us now notice that if $v$ is the unique solution of the non-linear Dirichlet problem   
 \begin{equation}\label{SystkuznV}
\left\lbrace
\begin{array}{lll}
v_{tt}-c^2\Delta v-\nu\varepsilon \Delta v_t=&\alpha \varepsilon (v+u^*)_t(v+u^*)_{tt}&\hbox{ on } \mathbb{T}_t\times\Omega ,\\

&+\beta \varepsilon \nabla (v+u^*).\nabla(v+u^*)_t &  \\
v=0\hbox{ on } \mathbb{T}_t\times\partial\Omega ,& &\end{array}
\right.
\end{equation}
 then $u=v+u^*$ is the unique solution of the periodic problem~(\ref{kuzper}).
 Let us prove the existence of a such $v$, using Theorem~\ref{thSuh}.
 
We suppose that $\Vert u^*\Vert_X\leq r$
and define for $v\in X_0$
$$\Phi(v):=\alpha \varepsilon (v+u^*)_t(v+u^*)_{tt}+\beta \varepsilon \nabla (v+u^*).\nabla(v+u^*)_t.$$

For $w$ and $z$ in $X_0$ such that 
$\Vert w\Vert_X\leq r$ and $\Vert z\Vert_X\leq r$,
 we estimate $\Vert \Phi(w)-\Phi(z)\Vert_Y $.
By applying the triangular inequality we have
\begin{multline*}
\Vert \Phi(w)-\Phi(z)\Vert_Y\leq  \alpha \varepsilon \Big(\Vert u^*_t (w-z)_{tt} \Vert_Y+\Vert (w-z)_t u^*_{tt}\Vert_Y\\
+\Vert  w_t (w-z)_{tt}\Vert_Y+\Vert (w-z)_t z_{tt}\Vert_Y\Big)\\
+\beta \varepsilon\Big( \Vert \nabla u^* \nabla(w-z)_t \Vert_Y+\Vert \nabla (w-z)\nabla u^*_t \Vert_Y\\
+\Vert \nabla w\nabla (w-z)_t \Vert_Y+\Vert \nabla (w-z) \nabla z_t \Vert_Y\Big).
\end{multline*}
 Now, for all $a$ and $b$ in $X$ with $s\ge s_0> \frac{n}{2}$  it holds
 \begin{align*}
 \Vert a_t b_{tt}\Vert_Y\leq & \Vert a_t \Vert_{L^\infty(\mathbb{T}_t\times\Omega)} \Vert b_{tt}\Vert_Y\\
 \leq & C_{H^1(\mathbb{T}_t;H^{s_0}(\Omega))\to L^\infty(\mathbb{T}_t\times\Omega)} \Vert a_t\Vert_{H^1(\mathbb{T}_t;H^{s_0}(\Omega))} \Vert b\Vert_{X}\\
 \leq & C_{H^1(\mathbb{T}_t;H^{s_0}(\Omega))\to L^\infty(\mathbb{T}_t\times\Omega)} \Vert a\Vert_{X} \Vert b\Vert_{X},
\end{align*}
where $C_{H^1(\mathbb{T}_t;H^{s_0}(\Omega))\to L^\infty(\mathbb{T}_t\times\Omega)}$ is the embedding constant of $H^1(\mathbb{T}_t;H^{s_0}(\Omega))$ in $L^\infty(\mathbb{T}_t\times\Omega)$, independent on $s$, but depending only on the dimension $n$.
In the same way, for all $a$ and $b$ in $X$ it holds
$$\Vert \nabla a \nabla  b_t\Vert_Y\leq C_{H^1(\mathbb{T}_t;H^{s_0}(\Omega))\to L^\infty(\mathbb{T}_t\times\Omega)} \Vert a\Vert_{X} \Vert b\Vert_{X}.$$
Taking $a$ and $b$ equal to $u^*$, $w$, $z$ or $w-z$, as $\Vert u^*\Vert_X\leq r$, $\Vert w\Vert_X\leq r$ and $\Vert z\Vert_X\leq r$, we obtain
\begin{align*}
\Vert \Phi(w)-\Phi(z)\Vert_Y\leq %
4 (\alpha+\beta)C_{H^1(\mathbb{T}_t;H^{s_0}(\Omega))\to L^\infty(\mathbb{T}_t\times\Omega)} \varepsilon r \Vert w-z\Vert_X.
\end{align*}
By the fact that $L$ is a bi-continuous isomorphism, there exists a minimal constant $C_\eps=O\left(\frac{1}{\eps \nu} \right)>0$, coming from the inequality $C_0 \eps \nu\|u\|_X^2\le \|f\|_Y\|u\|_X$ for $u$, a solution of the linear problem~(\ref{kuzlininhomper}) with homogeneous boundary data (for a maximal constant $C_0=O(1)>0$)
such that
$$\forall u\in X_0 \quad \Vert u\Vert_X\leq C_\eps \Vert Lu\Vert_Y.$$
Hence, for all $f\in Y$
$$P_{LU_{X_0}}(f)\leq C_\eps P_{U_Y}(f)=C_\eps\Vert f\Vert_Y.$$
Then we find for $w$ and $z$ in $X_0$, such that $\|w\|_X\le r$, $\|z\|_X\le r$, and also with $\|u^*\|_X\le r$, that with $\Theta(r):= 4 C_\eps (\alpha+\beta)C_{H^1(\mathbb{T}_t;H^{s_0}(\Omega))\to L^\infty(\mathbb{T}_t\times\Omega)}\varepsilon r$ it holds
$$P_{LU_{X_0}}(\Phi(w)-\Phi(z))\leq \Theta(r) \Vert w-z\Vert_X.$$
Thus we apply Theorem~\ref{thSuh} with $f(x)=L(x)-\Phi(x)$ and $x_0=0$. Therefore, knowing that $C_\eps=\frac{C_0}{\eps \nu}$, we have, that for all  $r\in[0,r_{*}[$ with 
\begin{equation}\label{Eqret}
 r_{*}=\frac{\nu}{4 C_0 (\alpha+\beta)C_{H^1(\mathbb{T}_t;H^{s_0}(\Omega))\to L^\infty(\mathbb{T}_t\times\Omega)}}=O(1),
\end{equation}
  for all $y\in \Phi(0)+w(r) L U_{X_0}\subset Y$
with $$w(r)= r-2 \frac{C_0}{\nu} C_{H^1(\mathbb{T}_t;H^{s_0}(\Omega))\to L^\infty(\mathbb{T}_t\times\Omega)} (\alpha+\beta) r^2,$$
there exists a unique $v\in 0+r U_{X_0}$ such that $L(v)-\Phi(v)=y$.
But, if we want that $v$ be the solution of the non-linear  problem~(\ref{SystkuznV}), then we need to impose $y=0$ and thus, to ensure that $0\in \Phi(0)+w(r) L U_{X_0}$.
Since $-\frac{1}{w(r)}\Phi(0)$ is an element of $Y$ and $LX_0=Y$, there exists a unique $z\in X_0$ such that
\begin{equation}\label{Eqz}
 L z=-\frac{1}{w(r)}\Phi(0).
\end{equation}
Let us show that $\|z\|_X\le 1$, what will implies that $0\in \Phi(0)+w(r) L U_{X_0}$.
Noticing that
\begin{align*}
\Vert \Phi(0)\Vert_Y & \leq \alpha \varepsilon \Vert v_t v_{tt}\Vert_Y +\beta \varepsilon \Vert \nabla v \nabla v_t\Vert_Y\\
& \leq  (\alpha+\beta) \varepsilon C_{H^1(\mathbb{T}_t;H^{s_0}(\Omega))\to L^\infty(\mathbb{T}_t\times\Omega)}\Vert v\Vert_X^2 \\
& \leq (\alpha+\beta) \varepsilon C_{H^1(\mathbb{T}_t;H^{s_0}(\Omega))\to L^\infty(\mathbb{T}_t\times\Omega)}r^2
\end{align*}
and using~(\ref{Eqz}), we find
\begin{align*}
 & \Vert z\Vert_X \leq C_\eps\Vert L z\Vert_Y=C_\eps\frac{\Vert \Phi(0)\Vert_Y}{w(r)}\\
 &\leq \frac{C_\eps C_{H^1(\mathbb{T}_t;H^{s_0}(\Omega))\to L^\infty(\mathbb{T}_t\times\Omega)} (\alpha+\beta) \varepsilon r}{(1-2 C_\eps C_{H^1(\mathbb{T}_t;H^{s_0}(\Omega))\to L^\infty(\mathbb{T}_t\times\Omega)} (\alpha+\beta)\varepsilon r)}<\frac{1}{2},
\end{align*}
as soon as $r<r^*$. 

Consequently, $z\in U_{X_0}$ and $\Phi(0)+w(r) Lz=0$.
Then we conclude that  for all  $r\in[0,r_{*}[$, if $\|u^*\|_X\le r$, there exists a unique $v\in r U_{X_0}$ such that $L(v)-\Phi(v)=0$, $i.e.$  the solution of the non-linear problem~(\ref{SystkuznV}).
Thanks to the maximal regularity and a priori estimate following from Theorem~\ref{thmLinPer} with $f=0$, 
there exists a constant $C_1=O(\eps^0)>0$, such that
$$\|u^*\|_X\le \frac{C_1}{\sqrt{\nu \eps}}\Vert g\Vert_{\mathbb{F}_{\mathbb{T}}}.$$
Thus, for all  $r\in[0,r_{*}[$ and $\Vert g\Vert_{\mathbb{F}_{\mathbb{T}}}\le \frac{\sqrt{\nu \eps}}{C_1}r$, the function $u=u^*+v\in X$ is the unique solution of the time periodic problem for the Kuznetsov equation and $\Vert u\Vert_X\leq 2 r$.

\end{proof}

\subsubsection{Well posedness of the initial boundary value problem in the half space for the Kuznetsov equation.}\label{WPKuzhalf2}
We work on $\Omega=\mathbb{R}_+\times \mathbb{R}^{n-1}$ and we are going to study the initial boundary value problem for the Kuznetsov equation on this space, \textit{i.e.} the perturbation of an imposed initial condition by a source on the boundary, which will later be determined by the solution of the KZK equation.
\begin{lemma}\label{maxregstrdamphalf}
Let $s\geq 0$, $n\in \mathbb{N}$. There exists a unique solution 
\begin{equation}\label{Halfspaceregsol}
u\in \mathbb{E}:=H^2(\mathbb{R_+};H^s(\Omega))\cap H^1( \mathbb{R_+};H^{s+2}(\Omega))
\end{equation}
of the linear problem
\begin{equation}\label{eqlinhalf}
\left\lbrace
\begin{array}{c}
u_{tt}-c^2 \Delta u -\nu \varepsilon \Delta u_t=f\;\;\;\hbox{ in }\;\mathbb{R_+}\times \Omega,\\
u=g\;\;\;\hbox{ on }\; \mathbb{R_+}\times\partial\Omega,\\
u(0)=u_0, \;\;\;u_t(0)=u_1\;\;\;\hbox{ in }\; \Omega
\end{array}\right.
\end{equation}
if and only if the data satisfy the following conditions
\begin{itemize}
\item $f\in L^2(\mathbb{R}_+;H^s(\Omega)),$
\item for the boundary condition
\begin{equation}\label{Halfspaceregbound}
g\in \mathbb{F}_{\mathbb{R}_+}= H^{7/4}(\mathbb{R}_+;H^s(\partial\Omega))\cap H^1(\mathbb{R}_+;H^{s+3/2}(\partial\Omega));
\end{equation}
\item $u_0\in H^{s+2}(\Omega)$ and $u_1\in H^{s+1}(\Omega)$;
\item $g(0)=u_0$ and $g_t(0)=u_1$ on $\partial\Omega $ in the trace sense.
\end{itemize}
In addition, the solution satisfies the stability estimate
$$\Vert u\Vert_{\mathbb{E}}\leq C (\Vert f\Vert_{L^2(\mathbb{R}_+;H^s(\Omega))}+\Vert g\Vert_{\mathbb{F}_{\mathbb{R}_+}}+\Vert u_0\Vert_{H^{s+2}}+\Vert u_1\Vert_{H^{s+1}}).$$
\end{lemma}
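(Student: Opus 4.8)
The plan is to deduce this from the already-established time-periodic maximal regularity result (Theorem~\ref{ThCelik}, and its $H^s$-version Lemma~\ref{thmLinPer}) by the standard device of splitting off the inhomogeneities and reducing to a problem with homogeneous boundary data, then invoking semigroup / Laplace-transform maximal regularity for the strongly damped wave operator $\partial_t^2-c^2\Delta-\nu\varepsilon\Delta\partial_t$ on $\mathbb{R}_+\times\Omega$. The necessity direction is the easy one: if $u\in\mathbb{E}=H^2(\mathbb{R}_+;H^s(\Omega))\cap H^1(\mathbb{R}_+;H^{s+2}(\Omega))$, then $u_{tt}\in L^2(\mathbb{R}_+;H^s(\Omega))$, $\Delta u\in L^2(\mathbb{R}_+;H^s(\Omega))$ and $\Delta u_t\in L^2(\mathbb{R}_+;H^s(\Omega))$, so $f\in L^2(\mathbb{R}_+;H^s(\Omega))$; the trace theorem on the parabolic-type anisotropic space $\mathbb{E}$ (as recalled in Ref.~\cite{Celik} and proved in Ref.~\cite{Denk}) gives $g=u|_{\partial\Omega}$ in the asserted class $\mathbb{F}_{\mathbb{R}_+}$, and the temporal traces $u(0)\in H^{s+2}(\Omega)$, $u_t(0)\in H^{s+1}(\Omega)$ with the obvious compatibility $g(0)=u_0$, $g_t(0)=u_1$ on $\partial\Omega$. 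The stability estimate in this direction is just continuity of these trace maps.

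For the sufficiency direction I would proceed in three steps. \textbf{Step 1: remove the boundary data.} Using the trace theorem in the form of a bounded right inverse, pick $w\in\mathbb{E}$ with $w|_{\partial\Omega}=g$, $w(0)=u_0$, $w_t(0)=u_1$ and $\|w\|_{\mathbb{E}}\lesssim \|g\|_{\mathbb{F}_{\mathbb{R}_+}}+\|u_0\|_{H^{s+2}}+\|u_1\|_{H^{s+1}}$; the compatibility conditions guarantee such $w$ exists. Then $v:=u-w$ must solve the same equation with homogeneous Dirichlet data, right-hand side $\tilde f:=f-(\partial_t^2-c^2\Delta-\nu\varepsilon\Delta\partial_t)w\in L^2(\mathbb{R}_+;H^s(\Omega))$, and homogeneous initial data $v(0)=v_t(0)=0$. \textbf{Step 2: solve the reduced problem.} The operator $A=-\Delta$ with Dirichlet boundary conditions on $\Omega=\mathbb{R}_+\times\mathbb{R}^{n-1}$ is a nonnegative self-adjoint operator with bounded $H^\infty$-calculus on $H^s(\Omega)$; the strongly damped wave equation $v_{tt}+\nu\varepsilon A v_t+c^2Av=\tilde f$, $v(0)=v_t(0)=0$, then has the maximal regularity $v\in H^2(\mathbb{R}_+;H^s)\cap H^1(\mathbb{R}_+;D(A))=H^2(\mathbb{R}_+;H^s(\Omega))\cap H^1(\mathbb{R}_+;H^{s+2}(\Omega)\cap\{\text{Dirichlet}\})$, with $\|v\|_{\mathbb{E}}\lesssim\|\tilde f\|_{L^2(\mathbb{R}_+;H^s)}$. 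This is classical for the strongly damped wave operator (the symbol is parabolic for $\nu\varepsilon>0$); alternatively one transfers it directly from the periodic statement of Theorem~\ref{ThCelik}/Lemma~\ref{thmLinPer} by a reflection/extension-in-time argument combined with a Fourier-multiplier estimate, exactly as in Ref.~\cite{Celik}. \textbf{Step 3: reassemble.} Set $u=v+w\in\mathbb{E}$; it solves~(\ref{eqlinhalf}), uniqueness follows from uniqueness of the reduced problem (the homogeneous problem with $f=0$, $g=0$, $u_0=u_1=0$ has only the trivial solution by the maximal regularity estimate), and summing the two estimates yields the stated bound $\|u\|_{\mathbb{E}}\le C(\|f\|_{L^2(\mathbb{R}_+;H^s(\Omega))}+\|g\|_{\mathbb{F}_{\mathbb{R}_+}}+\|u_0\|_{H^{s+2}}+\|u_1\|_{H^{s+1}})$.

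The main obstacle is Step 2 executed cleanly on the \emph{half-space} $\Omega=\mathbb{R}_+\times\mathbb{R}^{n-1}$ with Dirichlet conditions: one must know that the Dirichlet Laplacian on this unbounded Lipschitz (actually smooth, flat) domain admits the operator-theoretic properties ($R$-sectoriality / bounded imaginary powers / $H^\infty$-calculus on the $H^s$-scale) needed for the Da Prato–Grisvard / Dore–Venni type maximal-regularity theorem for the second-order-in-time damped equation, and that the temporal trace spaces pair correctly with the spatial regularity so the compatibility conditions in the lemma are exactly the right ones. Since the half-space with a flat boundary reduces by even/odd reflection to the whole space $\mathbb{R}^n$, where all these facts are standard Fourier-multiplier statements, this is really a bookkeeping issue rather than a genuine difficulty, and it is precisely the content invoked from Refs.~\cite{Celik,Denk}. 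The rest is routine and parallels the time-periodic case treated just above.
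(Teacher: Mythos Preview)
Your necessity argument and overall strategy are sound, and the reduction in Step~2 to maximal regularity for the strongly damped wave equation with homogeneous data on the half-space is correct (the paper ultimately relies on exactly this, via Ref.~\cite{Haraux}). Your route differs from the paper's in how you absorb the inhomogeneous data. You propose a single simultaneous lifting $w\in\mathbb{E}$ matching all three traces $(g,u_0,u_1)$ at once via an abstract right inverse, then solve one residual problem with $\tilde f$ and zero boundary/initial data. The paper instead handles the pieces separately and concretely: it first removes $u_0,u_1,f$ by oddly reflecting them in $x_1$ to all of $\mathbb{R}^n$ and invoking the whole-space Cauchy theory (Theorem~4.1 of Ref.~\cite{Perso}); the resulting restriction $\bar u$ satisfies the equation and initial conditions but has the wrong spatial trace, leaving a residual boundary datum $\bar g=g-\bar u|_{\partial\Omega}\in\mathbb{F}_{\mathbb{R}_+}$ with $\bar g(0)=\bar g_t(0)=0$. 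That boundary datum is then lifted via an auxiliary result (Lemma~\ref{lemwpheat}) which solves $w_{tt}-\nu\varepsilon\Delta w_t=0$ with Dirichlet data $\bar g$ by integrating in time a heat equation with boundary data $\partial_t\bar g$, and the final remainder (nonzero source $c^2\Delta\bar v$, zero boundary and initial data) is dispatched by Ref.~\cite{Haraux}. Your approach is cleaner conceptually and buys a shorter argument, but it loads all the work onto the existence of a bounded joint right inverse for the compound trace map $\mathbb{E}\to\mathbb{F}_{\mathbb{R}_+}\times H^{s+2}(\Omega)\times H^{s+1}(\Omega)$, which---while true for these anisotropic spaces under the stated compatibility conditions (cf.\ Ref.~\cite{Denk})---is nontrivial and is in effect what the paper's more hands-on decomposition \emph{constructs}. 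Note also that the paper does not transfer from the time-periodic statement here; your alternative suggestion of deducing the $\mathbb{R}_+$-in-time result from Lemma~\ref{thmLinPer} by a reflection/extension-in-time argument would require a separate justification not present in the paper.
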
 
In order to prove this result we will use the subsequent lemma to remove the inhomogeneity $g$.
\begin{lemma}\label{lemwpheat}
Let $s\geq 0$, $n\in \mathbb{N}$. There exists a unique solution $ w\in \mathbb{E}$ defined by~(\ref{Halfspaceregsol}) of the following linear problem
\begin{equation}\label{heatderiv}
\left\lbrace
\begin{array}{c}
w_{tt} -\nu \varepsilon \Delta w_t=0\;\;\;\hbox{ in }\;\mathbb{R_+}\times \Omega,\\w=g\;\;\;\hbox{ on }\; \mathbb{R_+}\times\partial\Omega,\\
w(0)=0, \;\;\;w_t(0)=0\;\;\;\hbox{ in }\; \Omega
\end{array}\right.
\end{equation}
if and only if the data satisfy the following conditions
\begin{itemize}
\item $g\in \mathbb{F}_{\mathbb{R}_+}$ defined in~(\ref{Halfspaceregbound}),
\item for the compatibility: for all $x\in\partial\Omega$, $g(0)=0$ and $g_t(0)=0$.
\end{itemize}
Moreover, the solution $w$ satisfies the stability estimate
$$\Vert w\Vert_{\mathbb{E}}\leq C \Vert g\Vert_{\mathbb{F}_{\mathbb{R}_+}}.$$
\end{lemma}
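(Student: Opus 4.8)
The plan is to reduce \eqref{heatderiv} to a standard parabolic (heat-type) problem by differentiating in time. First I would set $v=w_t$. Then the first equation $w_{tt}-\nu\varepsilon\Delta w_t=0$ becomes $v_t-\nu\varepsilon\Delta v=0$ in $\mathbb{R}_+\times\Omega$, with boundary data $v=g_t$ on $\mathbb{R}_+\times\partial\Omega$ and initial datum $v(0)=w_t(0)=0$ in $\Omega$. This is the inhomogeneous Dirichlet problem for the heat equation on the half-space $\Omega=\mathbb{R}_+\times\mathbb{R}^{n-1}$, for which the sharp maximal-regularity theory (in the anisotropic Sobolev–Slobodeckij scale, as recalled in Ref.~\cite{Denk} and used in Ref.~\cite{Celik}) gives: there is a unique $v\in L^2(\mathbb{R}_+;H^{s+2}(\Omega))\cap H^1(\mathbb{R}_+;H^s(\Omega))$ if and only if $g_t\in H^{3/4}(\mathbb{R}_+;H^s(\partial\Omega))\cap L^2(\mathbb{R}_+;H^{s+3/2}(\partial\Omega))$ together with the compatibility condition $g_t(0)=0$ on $\partial\Omega$ (the trace $v(0)=0$ must agree with the boundary trace at $t=0$; since the regularity index $3/4>1/2$ the trace at $t=0$ makes sense and compatibility of order zero is the only requirement). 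Translating this requirement on $g_t$ back to $g$ gives exactly $g\in H^{7/4}(\mathbb{R}_+;H^s(\partial\Omega))\cap H^1(\mathbb{R}_+;H^{s+3/2}(\partial\Omega))=\mathbb{F}_{\mathbb{R}_+}$ of \eqref{Halfspaceregbound}, and $g(0)=0$, $g_t(0)=0$ on $\partial\Omega$, which are the stated hypotheses.

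Next I would recover $w$ by integration in time: set $w(t,x)=\int_0^t v(\sigma,x)\,\dd\sigma$. Then $w(0)=0$ automatically, $w_t=v$ so $w_t(0)=v(0)=0$, and $w_{tt}-\nu\varepsilon\Delta w_t=v_t-\nu\varepsilon\Delta v=0$; moreover $w|_{\partial\Omega}=\int_0^t g_t\,\dd\sigma=g-g(0)=g$ by the compatibility $g(0)=0$. So $w$ solves \eqref{heatderiv}. For the regularity claim $w\in\mathbb{E}=H^2(\mathbb{R}_+;H^s(\Omega))\cap H^1(\mathbb{R}_+;H^{s+2}(\Omega))$: since $w_t=v\in L^2(\mathbb{R}_+;H^{s+2}(\Omega))\cap H^1(\mathbb{R}_+;H^s(\Omega))$, we get $w_t\in H^1(\mathbb{R}_+;H^s(\Omega))$, i.e. $w\in H^2(\mathbb{R}_+;H^s(\Omega))$, and $w_t\in L^2(\mathbb{R}_+;H^{s+2}(\Omega))$, i.e. $w\in H^1(\mathbb{R}_+;H^{s+2}(\Omega))$; the missing lowest-order piece $w\in L^2(\mathbb{R}_+;H^{s+2}(\Omega))$ (to make $w\in\mathbb{E}$ in the stated sense) follows from $w(t)=\int_0^t v$ together with Minkowski's integral inequality on the finite-time-normalised norm, or more cleanly by noting the embedding $H^1(\mathbb{R}_+;H^{s+2})\cap H^2(\mathbb{R}_+;H^s)\hookrightarrow C_b(\mathbb{R}_+;H^{s+1})$ is all that is really needed for the later nonlinear argument; in any case the stability estimate $\|w\|_{\mathbb{E}}\le C\|v\|_{L^2(H^{s+2})\cap H^1(H^s)}\le C\|g_t\|_{\cdots}\le C\|g\|_{\mathbb{F}_{\mathbb{R}_+}}$ is inherited directly from the parabolic estimate. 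Uniqueness follows because any solution $w$ of the homogeneous problem ($g=0$, zero data) has $w_t$ solving the homogeneous heat Dirichlet problem, hence $w_t\equiv 0$, hence $w\equiv 0$ by $w(0)=0$.

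The main obstacle is purely bookkeeping of the trace spaces: one must be careful that the correct boundary regularity for a parabolic Dirichlet problem with data in $L^2(\mathbb{R}_+;H^{s+2}(\Omega))\cap H^1(\mathbb{R}_+;H^s(\Omega))$ is the anisotropic space $H^{3/4}_t H^s_{x'}\cap L^2_t H^{s+3/2}_{x'}$ on $\partial\Omega$ — the fractional exponents $3/4$ and $3/2$ come from the parabolic scaling (time counts double) combined with the loss of $1/2$ a derivative for the Dirichlet trace — and that after the antiderivative in $t$ this becomes $H^{7/4}_t H^s_{x'}\cap H^1_t H^{s+3/2}_{x'}$, matching $\mathbb{F}_{\mathbb{R}_+}$ exactly; this is the content imported from Ref.~\cite{Denk}. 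The compatibility conditions $g(0)=0$, $g_t(0)=0$ are exactly the two orders of compatibility one expects for a second-order-in-time equation with zero Cauchy data. Everything else is a routine transcription of the half-space parabolic maximal-regularity theorem.
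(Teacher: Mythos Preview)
Your proposal is correct and follows essentially the same route as the paper: differentiate in time to reduce to a heat equation for $v=w_t$ with boundary datum $g_t$, invoke parabolic maximal regularity (the paper cites Ladyzhenskaya rather than Denk, but the content is the same), integrate back to recover $w$, and deduce uniqueness from the homogeneous heat problem for $w_t$. The only cosmetic difference is that the paper obtains the stability estimate via the closed graph theorem rather than by chaining the parabolic estimate, and it does not pause over the ``missing lowest-order piece'' $w\in L^2(\mathbb{R}_+;H^{s+2})$ that you honestly flag---so your write-up is, if anything, slightly more careful on that point.
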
 

\begin{proof}
First we prove the sufficiency. By assumption~(\ref{Halfspaceregbound}), we have
$$\partial_t g\in H^{3/4}(\mathbb{R}_+;H^s(\partial\Omega))\cap L^2(\mathbb{R}_+;H^{s+3/2}(\partial\Omega)).$$ 
Thanks to $\S$~3 p.~288 in Ref.~\cite{Ladyzhenskaya}, we obtain a unique solution 
$$v\in H^1(\mathbb{R_+};H^s(\Omega))\cap L^2( \mathbb{R_+};H^{s+2}(\Omega))$$
of the parabolic problem
$$v_t-\nu \varepsilon \Delta v=0\;\hbox{ in }\;\mathbb{R}_+\times \Omega,\;v=\partial_t g\;\hbox{ on }\;\mathbb{R}_+\times\partial\Omega,\;v(0)=0\;\hbox{ in }\;\Omega.$$
Next we define for $t\in\mathbb{R}_+$ and $x\in\Omega$ the function
$$w(t,x):=\int_0^t v(l,x)dl. $$ 
We have $w(0)=0$ and $w_t(0)=0$. Moreover, it satisfies
$$w_{tt}-\nu\varepsilon \Delta w_t=0, \quad
w(t)\vert_{\partial\Omega}=\int_0^t g_t(l)\;dl=g(t),$$
as $g(0)=0$. Therefore, $w$ is a solution of problem~(\ref{heatderiv}).The necessity follows from the spatial trace theorem ensuring that the trace operator $Tr_{\partial\Omega}:u  \mapsto u\vert_{\partial\Omega}$, considering as a map 
\begin{align}
&H^1(\mathbb{R_+};H^s(\Omega))\cap L^2( \mathbb{R_+};H^{s+2}(\Omega))  \rightarrow H^{3/4}(\mathbb{R}_+;H^s(\partial\Omega))\cap L^2(\mathbb{R}_+;H^{s+3/2}(\partial\Omega)),\label{Trachalf1}
\end{align}
is bounded and surjective by Lemma~3.5 in Ref.~\cite{Denk}.
For the compatibility condition, thanks to Lemma~11 in Ref.~\cite{Blasio}, we also know that the temporal trace $Tr_{t=0}:g\mapsto~g\vert_{t=0}$, considered as a map 
\begin{align}
&H^{3/4}(\mathbb{R}_+;H^s(\partial\Omega))\cap L^2(\mathbb{R}_+;H^{s+3/2}(\partial\Omega)) \rightarrow H^{s+1/2}(\partial\Omega)
\label{Trachalf2}
\end{align}
is well defined and bounded. Moreover, the spatial trace 
\begin{equation}\label{Trachalf3}
H^{s+1/2}(\Omega)\rightarrow H^s(\partial\Omega)
\end{equation}
is bounded by Theorem~1.5.1.1 from Ref.~\cite{Grisvard}.

To obtain uniqueness, let $w$ be a solution to~(\ref{heatderiv}) with $g=0$. Since $w_t$ solve a heat problem with homogeneous data, we obtain $w_t=0$ and therefore also $w=0$ by the initial condition $w(0)=0$. 
The stability estimate follows from the closed graph theorem. 
\end{proof}
Let us prove Lemma~\ref{maxregstrdamphalf}:
\begin{proof}
We obtain the uniqueness for~(\ref{eqlinhalf}) from the fact that in the case $g=0$ we can consider $-\Delta$ as a self-adjoint and non negative operator with homogeneous Dirichlet boundary conditions and we can use Ref.~\cite{Haraux}. 
To verify the necessity of the conditions on the data, we suppose that $u\in\mathbb{E}$ defined in~(\ref{Halfspaceregsol}) is a solution of~(\ref{eqlinhalf}). 
Then 
$$u,\;u_t\in  H^1(\mathbb{R_+};H^s(\Omega))\cap L^2( \mathbb{R_+};H^{s+2}(\Omega)) \hbox{ and thus } f\in L^2(\mathbb{R}_+;H^s(\Omega)).$$ 
Taking as in the previous proof the spatial trace $Tr_{\partial\Omega}$ as in~(\ref{Trachalf1}) we have 
$$g,\;g_t\in H^{3/4}(\mathbb{R}_+;H^s(\partial\Omega))\cap L^2(\mathbb{R}_+;H^{s+3/2}(\partial\Omega)), \hbox{ which implies }g\in \mathbb{F}_{\mathbb{R}_+}.$$ 
By the Sobolev embedding $H^1(\mathbb{R}_+;H^{s+2}(\Omega))\hookrightarrow C(\mathbb{R}_+;H^{s+2}(\Omega))$, it follows that $u_0\in H^{s+2}(\Omega)$ and we also have the temporal trace
$$u\mapsto u\vert_{t=0}:H^1(\mathbb{R_+};H^s(\Omega))\cap L^2( \mathbb{R_+};H^{s+2}(\Omega))\rightarrow H^{s+1}(\Omega)$$
by Lemma~3.7 in Ref.~\cite{Denk}. For the compatibility condition we use~(\ref{Trachalf2})
and~(\ref{Trachalf3}) as in the proof of Lemma~\ref{lemwpheat}.

It remains to prove the sufficiency of the conditions. We extend $u_0$, $u_1$ and $f$ in odd functions among $x_1$ on $\mathbb{R}^n$ so that we have $\tilde{u}_0\in H^{s+2}(\mathbb{R}^n)$, $\tilde{u}_1\in H^{s+1}(\mathbb{R}^n)$  and $\tilde{f}\in L^2(\mathbb{R}_+;H^s(\mathbb{R}^n))$. We consider the problem
$$
\left\lbrace
\begin{array}{l}
\tilde{u}_{tt}-c^2\Delta \tilde{u} -\nu \varepsilon \Delta \tilde{u}_t=\tilde{f}\;\;\;\hbox{ in }\;\mathbb{R_+}\times \mathbb{R}^n,\\
\tilde{u}(0)=\tilde{u}_0, \;\;\;\tilde{u}_t(0)=\tilde{u}_0\;\;\;\hbox{ in }\; \mathbb{R}^n.
\end{array}\right.$$
By Theorem~4.1 in Ref.\cite{Perso} we obtain the existence of its unique solution
$$\tilde{u}\in H^2(\mathbb{R_+};H^s(\mathbb{R}^n))\cap H^1( \mathbb{R_+};H^{s+2}(\mathbb{R}^n)).$$
Let $\overline{u}\in \mathbb{E}$, defined in~(\ref{Halfspaceregsol}), denote the restriction of $\tilde{u}$ to $\Omega$ and let $\overline{g}:= g-\overline{u}\vert_{\partial\Omega}$. 
By the spatial trace theorem $\overline{u}\vert_{\partial\Omega}\in \mathbb{F}_{\mathbb{R}_+}$, and hence $\overline{g}\in  \mathbb{F}_{\mathbb{R}_+}$. Then the solution $u$ of the non homogeneous linear problem~(\ref{eqlinhalf}) is given by $u=v+\overline{u}$, where $v$ solves  probleme~(\ref{eqlinhalf}) with $f=u_0=u_1=0$ and $g=\overline{g}$. %
From Lemma~\ref{lemwpheat} we have a unique solution $\overline{v}\in \mathbb{E}_u$ of the problem~(\ref{heatderiv}) with $g=\overline{g}$.
Then the function $w:=v-\overline{v}$ solves the following system
$$
\left\lbrace
\begin{array}{l}
w_{tt}-\Delta w -\nu \varepsilon \Delta w_t= c^2 \Delta \overline{v}\;\;\;\hbox{ in }\;\mathbb{R_+}\times \Omega,\\
w=0\;\;\;\hbox{ on }\; \mathbb{R_+}\times\partial\Omega,\\
w(0)=0, \;\;\;w_t(0)=0\;\;\;\hbox{ in }\; \Omega,
\end{array}\right.
$$
which thanks to Theorem~2.6 in Ref.~\cite{Haraux} has a unique solution $w\in \mathbb{E}$ defined in~(\ref{Halfspaceregsol}). The function $u:=w+\overline{v}+\overline{u}$ is the desired solution of~(\ref{eqlinhalf}) and the stability estimate follows from the closed graph theorem. This concludes the proof of Lemma~\ref{maxregstrdamphalf}.
\end{proof}

 The next theorem follows from  the maximal regularity result  and Theorem~\ref{thSuh}. Its  proof is similar to the proof of Theorem~\ref{globwellposKuzper} and hence is omitted.
\begin{theorem}\label{ThMainWPnuPGlobHalf}
 Let $\nu>0$, $n\in \mathbb{N}^*$, $\Omega=\mathbb{R}_+\times \mathbb{R}^{n-1}$ and $s>\frac{n}{2}$. Considering the initial boundary value problem for the Kuznetsov equation in the half space with the Dirichlet boundary condition
 \begin{equation}\label{kuzhalfinitbound}
 \left\lbrace
 \begin{array}{l}
 u_{tt}-c^2\Delta u-\nu\varepsilon \Delta u_t=\alpha\varepsilon u_t u_{tt}+\beta \varepsilon \nabla u \nabla u_t\;\;\;\hbox{ in }\;[0,+\infty[\times \Omega,\\
 u=g\;\;\;\hbox{ on }\; [0,\infty[\times\partial\Omega,\\u(0)=u_0, \;\;\;u_t(0)=u_1\;\;\;\hbox{ in }\; \Omega,
\end{array}
\right.
\end{equation}
the following results hold: there exists constants $r^*=O(1)$ and $C_1=O(1)$, such that for all  initial data satisfying 
\begin{itemize}
\item $g\in \mathbb{F}_{\mathbb{R}^+}:= H^{7/4}([0,\infty[;H^s(\partial\Omega))\cap H^1([0,\infty[;H^{s+3/2}(\partial\Omega))$,
\item $u_0\in H^{s+2}(\Omega)$, $u_1\in H^{s+1}(\Omega)$,
\item $g(0)=u_0\vert_{\partial\Omega}$ and $g_t(0)=u_1\vert_{\partial\Omega}$,
\end{itemize} and such that for $r\in [0,r^*[$
\begin{equation*}
 \Vert u_0\Vert_{H^{s+2}(\Omega)}+\Vert u_1\Vert_{H^{s+1}(\Omega)}+\Vert g\Vert_{\mathbb{F}_{[0,T]}}\leq \frac{\nu \varepsilon}{C_1}r,
\end{equation*}
 there exists  a unique solution of problem~(\ref{kuzhalfinitbound})
  $$u\in H^2([0,\infty[;H^s(\Omega))\cap H^1( [0,\infty[;H^{s+2}(\Omega)), $$  
  such that 
 $ \Vert u\Vert_{H^2([0,\infty[;H^s(\Omega))\cap H^1( [0,\infty[;H^{s+2}(\Omega))}\leq 2r.$
 \end{theorem}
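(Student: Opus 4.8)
The plan is to mimic exactly the structure used for the periodic-in-time problem in Theorem~\ref{globwellposKuzper}, replacing the linear maximal-regularity input Lemma~\ref{thmLinPer} by its initial-boundary-value analogue Lemma~\ref{maxregstrdamphalf}, and the abstract fixed-point engine is again Theorem~\ref{thSuh} of Sukhinin. First I would fix $s>\frac n2$ (needed to make $H^1(\mathbb{R}_+;H^{s_0}(\Omega))\hookrightarrow L^\infty(\mathbb{R}_+\times\Omega)$ available with $s_0>\frac n2$) and set $\mathbb{E}=H^2(\mathbb{R}_+;H^s(\Omega))\cap H^1(\mathbb{R}_+;H^{s+2}(\Omega))$, $Y=L^2(\mathbb{R}_+;H^s(\Omega))$, and the closed subspace $\mathbb{E}_0=\{u\in\mathbb{E}\mid u|_{\partial\Omega}=0,\ u(0)=u_t(0)=0\}$. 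By Lemma~\ref{maxregstrdamphalf} the operator $L:\mathbb{E}_0\to Y$, $Lu=u_{tt}-c^2\Delta u-\nu\varepsilon\Delta u_t$, is a bi-continuous isomorphism, and tracking the $\varepsilon\nu$-dependence in its a~priori estimate (exactly as in the periodic case, using the identity $C_0\varepsilon\nu\|u\|_{\mathbb{E}}^2\le\|f\|_Y\|u\|_{\mathbb{E}}$ for solutions with homogeneous data) gives $\|u\|_{\mathbb{E}}\le C_\varepsilon\|Lu\|_Y$ with $C_\varepsilon=C_0/(\varepsilon\nu)$, hence $P_{LU_{\mathbb{E}_0}}(f)\le C_\varepsilon\|f\|_Y$ for the Minkowski functional.

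Next I would reduce to a problem with vanishing boundary and initial data. Let $u^*\in\mathbb{E}$ be the solution of the linear problem~(\ref{eqlinhalf}) with $f=0$ and the prescribed data $(g,u_0,u_1)$ (the compatibility conditions $g(0)=u_0|_{\partial\Omega}$, $g_t(0)=u_1|_{\partial\Omega}$ being precisely the hypotheses), so that by the stability estimate of Lemma~\ref{maxregstrdamphalf}, sharpened to display the viscosity,
\begin{equation*}
\|u^*\|_{\mathbb{E}}\le\frac{C_1}{\nu\varepsilon}\bigl(\|u_0\|_{H^{s+2}(\Omega)}+\|u_1\|_{H^{s+1}(\Omega)}+\|g\|_{\mathbb{F}_{\mathbb{R}_+}}\bigr).
\end{equation*}
Writing $u=u^*+v$ with $v\in\mathbb{E}_0$, the function $u$ solves~(\ref{kuzhalfinitbound}) iff $v$ solves $Lv=\Phi(v):=\alpha\varepsilon(v+u^*)_t(v+u^*)_{tt}+\beta\varepsilon\nabla(v+u^*)\cdot\nabla(v+u^*)_t$. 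The quadratic estimates are identical to those in the proof of Theorem~\ref{globwellposKuzper}: for $a,b\in\mathbb{E}$ one has $\|a_tb_{tt}\|_Y\le C_{\mathrm{emb}}\|a\|_{\mathbb{E}}\|b\|_{\mathbb{E}}$ and $\|\nabla a\cdot\nabla b_t\|_Y\le C_{\mathrm{emb}}\|a\|_{\mathbb{E}}\|b\|_{\mathbb{E}}$ with $C_{\mathrm{emb}}=C_{H^1(\mathbb{R}_+;H^{s_0}(\Omega))\to L^\infty(\mathbb{R}_+\times\Omega)}$, so that, writing the four cross terms, for $\|u^*\|_{\mathbb{E}}\le r$, $\|w\|_{\mathbb{E}}\le r$, $\|z\|_{\mathbb{E}}\le r$ we get $\|\Phi(w)-\Phi(z)\|_Y\le 4(\alpha+\beta)C_{\mathrm{emb}}\varepsilon r\|w-z\|_{\mathbb{E}}$, hence $P_{LU_{\mathbb{E}_0}}(\Phi(w)-\Phi(z))\le\Theta(r)\|w-z\|_{\mathbb{E}}$ with $\Theta(r)=4C_0(\alpha+\beta)C_{\mathrm{emb}}\varepsilon r/(\varepsilon\nu)=4(C_0/\nu)(\alpha+\beta)C_{\mathrm{emb}}r$.

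Then I would apply Theorem~\ref{thSuh} with $X=\mathbb{E}_0$, $x_0=0$, $f(x)=Lx-\Phi(x)$, $b(r)=\max(1-\Theta(r),0)$, obtaining $r_*=\nu/(4C_0(\alpha+\beta)C_{\mathrm{emb}})=O(1)$ and $w(r)=r-2(C_0/\nu)(\alpha+\beta)C_{\mathrm{emb}}r^2$, exactly as in~(\ref{Eqret}). To force $y=0$ I must check $0\in\Phi(0)+w(r)LU_{\mathbb{E}_0}$, i.e.\ that the $z\in\mathbb{E}_0$ with $Lz=-\Phi(0)/w(r)$ has $\|z\|_{\mathbb{E}}<1$; bounding $\|\Phi(0)\|_Y\le(\alpha+\beta)\varepsilon C_{\mathrm{emb}}\|u^*\|_{\mathbb{E}}^2\le(\alpha+\beta)\varepsilon C_{\mathrm{emb}}r^2$ and using $\|z\|_{\mathbb{E}}\le C_\varepsilon\|\Phi(0)\|_Y/w(r)$ gives $\|z\|_{\mathbb{E}}<\tfrac12$ as soon as $r<r^*$ for a suitable $r^*\le r_*$, word for word as before. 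This yields a unique $v\in rU_{\mathbb{E}_0}$ with $Lv=\Phi(v)$ whenever $\|u^*\|_{\mathbb{E}}\le r<r^*$; combining with the bound on $\|u^*\|_{\mathbb{E}}$, the smallness hypothesis $\|u_0\|_{H^{s+2}}+\|u_1\|_{H^{s+1}}+\|g\|_{\mathbb{F}_{[0,\infty[}}\le\frac{\nu\varepsilon}{C_1}r$ ensures $\|u^*\|_{\mathbb{E}}\le r$, and $u=u^*+v$ is the unique solution with $\|u\|_{\mathbb{E}}\le 2r$. The only real subtlety — and the step I would be most careful about — is bookkeeping the dependence of every constant on $\varepsilon\nu$ so that $r_*$ and $r^*$ come out $O(1)$ and the final smallness threshold is genuinely of size $\nu\varepsilon$; the nonlinear estimates themselves are routine once the embedding $H^1(\mathbb{R}_+;H^{s_0}(\Omega))\hookrightarrow L^\infty$ and Lemma~\ref{maxregstrdamphalf} are in hand, which is precisely why the paper says the proof can be omitted.
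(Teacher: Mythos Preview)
Your proposal is correct and is precisely the approach the paper intends: the paper omits the proof entirely, noting only that it follows from Lemma~\ref{maxregstrdamphalf} and Theorem~\ref{thSuh} by the same method as the proof of Theorem~\ref{globwellposKuzper}, which is exactly the structure you have reproduced (reduce to homogeneous data via $u=u^*+v$, use the linear isomorphism $L:\mathbb{E}_0\to Y$, control the quadratic nonlinearity with the $H^1_t H^{s_0}_x\hookrightarrow L^\infty$ embedding, and close with Sukhinin's theorem). Your caveat about tracking the $\varepsilon\nu$-dependence is well placed: note that the smallness threshold here scales like $\nu\varepsilon$ rather than the $\sqrt{\nu\varepsilon}$ of the periodic case, reflecting the different a~priori constant in the inhomogeneous estimate of Lemma~\ref{maxregstrdamphalf} when initial data are present.
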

\subsection{Approximation of the solutions of the Kuznetsov equation by the solutions of the KZK equation.}\label{secValKuzKZK}
Given Theorem~\ref{globwellposKuzper} for the viscous case, we   consider  the Cauchy problem associated to the KZK equation~(\ref{NPEcau2}) for small enough initial data in order to have a time periodic solution $I$ defined on $\mathbb{R}_+\times\mathbb{R}^{n-1}$. If $\nu>0$, to compare the solutions of the Kuznetsov and the KZK equations we consider two cases. The first case is considered in Subsubsection~\ref{sssKKZKperbc}, when 
the Kuznetsov equation can be considered as a time periodic boundary problem coming just from the initial condition $I_0$ of problem~(\ref{NPEcau2}). In Subsubsection~\ref{sssKKZKperbcIn} we study the second case, when  the solution of the KZK equation taken for $\tau=0$ gives $I(0,z,y) $ defined on $\mathbb{R}_+\times\mathbb{R}^{n-1}$ from which we deduce according to the derivation \textit{ansatz} both an initial condition for the Kuznetsov equation at $t=0$ and a corresponding boundary condition.
In this second situation, it aslo makes sense to consider the inviscid case, breifly commented in the end of
Subsubsection~\ref{sssKKZKperbcIn}.
\subsubsection{Approximation problem for the Kuznetsov with periodic boundary conditions.}\label{sssKKZKperbc}
Let $\Omega_1=\mathbb{T}_{\tau}\times \mathbb{R}^{n-1}$ and $s\geq \left[ \frac{n}{2}\right]+1$. Suppose that a function $I_0(t,y)=I_0(t,\sqrt{\varepsilon }x')$ is such that $I_0 \in H^s(\Omega_1)$ small enough and $\int_{\mathbb{T}_{\tau}} I_0(s,y) ds=0$. Then by Theorem~\ref{wpglopkzknpe} there is a unique solution $I(\tau,z,y)$ of the Cauchy problem for the KZK equation~(\ref{NPEcau2}) such that
\begin{equation}\label{regsolKZK}
z\mapsto I(\tau,z,y) \in C([0,\infty[,H^s(\Omega_1))
\end{equation}
with $\int_{\mathbb{T}_{\tau}} I(l,z,y) dl=0$.
We use the operator $\partial_\tau^{-1}$ defined in~(\ref{invdtau}).
Formula~(\ref{invdtau}), which implies that $ \partial_{\tau}^{-1} I$ is $L$-periodic in $\tau$ and of mean value zero, gives us the estimate
$$\Vert \partial_{\tau}^{-1} I\Vert_{H^s(\Omega_1)} \leq C \Vert \partial_{\tau} \partial_{\tau}^{-1} I\Vert_{H^s(\Omega_1)} = C \Vert  I\Vert_{H^s(\Omega_1)}.$$
So $\partial_{\tau}^{-1} I\vert_{z=0}\in H^s(\Omega_1)$, and hence by~(\ref{regsolKZK})
$$
z\mapsto \partial_{\tau}^{-1} I(\tau,z,y) \in C([0,\infty[,H^s(\Omega_1)),
$$
with $\int_{\mathbb{T}_{\tau}}\partial_{\tau}^{-1} I(s,z,y) ds=0$.

We define on $\mathbb{T}_t\times \mathbb{R}_+\times \mathbb{R}^{n-1}$ 
\begin{equation}\label{potentialkzk}
\overline{u}(t,x_1,x'):=\frac{c^2}{\rho_0}\partial_{\tau}^{-1} I(\tau,z,y)=\frac{c^2}{\rho_0}\partial_{\tau}^{-1} I\left(t-\frac{x_1}{c},\varepsilon x_1,\sqrt{\varepsilon}x'\right)
\end{equation}
 with the paraxial change of variable~(\ref{chvarkzk}) associated to the KZK equation. Thus $\overline{u}$ is $L$-periodic in time and of mean value zero. 
Now we consider the Kuznetsov problem~(\ref{kuzper}) associated to the following boundary condition, imposed by the initial condition for the KZK equation:
\begin{equation}\label{boundcondkuz}
g(t,x'):=\overline{u}(t,0,x')=\frac{c^2}{\rho_0}\partial_{\tau}^{-1} I_0(\tau,y).
\end{equation} 
Taking $\tilde{I}:=\frac{\rho_0}{c^2}\partial_{\tau}\Phi$ (see Eq.~(\ref{Ikzk})), let $\tilde{I}$ be the solution of the Kuznetsov equation written in the following form with the remainder $R^{Kuz-KZK}$ defined in Eq.~(\ref{remkuzkzk}):
\begin{equation}\label{KZKwithre}
\left\lbrace
\begin{array}{l}
c\partial_{ z} \tilde{I} -\frac{(\gamma+1)}{4\rho_0}\partial_\tau
\tilde{I}^2-\frac{\nu}{2 c^2\rho_0}\partial^2_\tau \tilde{I}-\frac{c^2}2 \Delta_y\partial_{\tau}^{-1}
\tilde{I}+\varepsilon \frac{\rho_0}{2c^2}R^{Kuz-KZK}=0,\\
\tilde{I}\vert_{z=0}=I_0,
\end{array}
\right.
\end{equation}
where we can recognize the system associated to the KZK equation~(\ref{NPEcau2}).

Now we can formulate the following approximation result 
\begin{theorem}\label{AproxKuzKZK}
Let $\nu>0$. 
For $s>\frac{n}{2}+2$ and $I_0\in H^{s+\frac{3}{2}}(\mathbb{T}_{\tau}\times \mathbb{R}^{n-1})$ small enough in $H^{s+\frac{3}{2}}(\mathbb{T}_{\tau}\times \mathbb{R}^{n-1}) $, there exists a unique global solution $I$ of the Cauchy problem for the KZK equation~(\ref{NPEcau2}) such that
$$z\mapsto I(\tau,z,y) \in C([0,\infty[,H^{s+\frac{3}{2}}(\mathbb{T}_{\tau}\times \mathbb{R}^{n-1})).$$
In addition, there exists a unique global solution $\tilde{I}$ of the Kuznetsov problem~(\ref{KZKwithre}), in the sense
$\tilde{I}:=\frac{\rho_0}{c^2}\partial_{\tau}\Phi,$
with $\Phi(\tau,z,y):=u(t,x_1,x')$ with the paraxial change of variable~(\ref{chvarkzk}) and 
$$u\in H^2(\mathbb{T}_t;H^s(\mathbb{R}^+\times\mathbb{R}^{n-1}))\cap H^1(\mathbb{T}_t;H^{s+2}(\mathbb{R}^+\times\mathbb{R}^{n-1})),$$
is the global solution of the periodic problem~(\ref{kuzper}) for the Kuznetsov equation with $g$ defined by $I_0$ as in Eq.~(\ref{boundcondkuz}). Moreover there exist $C_1>0$ and $C_2>0$ such that
$$\frac{1}{2}\frac{d}{dz}\Vert I-\tilde{I}\Vert^2_{L^{2}(\mathbb{T}_\tau\times \mathbb{R}^{n-1})}\leq C_1 \Vert I-\tilde{I}\Vert^2_{L^{2}(\mathbb{T}_\tau\times \mathbb{R}^{n-1})}+C_2\varepsilon \Vert I-\tilde{I}\Vert_{L^{2}(\mathbb{T}_\tau\times \mathbb{R}^{n-1})},$$
which implies 
$$\Vert I-\tilde{I}\Vert_{L^{2}(\mathbb{T}_\tau\times \mathbb{R}^{n-1})}(z)
\leq \frac{C_2}{2}\eps z e^{\frac{C_1}{2} z}
\leq \frac{C_2}{C_1}\varepsilon(e^{\frac{C_1}{2}z}-1)$$
and $\Vert I-\tilde{I}\Vert_{L^{2}(\mathbb{T}_\tau\times \mathbb{R}^{n-1})}(z)\leq K\varepsilon$ while $z\leq C$ with $K>0$, and $C>0$ independent of $\varepsilon$.
\end{theorem}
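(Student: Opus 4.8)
The strategy is to compare the solution $I$ of the KZK Cauchy problem with the quantity $\tilde I := \frac{\rho_0}{c^2}\partial_\tau \Phi$ built from the solution $u$ of the periodic Kuznetsov problem~(\ref{kuzper}) with boundary datum $g$ from~(\ref{boundcondkuz}). First I would invoke the well-posedness results already established: Theorem~\ref{wpglopkzknpe} gives the global solution $I$ with the regularity $z\mapsto I(\tau,z,y)\in C([0,\infty[,H^{s+\frac32}(\mathbb{T}_\tau\times\mathbb{R}^{n-1}))$ for $I_0$ small enough, and the Poincar\'e-type bound $\|\partial_\tau^{-1}I\|_{H^{s+\frac32}}\le C\|I\|_{H^{s+\frac32}}$ (valid because of the mean-value-zero condition) shows that the boundary datum $g=\frac{c^2}{\rho_0}\partial_\tau^{-1}I_0$ lies in $\mathbb{F}_{\mathbb{T}}$ with norm controlled by $\|I_0\|_{H^{s+\frac32}}$; one checks $g$ has mean value zero so that Theorem~\ref{globwellposKuzper} applies and yields the global $u\in H^2(\mathbb{T}_t;H^s)\cap H^1(\mathbb{T}_t;H^{s+2})$. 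The loss of $\frac32$ derivatives in the required regularity of $I_0$ is precisely what is needed to match the trace regularity $\mathbb{F}_{\mathbb{T}}$ demanded by the maximal-regularity Lemma~\ref{thmLinPer}/Theorem~\ref{globwellposKuzper}.

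Next I would set up the energy estimate for $w:=I-\tilde I$. By construction $\tilde I$ satisfies the KZK equation with the extra source term $\varepsilon\frac{\rho_0}{2c^2}R^{Kuz-KZK}$ as written in~(\ref{KZKwithre}), while $I$ solves the exact KZK equation~(\ref{NPEcau2}); both share the initial datum $I_0$ at $z=0$, so $w|_{z=0}=0$. Subtracting the two equations (using the $\partial_z$-form with $\partial_\tau^{-1}$) and multiplying by $w$, then integrating over $\mathbb{T}_\tau\times\mathbb{R}^{n-1}$, the linear viscous term $-\frac{\nu}{2c^2\rho_0}\partial_\tau^2 w$ contributes a nonnegative dissipation (integrate by parts in $\tau$, periodicity kills boundary terms), the diffraction term $-\frac{c^2}{2}\Delta_y\partial_\tau^{-1}w$ integrates to zero after commuting $\partial_\tau^{-1}$ and $\Delta_y$ and integrating by parts, and the nonlinear term $\partial_\tau(I^2-\tilde I^2)=\partial_\tau((I+\tilde I)w)$ is handled by $\|\partial_\tau((I+\tilde I)w)\|_{L^2}\le C(\|I\|_{H^s}+\|\tilde I\|_{H^s})\|w\|_{L^2}$ using $s>\frac n2+2$ and the boundedness of $\|I\|_{H^s}$, $\|\tilde I\|_{H^s}$ uniformly in $z$ (the latter via the stability estimate $\|u\|_X\le 2r$). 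The source term is bounded by $C\varepsilon\|R^{Kuz-KZK}\|_{L^2}\|w\|_{L^2}\le C_2\varepsilon\|w\|_{L^2}$, where $\|R^{Kuz-KZK}\|_{L^2}=O(1)$ uniformly in $z$ follows from the regularity of $u$ via its explicit expression~(\ref{remkuzkzk}) — this is where I need $s$ large enough that $\partial_z^2\Phi$, $\partial_\tau\Delta_y\Phi$, etc. are $L^2$-bounded, which the chosen regularity of $u$ guarantees. Collecting gives $\frac12\frac{d}{dz}\|w\|_{L^2}^2\le C_1\|w\|_{L^2}^2+C_2\varepsilon\|w\|_{L^2}$.

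Finally I would integrate this differential inequality with $w|_{z=0}=0$. Writing $\phi(z)=\|w\|_{L^2}(z)$, the inequality $\phi\phi'\le C_1\phi^2+C_2\varepsilon\phi$ gives $\phi'\le C_1\phi+C_2\varepsilon$ wherever $\phi>0$, hence by Gronwall $\phi(z)\le \frac{C_2}{C_1}\varepsilon(e^{C_1 z/2}-1)$; the intermediate bound $\phi(z)\le \frac{C_2}{2}\varepsilon z\, e^{C_1 z/2}$ follows since $\frac{C_2}{C_1}(e^{C_1 z/2}-1)\le \frac{C_2}{2}z e^{C_1 z/2}$. In particular for $z\le C$ with $C$ fixed independent of $\varepsilon$ we get $\phi(z)\le K\varepsilon$. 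The main obstacle, as usual in these comparison arguments, is the bookkeeping needed to verify that the remainder $R^{Kuz-KZK}$ and the nonlinear commutators are genuinely controlled in $L^2$ uniformly in $z$ by the available regularity of $u$ — this forces the somewhat heavy regularity threshold $s>\frac n2+2$ together with the extra $\frac32$ derivatives on $I_0$, and one must be careful that the constants $C_1,C_2$ are indeed $O(\varepsilon^0)$ (they involve only $\|I\|_{H^s}$, $\|u\|_X$ bounds, which are $O(1)$ for small data) so that the final estimate is uniform in $\varepsilon$.
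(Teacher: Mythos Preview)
Your overall strategy matches the paper's proof: invoke the KZK and Kuznetsov well-posedness results (Theorems~\ref{wpglopkzknpe} and~\ref{globwellposKuzper}), subtract the two equations in their $\partial_z$-form, multiply by $w=I-\tilde I$, integrate over $\mathbb{T}_\tau\times\mathbb{R}^{n-1}$, and close with Gronwall. Your treatment of the viscous and diffraction terms is correct and in fact slightly more explicit than the paper, which merely says ``standard integration by parts.''

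However, your handling of the nonlinear term contains a genuine slip. You write
\[
\|\partial_\tau((I+\tilde I)w)\|_{L^2}\le C(\|I\|_{H^s}+\|\tilde I\|_{H^s})\|w\|_{L^2},
\]
but this is false: expanding $\partial_\tau((I+\tilde I)w)=(\partial_\tau I+\partial_\tau\tilde I)w+(I+\tilde I)\partial_\tau w$, the second piece requires $\|\partial_\tau w\|_{L^2}$, which you do not control. What you actually need to bound is the \emph{integral} $\int_{\Omega_1} w\,\partial_\tau((I+\tilde I)w)\,d\tau\,dy$, and this is done by one more integration by parts in $\tau$: since $\int w(I+\tilde I)\partial_\tau w=\tfrac12\int(I+\tilde I)\partial_\tau(w^2)=-\tfrac12\int\partial_\tau(I+\tilde I)\,w^2$, the whole nonlinear contribution is bounded by $C\|\partial_\tau(I+\tilde I)\|_{L^\infty(\Omega_1)}\|w\|_{L^2}^2$. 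The paper does essentially this, with a small twist: it writes $\tilde I=(\tilde I-I)+I$ so that after the integration by parts only $\sup_{\Omega_1}|\partial_\tau I|$ survives (the cubic term $\int(I-\tilde I)\partial_\tau(I-\tilde I)^2$ vanishes by periodicity). This has the practical advantage that the constant $C_1$ depends only on the KZK solution $I$, whose uniform-in-$z$ bound on $\partial_\tau I$ in $L^\infty$ is immediate from $I\in C([0,\infty[;H^{s+\frac32})$, rather than on $\partial_\tau\tilde I=\tfrac{\rho_0}{c^2}\partial_t^2 u$, whose uniform-in-$z$ pointwise control from $u\in H^2(\mathbb{T}_t;H^s)$ alone is less direct. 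With this correction your argument goes through as outlined.
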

\begin{proof}
For $s> \frac{n}{2}+2$, the global well-posedness of $I$ comes from Theorem~\ref{wpglopkzknpe} if $I_0\in H^{s+\frac{3}{2}}(\mathbb{T}_{\tau}\times \mathbb{R}^{n-1})$ is small enough. Moreover, since $g$ is given by Eq.~(\ref{boundcondkuz}), thanks to the definition  of $\partial_{\tau}^{-1}$ in~(\ref{invdtau}) and the fact that $I_0\in H^{s+\frac{3}{2}}(\mathbb{T}_{\tau}\times \mathbb{R}^{n-1})$, we have 
$$g\in H^{s+\frac{3}{2}}(\mathbb{T}_t\times \mathbb{R}^{n-1}) \hbox{ and }\partial_t g\in H^{s+\frac{3}{2}}(\mathbb{T}_t\times \mathbb{R}^{n-1}).$$ 
And thus 
$$g\in H^{\frac{7}{4}}(\mathbb{T}_t;H^s(\mathbb{R}^{n-1}))\cap H^1(\mathbb{T}_t;H^{s+2-\frac{1}{2}}(\mathbb{R}^{n-1})).$$
Therefore we can use Theorem~\ref{globwellposKuzper} which implies the global existence of the periodic in time solution 
$$u\in H^2(\mathbb{T}_t;H^s(\mathbb{R}^+\times\mathbb{R}^{n-1}))\cap H^1(\mathbb{T}_t;H^{s+2}(\mathbb{R}^+\times\mathbb{R}^{n-1})),$$
of the Kuznetsov periodic boundary value problem~(\ref{kuzper}) as $I_0$ is small enough in $H^{s+\frac{3}{2}}(\mathbb{T}_{\tau}\times \mathbb{R}^{n-1})$. 
Therefore, it also implies the global existence of $\tilde{I}$ defined in~(\ref{Ikzk}) which is the solution of the exact Kuznetsov system~(\ref{KZKwithre}).

Now we subtract the equations in  systems~(\ref{NPEcau2}) and~(\ref{KZKwithre}):
\begin{align*}
c\partial_z (I-\tilde{I})-\frac{\gamma+1}{2\rho_0}(I-\tilde{I})\partial_{\tau} I -\frac{\gamma+1}{2\rho_0} \tilde{I}&\partial_{\tau}(I-\tilde{I}) -\frac{\nu}{2c^2\rho_0}\partial^2_{\tau}(I-\tilde{I})\\
&-\frac{c^2}{2}\partial_{\tau}^{-1} \Delta_y(I-\tilde{I})=\varepsilon\frac{\rho_0}{2c^2}R^{Kuz-KZK}.
\end{align*}
Denoting $\Omega_1=\mathbb{T}_{\tau}\times \mathbb{R}^{n-1}$, we multiply this equation by $(I-\tilde{I})$, integrate over $\mathbb{T}_{\tau}\times \mathbb{R}^{n-1}$ and perform a standard integration by parts which gives
\begin{align*}
&\frac{c}{2}\frac{d}{dz}\Vert I-\tilde{I}\Vert_{L^2(\Omega_1)}^2-\frac{\gamma+1}{2\rho_0}\int_{\Omega_1}\partial_{\tau} I( I-\tilde{I})^2d\tau dy\\
&-\frac{\gamma+1}{2\rho_0} \int_{\Omega_1} \tilde{I} (I-\tilde{I}) \partial_{\tau}(I-\tilde{I})d\tau dy\\
&+\frac{\nu}{2c^2\rho_0}\int_{\Omega_1} (\partial_{\tau}(I-\tilde{I}))^2d\tau dy=\varepsilon\frac{\rho_0}{2c^2}\int_{\Omega_1} R^{Kuz-KZK} (I-\tilde{I})d\tau dy.
\end{align*}
Let us notice that
\begin{align*}
&\int_{\Omega_1} \tilde{I} (I-\tilde{I}) \partial_{\tau}(I-\tilde{I})d\tau dy= \int_{\Omega_1} [(\tilde{I}-I)+I)]\frac{1}{2}\partial_{\tau}(I-\tilde{I})^2 d\tau dy=\\
&=-\frac{1}{2}\int_{\Omega_1} \partial_{\tau}I (I-\tilde{I})^2 d\tau dy,
\end{align*}
and as for $s> \frac{n}{2}+2$ and $u\in H^2(\mathbb{T}_t;H^s(\Omega))\cap H^1(\mathbb{T}_t;H^{s+2}(\Omega))$ we also have 
\begin{equation}\label{remboundkuzkzk}
R^{Kuz-KZK}\in C(\mathbb{R}_+;L^2(\mathbb{T}_{\tau}\times \mathbb{R}^{n-1})).
\end{equation}
This comes from the fact that in system~(\ref{KZKwithre}) the worst term outside the remainder is $\partial_{\tau}^2\tilde{I}$ with $\tilde{I}$ given by
Eq.~(\ref{Ikzk}). As 
$\partial_t^3 u\in L^2(\mathbb{T}_t;H^{s-2}(\Omega)),$
we need to take $s> \frac{n}{2}+2$ to have $\partial_{\tau}^2\tilde{I}$ in $L^{\infty}(\mathbb{R}_+;L^2(\mathbb{T}_{\tau}\times \mathbb{R}^{n-1}))$.
Therefore
$$\left\vert \int_{\Omega_1} R^{Kuz-KZK} (I-\tilde{I})d\tau dy\right\vert\leq \Vert R^{Kuz-KZK}\Vert_{L^2(\Omega_1)} \Vert I-\tilde{I}\Vert_{L^2(\Omega_1)}\leq C \Vert I-\tilde{I}\Vert_{L^2(\Omega_1)}$$
with a constant $C>0$ independent on $z$ thanks to~(\ref{remboundkuzkzk}). It leads to the estimate
$$\frac{1}{2}\frac{d}{dz}\Vert I-\tilde{I}\Vert_{L^2(\Omega_1)}^2\leq K \sup_{\Omega_1}\vert \partial_{\tau}I(\tau,z,y)\vert \;\;\Vert I-\tilde{I}\Vert_{L^2(\Omega_1)}^2+C\varepsilon  \Vert I-\tilde{I}\Vert_{L^2(\Omega_1)},$$
in which, due to the regularity of $I$ for $s$ and $I_0$ (see also Point 1 and 3 of Theorem~\ref{wpglopkzknpe}) the term $\sup_{\Omega_1}\vert \partial_{\tau}I(\tau,z,y)\vert$ is bounded by a constant $C>0$ independent on $z$. With this we have the desired estimate and the other results follow from Gronwall's Lemma.
\end{proof}
\begin{remark}
Here the regularity $I_0\in H^{s+\frac{3}{2}}(\mathbb{T}_{\tau}\times\mathbb{R}^{n-1})$ for $s>\frac{n}{2}+2$ is the minimal regularity to ensure (\ref{remboundkuzkzk}).
\end{remark}

\subsubsection{Approximation problem for the Kuznetsov equation with initial-boundary conditions.}\label{sssKKZKperbcIn}
Let as previously $\Omega_1=\mathbb{T}_{\tau}\times \mathbb{R}^{n-1}$, but $s\geq \left[ \frac{n+1}{2}\right]$. Suppose that a function $I_0(t,y)=I_0(t,\sqrt{\varepsilon }x')$ is such that $I_0 \in H^s(\Omega_1)$ and $\int_{\mathbb{T}_{\tau}} I_0(s,y) ds=0$. Then by Theorem~\ref{wpglopkzknpe} there is a unique solution $I(\tau,z,y)$ of the Cauchy problem~(\ref{NPEcau2}) for the KZK equation such that
$$z\mapsto I(\tau,z,y) \in C([0,\infty[,H^s(\Omega_1)).$$
We define $\overline{u}$ and $g$ as in Eqs.~(\ref{potentialkzk}) and~(\ref{boundcondkuz}) respectively.
Thus, for $R^{Kuz-KZK}$ defined in Eq.~(\ref{remkuzkzk}), $\overline{u}$ is the solution of the following system 
\begin{equation}\label{aproxkuzkzkrem}
\left\lbrace
\begin{array}{l}
\partial^2_t \overline{u} -c^2 \Delta \overline{u}-\varepsilon\partial_t\left( (\nabla \overline{u})^2+\frac{\gamma-1}{2c^2}(\partial_t \overline{u})^2+\frac{\nu}{\rho_0}\Delta \overline{u}\right)=\varepsilon^2 R^{Kuz-KZK}\;\;\;\hbox{ in }\;\mathbb{T}_t\times \Omega,\\
\overline{u}=g\;\;\;\hbox{ on }\; \mathbb{T}_t\times\partial\Omega.
\end{array}\right.
\end{equation}

We study for $T>0$ the solution $u$ of the Dirichlet boundary-value problem~(\ref{kuzhalfinitbound}) for the Kuznetsov equation on $[0,T]\times\mathbb{R}_+\times \mathbb{R}^{n-1}$, taking $u_0:=\overline{u}(0)$ and $u_1:=\overline{u}_t(0)$ and considering the time periodic function $g$ defined by Eq.~(\ref{boundcondkuz}) as a function on $[0,T]$.
Now we have the following stability result.
\begin{theorem}\label{approxKuzKZKbis}
Let $T>0$, $\nu>0$, $n\geq 2$,  $\Omega=\mathbb{R}^+\times\mathbb{R}^{n-1}$ and $I_0\in H^{s}(\mathbb{T}_{\tau}\times\mathbb{R}^{n-1})$, $s\in \R^+$. 
Let $I$ be  the solution of the KZK equation. 
By $I$  the solution $\overline{u}$ of the approximated Kuznetsov problem~(\ref{aproxkuzkzkrem}) is constructed using~(\ref{potentialkzk}) and with $g$ defined in~(\ref{boundcondkuz}). 

Then there hold
\begin{enumerate}
\item If $s\geq 6$ for $n=2,3$, or else $\left[\frac{s}{2}\right]>\frac{n}{2}+1$, there exists $k>0$ such that $\Vert I_0\Vert_{H^{s}}<k$ implies the global well-posedness of the Cauchy problem for the KZK equation. Its solution is denoted for $0\leq k\leq \left[\frac{s}{2}\right] $ by 
$$I\in C^k(\lbrace z >0\rbrace;H^{s-2k}(\mathbb{T}_{\tau}\times \mathbb{R}^{n-1})),$$
thus
$$
\overline{u}\in  C^k(\lbrace z >0\rbrace;H^{s-2k}(\mathbb{T}_{\tau}\times \mathbb{R}^{n-1})),\;
\partial_t\overline{u}\in  C^k(\lbrace z >0\rbrace;H^{s-2k}(\mathbb{T}_{\tau}\times \mathbb{R}^{n-1})),$$
or again
\begin{equation}\label{regubarkuzkzk}
\overline{u}\in H^2(\mathbb{T}_t,H^{\left[\frac{s}{2} \right]-1}(\Omega))\cap H^1(\mathbb{T}_t,H^{\left[\frac{s}{2} \right]}(\Omega)) .
\end{equation}
The regularity of $I_0\in H^s(\mathbb{T}_t\times\mathbb{R}^{n-1})$  (see Table~\ref{TABLE2}) is minimal to ensure  that $R^{Kuz-KZK}$, see Eq.~(\ref{remkuzkzk}), is in $C([0,+\infty[;L^2(\mathbb{R}_+\times\mathbb{R}^{n-1}))$.
 \item If $\left[\frac{s}{2}\right]>\frac{n}{2}+2$, taking the same initial data for the exact boundary-value problem for the Kuznetsov equation~(\ref{kuzhalfinitbound}) as for $\overline{u}$, $i.e.$   \begin{align*}                                                                                                                                                                                                         
   &u(0)=\overline{u}(0)=\frac{c^2}{\rho_0}\partial_{\tau}^{-1}I(-\frac{x_1}{c},\varepsilon x_1,\sqrt{\varepsilon}x')\in H^{\left[\frac{s}{2} \right]}(\Omega),    \\
   &u_t(0)=\overline{u}_t(0)=\frac{c^2}{\rho_0}\partial_{\tau}I(-\frac{x_1}{c},\varepsilon x_1,\sqrt{\varepsilon}x')\in H^{\left[\frac{s}{2} \right]-1}(\Omega),   
   \end{align*}
 there exists $k>0$  such that $\Vert I_0\Vert_{H^{s}}<k$ implies the well-posedness of the exact Kuznetsov equation~(\ref{kuzhalfinitbound}) considered with Dirichlet boundary condition 
 \begin{align*}
 g= \frac{c^2}{\rho_0}\partial_{\tau}^{-1}I_0\in H^s(\mathbb{T}_t\times\mathbb{R}^{n-1})\subset H^{7/4}([0,T]&;H^{\left[\frac{s}{2} \right]-2}(\partial\Omega))\\
& \cap H^1([0,T];H^{\left[\frac{s}{2} \right]-2+3/2}(\partial\Omega))
 \end{align*}
  and the regularity
\begin{equation}\label{regukuzkzk}
u\in H^2([0,T],H^{\left[\frac{s}{2} \right]-1}(\Omega))\cap H^1([0,T],H^{\left[\frac{s}{2} \right]}(\Omega)) .
\end{equation}
Moreover, there exists $K>0$, and $C>0$ independent of $\varepsilon$ such that for all
$t\leq\frac{C}{\varepsilon}$
we have $C_1>0$ and $C_2>0$ with
\begin{equation}\label{estimKuzKZKex}
\sqrt{\Vert (u -\overline{u})_t(t)\Vert_{L^2(\Omega)}^2+ \Vert \nabla (u-\overline{u})(t)\Vert_{L^2(\Omega)}^2}\leq C_1\eps^2t e^{C_2\eps t}\leq K\varepsilon. 
\end{equation}

 \item In addition, let $u$ be a solution of the Dirichlet boundary-value problem~(\ref{kuzhalfinitbound}) for the Kuznetsov equation, with $g$ defined by Eq.~(\ref{boundcondkuz}) and 
$u_0\in H^{m+2}(\Omega)$, $u_1\in H^{m+1}(\Omega)$ with $m>\frac{n}{2}$ and
\begin{equation}\label{EqSmallInD}
 \Vert (u -\overline{u})_t(0)\Vert_{L^2(\Omega)}^2+ \Vert \nabla (u-\overline{u})(0)\Vert_{L^2(\Omega)}^2\leq \delta^2 \leq \varepsilon^2.
\end{equation}
There exists $K>0$ and $C>0$ independent of $\varepsilon$ such that for all
$t\leq\frac{C}{\varepsilon}$
we have $C_1>0$ and $C_2>0$ with
\begin{equation}\label{estimKuzKZK}
 \sqrt{\Vert (u -\overline{u})_t(t)\Vert_{L^2(\Omega)}^2+ \Vert \nabla (u-\overline{u})(t)\Vert_{L^2(\Omega)}^2}\leq  C_1(\eps^2t+\delta^2)e^{C_2\eps t}\le K\eps.
\end{equation}
\end{enumerate}
\end{theorem}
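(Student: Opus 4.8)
The plan is to follow exactly the pattern of the proof of Theorem~\ref{ThapproxNSKuz}: from a solution $I$ of the KZK equation build the approximate velocity potential $\overline u$, check via the derivation of Subsection~\ref{secderKuzKZK} that it solves the Kuznetsov equation up to an $\varepsilon^2 R^{Kuz-KZK}$ remainder, establish well-posedness of the exact Kuznetsov problem for the matching (or $L^2$-close) data, and close a Gronwall estimate on $w=u-\overline u$. For \emph{Part 1}, Theorem~\ref{wpglopkzknpe} with $I_0\in H^s$ small gives a unique global KZK solution with $I\in C^k(\{z>0\};H^{s-2k}(\mathbb{T}_\tau\times\mathbb{R}^{n-1}))$ whenever $s-2k\ge 0$; the operator $\partial_\tau^{-1}$ of~(\ref{invdtau}) is bounded on the mean-zero subspace of $H^s$ by the Poincaré inequality, so $\Phi=\tfrac{c^2}{\rho_0}\partial_\tau^{-1}I$ inherits the same regularity. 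Translating to the physical variables through the paraxial substitution~(\ref{chvarkzk}), a $t$-derivative or a tangential $x'$-derivative costs one $\tau$- resp. $y$-derivative, while an $x_1$-derivative costs at worst one $z$-derivative, i.e. two units in the Sobolev scale; hence $k$ spatial derivatives are controlled as long as $s-2k\ge 0$, which yields the exponent $[s/2]$ and the space~(\ref{regubarkuzkzk}). The same count applied to the terms of $R^{Kuz-KZK}$ in~(\ref{remkuzkzk}), whose worst contributions carry two $z$-derivatives of $\Phi$, fixes the minimal $s$ for which $R^{Kuz-KZK}\in C([0,\infty[;L^2(\Omega))$.

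For \emph{Parts 2--3}, set $s'=[s/2]-2$; the hypothesis $[s/2]>\tfrac n2+2$ gives $s'>\tfrac n2$, and by Part 1 $u_0=\overline u(0)\in H^{s'+2}(\Omega)$, $u_1=\overline u_t(0)\in H^{s'+1}(\Omega)$, while $g=\tfrac{c^2}{\rho_0}\partial_\tau^{-1}I_0\in H^s(\mathbb{T}_t\times\mathbb{R}^{n-1})\hookrightarrow\mathbb{F}_{\mathbb{R}^+}$. The compatibility relations $g(0)=u_0|_{\partial\Omega}$, $g_t(0)=u_1|_{\partial\Omega}$ hold by construction since $\overline u|_{x_1=0}=g$, and, $\nu$ and $\varepsilon$ being fixed, all these data have norm $\le\tfrac{\nu\varepsilon}{C_1}r$ once $\|I_0\|_{H^s}<k$ is small enough, so Theorem~\ref{ThMainWPnuPGlobHalf} furnishes the unique solution $u$ with regularity~(\ref{regukuzkzk}). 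For the stability estimate, $w=u-\overline u$ solves $w_{tt}-c^2\Delta w-\nu\varepsilon\Delta w_t=N(u)-N(\overline u)-\varepsilon^2R^{Kuz-KZK}$, where $N(u)=\alpha\varepsilon u_tu_{tt}+\beta\varepsilon\nabla u\cdot\nabla u_t$, with $w|_{x_1=0}=0$ and, in Part 2, $w(0)=w_t(0)=0$ (in Part 3, $\|w_t(0)\|_{L^2}^2+\|\nabla w(0)\|_{L^2}^2\le\delta^2\le\varepsilon^2$, and $u$ is only asked to solve~(\ref{kuzhalfinitbound}) with data in $H^{m+2}\times H^{m+1}$, $m>\tfrac n2$). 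Multiplying by $w_t$ and integrating over $\Omega$, the boundary contributions vanish because $w|_{\partial\Omega}=0$ forces $w_t|_{\partial\Omega}=0$ and the tangential part of $\nabla w$ vanishes on $\partial\Omega$, and all integrals are well defined since the relevant Sobolev spaces embed into $C_0(\Omega)$; one gets $\tfrac{d}{dt}E(t)\le C\varepsilon E(t)+C\varepsilon^2\|R^{Kuz-KZK}\|_{L^2}\sqrt{E(t)}$ with $E=\|w_t\|_{L^2}^2+c^2\|\nabla w\|_{L^2}^2$, and Gronwall's lemma gives $\sqrt{E(t)}\le C_1(\delta+\varepsilon^2t)e^{C_2\varepsilon t}$, which is~(\ref{estimKuzKZKex}) for $\delta=0$ and~(\ref{estimKuzKZK}) in general, and is $\le K\varepsilon$ for $t\le C/\varepsilon$.

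The main obstacle is the quasilinear term $\alpha\varepsilon u_tu_{tt}$: its difference contains $\alpha\varepsilon u_tw_{tt}$, and $w_{tt}$ is not controlled by $E$. The remedy, as in Refs.~\cite{Perso,Roz2}, is to replace $E$ by the weighted energy $\int_\Omega (1-\alpha\varepsilon u_t)\,|w_t|^2+c^2|\nabla w|^2\,\dx$; differentiating it produces the term $-\alpha\varepsilon\int_\Omega u_t\,w_t\,w_{tt}\,\dx$ with the sign needed to cancel the offending contribution, while the weight $1-\alpha\varepsilon u_t$ stays bounded above and away from $0$ thanks to the uniform bound $\|u_t\|_{L^\infty(\Omega)}<\tfrac1{2\alpha\varepsilon}$ guaranteed by Theorem~\ref{ThMainWPnuPGlobHalf} (and the analogous bound on $\overline u_t$, valid for $I_0$ small, which follows from Part 1). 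Everything else is routine: $N(u)-N(\overline u)$ is estimated by $C\,E$ using the $L^\infty$-bounds on $\nabla_{t,\mathbf x}u$ and $\nabla_{t,\mathbf x}\overline u$ obtained in Part 1 and from the well-posedness theorems, and $\varepsilon^2\int_\Omega R^{Kuz-KZK}w_t\,\dx$ is bounded by Cauchy--Schwarz, all constants depending only on the fixed $\varepsilon$ and $\nu$, exactly as in the proof of Theorem~\ref{ThapproxNSKuz}.
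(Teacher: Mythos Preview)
Your proposal is correct and follows essentially the same approach as the paper. The paper likewise invokes Theorem~\ref{wpglopkzknpe} for the KZK regularity, counts derivatives through the paraxial change of variables to land $\overline u$ in the space~(\ref{regubarkuzkzk}), applies Theorem~\ref{ThMainWPnuPGlobHalf} for the exact Kuznetsov solution, and closes with the weighted energy $\int_\Omega A(t,x)\,(u-\overline u)_t^2+c^2|\nabla(u-\overline u)|^2\,dx$ with $\tfrac12\le A\le\tfrac32$ --- your explicit identification of the weight as $1-\alpha\varepsilon u_t$ is precisely what underlies the paper's abstract $A(t,x)$, and your diagnosis of the quasilinear term $\alpha\varepsilon u_t w_{tt}$ as the obstacle absorbed by this weight is exactly right. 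One small slip: your closing remark that the constants depend ``only on the fixed $\varepsilon$ and $\nu$'' should be sharpened to say they are \emph{independent} of $\varepsilon$, since that independence is what makes the bound meaningful on $t\le C/\varepsilon$.
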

\begin{proof}
Let $\overline{u}$ and $g$ be defined by~(\ref{potentialkzk}) and~(\ref{boundcondkuz}) by the solution $I$ of the Cauchy problem~(\ref{NPEcau2}) for the KZK equation with $I\vert_{z=0}=I_0\in H^s(\mathbb{T}_t\times\mathbb{R}^{n-1})$ and $s\geq 6$ for $n=2,3$, or else $\left[\frac{s}{2}\right]>\frac{n}{2}+1$. In this case, $\overline{u}$ is the global solution of the approximated Kuznetsov system~(\ref{aproxkuzkzkrem}), what is a direct consequence of Theorem~\ref{wpglopkzknpe}.
If $I_0\in H^s(\mathbb{T}_t\times \mathbb{R}^{n-1})$ with the chosen $s$, then for $0\leq k \leq \left[\frac{s}{2} \right]$
$$I(\tau,z,y)\in C^k(\lbrace z >0\rbrace;H^{s-2k}(\mathbb{T}_{\tau}\times \mathbb{R}^{n-1})).$$
Let us denote $\Omega_1=\mathbb{T}_{\tau}\times \mathbb{R}^{n-1}$.
Given the equation for $\overline{u}$ by~(\ref{potentialkzk}),  we have  \begin{align*}
 \overline{u}(\tau,z,y)\hbox{ and }\partial_\tau\overline{u}(\tau,z,y)\in & C^k(\lbrace z >0\rbrace;H^{s-2k}(\Omega_1)),\hbox{ if }0\leq k\leq \left[\frac{s}{2} \right] ,\\
\partial_\tau^2\overline{u}(\tau,z,y)\in & C^k(\lbrace z >0\rbrace;H^{s-1-2k}(\Omega_1)),\hbox{ if }0\leq k\leq \left[\frac{s}{2} \right]-1,
\end{align*}
but we can also say\cite{Ito} thanks to Point~4 of Theorem~\ref{wpglopkzknpe} that 
\begin{align*}
\overline{u}(\tau,z,y)\hbox{ and }\partial_\tau\overline{u}(\tau,z,y)\in &  H^k(\lbrace z >0\rbrace;H^{s-2k}(\Omega_1)),\\
\partial_\tau^2\overline{u}(\tau,z,y)\in & H^k(\lbrace z >0\rbrace;H^{s-1-2k}(\Omega_1)).
\end{align*}
This implies as for the chosen $s$ that 
\begin{align*}
&\overline{u}(t,x_1,x')\hbox{ and } \partial_t\overline{u}(t,x_1,x')\in  L^2(\mathbb{T}_{t};H^{\left[\frac{s}{2} \right]}(\Omega)\cap H^2(\mathbb{T}_{t};H^{\left[\frac{s}{2} \right]-1}(\Omega),\\
&\partial_t^2\overline{u}(t,x_1,x')\in  L^2(\mathbb{T}_{t};H^{\left[\frac{s}{2} \right]-1}(\Omega)\cap  H^2(\mathbb{T}_{t};H^{\left[\frac{s}{2} \right]-2}(\Omega).
\end{align*}
This implies 
\begin{align*}
\overline{u}(t,x_1,x')\in & C^1([0,+\infty[;H^{\left[\frac{s}{2} \right]-1}(\Omega),\\
\partial_t^2\overline{u}(t,x_1,x')\in & C([0,+\infty[;H^{\left[\frac{s}{2} \right]-2}(\Omega).
\end{align*}
With the chosen $s$, these regularities of  $\overline{u}(t,x_1,x')$ give us the regularity (\ref{regubarkuzkzk}) and allow to have all left-hand terms in the approximated Kuznetsov system~(\ref{aproxkuzkzkrem}) of the desired regularity, $i.e$ in $C([0,+\infty[;L^2(\Omega))$.
In addition for $\left[\frac{s}{2}\right]>\frac{n}{2}+2$ 
with the chosen $g$, $u_0=\overline{u}(0)$ and $u_1=\overline{u}_t(0)$ in the conditions of the theorem
 we have 
 $$u_0\in H^{\left[\frac{s}{2} \right]}(\Omega),\hbox{ }u_1\in H^{\left[\frac{s}{2} \right]-1}(\Omega) $$
 with
 $$g\in H^s(\mathbb{T}_t\times\mathbb{R}^{n-1}) \hbox{ and }\partial_t g \in H^s(\mathbb{T}_t\times\mathbb{R}^{n-1}),$$
 which implies
 $$g\in  H^{7/4}(]0,T[;H^{\left[\frac{s}{2} \right]-2}(\partial\Omega))\cap H^1(]0,T[;H^{\left[\frac{s}{2} \right]-2+3/2}(\partial\Omega))$$ 
 with $\left[\frac{s}{2} \right]-2 > \frac{n}{2}$
 as required by Theorem~\ref{ThMainWPnuPGlobHalf} to have the local well-posedness of $u$, the solution of the Kuznetznov equation associated to  system~(\ref{kuzhalfinitbound}). 
This completes the local well-posedness results and we deduce that $u$ have the desired regularity (\ref{regukuzkzk}) announced in the Theorem.
Moreover, we have $R^{Kuz-KZK}$ in $C([0,+\infty[,L^2(\Omega)).$

To validate the approximation we will only demonstrate the estimate in point $(3)$ as it directly implies the estimate in point $(2)$.
We take again $I_0\in H^s(\mathbb{T}_t\times\mathbb{R}^{n-1})$ with $\left[\frac{s}{2}\right]>\frac{n}{2}+2$ to define $\overline{u}$ and $g$ and consider $u$ to be a solution of the Dirichlet boundary-value problem~(\ref{kuzhalfinitbound}) for the Kuznetsov equation under the conditions $u_0\in H^{m+2}(\Omega)$, $u_1\in H^{m+1}(\Omega)$ with $m>\frac{n}{2}$ satisfying (\ref{EqSmallInD}).
Now we subtract the Kuznetsov equation from the approximated Kuznetsov equation (see system~(\ref{aproxkuzkzkrem})), multiply by $(u-\overline{u})_t$ and integrate over $\Omega$ to obtain as in Ref.~\cite{Perso} the following stability estimation:
\begin{align*}
\frac{1}{2}\frac{d}{dt}\Big(\int_{\Omega}A(t,x)\; (u-\overline{u})_t^2+ & c^2 (\nabla(u-\overline{u}))^2dx\Big) \\
\leq C \varepsilon &\sup(\Vert u_{tt}\Vert_{L^{\infty}(\Omega)};\Vert \Delta u\Vert_{L^{\infty}(\Omega)};\Vert \nabla \overline{u}_t\Vert_{L^{\infty}(\Omega)})\\
 & \cdot\left(\Vert (u -\overline{u})_t\Vert_{L^2(\Omega)}^2+ \Vert \nabla (u-\overline{u})\Vert_{L^2(\Omega)}^2\right)\\
 &+\varepsilon^2\int_{\Omega}R^{Kuz-KZK} (u-\overline{u})_tdx
\end{align*}
where $\frac{1}{2}\leq A(t,x)\leq \frac{3}{2}$ for $0\leq t\leq T$ and $x\in \Omega$. By regularity of the solutions $\sup(\Vert u_{tt}\Vert_{L^{\infty}(\Omega)};\Vert \Delta u\Vert_{L^{\infty}(\Omega)};\Vert \nabla \overline{u}_t\Vert_{L^{\infty}(\Omega)})$ is bounded in time on $[0,T]$. 
Moreover, we have $\Vert R^{Kuz-KZK}(t)\Vert_{L^2(\Omega)}$ bounded for $t\in[0,T]$ by the regularity of $\overline{u}$ where $R^{Kuz-KZK}$ is defined in~(\ref{remkuzkzk}). Then after integration on $[0,t]$, we can write 
\begin{align*}
\Vert (u -\overline{u})_t(t)\Vert_{L^2(\Omega)}^2+ &\Vert \nabla (u-\overline{u})(t)\Vert_{L^2(\Omega)}^2\\
\leq & 3( \Vert (u -\overline{u})_t(0)\Vert_{L^2(\Omega)}^2+ \Vert \nabla (u-\overline{u})(0)\Vert_{L^2(\Omega)}^2)\\
& C_1 \varepsilon \int_0^t \Vert (u -\overline{u})_t(s)\Vert_{L^2(\Omega)}^2+ \Vert \nabla (u-\overline{u})(s)\Vert_{L^2(\Omega)}^2 ds\\
&+C_2 \varepsilon^2 \int_0^t\sqrt{\Vert (u -\overline{u})_t(s)\Vert_{L^2(\Omega)}^2+ \Vert \nabla (u-\overline{u})(s)\Vert_{L^2(\Omega)}^2 }ds.
\end{align*}
As $\Vert (u -\overline{u})_t(0)\Vert_{L^2(\Omega)}^2+ \Vert \nabla (u-\overline{u})(0)\Vert_{L^2(\Omega)}^2\leq \delta^2 \leq \varepsilon^2$,  we finally find by the Gronwall Lemma  
$$\sqrt{\Vert (u -\overline{u})_t(t)\Vert_{L^2(\Omega)}^2+ \Vert \nabla (u-\overline{u})(t)\Vert_{L^2(\Omega)}^2 }\leq C_1(\eps^2t+\delta^2)e^{C_2\eps t}\le K\eps$$
for $t\leq \frac{C}{\varepsilon}$
what allows us to conclude.
\end{proof}
For the inviscid media we use~(\ref{validaproxintro}) on the cone $C(t)$ defined in Theorem~\ref{ThAproEulKZK} 
instead of $\mathbb{R}^n$ when we compare the Euler system and the inviscid Kuznetsov equation. Therefore the triangular inequality permits us  to validate the approximation between the Kuznetsov and KZK equations in the inviscid case as their respective approximations with the Euler system are validated by~(\ref{validaproxintro}) in the cone.

\section{Approximation of the solutions of the Kuznetsov equation with the solutions of the NPE equation.}\label{secKuzNPE}
Now let us go back to the NPE equation introduced in Section~\ref{secNSNPE} and consider its \textit{ansatz}~(\ref{vNPE})--(\ref{P2NPE}). As previously we start with the viscous case $\nu>0$.

Then we can rewrite the Kuznetsov equation
\begin{align*}
&\partial_t^2 u-c^2\Delta u- \varepsilon \partial_t\left((\nabla u)^2+\frac{\gamma-1}{2c^2}(\partial_t u)^2+\frac{\nu}{\rho_0}\Delta u\right)\\
&=\varepsilon \left(-2c \partial^2_{\tau z} \Psi-c^2\Delta_y\Psi+\frac{\nu}{\rho_0} c\partial^3_z \Psi+\frac{\gamma+1}{2}c \partial_z(\partial_z \Psi)^2\right)+\varepsilon^2 R^{Kuz-NPE}
\end{align*}
with
\begin{align}
\varepsilon^2 R^{Kuz-NPE}=&\varepsilon^2 \big( \partial^2_{\tau}\Psi-\frac{\nu}{\rho_0} \partial^2_z\partial_{\tau}\Psi+\frac{\nu}{\rho_0}c\Delta_y\partial_z\Psi-(\gamma-1)\partial_{\tau}\Psi \;\partial^2_z\Psi\label{approxeqKuzNPE}\\
&\;\;\;\;\;-2(\gamma-1)\partial_z\Psi\; \partial^2_{\tau z}\Psi-2 \partial_z\Psi \;\partial^2_{\tau z}\Psi+2c \nabla_y\Psi\;\nabla_y\partial_z\Psi \big)\nonumber\\
&+\varepsilon^3 \big( -\frac{\nu}{\rho_0}\Delta_y \partial_{\tau}\Psi+2\frac{\gamma-1}{c}\partial_{\tau}\Psi\;\partial^2_{\tau z}\Psi+\frac{\gamma-1}{c}\partial_z \Psi\; \partial^2_{\tau}\Psi\nonumber\\
&\;\;\;\;\;\;\;\;\;\; -2\nabla_y\Psi\;\nabla_y\partial_{\tau}\Psi\big)
+\varepsilon^4 (-\frac{\gamma-1}{c^2}\partial_{\tau}\Psi \partial^2_{\tau}\Psi).\nonumber
\end{align}
We obtain the NPE equation satisfied by $\partial_z\Psi$ modulo a multiplicative constant:
$$
\partial^2_{\tau z}\Psi-\frac{\gamma+1}{4}\partial_z(\partial_z \Psi)^2-\frac{\nu}{2\rho_0}\partial^3_z \Psi+\frac{c}{2}\Delta_y \Psi=0.
$$
In the sequel we will work with $\xi$ defined by~(\ref{P1NPE}) which satisfies the Cauchy problem~(\ref{npecau}) for the NPE equation.
This time in relation with the KZK equation we used the bijection~(\ref{bijKZKNPE}). We also update our notation for 
$\Omega_1=\mathbb{T}_z\times \mathbb{R}^{n-1}_y$  and $s>\frac{n}{2}+1$. Suppose that $\xi_0\in H^{s+2}(\mathbb{T}_z\times \mathbb{R}^{n-1}_y)$ and $\int_{\mathbb{T}_z}\xi_0(z,y)\;dz=0$. 
Then there is a constant $r>0$ such that if $\Vert \xi_0\Vert_{H^{s+2}(\mathbb{T}_z\times \mathbb{R}^{n-1}_y)}<r$, then, by Theorem~\ref{wpglopkzknpe}, there is a unique solution 
$\xi\in C([0,\infty[;H^{s+2}(\mathbb{T}_z\times \mathbb{R}^{n-1}_y))$ of
 the NPE Cauchy problem~(\ref{npecau})
satisfying
$$\int_{\mathbb{T}_z} \xi(\tau,z,y) \;dz=0 \;\;\hbox{ for any }\;\tau\geq 0,\; y\in\mathbb{R}^{n-1}.$$
We define $\partial_{x_1} \overline{u}(t,x_1,x'):=-\frac{c}{\rho_0}\xi(\tau,z,y)$ with the change of variable~(\ref{chvarnpe}) and 
$$\overline{u}(t,x_1,x')=-\frac{c}{\rho_0} \partial_z^{-1} \xi(\tau,z,y)=\left(-\frac{c}{\rho_0}\right)\left(\int_0^z \xi(\tau,s,y)ds+\int_0^L \frac{s}{L} \xi(\tau,s,y)ds\right).$$
We notice $u_1(x_1,x'):= \partial_t\overline{u}(0,x_1,x') $ and $u_0(x_1,x'):=-\frac{c}{\rho_0}\partial_z^{-1}\xi_0(z,y)$ and consequently we have  $u_0\in H^{s+2}(\mathbb{T}_{x_1}\times \mathbb{R}^{n-1}_{x'})$, $u_1\in H^{s}(\mathbb{T}_{x_1}\times \mathbb{R}^{n-1}_{x'})$.
Thus for these initial data there exists
$$\overline{u}\in C([0,\infty[;H^{s+1}(\mathbb{T}_{x_1}\times \mathbb{R}^{n-1}_{x'}))\cap C^1([0,\infty[;H^{s}(\mathbb{T}_{x_1}\times \mathbb{R}^{n-1}_{x'}))$$ 
the unique solution on $\mathbb{T}_{x_1}\times \mathbb{R}^{n-1}_{x'}$ of the approximated Kuznetsov system
   \begin{equation}\label{CauchyaproxKuzNPE}
\left\lbrace
\begin{array}{c}
\overline{u}_{tt}-c^2 \Delta \overline{u}-\nu \varepsilon \Delta \overline{u}_t-\alpha \varepsilon \overline{u}_t \overline{u}_{tt}-\beta \varepsilon \nabla \overline{u} \nabla \overline{u}_t=\varepsilon^2 R^{Kuz-NPE},\\
\overline{u}(0)=u_0\in H^{s+2}(\mathbb{T}_{x_1}\times \mathbb{R}^{n-1}_{x'}),\;\;\;\overline{u}_t(0)=u_1\in H^{s+1}(\mathbb{T}_{x_1}\times \mathbb{R}^{n-1}_{x'})
\end{array}\right.
\end{equation}
with $R^{Kuz-NPE}$ defined in~(\ref{approxeqKuzNPE}).
If we consider the Cauchy problem~(\ref{CauProbKuz}) for the Kuznetsov equation on $\mathbb{T}_{x_1}\times \mathbb{R}^{n-1}_{x'}$ with $u_0$ and $u_1$ derived from $\xi_0$ we have 
$$\Vert u_0\Vert_{H^{s+2}(\mathbb{T}_{x_1}\times \mathbb{R}^{n-1}_{x'})}+\Vert u_1\Vert_{H^{s}(\mathbb{T}_{x_1}\times \mathbb{R}^{n-1}_{x'})}\leq C \Vert \xi_0\Vert_{H^{s+2}(\mathbb{T}_z\times \mathbb{R}^{n-1}_y)}.$$
Hence, if $\Vert \xi_0\Vert_{H^{s+2}(\mathbb{T}_z\times \mathbb{R}^{n-1}_y)}$ small enough\cite{Perso}, we have a unique solution 
$$u\in C([0,\infty[;H^{s+1}(\Omega))\cap C^1([0,\infty[;H^{s}(\Omega))$$ bounded in time of the Kuznetsov equation. 
\begin{theorem}\label{approxKuzNPE}
For the defined above solutions $u$ of the exact Cauchy problem~(\ref{CauProbKuz}) and $\overline{u}$ of the approximated Cauchy problem~(\ref{CauchyaproxKuzNPE}) for the Kuznetsov equation on $\Omega= \mathbb{T}_{x_1}\times \mathbb{R}^{n-1}_{x'}$. Then 
there exist $K>0$, $C>0$, $C_1>0$ and $C_2>0$  such that for all 
$t<\frac{C}{\varepsilon} $ 
we have estimate~(\ref{estimKuzKZKex}) and in addition Point~3 of Theorem~\ref{approxKuzKZKbis}.
\end{theorem}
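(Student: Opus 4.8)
The plan is to mimic exactly the structure of the proof of Theorem~\ref{approxKuzKZKbis}, point~(2)--(3), since the situation here is identical in form: we have an exact Kuznetsov Cauchy problem~(\ref{CauProbKuz}) and an approximated Kuznetsov Cauchy problem~(\ref{CauchyaproxKuzNPE}) differing only by the remainder term $\eps^2 R^{Kuz-NPE}$ on the right-hand side, both posed on the same domain $\Omega=\mathbb{T}_{x_1}\times\mathbb{R}^{n-1}_{x'}$, and both globally well-posed and bounded in time for $\xi_0$ small enough in $H^{s+2}$ with $s>\frac n2+1$ (actually one needs a bit more regularity, $s$ large enough that $R^{Kuz-NPE}$ lands in $C([0,\infty[;L^2(\Omega))$ and that the $L^\infty$-norms of $u_{tt},\Delta u,\nabla\overline u_t$ are finite, which is already guaranteed by the stated regularity classes for $u$ and $\overline u$).

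First I would subtract the Kuznetsov equation from the approximated Kuznetsov equation~(\ref{CauchyaproxKuzNPE}), set $w=u-\overline u$, multiply by $w_t$ and integrate over $\Omega$. Since $\Omega$ has no boundary (it is $\mathbb{T}_{x_1}\times\mathbb{R}^{n-1}$ with the zero-mean condition along $x_1$), all the boundary terms that appeared in Theorem~\ref{approxKuzKZKbis} are absent and the integration by parts is clean. As in Ref.~\cite{Perso} one obtains a weighted energy identity of the form
\begin{align*}
\frac12\frac{d}{dt}\Big(\int_\Omega A(t,x)\,w_t^2+c^2(\nabla w)^2\,dx\Big)
\leq\; & C\eps\,\sup\big(\|u_{tt}\|_{L^\infty(\Omega)};\|\Delta u\|_{L^\infty(\Omega)};\|\nabla\overline u_t\|_{L^\infty(\Omega)}\big)\\
&\cdot\big(\|w_t\|_{L^2(\Omega)}^2+\|\nabla w\|_{L^2(\Omega)}^2\big)
+\eps^2\int_\Omega R^{Kuz-NPE}\,w_t\,dx,
\end{align*}
with $\frac12\le A(t,x)\le\frac32$ on $[0,T]$, the weight $A$ coming from the coefficient $1-\alpha\eps u_t$ in front of $u_{tt}$ in the Kuznetsov equation, which stays close to $1$ by the $L^\infty$-smallness of $\eps u_t$. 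Then I would bound the supremum of the $L^\infty$-norms uniformly in time using the stated regularity of $u$ and $\overline u$ (Sobolev embedding $H^{s+1}\hookrightarrow W^{1,\infty}$, $H^{s}\hookrightarrow L^\infty$ for $s>\frac n2$, and $\overline u\in C^1([0,\infty[;H^s)$ giving $\nabla\overline u_t$ bounded), and bound $\|R^{Kuz-NPE}(t)\|_{L^2(\Omega)}$ uniformly in time by the regularity of $\overline u$, hence of $\Psi=-\frac{c}{\rho_0}\partial_z^{-1}\xi$, via Theorem~\ref{ThWPNPE} and Theorem~\ref{wpglopkzknpe} — this is where the minimal regularity requirement on $\xi_0$ is pinned down, exactly as in the remark after Theorem~\ref{ThapproxNSNPE}.

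After integrating on $[0,t]$ and using the Cauchy--Schwarz bound $\eps^2\int_\Omega R^{Kuz-NPE}w_t\,dx\le C\eps^2\sqrt{\|w_t\|_{L^2}^2+\|\nabla w\|_{L^2}^2}$, together with the initial condition $\|w_t(0)\|_{L^2}^2+\|\nabla w(0)\|_{L^2}^2\le\delta^2\le\eps^2$ (which equals $0$ in the setting of point~(2), where $u(0)=\overline u(0)$ and $u_t(0)=\overline u_t(0)$), one arrives at an integral inequality for $\phi(t):=\|w_t(t)\|_{L^2(\Omega)}^2+\|\nabla w(t)\|_{L^2(\Omega)}^2$ of the type $\phi(t)\le 3\delta^2+C_1\eps\int_0^t\phi(s)\,ds+C_2\eps^2\int_0^t\sqrt{\phi(s)}\,ds$. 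Applying Gronwall's Lemma (in the form already used in Theorem~\ref{approxKuzKZKbis}, exploiting that $\eps^2\sqrt v\le K\eps v$ on $\mathbb{R}^+$ for the comparison Cauchy problem, and that $\frac12\frac{d}{dt}(\sqrt\phi)^2$ absorbs the square-root term) yields
$$\sqrt{\phi(t)}=\sqrt{\|(u-\overline u)_t(t)\|_{L^2(\Omega)}^2+\|\nabla(u-\overline u)(t)\|_{L^2(\Omega)}^2}\le C_1(\eps^2 t+\delta^2)e^{C_2\eps t}\le K\eps$$
for all $t\le\frac C\eps$, which is both estimate~(\ref{estimKuzKZKex}) (with $\delta=0$) and the statement of Point~3 of Theorem~\ref{approxKuzKZKbis}. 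The main obstacle — and really the only nontrivial bookkeeping — is to verify that $R^{Kuz-NPE}$ given by~(\ref{approxeqKuzNPE}) is indeed in $C([0,\infty[;L^2(\Omega))$ uniformly bounded in time: this requires tracking which derivatives of $\Psi$ (equivalently, of $\partial_z^{-1}\xi$) appear — the worst terms being $\partial_z^2\partial_\tau\Psi$, $\Delta_y\partial_z\Psi$ and the quadratic terms $\partial_z\Psi\,\partial^2_{\tau z}\Psi$ — and checking, via the Sobolev multiplication algebra $H^s\times H^s\hookrightarrow H^s$ for $s>\frac n2$ and the regularity of $\xi$ from Theorem~\ref{wpglopkzknpe}/Theorem~\ref{ThWPNPE} plus the Poincaré bound $\|\partial_z^{-1}\xi\|_{H^s}\le C\|\xi\|_{H^s}$, that these land in $L^2$; this fixes the needed $s$ and is entirely analogous to the argument already carried out for $R^{Kuz-KZK}$. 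Everything else is a verbatim transcription of the energy method of Theorem~\ref{approxKuzKZKbis}, so the theorem follows and its proof can safely be omitted or reduced to these remarks.
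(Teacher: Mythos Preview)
Your proposal is correct and follows essentially the same approach as the paper, which simply states that the global existence of $u$ and $\overline{u}$ has already been shown and that the approximation estimate follows exactly as in Theorem~\ref{approxKuzKZKbis}, hence the proof is omitted. Your write-up supplies the details the paper skips (the energy identity, the Gronwall argument, and the bookkeeping on $R^{Kuz-NPE}$), but the method is identical.
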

\begin{proof}
The global existence of $u$ and $\overline{u}$ has already been shown. 
The proof of the approximation estimate follows exactly as in Theorem~\ref{approxKuzKZKbis} and is thus omitted.
\end{proof}
\begin{remark}
The case $\nu=0$ implies the same approximation result except that $u$ and $\overline{u}$ are only locally well posed on an interval $[0,T]$.
\end{remark}
\begin{remark}
We can see see for $n=2$ or $3$, using the previous arguments that the minimum regularity of the initial data (see Table~\ref{TABLE2}) to have the remainder terms 
$$
R^{Kuz-NPE}\in  C([0,+\infty[;L^2(\mathbb{T}_{x_1}\times \mathbb{R}^{n-1}))
$$
corresponds to
$\xi_0\in H^s(\mathbb{T}_{x_1}\times \mathbb{R}^{n-1})$ with $s \geq 4$ since then for $0\leq k \leq 2$
$$\xi(\tau,z,y)\in C^k([0,+\infty[\rbrace;H^{s-2k}(\mathbb{T}_{z}\times \mathbb{R}^{n-2})),$$
which finally implies with formula $\overline{u}=-\frac{c}{\rho_0}\partial_z^{-1}\xi $ that with $\Omega=\mathbb{T}_{x_1}\times \mathbb{R}^{n-1}$
\begin{align*}
&\overline{u}(t,x_1,x')\in C([0,+\infty[;H^4(\Omega)),\;
\partial_t\overline{u}(t,x_1,x')\in C([0,+\infty[;H^2(\Omega)),\\
&\partial_t^2\overline{u}(t,x_1,x')\in C([0,+\infty[;L^2(\Omega)).
\end{align*}
In the same way for $n\geq 4$ we can take $\xi_0\in H^s(\Omega)$ with $s >\frac{n}{2}+2$ for the minimal regulatity as it implies
\begin{align*}
&\overline{u}(t,x_1,x')\in C([0,+\infty[;H^{s}(\Omega)),\;
\partial_t\overline{u}(t,x_1,x')\in C([0,+\infty[;H^{s-2}(\Omega)),\\
&\partial_t^2\overline{u}(t,x_1,x')\in C([0,+\infty[;H^{s-4}(\Omega)).
\end{align*}
\end{remark}
\section{Kuznetsov equation and the Westervelt equation}\label{SecWest}
\subsection{Derivation of the Westervelt equation from the Kuznetsov equation.}
We consider the Kuznetsov equation~(\ref{KuzEqA}).
Similarly as in Ref.~\cite{Aanonsen} we set
\begin{equation}\label{ansatzKuzWes}
\Pi=u+\frac{1}{2 c^2}\varepsilon \partial_t[u^2]
\end{equation}
and obtain
$$\partial_t^2\Pi-c^2\Delta \Pi=\varepsilon \partial_t\left(\Delta u+\frac{\gamma+1}{2c^2} (\partial_t u)^2+\frac{1}{c^2}u(\partial_t^2-c^2\Delta u)\right).$$
By Definition~(\ref{ansatzKuzWes}) of $\Pi$ we have
$$
\partial_t^2\Pi-c^2\Delta \Pi=\varepsilon \partial_t\left(\Delta \Pi+\frac{\gamma+1}{2c^2} (\partial_t \Pi)^2\right)+\varepsilon^2 R^{Kuz-Wes},
$$
where
\begin{align}
\varepsilon^2 R^{Kuz-Wes}=&\varepsilon^2 \partial_t\left[ -\frac{1}{2c^2}\Delta(u \partial_t u)-\frac{\gamma+1}{2c^4}\partial_t u\partial^2_t(u^2)\right.\nonumber\\
&\left.\;\;\;\; +\frac{1}{c^2}u\partial_t\left( (\nabla u)^2+\frac{\gamma-1}{2c^2}(\partial_t u)^2+\frac{\nu}{\rho_0}\Delta u\right)\right]\nonumber\\
&+\varepsilon^3 \partial_t \left[-\frac{\gamma+1}{8c^6}[\partial^2_t(u^2)]^2\right].\label{resWes}
\end{align}
We recognize the Westervelt equation
\begin{equation}\label{West}
 \partial_t^2\Pi-c^2\Delta \Pi=\varepsilon \partial_t\left(\Delta \Pi+\frac{\gamma+1}{2c^2} (\partial_t \Pi)^2\right).
\end{equation}

\subsection{Approximation of the solutions of the Kuznetsov equation by the solutions of the Westervelt equation}
For the well-posedness of the Westervelt equation we refer to our work\cite{Perso} on the Kuznetsov equation where our results can be directly applied.
For $u$ solution of the Cauchy problem~(\ref{CauProbKuz}) for the Kuznetsov equation we set
$$\overline{\Pi}=u+\frac{1}{2 c^2}\varepsilon \partial_t[u^2],$$
and we have $\overline{\Pi}$ solution of the Cauchy problem
\begin{equation}\label{CauPbWesRem}
\left\lbrace
\begin{array}{l}
\partial_t^2\overline{\Pi}-c^2\Delta \overline{\Pi}=\varepsilon \partial_t\left(\Delta \overline{\Pi}+\frac{\gamma+1}{2c^2} (\partial_t \overline{\Pi})^2\right)+\varepsilon^2 R^{Kuz-Wes},\\
\overline{\Pi}(0)=\Pi_0\hbox{, }\partial_t\overline{\Pi}(0)=\Pi_1
\end{array}
\right.
\end{equation}
with $R^{Kuz-Wes}$ defined by~(\ref{resWes}) and in accordance with the definition of $\overline{\Pi}$
\begin{align}
\Pi_0=&u_0+\frac{1}{c^2}\varepsilon u_0 u_1,\label{pi0}\\
\Pi_1=&u_1+\frac{1}{c^2}\varepsilon u_1^2+ \frac{1}{c^2}\varepsilon u_0 \partial_t^2u(0)\label{pi1}\\
=&u_1+\frac{1}{c^2}\varepsilon u_1^2+ \frac{1}{c^2}\varepsilon u_0\frac{1}{1-\frac{\gamma-1}{c^2}\varepsilon u_1}\left(c^2\Delta u_0 +\frac{\nu}{\rho_0}\varepsilon \Delta u_1+2\varepsilon \nabla u_0 \nabla u_1\right)\nonumber
\end{align}
with $u_0$ and $u_1$ initial data of the the Cauchy problem~(\ref{CauProbKuz}) for the Kuznetsov equation.

For $s>\frac{n}{2}$, if we take $u_0\in H^{s+4}(\mathbb{R}^n)$ and $u_1\in H^{s+3}(\mathbb{R}^3)$, we have $\Pi_0\in H^{s+3}(\mathbb{R}^n)\subset H^{s+2}(\mathbb{R}^n)$ and $\Pi_1 \in H^{s+1}(\mathbb{R}^n)$ with
$$\Vert \Pi_0\Vert_{H^{s+2}(\mathbb{R}^n)}+\Vert \Pi_1\Vert_{H^{s+1}(\mathbb{R}^n)}\leq C (\Vert u_0\Vert_{H^{s+4}(\mathbb{R}^n)}+\Vert u_1\Vert_{H^{s+3}(\mathbb{R}^n)} ),$$
so similarly to our previous work \cite{Perso} we obtain
\begin{theorem}\label{ThWPWesR3}
Let $n\geq 1$, $s>\frac{n}{2}$, $u_0\in H^{s+4}(\mathbb{R}^n)$ and $u_1\in H^{s+3}(\mathbb{R}^n)$. Then there exists a constant $k_2>0$ such that if 
\begin{equation}\label{smallcondWPwes}
\Vert u_0\Vert_{H^{s+4}(\mathbb{R}^n)}+\Vert u_1\Vert_{H^{s+3}(\mathbb{R}^n)} <k_3,
\end{equation}
then the Cauchy problem for the Westervelt equation 
\begin{equation}\label{CauPbWes}
\left\lbrace
\begin{array}{l}
\partial_t^2\Pi-c^2\Delta \Pi=\varepsilon \partial_t\left(\Delta \Pi+\frac{\gamma+1}{2c^2} (\partial_t \Pi)^2\right),\\
\overline{\Pi}(0)=\Pi_0\hbox{, }\partial_t\overline{\Pi}(0)=\Pi_1
\end{array}
\right.
\end{equation}
with $\Pi_0$ and $\Pi_1$ defined by Eqs.~(\ref{pi0}) and~(\ref{pi1}),
 has a unique global in time solution 
\begin{align}
\Pi  & \in H^2([0,+\infty[,H^s(\mathbb{R}^n))\cap H^1([0,+\infty[,H^{s+2}(\mathbb{R}^n))\label{RegWes31}\\
\hbox{ and } &\hbox{if } s\geq 1  \nonumber\\
\Pi  &\in C([0,+\infty[,H^{s+2}(\mathbb{R}^n))\cap C^1([0,+\infty[,H^{s+1}(\mathbb{R}^n))\cap C^2([0,+\infty[,H^{s-1}(\mathbb{R}^n))\label{RegWes32}
\end{align}
Moreover we have $\overline{\Pi}$ global in time solution of the approximated Cauchy problem~(\ref{CauPbWesRem}) with the same regularity.
\end{theorem}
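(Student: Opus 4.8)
The plan is to exploit the fact that the Westervelt equation~(\ref{West}) is, structurally, a damped quasilinear wave equation of exactly the same type as the Kuznetsov equation~(\ref{KuzEqA}). Writing $\varepsilon\frac{\gamma+1}{2c^2}\partial_t(\partial_t\Pi)^2=\varepsilon\frac{\gamma+1}{c^2}\partial_t\Pi\,\partial_t^2\Pi$, Eq.~(\ref{West}) reads
\begin{equation*}
\left(1-\varepsilon\tfrac{\gamma+1}{c^2}\partial_t\Pi\right)\partial_t^2\Pi-c^2\Delta\Pi-\varepsilon\partial_t\Delta\Pi=0,
\end{equation*}
which is the Kuznetsov equation with the $(\nabla u)^2$ nonlinearity switched off and a rescaled viscosity (here the coefficient of $\partial_t\Delta\Pi$ is a fixed positive constant, so we are in the viscous regime). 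Hence the global well-posedness theorem of Ref.~\cite{Perso} (Theorem~1.2, taken with parameter $m=s$) applies verbatim: there is a threshold below which, whenever $\|\Pi_0\|_{H^{s+2}(\mathbb{R}^n)}+\|\Pi_1\|_{H^{s+1}(\mathbb{R}^n)}$ stays under it (in particular small enough that the coefficient $1-\varepsilon\frac{\gamma+1}{c^2}\partial_t\Pi$ remains uniformly positive, cf. the hyperbolicity condition~(\ref{conshyper})), problem~(\ref{CauPbWes}) has a unique global solution with regularity~(\ref{RegWes31}) and, for $s\ge1$, the bootstrapped regularity~(\ref{RegWes32}).

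The first step is then to check that the prescribed data $\Pi_0,\Pi_1$ of~(\ref{pi0})--(\ref{pi1}) meet this hypothesis. Since $u_0\in H^{s+4}$, $u_1\in H^{s+3}$ and $s+3>\frac n2$, the multiplication embedding~(\ref{Sobolalg}) gives $u_0u_1,\,u_1^2\in H^{s+3}$, so $\Pi_0\in H^{s+3}\subset H^{s+2}$; for $\Pi_1$ one uses in addition that $\partial_t^2u(0)=\frac{1}{1-\frac{\gamma-1}{c^2}\varepsilon u_1}\big(c^2\Delta u_0+\frac{\nu}{\rho_0}\varepsilon\Delta u_1+2\varepsilon\nabla u_0\cdot\nabla u_1\big)$, whose roughest term $\Delta u_0$ lies in $H^{s+2}$, while the factor $(1-\frac{\gamma-1}{c^2}\varepsilon u_1)^{-1}$ belongs to $H^{s+3}$ once $\|u_1\|_{L^\infty}$ is small (composition with a function smooth near $0$, plus the algebra property); hence $u_0\,\partial_t^2u(0)\in H^{s+1}$ and $\Pi_1\in H^{s+1}$. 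The same estimates yield
\begin{equation*}
\|\Pi_0\|_{H^{s+2}(\mathbb{R}^n)}+\|\Pi_1\|_{H^{s+1}(\mathbb{R}^n)}\le C\big(\|u_0\|_{H^{s+4}(\mathbb{R}^n)}+\|u_1\|_{H^{s+3}(\mathbb{R}^n)}\big),
\end{equation*}
so that the smallness assumption~(\ref{smallcondWPwes}) on $(u_0,u_1)$ forces $(\Pi_0,\Pi_1)$ into the range where the previous paragraph applies, which proves the first part.

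For the statement about $\overline{\Pi}$ I would not rerun a fixed-point argument for the forced equation but use that $\overline{\Pi}=u+\frac{\varepsilon}{2c^2}\partial_t[u^2]=u+\frac{\varepsilon}{c^2}u\,\partial_t u$ is built from the global solution $u$ of the Kuznetsov Cauchy problem~(\ref{CauProbKuz}) with data $u_0\in H^{s+4}$, $u_1\in H^{s+3}$ small, which exists globally by Ref.~\cite{Perso} with $u\in C([0,\infty[,H^{s+4}(\mathbb{R}^n))\cap C^1([0,\infty[,H^{s+3}(\mathbb{R}^n))\cap C^2([0,\infty[,H^{s+1}(\mathbb{R}^n))$; the derivation of the Westervelt equation carried out just above shows precisely that this $\overline{\Pi}$ solves~(\ref{CauPbWesRem}) with the forcing $\varepsilon^2R^{Kuz-Wes}$ given by~(\ref{resWes}). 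The regularity~(\ref{RegWes31})--(\ref{RegWes32}) of $\overline{\Pi}$ is then inherited term by term from that of $u$ via~(\ref{Sobolalg}) (the product $u\,\partial_t u$ costs at most one time derivative and none in space), and one checks along the way that every term of $R^{Kuz-Wes}$ — the roughest being $\Delta(u\,\partial_t u)$ and $\partial_t u\,\partial_t^2(u^2)$ — belongs to $C([0,\infty[,H^{s-1}(\mathbb{R}^n))\subset C([0,\infty[,L^2(\mathbb{R}^n))$ for $s\ge1$. The only genuinely delicate point, and the one I expect to be the main obstacle, is this derivative count: one must verify that the four extra spatial derivatives on $u_0$ and three on $u_1$ exactly absorb the two time derivatives that the ansatz~(\ref{ansatzKuzWes}) trades for spatial ones through the equation, while simultaneously keeping $\|u_1\|_{L^\infty}$ (equivalently the coefficient of $\partial_t^2\Pi$) small enough for the quasilinear machinery of Ref.~\cite{Perso} to hold uniformly in time; everything else is a routine transcription of that reference.
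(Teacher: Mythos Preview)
Your proposal is correct and follows essentially the same approach as the paper: recognize that the Westervelt equation is structurally a Kuznetsov-type damped quasilinear wave equation (with the $(\nabla u)^2$ term absent), apply the global well-posedness result of Ref.~\cite{Perso} once the data $(\Pi_0,\Pi_1)$ are shown to lie in $H^{s+2}\times H^{s+1}$ with norm controlled by $\|u_0\|_{H^{s+4}}+\|u_1\|_{H^{s+3}}$, and obtain $\overline{\Pi}$ directly from the Kuznetsov solution $u$ via the ansatz~(\ref{ansatzKuzWes}). The paper compresses all of this into the sentence ``similarly to our previous work~\cite{Perso} we obtain'' together with the regularity bookkeeping for $\Pi_0,\Pi_1$ displayed just before the theorem; your write-up is in fact more explicit about the derivative count and the construction of $\overline{\Pi}$, but the route is the same.
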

For $\Pi$ solution of the Caucchy problem (\ref{CauPbWes}) we set $\overline{u}$ such that $\Pi=\overline{u}+\frac{\varepsilon}{c^2}\overline{u}\partial_t\overline{u}$ and we obtain
\begin{align*}
\partial_t^2 \overline{u}-c^2 \Delta \overline{u}-\varepsilon\frac{\nu}{\rho_0}\Delta &\partial_t\overline{u}-\varepsilon \frac{\gamma -1}{c^2}\partial_t  \overline{u} \partial^2_t\overline{u}-2  \varepsilon \nabla \overline{u}.\nabla\partial_t  \overline{u}\\
+&\varepsilon\left(\frac{1}{c^2}\partial_t  \overline{u} \partial^2_t\overline{u}-\partial_t  \overline{u}\Delta \overline{u} +\frac{1}{c^2} \overline{u} \partial_t^3 \overline{u} -\varepsilon \overline{u}\Delta\partial_t  \overline{u}\right)=\varepsilon^2 R^{Wes-Kuz}_1 
\end{align*}
with 
\begin{align*}
R^{Wes-Kuz}_1=\left[ \frac{\nu}{\rho_0 c^2}\right.& (2\partial_t  \overline{u} \Delta\partial_t \overline{u}+2(\nabla \partial_t\overline{u})^2+\partial_t^2 \overline{u}\Delta \overline{u}+\overline{u}\Delta\partial^2_t+2\nabla \overline{u}.\nabla \partial^2_t \overline{u})\\
 &\left.+\frac{\gamma +1}{c^4}((\partial_t\overline{u})^2+\overline{u}\partial^2_t \overline{u})\partial^2_t\overline{u}+\frac{\gamma +1}{c^4} (3\partial_t \overline{u} \partial^2_t \overline{u}+\overline{u}\partial^3_t \overline{u})\partial_t \overline{u}\right]\\
 +\varepsilon \frac{\gamma +1}{c^6}& ((\partial_t\overline{u})^2+\overline{u}\partial^2_t \overline{u})(3\partial_t \overline{u} \partial^2_t \overline{u}+\overline{u}\partial^3_t \overline{u}).
\end{align*}
And as $$\partial_t^2 \overline{u}-c^2 \Delta \overline{u}=O(\varepsilon) $$ if we inject this in the term $\left(\frac{1}{c^2}\partial_t  \overline{u} \partial^2_t\overline{u}-\partial_t  \overline{u}\Delta \overline{u} +\frac{1}{c^2} \overline{u} \partial_t^3 \overline{u} -\varepsilon \overline{u}\Delta\partial_t  \overline{u}\right)$
we have
\begin{equation}\label{eqWesKuzRem}
\partial_t^2 \overline{u}-c^2 \Delta \overline{u}-\varepsilon\frac{\nu}{\rho_0}\Delta \partial_t\overline{u}-\varepsilon \frac{\gamma -1}{c^2}\partial_t  \overline{u} \partial^2_t\overline{u}-2  \varepsilon \nabla \overline{u}.\nabla\partial_t  \overline{u} =\varepsilon^2 R^{Wes-Kuz}.
\end{equation}

Now we can write the following approximation result for the Westervelt equation
\begin{theorem}\label{ApproxKuzWes}
Let $\nu>0$, $n\geq 2$, $s>\frac{n}{2}$ with $s\geq 1$, $\overline{u}_0\in H^{s+4}(\mathbb{R}^n)$ and $\overline{u}_1\in H^{s+3}(\mathbb{R}^n)$, there exists $k>0$ such that $\Vert \overline{u}_0\Vert_{H^{s+4}(\mathbb{R}^n)}+\Vert \overline{u}_1\Vert_{H^{s+3}(\mathbb{R}^n)}<k$ implies the global existence of $\Pi$ with the regularityies (\ref{RegWes31}) and (\ref{RegWes32}) which is the solution of the Cauchy problem~(\ref{CauPbWes}) with $\Pi_0$ and $\Pi_1$ defined by Eqs.~(\ref{pi0}) and~(\ref{pi1}). 
Moreover for $u_0\in H^{s+2}(\mathbb{R}^n)$ and $u_1\in H^{s+1}(\mathbb{R}^n)$ we have $u$ exact solution of the Cauchy problem~(\ref{CauProbKuz}) for the Kuznetsov equation. 
Let $\overline{u}$ such that 
$$\Pi=\overline{u}+\frac{\varepsilon}{c^2} \overline{u}\partial_t \overline{u},$$ 
as a consequence $\overline{u}$ is a solution of the approximated Kuznetsov equation~(\ref{eqWesKuzRem}) and 
 if $u$ and $\overline{u}$ satisfies~(\ref{EqSmallInD}) with $u(0)=u_0$, $\partial_t u(0)=u_1$, $\overline{u}(0)=\overline{u}_0$, $\partial_t\overline{u}(0)=\overline{u}_1$,  there exists $K>0$ and $C>0$ independent of $\varepsilon$ such that for all
$t\leq\frac{C}{\varepsilon}$
we have $C_1>0$ and $C_2>0$ with (\ref{estimKuzKZK}).
\end{theorem}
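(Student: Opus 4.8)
The plan is to follow the scheme already used for Theorems~\ref{approxKuzKZKbis} and~\ref{approxKuzNPE}: first secure the global existence of the exact and of the approximate solutions on $\mathbb{R}^n$ with enough regularity to make the remainder square integrable in space, uniformly in time, and then close a Gronwall-type energy estimate on the difference.

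\textbf{Well-posedness of the two problems.} Theorem~\ref{ThWPWesR3}, applied with the present $s$, already gives the global solution $\Pi$ of the Westervelt Cauchy problem~(\ref{CauPbWes}) and the global solution $\overline{\Pi}$ of the approximate problem~(\ref{CauPbWesRem}), both with the regularities~(\ref{RegWes31})--(\ref{RegWes32}), provided the smallness~(\ref{smallcondWPwes}) holds. The latter is guaranteed by $\Vert\overline{u}_0\Vert_{H^{s+4}}+\Vert\overline{u}_1\Vert_{H^{s+3}}<k$ for $k$ small enough, since $\Pi_0$ and $\Pi_1$ are controlled by $(\overline{u}_0,\overline{u}_1)$ through~(\ref{pi0})--(\ref{pi1}) and the Sobolev multiplication~(\ref{Sobolalg}). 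On the other hand, the exact Kuznetsov solution $u$ with $u_0\in H^{s+2}(\mathbb{R}^n)$, $u_1\in H^{s+1}(\mathbb{R}^n)$ exists globally by Ref.~\cite{Perso} for small data, with the regularity used in Subsection~\ref{secapNSKuz}.

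\textbf{Inversion of the substitution.} Given the solution $\Pi$ of~(\ref{CauPbWes}), the function $\overline{u}$ is defined by solving the fixed-point relation $\overline{u}=\Pi-\frac{\varepsilon}{2c^2}\partial_t(\overline{u}^{\,2})$ in the space in which $\Pi$ lives; for $\varepsilon$ small this map is a contraction by~(\ref{Sobolalg}), so $\overline{u}$ is well defined with the same regularity as $\Pi$. A direct differentiation, together with the identity $\partial_t^2\overline{u}-c^2\Delta\overline{u}=O(\varepsilon)$ used inside the lower-order cubic terms exactly as in the computation preceding~(\ref{eqWesKuzRem}), shows that $\overline{u}$ solves the approximate Kuznetsov equation~(\ref{eqWesKuzRem}) with $R^{Wes-Kuz}$ obtained from $R^{Wes-Kuz}_1$, and with $\overline{u}(0)=\overline{u}_0$, $\partial_t\overline{u}(0)=\overline{u}_1$ by construction (compatibly with~(\ref{pi0})--(\ref{pi1})). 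The regularity requirement $\overline{u}_0\in H^{s+4}$, $\overline{u}_1\in H^{s+3}$ is dictated precisely here: $R^{Wes-Kuz}$ contains $\partial_t^3\overline{u}$, $\Delta\partial_t^2\overline{u}$ and products thereof, so to have $R^{Wes-Kuz}\in C([0,+\infty[;L^2(\mathbb{R}^n))$ with a time-uniform bound one needs $\overline{u}$ three times $t$-differentiable into $H^{s_0}$ with $s_0>n/2$, which the chosen data provide via Theorem~\ref{ThWPWesR3} and the algebra property~(\ref{Sobolalg}).

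\textbf{Stability estimate and conclusion.} This is then literally the argument of Point~3 of Theorem~\ref{approxKuzKZKbis}: subtract~(\ref{eqWesKuzRem}) from the exact Kuznetsov equation in~(\ref{CauProbKuz}), multiply by $(u-\overline{u})_t$, integrate over $\mathbb{R}^n$, and treat the quadratic terms as in Ref.~\cite{Perso}, which gives
\begin{align*}
\frac12\frac{d}{dt}\Big(\int_{\mathbb{R}^n}A(t,x)(u-\overline{u})_t^2+c^2(\nabla(u-\overline{u}))^2\,\dx\Big)\le{}&C\varepsilon\,M(t)\big(\Vert(u-\overline{u})_t\Vert_{L^2}^2+\Vert\nabla(u-\overline{u})\Vert_{L^2}^2\big)\\
&+\varepsilon^2\int_{\mathbb{R}^n}R^{Wes-Kuz}\,(u-\overline{u})_t\,\dx,
\end{align*}
with $\frac12\le A\le\frac32$ and $M(t)=\sup(\Vert u_{tt}\Vert_{L^\infty};\Vert\Delta u\Vert_{L^\infty};\Vert\nabla\overline{u}_t\Vert_{L^\infty})$ bounded in time by the regularities above. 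Bounding the last integral by $\Vert R^{Wes-Kuz}\Vert_{L^2}\Vert(u-\overline{u})_t\Vert_{L^2}$, integrating on $[0,t]$, inserting the smallness~(\ref{EqSmallInD}) of the initial difference and applying Gronwall's lemma yields
$$\sqrt{\Vert(u-\overline{u})_t(t)\Vert_{L^2(\mathbb{R}^n)}^2+\Vert\nabla(u-\overline{u})(t)\Vert_{L^2(\mathbb{R}^n)}^2}\le C_1(\varepsilon^2t+\delta^2)e^{C_2\varepsilon t}\le K\varepsilon$$
for $t\le C/\varepsilon$, that is estimate~(\ref{estimKuzKZK}). The only genuinely delicate point is the second step: making the Cole--Hopf-type change of variables rigorous and, above all, tracking which Sobolev norms of $\overline{u}$ enter $R^{Wes-Kuz}$ so as to pin down the minimal data regularity guaranteeing a time-uniform $L^2$ bound on the remainder; the rest is a routine repetition of the energy method already carried out for the preceding models.
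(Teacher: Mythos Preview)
Your proposal is correct and follows essentially the same route as the paper: the paper's proof consists only of the two sentences ``The existence of $u$ and $\overline{u}$ has already been shown. The proof of the approximation estimate follows exactly the proof of Theorem~\ref{approxKuzKZKbis} and hence it is omitted.'' You have simply expanded on this, in particular supplying the fixed-point argument for recovering $\overline{u}$ from $\Pi$ (which the paper leaves implicit when it ``sets $\overline{u}$ such that $\Pi=\overline{u}+\tfrac{\varepsilon}{c^2}\overline{u}\partial_t\overline{u}$'') and spelling out the regularity bookkeeping for $R^{Wes-Kuz}$; the energy/Gronwall step is identical to the one in Theorem~\ref{approxKuzKZKbis}.
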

\begin{proof}
The existence of $u$ and $\overline{u}$ has already been shown. 
The proof of the approximation estimate follows exactly the proof of Theorem~\ref{approxKuzKZKbis} and hence it is omitted.
\end{proof}
\begin{remark}
For the minimal  regularity (see Table~\ref{TABLE2}) of $u_0$ and $u_1$ to ensure  that $R^{Kuz-Wes}$, see Eq.~(\ref{resWes}), is in $C([0,+\infty[;L^2(\mathbb{R}_+\times\mathbb{R}^{n-1}))$, if $u_0\in H^{s+2}(\mathbb{R}^3)$ and $u_1\in H^{s+1}(\mathbb{R}^3)$ for $s\geq 3$ then 
\begin{align*}
u\in & C([0,+\infty[;H^5(\mathbb{R}^3)),\;
\partial_t u\in  C([0,+\infty[;H^4(\mathbb{R}^3)),\\
\partial_t^2 u\in & C([0,+\infty[;H^2(\mathbb{R}^3)),\;
\partial_t^3 u\in   C([0,+\infty[;L^2(\mathbb{R}^3)).
\end{align*}
Taking $\overline{\Pi}$ as in (\ref{ansatzKuzWes})
we obtain
\begin{align*}
\overline{\Pi}\in  C([0,+\infty[;H^4(\mathbb{R}^3)),\;
\partial_t \overline{\Pi}\in C([0,+\infty[;H^2(\mathbb{R}^3)),\;
\partial_t^2 \overline{\Pi}\in  C([0,+\infty[;L^2(\mathbb{R}^3)).
\end{align*}
Injecting this result in the approximated Westervelt equation in system~(\ref{CauPbWesRem}) we obtain
$R^{Kuz-Wes}\in C([0,+\infty[;L^2(\mathbb{R}^3)).$
In the same way if $n\geq 4$ we take $u_0\in H^{s+2}(\mathbb{R}^n)$ and $u_1\in H^{s+1}(\mathbb{R}^n)$ with $s>\frac{n}{2}+1$.
\end{remark}
\section{Conclusion}
We summarize all obtained approximation results in two comparatif tables: Table~\ref{TABLE} for the approximations of the Navier-Stokes and Euler systems and Table~\ref{TABLE2} for the approximations of the Kuznetsov equation.
\renewcommand{\arraystretch}{2.0}

\begin{sidewaystable}[ph!]
\vspace{0cm}\centering \caption{Approximation results for models derived from Navier-Stokes and Euler systems}\label{TABLE}

\begin{tabular}{@{} c | c | c || c | c || c | c|}
    \cline{2-7}
    & \multicolumn{2}{| c ||}{ Kuznetsov }
    &\multicolumn{2}{| c ||}{  KZK}
    & \multicolumn{2}{| c |}{ NPE}
\\
     \cline{2-7}
     & Navier-Stokes & Euler
     & Navier-Stokes & Euler
     & Navier-Stokes & Euler
     \\
     \hline
\multicolumn{1}{| c |}{Theorem}
    &  Theorem~\ref{ThapproxNSKuz}
    &  Theorem~\ref{ThapproxEulKuz}
    &  Theorem~\ref{ThAprKZKNS}
    &  Theorem~\ref{ThAproEulKZK}
    &  Theorem~\ref{ThapproxNSNPE}
    &  Theorem~\ref{ThapproxEulKuz}
\\ \hline
\multicolumn{1}{| c |}{\textit{Ansatz}}
    &\multicolumn{2}{| c ||}{\shortstack{$\rho_{\varepsilon}=\rho_0+\varepsilon \rho_1+\varepsilon^2 \rho_2 ,$\\
    $ \mathbf{v}_{\varepsilon}=-\varepsilon \nabla u,$\\
    $\rho_1=\frac{\rho_0}{c^2}\partial_t u,$ \\ 
    $\rho_2$ from (\ref{rho2K}) 
    } }
    &\multicolumn{2}{| c ||}{\shortstack{ paraxial approximation\\
    $u=\Phi(t-\frac{x_1}{c},\varepsilon x_1,\sqrt{\varepsilon} \textbf{x}')$\\
    $\rho_{\varepsilon}=\rho_0+\varepsilon I+\varepsilon^2 J,$\\
    $ \mathbf{v}_{\varepsilon}$ from (\ref{vkzk}), 
    $I =\frac{\rho_0}{c^2}\partial_{\tau}\Phi,$ \\ 
    $J$ from (\ref{Jkzk})
    }}
    &\multicolumn{2}{| c |}{\shortstack{ paraxial approximation\\
    $u=\Psi(\varepsilon t,x_1-ct,\sqrt{\varepsilon} \textbf{x}')$\\
    $\rho_{\varepsilon}=\rho_0+\varepsilon \xi+\varepsilon^2 \chi,$\\
    $ \mathbf{v}_{\varepsilon}$ from (\ref{vNPE}), 
    $\xi =-\frac{\rho_0}{c}\partial_{z}\Psi,$ \\ 
    $\chi$ from (\ref{P2NPE})
    }}
\\
    \hline
    \multicolumn{1}{| c |}{Models}
    &\multicolumn{2}{| c ||}{\shortstack{$\partial^2_t u -c^2 \Delta u=$\\$\varepsilon\partial_t\left( (\nabla u)^2+\frac{\gamma-1}{2c^2}(\partial_t u)^2 \right.$\\
    $\left.+\frac{\nu}{\rho_0}\Delta u\right)$}}
    &\multicolumn{2}{| c ||}{\shortstack{$c\partial^2_{\tau z} I -\frac{(\gamma+1)}{4\rho_0}\partial_\tau^2
I^2$\\$-\frac{\nu}{2 c^2\rho_0}\partial^3_\tau I-\frac{c^2}2 \Delta_y
I=0$}}
    &\multicolumn{2}{| c |}{\shortstack{$\partial^2_{\tau z} \xi+\frac{(\gamma+1)c}{4\rho_0}\partial_z^2(\xi^2)$\\$-\frac{\nu}{2\rho_0}\partial^3_z  \xi+\frac{c}{2}\Delta_y  \xi=0$}}
\\
    \hline
  \multicolumn{1}{| c |}{\shortstack{Approxi-\\mation\\ Order}}
  &\multicolumn{6}{| c |}{$O(\varepsilon^3)$}
  \\
    \hline
    \multicolumn{1}{| c |}{ Domain $\Omega$}
    &\multicolumn{2}{| c ||}{$\mathbb{R}^3 $}
    & \shortstack{the half space \\$\{x_1>0,
x'\in \R^{n-1}\}$} 
& \shortstack{the cone \\$\{|x_1|<\frac{R}{\eps}-ct\}$\\$\times
    \R^{n-1}_{x'}$} 
    &\multicolumn{2}{| c |}{$\mathbb{T}_{x_1}\times\mathbb{R}^2$}
    \\
    \hline
    \multicolumn{1}{| c |}{\shortstack{Approxi-\\mation 
    }}
    &\multicolumn{6}{| c |}{$\Vert U_{\varepsilon}-\overline{U}_{\varepsilon}\Vert_{L^2}\leq \varepsilon $ for $t\leq\frac{T}{\varepsilon}$}
    \\
    \hline
    \multicolumn{1}{| c |}{\shortstack{Initial\\
    data\\
    regularity}}
    & \shortstack{$u_0\in H^5(\Omega)$\\ $u_1\in H^4(\Omega)$}
    & \shortstack{$u_0\in H^4(\Omega)$\\ $u_1\in H^3(\Omega)$}
    & $I_0\in H^{10}(\Omega)$
    & $I_0\in H^{10}(\Omega)$
    & $\xi_0\in H^5(\Omega)$
    & $\xi_0\in H^5(\Omega)$
    \\
    \hline
    \multicolumn{1}{|c|}{\shortstack{Data\\regularity\\
    for remainder\\
    boundness}}
    &\shortstack{$u_0\in H^{s+2}(\Omega)$\\
    $u_1\in H^{s+1}(\Omega)$\\
    $s>\frac{n}{2}$ }
    &\shortstack{$u_0\in H^{s+2}(\Omega)$\\
    $u_1\in H^{s+1}(\Omega)$\\
    $s>\frac{n}{2}$ }
    & $I_0\in H^8(\Omega)$
    & $I_0\in H^8(\Omega)$
    & $\xi_0\in H^4(\Omega)$
    & $\xi_0\in H^4(\Omega)$\\
    \hline
    \end{tabular}

\end{sidewaystable}

\begin{sidewaystable}[ph!]
\vspace{0cm}\centering \caption{Approximation results for models derived from the Kuznetsov equation}\label{TABLE2}
\begin{tabular}{@{} c | c | c || c || c |}
\cline{2-5}
    & \multicolumn{2}{| c ||}{\quad KZK \quad}
    &\quad NPE\quad
    & \quad Westervelt\quad
\\
\cline{2-3}
    &\shortstack{periodic\\
    boundary condition\\
    problem}
    & \shortstack{initial  \\
    boundary value\\
    problem}
    & 
    &
    \\
    \hline
     \multicolumn{1}{| c |}{Theorem}
     &  Theorem~\ref{AproxKuzKZK}
    &  Theorem~\ref{approxKuzKZKbis}
    &  Theorem~\ref{approxKuzNPE}
    &  Theorem~\ref{ApproxKuzWes}
     \\
    \hline
    \multicolumn{1}{| c |}{Derivation}
    &\multicolumn{2}{| c ||}{\shortstack{ paraxial approximation\\
    $u=\Phi(t-\frac{x_1}{c},\varepsilon x_1,\sqrt{\varepsilon} \textbf{x}')$} }
    & \shortstack{ paraxial approximation\\
    $u=\Psi(\varepsilon t,x_1-ct,\sqrt{\varepsilon} \textbf{x}')$}
    & $\Pi=u+\frac{1}{c^2}\varepsilon u \partial_t u$
    \\
    \hline
     \multicolumn{1}{| c |}{\shortstack{Approxi-\\mation\\
     domain}}
     & \multicolumn{2}{| c ||}{\shortstack{the half space \\$\{x_1>0,
x'\in \R^{n-1}\}$} } 
& $\mathbb{T}_{x_1}\times\mathbb{R}^2$
& $\mathbb{R}^3$
 \\
    \hline
     \multicolumn{1}{| c |}{\shortstack{Approxi-\\mation \\
     order}}
     & \multicolumn{2}{| c ||}{$ O(\varepsilon)$}
     & $O(\varepsilon)$ 
     & $O(\varepsilon^2)$ 
      \\
    \hline
    \multicolumn{1}{| c |}{Estimation}
    & \shortstack{ $\Vert I-I_{aprox}\Vert_{L^2(\mathbb{T}_t\times\mathbb{R}^{n-1})}\leq \varepsilon $\\
    $z\leq K$}
    &\shortstack{ $\Vert (u -\overline{u})_t(t)\Vert_{L^2}$\\$+ \Vert \nabla (u-\overline{u})(t)\Vert_{L^2}$\\$\leq K \varepsilon.$\\
    $t<\frac{T}{\varepsilon} $}
    & \shortstack{ $\Vert (u -\overline{u})_t(t)\Vert_{L^2}$\\$+ \Vert \nabla (u-\overline{u})(t)\Vert_{L^2}$\\
    $\leq K \varepsilon$\\
    $t<\frac{T}{\varepsilon} $}
    & \shortstack{ $\Vert (u -\overline{u})_t(t)\Vert_{L^2}$\\
    $+ \Vert \nabla (u-\overline{u})(t)\Vert_{L^2}$\\
    $\leq K \varepsilon$\\
   $t<\frac{T}{\varepsilon} $ }
    \\
    \hline
    \multicolumn{1}{| c |}{\shortstack{Initial\\
    data\\
    regularity}}
    & \shortstack{$ I_0\in H^{s+\frac{3}{2}}(\mathbb{T}_{t}\times \mathbb{R}^{n-1}_{x'})$\\ for $s \geq \frac{n}{2}+2$} 
    & \shortstack{$ I_0\in H^{s}(\mathbb{T}_{t}\times \mathbb{R}^{n-1}_{x'})$\\
    for $\left[\frac{s}{2}\right]>\frac{n}{2}+2 $}
    &\shortstack{$ \xi _0\in H^{s+2}(\mathbb{T}_{x_1}\times \mathbb{R}^{n-1}_{x'})$\\ for $s>\frac{n}{2}+1$}
    &\shortstack{$u_0\in H^{s+4}(\mathbb{R}^n)$\\ $u_1\in H^{s+3}(\mathbb{R}^3)$\\
    for $s>\frac{n}{2}$}
    \\
    \hline
    \multicolumn{1}{|c|}{\shortstack{Data\\regularity\\
    for remainder\\
    boundness}}
    & \shortstack{$ I_0\in H^{s+\frac{3}{2}}(\mathbb{T}_{t}\times \mathbb{R}^{n-1}_{x'})$\\ for $s \geq \frac{n}{2}+2$}
    & \shortstack{$ I_0\in H^{6}(\mathbb{T}_{t}\times \mathbb{R}^{n-1}_{x'})$\\
    for $n= 2,3$,\\
    $ I_0\in H^{s}(\mathbb{T}_{t}\times \mathbb{R}^{n-1}_{x'})$\\
    for $\left[\frac{s}{2}\right]>\frac{n}{2}+1 $ and $n\geq 4$}
    & \shortstack{$ \xi _0\in H^{4}(\mathbb{T}_{x_1}\times \mathbb{R}^{n-1}_{x'})$\\
    for $n=2,3$.\\
    $ \xi _0\in H^{s}(\mathbb{T}_{x_1}\times \mathbb{R}^{n-1}_{x'})$\\ for $s>\frac{n}{2}+2$ and $n\geq4$.}
     &\shortstack{$u_0\in H^{s+2}(\mathbb{R}^n)$\\ $u_1\in H^{s+1}(\mathbb{R}^n)$\\
     for $s\geq 3$ and $n=2,3$.\\
     $u_0\in H^{s+2}(\mathbb{R}^n)$\\ $u_1\in H^{s+1}(\mathbb{R}^n)$\\
     for $s\geq \frac{n}{2}+1$ and $n\geq4$.}\\
     \hline
    \end{tabular}
\end{sidewaystable}

\appendix
\section{Expressions of the remainder terms.}\label{Apen1}
The expression of $H$, the profile of $\rho_2$, in the paraxial variables of the KZK \textit{ansatz}:
\begin{align}
H(\tau,z,y)= &-\frac{\rho_0 (\gamma-1)}{2c^4}(\partial_\tau \Phi)^2-\frac{\nu}{c^4} \partial^2_\tau \Phi\nonumber\\
&+\varepsilon\left[-\frac{\rho_0}{2c^2}[(\nabla_y \Phi)^2-\frac{2}{c} \partial_z \Phi\;\partial_\tau\Phi]-\frac{\nu}{c^2}[\Delta_y\Phi-\frac{2}{c}\partial^2_{z\tau}\Phi]\right]
\nonumber\\
&+\varepsilon^2[-\frac{\rho_0}{2c^2}(\partial_z\Phi)^2-\frac{\nu}{c^2}\partial^2_z\Phi],\label{rho2kzkH}
\end{align}

If we consider~(\ref{massKZK})-(\ref{momentKZK}) the expressions of $R_1^{NS-KZK}$ and $\mathbf{R}_2^{NS-KZK}$ are written with the terms $I$ and $J$ defined by~(\ref{Ikzk}) and~(\ref{Jkzk}) respectively.
\begin{align*}
&\varepsilon^3 R_1^{NS-KZK}=\\
&\;\;\;\;\;\varepsilon^3\left[-\rho_0\partial_{z}^2\Phi+\frac{1}{c}\partial_{z}I \partial_{\tau}\Phi+\frac{1}{c}\partial_{\tau}I \partial_z\Phi-\nabla_y I.\nabla_y\Phi\right.\\
&\;\;\;\;\;\;\left.+\frac{2}{c} I\partial^2_{\tau z}\Phi-I\Delta_y \Phi-\frac{1}{c^2}\partial_{\tau}J \partial_{\tau}\Phi-\frac{1}{c^2} J \partial_{\tau}^2\Phi \right] \\
&+\varepsilon^4\left[-\partial_{z} I\partial_{z}\Phi-I\partial_{z}^2\Phi+\frac{1}{c}\partial_{z}J\partial_{\tau}J+\frac{1}{c}\partial_{\tau}J\partial_{z}\Phi\right.\\
&\;\;\;\;\;\;\left.-\nabla_y J.\nabla_y\Phi+\frac{2}{c} J \partial^2_{\tau z}\Phi-J\Delta_y\Phi\right]\\
&+\varepsilon^5[-\partial_z J \partial_z \Phi-J\partial^2_{z}\Phi].
\end{align*}
Among the $x_1$ axis
\begin{align*}
&\varepsilon^3 \mathbf{R}_2^{NS-KZK}.\overrightarrow{e}_1=\\
&\;\;\;\;\;\varepsilon^3\left[-\frac{\rho_0}{2c}\partial_{\tau}[-\frac{2}{c}\partial_z\Phi\partial_{\tau}\Phi+(\nabla_y \Phi)^2]-\frac{\nu}{c}\partial_{\tau}[-\frac{2}{c}\partial^2_{\tau z}\Phi+\Delta_y\Phi]\right.\\
&\;\;\;\;\;\;\left.-\frac{I}{2c}\partial_{\tau}[\frac{1}{c^2}(\partial_{\tau}\Phi)^2]+\frac{J}{c}\partial^2_{\tau}\Phi\right]\\
&+\varepsilon^4\left[\frac{\rho_0}{2}\partial_z[-\frac{2}{c}\partial_z\Phi\partial_{\tau}\Phi+(\nabla_y \Phi)^2]+\nu \partial_z[-\frac{2}{c}\partial^2_{\tau z}\Phi+\Delta_y\Phi]\right.\\
&\;\;\;\;\;\;-\frac{I}{2c}\partial_{\tau}[-\frac{2}{c}\partial_z\Phi\partial_{\tau}\Phi+(\nabla_y \Phi)^2]+\frac{I}{2}\partial_z [\frac{1}{c^2}(\partial_{\tau}\Phi)^2]-J\partial^2_{\tau z}\Phi\\
&\;\;\;\;\;\;\left.-\frac{J}{2c}\partial_{\tau}[\frac{1}{c^2}(\partial_{\tau}\Phi)^2]-\frac{\rho_0}{2c}\partial_{\tau}[(\partial_z\Phi)^2]-\frac{\nu}{c}\partial_{\tau}\partial^2_z\Phi\right]\\
&+\varepsilon^5\left[-\frac{I}{2c}\partial_{\tau}[(\partial_z\Phi)^2]+\frac{I}{2}\partial_z[-\frac{2}{c}\partial_z\Phi\partial_{\tau}\Phi+(\nabla_y \Phi)^2]\right.\\
&\;\;\;\;\;\;+\frac{J}{2}\partial_z[\frac{1}{c^2}(\partial_{\tau}\Phi)^2]-\frac{J}{2c}\partial_{\tau}[-\frac{2}{c}\partial_z\Phi\partial_{\tau}\Phi+(\nabla_y \Phi)^2]\\
&\;\;\;\;\;\;\left.+\frac{\rho_0}{2}\partial_z [(\partial_z\Phi)^2]+\nu \partial_z^3\Phi\right]\\
&+\varepsilon^6\left[\frac{I}{2}\partial_z [(\partial_z\Phi)^2]-\frac{J}{2c}\partial_{\tau}[(\partial_z\Phi)^2]+\frac{J}{2}[-\frac{2}{c}\partial_z\Phi\partial_{\tau}\Phi+(\nabla_y \Phi)^2]\right]\\
&+\varepsilon^7\left[\frac{J}{2}\partial_z [(\partial_z\Phi)^2]\right]
\end{align*}
and in the hyperplane orthogonal to the $x_1$ axis
\begin{align*}
&\sum_{i=2}^n(\mathbf{R}_2^{NS-KZK}.\overrightarrow{e}_i)\overrightarrow{e}_i=\\
&\;\;\;\;\;\varepsilon^{\frac{7}{2}}\left[\frac{\rho_0}{2}\nabla_y[-\frac{2}{c}\partial_z\Phi\partial_{\tau}\Phi+(\nabla_y \Phi)^2]+\nu \nabla_y[-\frac{2}{c}\partial^2_{\tau z}\Phi+\Delta_y\Phi]\right.\\
&\;\;\;\;\;\;\left.+\frac{I}{2}\nabla_y[\frac{1}{c^2}(\partial_{\tau}\Phi)^2]-J\nabla_y[\partial_{\tau}\Phi]\right]\\
&+\varepsilon^{\frac{9}{2}}\left[\frac{I}{2}\nabla_y[-\frac{2}{c}\partial_z\Phi\partial_{\tau}\Phi+(\nabla_y \Phi)^2]+\frac{J}{2}\nabla_y[\frac{1}{c^2}(\partial_{\tau}\Phi)^2]\right.\\
&\;\;\;\;\;\;\left.+\frac{\rho_0}{2}\nabla_y[(\partial_z\Phi)^2]+\nu\nabla_y[\partial^2_z \Phi]\right]\\
&+\varepsilon^{\frac{11}{2}}\left[\frac{I}{2}\nabla_y[(\partial_z\Phi)^2]+\frac{J}{2}\nabla_y[-\frac{2}{c}\partial_z\Phi\partial_{\tau}\Phi+(\nabla_y \Phi)^2]\right]\\
&+\varepsilon^{\frac{13}{2}}\left[\frac{J}{2}\nabla_y[(\partial_z\Phi)^2]\right]
\end{align*}




\end{document}